\newtheorem{theorem}{Theorem}[section]
\newtheorem{lemma}[theorem]{Lemma}
\newtheorem{proposition}[theorem]{Proposition}
\newtheorem{assumption}{Assumption}
\theoremstyle{remark}
\newtheorem{remark}[theorem]{Remark}
\newcommand{\ud}{\mathrm d}
\newcommand{\D}{\mathbb D}
\newcommand{\OO}{\mathcal O}
\newcommand{\E}{\mathbb E}
\numberwithin{equation}{section}
\begin{document}

\title[Convergence of Density Approximations ]{Density convergence of a fully discrete finite difference method for  stochastic Cahn--Hilliard equation}

\subjclass[2010]{65C30, 60H35, 60H15, 60H07}
\author{Jialin Hong}
\address{LSEC, ICMSEC, Academy of Mathematics and Systems Science, Chinese Academy of Sciences, and School of Mathematical Sciences, University of Chinese Academy of Sciences, Beijing 100049, China}
\email{hjl@lsec.cc.ac.cn}

\author{Diancong Jin}
\address{School of Mathematics and Statistics, Huazhong University of Science and Technology,  and Hubei Key Laboratory of Engineering Modeling and Scientific Computing, Huazhong University of Science and Technology, Wuhan 430074, China}
\email{jindc@hust.edu.cn}

\author{Derui Sheng}
\address{LSEC, ICMSEC, Academy of Mathematics and Systems Science, Chinese Academy of Sciences, and School of Mathematical Sciences, University of Chinese Academy of Sciences, Beijing 100049, China}
\email{sdr@lsec.cc.ac.cn}

%\ead{diancongjin@lsec.cc.ac.cn (Corresponding author)}
\thanks{This work is supported by the National key R\&D Program of China under Grant No.\ 2020YFA0713701, National Natural Science Foundation of China (Nos.\ 11971470, 11871068, 12031020, 12022118, 12026428, 11926417), and the Fundamental Research Funds for the Central Universities 3004011142.}

\keywords{
	convergence of density, strong convergence rate, finite difference method, stochastic Cahn–Hilliard equation
}

\begin{abstract}
This paper focuses on investigating  the density convergence  of a fully discrete finite difference method when applied to numerically solve the stochastic Cahn--Hilliard equation driven by multiplicative space-time white noises. The main difficulty lies in the control of the drift coefficient
that is neither globally Lipschitz nor one-sided Lipschitz. To handle this difficulty, we propose a novel localization argument and derive the strong convergence rate of the numerical solution to estimate the total variation distance between the exact and numerical solutions. This along with the existence of the density of the numerical solution finally yields the convergence of density in $L^1(\mathbb{R})$ of the numerical solution.  Our results partially answer positively to the open problem emerged in [\textit{J. Cui and J. Hong, J. Differential Equations (2020)}] on computing the density of the exact solution numerically.
\end{abstract}

\maketitle

\section{Introduction}
The density of the exact solution of a stochastic system characterizes all relevant probabilistic information and has wide applications in the probability potential theory. 
When a numerical method is applied to the original system, 
a natural question is whether the numerical solution provides an effective approximation of the density of the exact solution, which has received much
attention recently.
For instance, for stochastic differential equations (SDEs) 
whose coefficients are smooth vector fields with bounded derivatives, \cite{BT96,HW96,KHA97} obtained the convergence of density of the numerical solution based on It\^o--Taylor type dicretizations under H\"ormander's condition. For stochastic Langevin equations with non-globally monotone coefficients, \cite{CHS19} used the splitting method to derive an approximation for the density of the exact solution. 
Relatively,  the research of approximations for densities of exact solutions of stochastic partial differential equations (SPDEs) is still at its infancy. And we are only aware of \cite{CCHS20}, where the authors investigated the existence and convergence of densities of numerical dicretizations for stochastic heat equations with additive noise.
Following this line of investigation, the present work makes further contributions on numerical approximations of densities of exact solutions of SPDEs with polynomial nonlinearity and multiplicative noises.

This paper is concerned with 
the following stochastic Cahn--Hilliard equation
\begin{gather}\label{CH}
	\begin{split}
		\partial_t u+\Delta^2u&=\Delta f(u)+\sigma(u)\dot{W},\quad \text{in}~[0,T]\times\OO
	\end{split}
\end{gather}
with the initial value $u(0,\cdot)=u_0$ and Dirichlet boundary conditions (DBCs) $u=\Delta u=0$ on $\partial \OO$. Here, $\OO:=(0,\pi)$, $T>0$, and $\dot{W}$ is the formal derivative of  a Brownian sheet $W=\{W(t,x),(t,x)\in[0,T]\times\OO\}$ defined on some complete probability space $(\Omega,\mathscr F,\{\mathscr F_t\}_{t\in[0,T]}, \mathbb P).$ The stochastic Cahn–Hilliard equation is  a model arising in non-equilibrium dynamics of metastable states \cite{CHC1961,CHC1958,CHC71,CHC1970,CHC1971}. For example, \eqref{CH} can describe the complicated phase separation and coarsening phenomena in a melted alloy that is quenched to a temperature at which only two different concentration phases can exist stably \cite{CH58,DN91}. 
The unknown quantity $u$ in \eqref{CH} represents the concentration, and $f(u)=u^3-u$ is the derivative of the double well potential.
For the  existence of a unique solution to \eqref{CH} 
under suitable assumptions, we refer to  \cite{AKM16,CC01,CH19} and references therein. 
Concerning the density of the exact solution, \cite{CC01} and \cite{CC02,CH20}  respectively proved the existence and strict positivity of the density $\{p_{t,x}\}_{(t,x)\in(0,T]\times\mathcal{O}}$ of $\{u(t,x)\}_{(t,x)\in(0,T]\times\mathcal{O}}$, under the non-degeneracy condition $\vert \sigma(\cdot)\vert >0$. From the practical point of view, 
numerically approximating the density $p_{t,x}$  is of prime importance in understanding intrinsic properties, beyond the existence, of the density, and we will 
resort to numerical methods to handle this problem.

Numerical methods have been successfully applied to solve the stochastic Cahn--Hilliard equation; see \cite{CCZZ18,LM11} for that of the linearized Cahn--Hilliard equation, \cite{CHS21,DN91,FKLL18,KLM11,QW20}  for the additive noise case, and \cite{CH19,CH20,ZL22,FLZ20} for the multiplicative noise case. 
Among them, the stochastic Cahn--Hilliard equation is interpreted as an SDEs in Hilbert spaces, which is discretized by
the finite element method or the spectral Galerkin method in space. 
In order to numerically approximating the density of the exact solution, we understand the exact solution $u:[0,T]\times \mathcal{O}\rightarrow L^2(\Omega)$ as a random field and apply the spatial
finite difference method (FDM) to discretize \eqref{CH}.
By introducing a uniform spatial 
stepsize $h=\pi/n$, $n\ge2$,  the spatial FDM of \eqref{CH} can be formulated into an $(n-1)$-dimensional SDE
\begin{align}\label{NCH}
	\ud U(t)+A_n^2U(t)\ud t= A_nF_n(U(t))\ud t+\sqrt{n/\pi}\Sigma_n(U(t)) \ud \beta_t.
\end{align}
Here  $A_n$ is  the matrix form of the discrete Dirichlet Laplacian, $F_n$ and $\Sigma_n$ are
respectively determined by $f$ and $\sigma$ (see \eqref{FnSigma}), and $\{\beta_t\}_{t\in[0,T]}$ is some $(n-1)$-dimensional Brownian motion related to $W$ (see subsection \ref{S31} for more details). 
Further, by denoting $\tau:=T/m$ $(m\in\mathbb{N}_+)$ the uniform time stepsize, we 
discretize \eqref{NCH} by the backward Euler scheme in time and obtain a fully discrete FDM method
\begin{align}\label{eq:Ui}
U^{i+1}-U^i+\tau A_n^2 U^{i+1}=\tau A_n F_n(U^{i+1})+\sqrt{n/\pi}\Sigma_n(U^i)(\beta_{t_{i+1}}-\beta_{t_{i}}),
\end{align}
where $t_i:=i\tau$ for $i\in \{0,1,\ldots,m\}$.
The $k$th component $U_k(t_i)$ (resp.\ $U^i_k$) of $U(t_i)$ (resp.\ $U^i$) approximates formally to $u(t_i,kh)$ for every $k\in\mathbb{Z}_{n-1}:=\{1,\ldots,n-1\}$.
 Taking advantage of the local weak monotonicity  of $A_nF_n$, we prove the well-posedness of  \eqref{NCH} and \eqref{eq:Ui}, and that for every $k\in\mathbb{Z}_{n-1}$, both $\{U_k(t)\}_{t\in(0,T]}$ and $\{U_k^i\}_{i=1,\ldots,m}$ admit densities under the non-degeneracy condition. 

Our first main result is the strong convergence rates of the spatial FDM and fully discrete FDM for \eqref{CH}, which will be used to derive the convergence of densities of numerical methods, and is of independent interest. 
Inspired by regularity estimates of original systems in \cite{AKM16,CC01},
we first use the interpolation approach to establish in Proposition \ref{Unl2-pro}  a uniform moment bound for the numerical solution $U(t)$ in the discrete Sobolev norm $\Vert (-A_n)^{\frac{1}{2}}\cdot\Vert _{l^2_n}$. To overcome the difficulty that the drift coefficient is neither globally Lipschitz  nor one-sided Lipschitz, we introduce an auxiliary process $\tilde U(t)$ (see \eqref{anx_u}), and focus mainly on estimating the error $E(t):=\tilde U(t)- U(t)$.
With the aid of the one-sided Lipschitz property of $A_nF_n$ in the discrete negative Sobolev norm $\Vert (-A_n)^{-\frac{1}{2}}\cdot\Vert _{l^2_n}$,
we are able to estimate $(-A_n)^{-\frac{1}{2}}E(t)$ in Proposition \ref{H-1E}, and meanwhile the linear part $-A_n^2E(t)$ leads to a upper bound for the 
$L^4(\Omega;L^2(0,T;\Vert (-A_n)^{\frac{1}{2}}\cdot\Vert _{l^2_n}))$-norm of
$E(t)$. Further,
building on the local Lipschitz continuity of $F_n$ in the norm $\Vert (-A_n)^{\frac{1}{2}}\cdot\Vert _{l_n^2}$, we attain the strong convergence order $1$ of the spatial FDM. By essentially exploiting the discrete analogue of previous arguments, we also show that the fully discrete FDM converges strongly to the spatial FDM with order nearly $\frac38$ in time. The above convergence orders are optimal in the sense that they coincide with the spatial and temporal H\"older continuity exponents of the exact solution, respectively.

Our second main result is the convergence of density in $L^1(\mathbb{R})$ of the numerical solutions for \eqref{CH}, which is realized by a localization argument to deal with the non-globally monotone coefficient $\Delta f$. Let us illustrate our idea by taking the spatial semi-discrete numerical solution for example. First, we establish a  criterion for reducing the total variation distance of random variables to that of their localizations in Proposition \ref{prop:localization}. Based on this criterion, the estimate of the total variation distance between $u$ and $u^n$ boils down to estimating that between $u_R$ and $u^n_R$. Here, $u_R$ is the localization of $u$ and solves the localized stochastic Cahn--Hilliard equation
\begin{equation*}
			\partial_t u_R+\Delta^2u_R=\Delta f_R(u_R)+\sigma(u_R)\dot{W},\quad R\ge1,
\end{equation*}
where $f_R=fK_R$ with $K_R$ being a smooth cut-off function supported on $[-R-1,R+1]$. In addition, 
 for any fixed  $ R\ge 1$, define the localization  $u^n_R$ of $u^n$ as the spatial FDM numerical solution of $u_R$.
Second, in order to control the total variation distance between $u^n_R$ and $u_R$, we apply 
a criterion for the convergence in total variation distance provided by \cite{NP13}, whose prerequisites
contain the negative moment estimate of the Malliavin derivative $Du_R(t,x)$  and the convergence of $u_R^n(t,x)$ in the Malliavin--Sobolev space $\D^{1,2}$. These are accomplished by making full use of the globally Lipschitz condition of $f_R$ and the strong regularizing effect of the linear part. Finally, together with the existence of density of the spatial FDM, we obtain that the density of the spatial FDM converges in $L^1(\mathbb{R})$ to that of the exact solution.
In a similar manner, we also show that the density of the fully discrete FDM converges in $L^1(\mathbb{R})$ to that of the exact solution.

We summarize main contributions of this work as follows.
\begin{itemize}
	\item[$\bullet$] We give the optimal strong convergence rate of a fully discrete FDM for stochastic Cahn--Hilliard equations with polynomial nonlinearity and multiplicative noise. 
	
	\item[$\bullet$] 
	We are the first to give the convergence of density for numerical approximations of SPDEs with polynomial nonlinearity. 
	The results on the existence and convergence of density of the numerical solutions for stochastic Cahn--Hilliard equations partially respond positively to an open problem on computing the density of the exact solution numerically proposed in \cite[Section 5]{CH20}.

	\item[$\bullet$] 
	We propose a criterion for reducing the  total variation distance of  random variables to that of their localizations. And it is successfully applied to derive the density convergence of a fully discrete numerical method for \eqref{CH}.	
We believe that this localization argument is also available for other SPDEs with non-globally Lipschitz coefficients such as stochastic Allen--Cahn equations.
\end{itemize}

The rest of this paper is organized as follows. Section \ref{S2} is devoted to introducing  the mild solution, the spatial and fully discrete FDMs for \eqref{CH}. In subsection \ref{S31}, we also prove the existence of densities of the numerical solutions.
The regularity estimates of the numerical solutions are presented in Section \ref{S4}.
The strong convergence rate of the spatial FDM and the fully discrete FDM are proved in Sections \ref{S5} and \ref{S4-5}, respectively. Finally, Section \ref{S6} is reserved for the convergence of 
  densities in $L^1(\mathbb{R})$ of the numerical solutions.
\section{Preliminaries}\label{S2}

Let $\mathcal{C}^\alpha(\OO)$ be the space of $\alpha$-H\"older continuous functions on $\OO$ for $\alpha\in(0,1)$, and the space of $\alpha$ times continuously differentiable functions on $\OO$ for $\alpha\in\mathbb N$. For $d\in\mathbb{N}_+$, we denote by $\Vert \cdot\Vert $ and $\langle\cdot,\cdot\rangle$ the Euclidean norm and inner product of $\mathbb{R}^d$, respectively. 
Given a measurable space $(\mathbb{M},\mathcal M,\mathbf{m})$ and
a Banach space $(H,\Vert \cdot\Vert _H)$, let $L^p(\mathbb{M};H)$ be the space of  measurable functions $g:\mathbb{M}\to H$ endowed with the usual norm 
$\|g\|_{L^p(\mathbb{M};H)}:=\left(\int_\mathbb{M}\|g\|_{H}^p\ud \mathbf{m}\right)^{\frac1p}$.
Especially, we write $L^p(\mathbb M):=L^p(\mathbb M;\mathbb{R})$ for short.
For $N\in\mathbb{N}_+$, denote $\mathbb{Z}_{N}:=\{1,\ldots,N\}$ and $\mathbb{Z}_{N}^0:=\{0,1,\ldots,N\}$. 
We use $C$ to denote a generic positive constant that may change from one place to another and depend on several parameters but never on the stepsize $h$. 

Given a random field $v=\{v(t,x),(t,x)\in[0,T]\times \OO\}$ and a kernel $S:(0,T]\times\OO\times\OO\to \mathbb{R}$, we denote $S*f(v)$ and $S\diamond \sigma(v)$ the deterministic and stochastic convolutions, respectively, namely for any $(t,x)\in[0,T]\times\OO$,
\begin{gather}\label{eq:SDC}
S*f(v)(t,x):=\int_0^t\int_\OO S_{t-s}(x,y)f(v(s,y))\ud y\ud s,\\\label{eq:SSC}
S\diamond\sigma(v)(t,x):=\int_0^t\int_\OO S_{t-s}(x,y)\sigma(v(s,y))W(\ud s,\ud y).
\end{gather}
If for any $p\ge1$, there exists some constant $C_p$ such that $\|v(t,x)\|_{L^p(\Omega)}\le C_p$ for all $(t,x)\in[0,T]\times\OO$, then it can be verified that for any $0\le s\le t\le T$ and $x,y\in\OO$,
\begin{align}\notag\label{eq:SD}
\|S* f(v)(t,x)&-S*f(v)(s,y)\|_{L^p(\Omega)} \le C\int_0^s\int_\OO |S_{t-r}(x,z)-S_{s-r}(x,z)|\ud z\ud r\\
& +C\int_s^t\int_\OO |S_{t-r}(x,z)|\ud z\ud r+C\int_0^s\int_\OO |S_{s-r}(x,z)-S_{s-r}(y,z)|\ud z\ud r,
\end{align}
and 
\begin{align}\notag\label{eq:SS}
\|S\diamond \sigma(v)&(t,x)-S\diamond \sigma(v)(t,y)\|^2_{L^p(\Omega)}\le C\int_0^s\int_{\mathcal{O}}\vert S_{s-r}(x,z)-S_{s-r}(y,z)\vert ^2\mathrm{d} z\mathrm{d} r\\
&\quad+C\int_0^s\int_{\mathcal{O}}\vert S_{t-r}(x,z)-S_{s-r}(x,z)\vert ^2\mathrm{d} z\mathrm{d} r+C\int_s^t\int_{\mathcal{O}}\vert S_{t-r}(x,z)\vert ^2\mathrm{d} z\mathrm{d} r.
\end{align}

\subsection{Mild solution}
The physical importance of the Dirichlet problem lies in that
  it governs the propagation of a solidification front into an ambient medium which is at rest relative to the front \cite{DN91}; see for instance \cite{CH20,CM04,EL92} for the study of Cahn--Hilliard equations with DBCs. In this case,
the Green function associated to $\partial_t+\Delta^2$  is  given by
$G_t(x,y)=\sum_{j=1}^\infty e^{-\lambda_j^2t}\phi_j(x)\phi_j(y), t\in[0,T],x,y\in\mathcal O$,
where $\lambda_j=-j^2$, $\phi_j(x)=\sqrt{2/\pi}\sin(jx)$, $j\ge1$. 
It is known that $\{\phi_j\}_{j\ge1}$ forms an orthonormal basis of $L^2(\mathcal O)$.
As pointed out in \cite[p.19]{CM04}, there exist $C,c>0$ such that
\begin{align}\label{Gtxy}
\vert G_t(x,y)\vert \le \frac{C}{t^{1/4}}\exp\Big(-c\frac{\vert x-y\vert ^{4/3}}{\vert t\vert ^{1/3}}\Big),\\\label{DGtxy}
\vert \Delta G_t(x,y)\vert \le  \frac{C}{t^{3/4}}\exp\Big(-c\frac{\vert x-y\vert ^{4/3}}{\vert t\vert ^{1/3}}\Big),
\end{align}
which corresponds to \cite[formula (1.2)]{CM04} with $a=b=0$ and $a=2,b=0$, respectively.

\textit{Without further explanations,
we always assume in the text that $u_0:\OO\rightarrow \mathbb{R}$ is nonrandom and continuous, $f(x)=x^3-x$, and
$\sigma:\mathbb{R}\rightarrow\mathbb{R}$ is bounded and satisfies the globally Lipschitz condition.} These assumptions ensure that \eqref{CH}  admits a
unique mild solution $u=\{u(t,x),(t,x)\in[0,T]\times\OO\}$
 given by (cf. \cite{CC01,CH20,CH22})
  \begin{align*}
u(t,x)&=\mathbb{G}_tu_0(x)+
\int_0^t\int_\mathcal{O} \Delta G_{t-s}(x,y)f(u(s,y))\mathrm{d} y\mathrm{d} s\\
&\quad+\int_0^t\int_\mathcal{O} G_{t-s}(x,y)\sigma(u(s,y))W(\mathrm{d} s,\mathrm{d} y),\quad(t,x)\in[0,T]\times\mathcal{O}.
\end{align*}
Hereafter, $\mathbb{G}_tv(x):=\int_{\mathcal O}G_t(x,y)v(y)\mathrm{d} y$ for $v\in \mathcal{C}(\mathcal{O})$. 
   Moreover, as shown in \cite[Proposition 5.2]{CH22}, the exact solution to \eqref{CH} satisfies
 \begin{align}\label{eq:bound-e}
\sup_{t\in[0,T]}\E\left[\sup_{x\in\mathcal{O}}\vert u(t,x)\vert^p\right]\le C(u_0,T,p).
 \end{align}

Similar to \cite[Lemma 1.8]{CC01}, we have the following regularity estimate of $G$.
 \begin{lemma}\label{Greg}
For $\alpha\in(0,1)$, there exists $C=C_\alpha$ such that for $x,y\in\mathcal{O}$ and $t>s$,
\begin{gather*}
\int_0^t\int_{\mathcal{O}}\vert G_{t-r}(x,z)-G_{t-r}(y,z)\vert ^2\mathrm{d} z\mathrm{d} r\le C\vert x-y\vert ^2,\\
\int_0^s\int_{\mathcal{O}}\vert G_{t-r}(x,z)-G_{s-r}(x,z)\vert ^2\mathrm{d} z\mathrm{d} r+\int_s^t\int_{\mathcal{O}}\vert G_{t-r}(x,z)\vert ^2\mathrm{d} z\mathrm{d} r\le C\vert t-s\vert ^{\frac{3}{4}\alpha},\\
\int_0^t\int_{\mathcal O}\vert \Delta G_{t-r}(x,z)-\Delta G_{t-r}(y,z)\vert \mathrm{d} z\mathrm{d} r\le C\vert x-y\vert,\\
\int_0^s\int_{\mathcal O}\vert \Delta G_{t-r}(x,z)-\Delta G_{s-r}(x,z)\vert \mathrm{d} z\mathrm{d} r+\int_s^t\int_{\mathcal O}\vert \Delta G_{t-r}(x,z)\vert \mathrm{d} z\mathrm{d} r\le C\vert t-s\vert ^{\frac{3\alpha}{8}}.
\end{gather*}
\end{lemma}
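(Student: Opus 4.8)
The plan is to work directly from the spectral representation $G_t(x,y)=\sum_{j\ge1}e^{-j^4t}\phi_j(x)\phi_j(y)$ (recall $\lambda_j^2=j^4$), exploiting three elementary facts: the orthonormality of $\{\phi_j\}_{j\ge1}$ in $L^2(\OO)$; the inequalities $|\phi_j(x)-\phi_j(y)|\le\sqrt{2/\pi}\,j|x-y|$ and $(1-e^{-a})^2\le\min\{1,a^2\}\le a^{2\theta}$ valid for $a\ge0$ and any $\theta\in[0,1]$; and the decay estimate $\sum_{j\ge1}j^ae^{-bj^4\rho}\le C_{a,b}\,\rho^{-(a+1)/4}$ for $\rho\in(0,T]$, $a\ge0$, $b>0$, obtained by comparison with $\int_0^\infty\xi^ae^{-b\xi^4\rho}\,\ud\xi$ after the substitution $\xi\mapsto\rho^{-1/4}\xi$. (This lemma is the Dirichlet counterpart of \cite[Lemma 1.8]{CC01}; one can equally argue from the pointwise bounds \eqref{Gtxy}--\eqref{DGtxy} together with their derivative versions from \cite{CM04}, but the spectral route is self-contained.)

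For the two $L^2$-estimates I apply Parseval's identity in the $z$-variable, turning the space integrals into series. The spatial increment gives $\int_0^t\int_\OO|G_{t-r}(x,z)-G_{t-r}(y,z)|^2\,\ud z\,\ud r=\sum_j\tfrac{1-e^{-2j^4t}}{2j^4}|\phi_j(x)-\phi_j(y)|^2\le\tfrac{|x-y|^2}{\pi}\sum_j j^{-2}\le C|x-y|^2$. For the temporal terms I factor $e^{-j^4(s-r)}-e^{-j^4(t-r)}=e^{-j^4(s-r)}\big(1-e^{-j^4(t-s)}\big)$: the increment term becomes $\sum_j\tfrac{1-e^{-2j^4s}}{2j^4}\phi_j(x)^2\big(1-e^{-j^4(t-s)}\big)^2\le C(t-s)^{2\theta}\sum_j j^{8\theta-4}$, summable for $\theta<3/8$, while $\int_s^t\int_\OO|G_{t-r}(x,z)|^2\,\ud z\,\ud r=\sum_j\tfrac{1-e^{-2j^4(t-s)}}{2j^4}\phi_j(x)^2\le C(t-s)^{\theta}\sum_j j^{4\theta-4}$ is summable for $\theta<3/4$. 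Taking $\theta=\tfrac38\alpha$ in the first and $\theta=\tfrac34\alpha$ in the second and using $t-s\le T$, both are $\le C|t-s|^{3\alpha/4}$; the hypothesis $\alpha<1$ is exactly what keeps these series convergent.

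For the two $L^1$-estimates, where $\Delta G_\rho(x,z)=-\sum_j j^2e^{-j^4\rho}\phi_j(x)\phi_j(z)$, I would first pass from $L^1(\OO)$ to $L^2(\OO)$ by Cauchy--Schwarz (so a factor $\sqrt{|\OO|}=\sqrt\pi$ appears) and then invoke Parseval. This yields $\int_\OO|\Delta G_\rho(x,z)-\Delta G_\rho(y,z)|\,\ud z\le\sqrt\pi\big(\sum_j j^4e^{-2j^4\rho}|\phi_j(x)-\phi_j(y)|^2\big)^{1/2}\le C|x-y|\big(\sum_j j^6e^{-2j^4\rho}\big)^{1/2}\le C|x-y|\,\rho^{-7/8}$, and $\int_0^t(t-r)^{-7/8}\,\ud r=8t^{1/8}$ finishes the spatial-increment estimate. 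In the same way $\int_\OO|\Delta G_\rho(x,z)|\,\ud z\le C\rho^{-5/8}$, so $\int_s^t\int_\OO|\Delta G_{t-r}(x,z)|\,\ud z\,\ud r\le\tfrac83(t-s)^{3/8}\le C|t-s|^{3\alpha/8}$; and, factoring the exponentials as before, $\int_\OO|\Delta G_{t-r}(x,z)-\Delta G_{s-r}(x,z)|\,\ud z\le C(t-s)^{\theta}\big(\sum_j j^{4+8\theta}e^{-2j^4(s-r)}\big)^{1/2}\le C(t-s)^{\theta}(s-r)^{-(5+8\theta)/8}$, which is integrable in $r$ over $(0,s)$ precisely when $\theta<3/8$, so $\theta=\tfrac38\alpha$ gives $C|t-s|^{3\alpha/8}$.

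I expect the last estimate --- the $L^1$ bound on the time increment of $\Delta G$ --- to be the delicate point. The naive splitting $|\Delta G_{t-r}-\Delta G_{s-r}|\le|\Delta G_{t-r}|+|\Delta G_{s-r}|$ produces no decay in $|t-s|$, so one must genuinely extract the factor $1-e^{-j^4(t-s)}$ (equivalently, in the kernel-bound formulation, write $\Delta G_{t-r}-\Delta G_{s-r}=-\int_{s-r}^{t-r}\Delta^3G_\rho\,\ud\rho$ and split the $r$-integral at the scale $s-r\sim|t-s|$). Since trading $\theta$ units of temporal regularity against the worsened singularity $(s-r)^{-(5+8\theta)/8}$ requires $\theta<3/8$, the exponent $3\alpha/8$ together with $\alpha<1$ is exactly what keeps the $r$-integral finite; this bookkeeping is the only step needing care, everything else being routine once the series bound $\sum_j j^ae^{-bj^4\rho}\le C\rho^{-(a+1)/4}$ is in hand.
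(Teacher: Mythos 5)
Your spectral argument is correct: the Parseval-plus-eigenvalue-decay computation for the $L^2$ bounds, the Cauchy--Schwarz reduction for the $L^1$ bounds on $\Delta G$, and the factorization $e^{-j^4(s-r)}(1-e^{-j^4(t-s)})$ with the exponent bookkeeping $\theta<3/8$ (resp.\ $\theta<3/4$) all check out. The paper omits the proof by reference to \cite[Lemma 1.8]{CC01}, but the route it intends — and the one it uses for the analogous discrete estimate in Lemma \ref{Gn-regularity} — is exactly this eigenfunction-expansion argument, so your proposal matches the paper's approach.
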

Then we are able to investigate the H\"older continuity of $u$.

\begin{lemma}\label{Holder-exact}
Let $u_0\in\mathcal C^2(\mathcal O)$ and $\alpha\in(0,1)$. Then for $p\ge1$, there exists some constant $C=C(\alpha,p,T)$ such that
\begin{align*}
\Vert u(t,x)-u(s,y)\Vert _{L^p(\Omega)}\le C(\vert t-s\vert ^{\frac{3\alpha}{8}}+\vert x-y\vert )\quad\forall~(t,x),(s,y)\in[0,T]\times\mathcal{O}.
\end{align*}
\end{lemma}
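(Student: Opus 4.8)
The plan is to establish the H\"older estimate by decomposing the mild solution into its three constituent pieces---the deterministic semigroup term $\GG_tu_0(x)$, the deterministic convolution $G*f(u)(t,x)=\int_0^t\int_\OO \Delta G_{t-s}(x,y)f(u(s,y))\,\ud y\,\ud s$, and the stochastic convolution $G\diamond\sigma(u)(t,x)$---and bounding the space and time increments of each separately, then combining via the triangle inequality. For any two points $(t,x),(s,y)$ with $s\le t$, I would write the increment as a sum of a ``pure time'' increment (from $(t,x)$ to $(s,x)$) and a ``pure space'' increment (from $(s,x)$ to $(s,y)$), so it suffices to treat each of these two cases.

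First, for the semigroup term: since $u_0\in\mathcal C^2(\OO)$ with $u_0=\Delta u_0=0$ on $\partial\OO$, the function $w(t,x):=\GG_tu_0(x)$ solves the deterministic linear equation $\partial_tw+\Delta^2w=0$ with smooth data, so standard parabolic regularity (or direct estimation using \eqref{Gtxy} and \eqref{DGtxy} together with an integration by parts moving $\Delta^2$ onto $u_0$) gives $|\GG_tu_0(x)-\GG_su_0(y)|\le C(|t-s|+|x-y|)$, which is more than enough. Second, for the deterministic convolution, I would invoke the general estimate \eqref{eq:SD} with kernel $S=\Delta G$: the hypothesis of \eqref{eq:SD}, namely the uniform $L^p(\Omega)$-bound on the integrand, is supplied here by \eqref{eq:bound-e} applied to $f(u(s,y))=u(s,y)^3-u(s,y)$, since $f$ has polynomial growth and $u$ has bounded moments of all orders. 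The three resulting kernel integrals are then controlled by the third and fourth inequalities of Lemma~\ref{Greg}, yielding a bound $C(|t-s|^{3\alpha/8}+|x-y|)$. Third, for the stochastic convolution, I would apply the Burkholder--Davis--Gundy inequality to reduce the $L^p(\Omega)$-norm of the increment to the $L^p(\Omega)$-norm of the square root of the quadratic variation, then use the boundedness of $\sigma$ together with \eqref{eq:SS} (again with the uniform moment bound \eqref{eq:bound-e}) to reduce everything to the kernel integrals $\int_0^s\int_\OO|G_{t-r}(x,z)-G_{s-r}(x,z)|^2+\int_s^t\int_\OO|G_{t-r}(x,z)|^2$ and $\int_0^s\int_\OO|G_{s-r}(x,z)-G_{s-r}(y,z)|^2$; these are bounded by the first two inequalities of Lemma~\ref{Greg}, giving $C(|t-s|^{3\alpha/8}+|x-y|)$ after taking square roots (note the $|x-y|^2$ bound on the spatial kernel integral becomes $|x-y|$ after the square root, consistent with the claimed exponent $1$ in space).

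The main obstacle, and the only genuinely delicate point, is the control of the polynomial nonlinearity $f(u)=u^3-u$ in the deterministic convolution: one must be sure that the cubic term does not destroy the uniform-in-$(s,y)$ moment bound needed to apply \eqref{eq:SD}. This is precisely where \eqref{eq:bound-e} is essential---it guarantees $\sup_{s}\E[\sup_x|u(s,x)|^{3p}]<\infty$, hence $\sup_{s,y}\|f(u(s,y))\|_{L^p(\Omega)}<\infty$, so $f(u)$ satisfies the standing hypothesis of \eqref{eq:SD} and the argument goes through verbatim. A secondary technical care is the bookkeeping of the parameter $\alpha$: since \eqref{eq:bound-e} and Lemma~\ref{Greg} hold for every $\alpha\in(0,1)$ and every $p\ge1$, one simply fixes the given $\alpha$ and the given $p$ at the outset, and the constant $C=C(\alpha,p,T)$ absorbs all dependencies. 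Assembling the three bounds via the triangle inequality completes the proof.
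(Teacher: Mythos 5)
Your proposal follows essentially the same route as the paper: decompose the mild solution into the semigroup term, the deterministic convolution with kernel $\Delta G$, and the stochastic convolution with kernel $G$, then apply \eqref{eq:SD} with $S=\Delta G$, \eqref{eq:SS} with $S=G$, and Lemma \ref{Greg}, with the uniform moment bound \eqref{eq:bound-e} supplying the hypothesis needed to control the cubic nonlinearity. The only inaccuracy is in the semigroup term: with $u_0\in\mathcal C^2(\mathcal O)$ you cannot integrate by parts to move $\Delta^2$ onto $u_0$, so the Lipschitz-in-time bound $|t-s|$ is not available; the paper instead moves a single Laplacian, writing $\mathbb{G}_tu_0(x)=u_0(x)-\int_0^t\int_{\mathcal O}\Delta G_r(x,z)u_0^{\prime\prime}(z)\,\mathrm{d}z\,\mathrm{d}r$ as in \cite[Lemma 2.3]{CC01}, and obtains the exponent $|t-s|^{\frac12}$, which still dominates $|t-s|^{\frac{3\alpha}{8}}$ on $[0,T]$ and so leaves your conclusion intact.
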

\begin{proof}
Without loss of generality, let $p\ge2$ and $t>s$.
Using \cite[Lemma 2.3]{CC01}
and the assumption $u_0\in\mathcal C^{2}(\mathcal{O})$, we get
$$\vert \mathbb{G}_tu_0(x)-\mathbb{G}_tu_0(y)\vert +\vert \mathbb{G}_tu_0(x)-\mathbb{G}_su_0(x)\vert \le C(\vert t-s\vert ^{\frac{1}{2}}+\vert x-y\vert ).$$
Then by \eqref{eq:SD} with $S=\Delta G$ and \eqref{eq:SS} with $S=G$, as well as Lemma \ref{Greg}, we finish the proof.
\end{proof}

\subsection{Spatial FDM}\label{S31}
In this part, we introduce the spatial FDM  for \eqref{CH} and present the regularity estimates of the spatial discrete Green function.
Given a function $w$ defined on the mesh $\{0,h,2h,\ldots,\pi\}$ with $h=\frac{\pi}{n}$,
define the difference operator 
\begin{align*}
\delta_h w_i:&=\frac{w_{i-1}-2w_i+w_{i+1}}{h^2},\quad i\in \mathbb{Z}_{n-1},
\end{align*}
where $w_i:=w(ih)$.  Notice that
$$\delta_h^2 w_i=\frac{w_{i-2}-4w_{i-1}+6w_i-4w_{i+1}+w_{i+2}}{h^4},\quad i\in \mathbb{Z}_{n-1}.$$
The compatibility conditions  $u_0(0)=u_0(\pi)=0$ and $u_0^{\prime\prime}(0)=u_0^{\prime\prime}(\pi)=0$ are direct results of DBCs and the initial condition. One can approximate $u(t,kh)$ via $\{u^n(t,kh)\}_{n\ge2}$, where $u^n(0,kh)=u_0(kh)$ and
\begin{align}\label{unt}
&\quad\ \ud u^n(t,kh)+\delta_h^2u^n(t,kh)\ud t\\\notag
&=\delta_h f(u^n(t,kh))\ud t+h^{-1}\sigma(u^n(t,kh))\ud(W(t,(k+1)h)-W(t,kh))
\end{align}
for $t\in[0,T]$ and $k\in\mathbb{Z}_{n-1}$,
under the boundary conditions 
\begin{gather*}
u^n(t,0)=u^n(t,\pi)=0,\\
u^n(t,-h)+u^n(t,h)=u^n(t,(n-1)h)+u^n(t,(n+1)h)=0
\end{gather*}
for $t\in(0,T]$.
Define $\Pi_n$ as an interpolation operator which gives the polygonal interpolation of a function defined on the spatial grid points, i.e.,
$$\Pi_n(\varphi)(x)=\varphi(\kappa_n(x))+n\pi^{-1}(x-\kappa_n(x))(\varphi(\kappa_n(x)+h)-\varphi(\kappa_n(x))),\quad x\in\OO,$$
where $\kappa_n(y)=h\lfloor y/h\rfloor$  with $\lfloor\cdot\rfloor$ being the floor function.
Then we define $u^n(t,x):=\Pi_n(u^n(t,\cdot))(x)$.
To solve \eqref{unt}, introduce $$U(t)=(U_1(t),\ldots,U_{n-1}(t))^\top,\quad\beta_t=(\beta^1_t,\ldots,\beta^{n-1}_t)^{\top}$$ with
$U_k(t):=u^n(t,kh)$ and $\beta^k_t:=\sqrt{n/\pi}(W(t,(k+1)h)-W(t,kh))$ for $k\in\mathbb{Z}_{n-1}$, where the explicit dependence of $U(t)$ and $\beta_t$ on $n$ is omitted.
Let $A_n\in\mathbb{R}^{(n-1)\times(n-1)}$ be the matrix form of the discrete Dirichlet Laplacian, i.e., \begin{align*}
A_n:=\frac{n^2}{\pi^2}\left[\begin{array}{cccccc}-2 & 1 & 0 & \cdots & \cdots & 0 \\1 & -2 & 1 & \ddots & \ddots & \vdots \\0 & 1 & -2 & \ddots & \ddots & \vdots \\\vdots & 0 & \ddots & \ddots & \ddots & 0 \\\vdots & \ddots & \ddots & 1 & -2 & 1 \\0 & \cdots & \cdots & 0 & 1 & -2\end{array}\right].
\end{align*} Then
\eqref{unt} can be rewritten into the $(n-1)$-dimensional SDE \eqref{NCH}
with the initial value $U(0)=(u_0(h),\ldots,u_0((n-1)h))^\top$ and the coefficients 
\begin{equation}\label{FnSigma}
\begin{split}
F_n(U(t))&=(f(U_1(t)),\ldots,f(U_{n-1}(t)))^\top,\\
\Sigma_n(U(t))&=\textrm{diag}(\sigma(U_1(t)),\ldots,\sigma(U_{n-1}(t))).
\end{split}
\end{equation}

\begin{lemma}
Eq.\ \eqref{NCH} has a
unique strong solution $U=\{U(t),t\in[0,T]\}$.
\end{lemma}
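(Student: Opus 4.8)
The plan is to recast \eqref{NCH} as a finite-dimensional It\^o SDE and verify the hypotheses of a standard existence-and-uniqueness theorem for SDEs with locally Lipschitz coefficients and a one-sided (weak monotonicity) growth bound. Write the drift as $b_n(x):=-A_n^2x+A_nF_n(x)$ and the diffusion as $g_n(x):=\sqrt{n/\pi}\,\Sigma_n(x)$, both maps on $\mathbb{R}^{n-1}$. Since $f(x)=x^3-x$ is a polynomial and $\sigma$ is globally Lipschitz, $b_n$ and $g_n$ are locally Lipschitz on $\mathbb{R}^{n-1}$; hence a unique local (maximal) strong solution exists up to an explosion time $\zeta$. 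The task then reduces to showing $\zeta=T$ a.s.\ by means of an a priori moment bound.

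The key step is the one-sided Lipschitz / weak monotonicity estimate for $b_n$ that the introduction already advertises (``local weak monotonicity of $A_nF_n$''). First I would note that $A_n$ is symmetric negative definite, so $\langle -A_n^2x,x\rangle=-\|A_nx\|^2\le 0$, which contributes a genuinely dissipative term. For the nonlinear part one computes
\[
\langle A_nF_n(x),x\rangle=\langle F_n(x),A_nx\rangle
=-\tfrac{n^2}{\pi^2}\sum_{k}\bigl(f(x_{k+1})-f(x_k)\bigr)(x_{k+1}-x_k),
\]
after summation by parts with the homogeneous boundary convention $x_0=x_n=0$; since $f'(\xi)=3\xi^2-1\ge -1$, each summand satisfies $(f(a)-f(b))(a-b)\ge -(a-b)^2$, so $\langle A_nF_n(x),x\rangle\le \tfrac{n^2}{\pi^2}\sum_k (x_{k+1}-x_k)^2=\langle -A_nx,x\rangle\le C_n\|x\|^2$. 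Combining, $\langle b_n(x),x\rangle\le C_n\|x\|^2$, and $\|g_n(x)\|^2\le C_n(1+\|x\|^2)$ because $\sigma$ is bounded. These one-sided bounds, fed into It\^o's formula applied to $\|X_t\|^2$ (with a localizing sequence of stopping times $\tau_N\uparrow\zeta$) and Gronwall's inequality, yield $\E\sup_{t\le T\wedge\tau_N}\|X_t\|^2\le C(1+\|U(0)\|^2)$ uniformly in $N$; letting $N\to\infty$ forces $\zeta\ge T$ a.s., so the local solution is in fact global on $[0,T]$.

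Uniqueness on $[0,T]$ follows from the local Lipschitz property together with the same localization: two solutions agreeing at $t=0$ coincide up to each $\tau_N$, hence for all $t\le T$. I would then simply invoke the standard theorem (e.g.\ the monotone-coefficient existence-uniqueness result, or Khasminskii's non-explosion test combined with the usual Picard iteration for locally Lipschitz coefficients) and remark that all constants above depend on $n$, which is harmless since $n$ is fixed. The main obstacle is purely the a priori bound, i.e.\ getting the right sign in the summation-by-parts identity for $\langle A_nF_n(x),x\rangle$; once the discrete analogue of $f'\ge -1$ is exploited, everything else is routine. (In the paper this is exactly the ``local weak monotonicity of $A_nF_n$'' alluded to in the introduction, and the later Proposition~\ref{Unl2-pro} sharpens this crude $\|\cdot\|$-bound to a uniform-in-$n$ bound in the discrete $H^1$ norm.)
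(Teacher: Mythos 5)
Your proposal is correct and takes essentially the same route as the paper: both arguments rest on exactly the two structural estimates on the coefficients, namely a local Lipschitz/weak-monotonicity bound coming from \eqref{fb-fa} and the coercivity bound $\langle x,A_nF_n(x)\rangle\le\langle -A_nx,x\rangle\le C_n\|x\|^2$ obtained from $f'\ge-1$ (i.e.\ \eqref{fbfa}) via summation by parts with the Dirichlet convention. The only difference is cosmetic: the paper concludes by invoking \cite[Theorem 3.1.1]{PR07}, whereas you re-derive non-explosion and uniqueness by hand with It\^o's formula, Gronwall and a localizing sequence of stopping times.
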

\begin{proof}
Since $f(a)=a^3-a$, there is some $c_0>0$ such that for $a,b\in\mathbb{R}$,
\begin{gather}\label{fbfa}
(f(b)-f(a))(a-b)=-(a^2+b^2+ab)(a-b)^2+(a-b)^2\le(a-b)^2,\\\label{fb-fa}
\vert f(b)-f(a)\vert \le c_0(1+a^2+b^2)\vert b-a\vert . 
\end{gather}  
Based on  \eqref{fbfa} and \eqref{fb-fa}, it can be verified that there exist  $K_n(R), K_n>0$ such that for all $R\in(0,\infty)$, $x,y\in\mathbb{R}^{n-1}$ with $\Vert x\Vert  ,\Vert y\Vert  \le R$,
\begin{gather}\label{lwm}
\langle x-y,A_nF_n(x)-A_nF_n(y)\rangle \le K_n(R)\Vert x-y\Vert  ^2~ \text{(local weak monotonicity)},\\
\label{wc}
\langle x,A_nF_n(x)\rangle \le K_n(1+\Vert x\Vert  ^2) ~\text{(weak coercivity)}.
\end{gather}
By virtue of \cite[Theorem 3.1.1]{PR07}, \eqref{lwm}, \eqref{wc} and the Lipschitz continuity of $\sigma$, we obtain that
\eqref{NCH} admits a unique solution $\{U(t),t\in[0,T]\}$, which is a.s. continuous and $\{\mathscr F_t\}$-adapted.
\end{proof}
By virtue of  \cite[Theorem 5]{CC01} and \cite[Remark 5.3($ii$)]{CM04}, we know that if  $\sigma(x)\neq0$ for any $x\in\mathbb{R}$, then for any $(t,x)\in(0,T]\times\OO$, the exact solution $u(t,x)$ to \eqref{CH} admits a density.
As a numerical counterpart,
Theorem \ref{Density-n} implies the existence of density of $u^n(t,kh)=U_k(t)$
for every $k\in\mathbb{Z}_{n-1}$ and $t\in(0,T]$. 
\begin{theorem}\label{Density-n}
Let $\sigma$ be continuously differentiable and $\sigma(x)\neq0$  for any $x\in\mathbb{R}$. Then
for any $t\in(0,T]$, the law of $U(t)$ is absolutely continuous with respect to the Lebesgue measure on $\mathbb{R}^{n-1}$.
\end{theorem}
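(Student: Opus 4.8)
The plan is to apply the Bouleau--Hirsch criterion: to show that the law of $U(t)$ on $\mathbb{R}^{n-1}$ is absolutely continuous, it suffices to prove that $U(t)\in\D^{1,2}(\mathbb{R}^{n-1})$ componentwise and that the Malliavin matrix $\gamma_{U(t)}:=\big(\langle DU_j(t),DU_k(t)\rangle_{\cH}\big)_{j,k\in\mathbb{Z}_{n-1}}$ is almost surely invertible, i.e.\ $\det\gamma_{U(t)}>0$ a.s. Here $\cH=L^2([0,T]\times\OO)^{\otimes?}$ is the Cameron--Martin space of the underlying Brownian sheet $W$; since $\beta^k$ are finite linear combinations of increments of $W$, the Malliavin calculus can equivalently be set up with respect to the $(n-1)$-dimensional Brownian motion $\beta$. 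First I would record that $F_n$ and $\Sigma_n$ are smooth with locally bounded derivatives and that $U$ has moments of all orders (from the well-posedness lemma and a standard moment bound), so that the Malliavin differentiability of $U(t)$ follows by differentiating the SDE \eqref{NCH}: for $r<t$, the derivative $D_r U(t)$ solves the linear matrix SDE
\begin{align*}
\ud (D_r U(t)) + A_n^2 (D_rU(t))\,\ud t = A_n F_n'(U(t))(D_rU(t))\,\ud t + \sqrt{n/\pi}\,\Sigma_n'(U(t))(D_rU(t))\,\ud\beta_t,
\end{align*}
with initial condition $D_rU(r)=\sqrt{n/\pi}\,\Sigma_n(U(r))$, a diagonal matrix with entries $\sqrt{n/\pi}\,\sigma(U_k(r))$; and $D_rU(t)=0$ for $r>t$. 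A localization/truncation argument (truncating the cubic nonlinearity) together with the a priori moment bounds on $U$ gives $U_k(t)\in\D^{1,2}$ for every $k$.

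Next I would analyze the Malliavin matrix. Writing $\Phi(t,r)$ for the $(n-1)\times(n-1)$ matrix solution of the homogeneous linear equation
\begin{align*}
\ud \Phi(t,r) + A_n^2\Phi(t,r)\,\ud t = A_nF_n'(U(t))\Phi(t,r)\,\ud t + \sqrt{n/\pi}\,\Sigma_n'(U(t))\Phi(t,r)\,\ud\beta_t,\qquad \Phi(r,r)=I_{n-1},
\end{align*}
one has, by uniqueness, $D_rU(t)=\Phi(t,r)\,D_rU(r)=\sqrt{n/\pi}\,\Phi(t,r)\,\Sigma_n(U(r))$, hence
\begin{align*}
\gamma_{U(t)}=\frac{n}{\pi}\int_0^t \Phi(t,r)\,\Sigma_n(U(r))\,\Sigma_n(U(r))^\top\,\Phi(t,r)^\top\,\ud r.
\end{align*}
This matrix is symmetric and nonnegative-definite; for any $\xi\in\mathbb{R}^{n-1}$ with $\xi\neq0$,
\begin{align*}
\xi^\top\gamma_{U(t)}\xi=\frac{n}{\pi}\int_0^t \big\| \Sigma_n(U(r))\,\Phi(t,r)^\top\xi\big\|^2\,\ud r.
\end{align*}
The key structural facts are: (i) $\Phi(t,r)$ is a.s.\ invertible, because it is the solution of a \emph{linear} SDE with (random but) locally bounded coefficients — this is the standard stochastic-flow argument, the inverse $\Psi(t,r):=\Phi(t,r)^{-1}$ solving its own linear SDE (obtained via It\^o's formula applied to $\Phi\Psi=I$); and (ii) $\Sigma_n(U(r))$ is diagonal with entries $\sigma(U_k(r))\neq0$ by the nondegeneracy hypothesis $\sigma\neq0$, hence $\Sigma_n(U(r))$ is invertible for every $r$. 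Consequently $\Sigma_n(U(r))\Phi(t,r)^\top\xi=0$ forces $\Phi(t,r)^\top\xi=0$, i.e.\ $\xi=0$; so the integrand $\|\Sigma_n(U(r))\Phi(t,r)^\top\xi\|^2$ is strictly positive for $r$ in a set of positive Lebesgue measure (indeed all $r\in[0,t)$, by continuity near $r=t$ where $\Phi(t,t)^\top\xi=\xi\neq0$). Therefore $\xi^\top\gamma_{U(t)}\xi>0$ a.s., and since this holds simultaneously for all $\xi$ in a countable dense set it upgrades to $\gamma_{U(t)}$ being a.s.\ positive-definite, hence $\det\gamma_{U(t)}>0$ a.s.

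I expect the main obstacle to be the \emph{Malliavin differentiability step}, not the nondegeneracy step: because $f(x)=x^3-x$ has superlinear growth, one cannot directly quote the classical theorems that assume globally bounded derivatives, and must instead run a truncation argument — replace $f$ by $f_R=fK_R$ (exactly the cut-off already introduced in the paper), obtain $U^R(t)\in\D^{1,2}$ with uniform-in-$R$ bounds on $\|U^R(t)\|_{\D^{1,2}}$ coming from the a priori moment estimates on $U$ and Gronwall's inequality applied to the linear variational equation, and then pass $R\to\infty$ using that $U^R(t)\to U(t)$ in $L^2(\Omega)$ together with closedness of the operator $D$. One must be a little careful that the Gronwall constant for $D_rU^R(t)$ involves $\|F_n'(U^R(t))\|=\|\mathrm{diag}(3(U^R_k)^2-1)\|$, which is controlled by $\sup_t\|U(t)\|^2$ having all moments; since $n$ is fixed this is harmless. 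Once differentiability is secured, the nondegeneracy argument above is routine linear-SDE flow theory, and Bouleau--Hirsch delivers the absolute continuity of the law of $U(t)$ on $\mathbb{R}^{n-1}$.
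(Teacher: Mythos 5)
Your overall strategy --- Bouleau--Hirsch plus a flow representation of the Malliavin derivative --- is a legitimate and genuinely different route from the paper's. The paper instead sets $Z(t)=(-A_n)^{-\frac12}U(t)$, observes that the transformed drift $\tilde b(z)=-(-A_n)^{\frac12}F_n((-A_n)^{\frac12}z)$ is \emph{globally} one-sided Lipschitz (a direct consequence of $(f(b)-f(a))(a-b)\le (a-b)^2$ and the symmetry of $(-A_n)^{\frac12}$), and then invokes a ready-made absolute-continuity theorem for SDEs with one-sided Lipschitz drift, Lipschitz diffusion and an ellipticity condition, namely \cite[Theorem 5.2]{IRS19}. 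That change of variables is the whole point of the paper's proof: in the Euclidean inner product the drift $A_nF_n$ is only \emph{locally} weakly monotone (see \eqref{lwm}), so the classical hypotheses are not met for the original $U$-equation.

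The gap in your argument sits precisely in the Malliavin differentiability step, which you flag as the main obstacle but then dismiss too quickly. The Gronwall estimate for the variational equation of $D_rU^R(t)$ in the Euclidean norm produces a bound in which the random local Lipschitz constant $\Vert A_nF_n'(U^R(s))\Vert\sim C_n(1+\Vert U^R(s)\Vert_{l_n^\infty}^2)$ sits \emph{inside an exponential}; closing the estimate therefore requires exponential integrability of $\int_0^T\Vert U(s)\Vert^2\,\ud s$ uniformly in $R$, which does not follow from the polynomial moment bounds you invoke. (A secondary issue: with $f_R=fK_R$ one has $f_R'(x)=(3x^2-1)K_R(x)+(x^3-x)K_R'(x)$, and the second term is of order $-R^3$ on $R<|x|<R+1$, so even the one-sided bound on $f_R'$ is not uniform in $R$, poisoning the truncation limit.) The repair is exactly the paper's trick: test the variational equation against $(-A_n)^{-1}D_rU(t)$ rather than $D_rU(t)$; then $\langle (-A_n)^{-1}J,\,A_nF_n'(U)J\rangle=-\sum_k(3U_k^2-1)J_k^2\le \Vert J\Vert^2$ uses only $f'\ge -1$, the random constant disappears, and Gronwall closes in the norm $\Vert(-A_n)^{-\frac12}\cdot\Vert$, which is equivalent to the Euclidean norm for fixed $n$. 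Your nondegeneracy step (invertibility of the flow $\Phi(t,r)$ and of $\Sigma_n(U(r))$, positivity of the integrand near $r=t$ by continuity of the paths) is sound once differentiability is secured, and is essentially the argument the paper itself runs for the fully discrete scheme in Theorem~\ref{Density-n-dis}.
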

\begin{proof}
 Introduce $Z(t):=(-A_n)^{-\frac12}U(t)$ for $t\in[0,T]$. By \eqref{NCH}, 
\begin{align*}
\ud Z(t)+\!A_n^2Z(t)\ud t=\!-(-A_n)^{\frac12}F_n((-A_n)^{\frac12}Z(t))\ud t+\!\sqrt{\frac{n}{\pi}}(-A_n)^{-\frac12}\Sigma_n((-A_n)^{\frac12}Z(t))\ud \beta_t.
\end{align*}
 For $t\in(0,T]$, to prove the absolute continuity of the law of $U(t)$, it suffices to show that the law of $Z(t)$ is  absolutely continuous. 
 Next, we apply \cite[Theorem 5.2]{IRS19} to show  the absolute continuity of the law of $Z(t)$,  where the following conditions ($i$) and ($ii$) are required.
\begin{itemize}
\item[($i$)]  Assumption 3.1 of \cite{IRS19}, which mainly contains properties $(1)$-$(3)$ below.

\begin{itemize}
\item[$(1)$]  $\mathbb R^{n-1}\ni x\mapsto\tilde b(x):=-(-A_n)^{\frac12}F_n((-A_n)^{\frac12}x)$ satisfies the one-sided Lipschitz condition; 

\item[$(2)$]  $\mathbb R^{n-1}\ni x\mapsto\tilde\sigma(x):=\sqrt{n/\pi}(-A_n)^{-\frac12}\Sigma_n((-A_n)^{\frac12}x)$ satisfies the global Lipschitz condition; 

\item[$(3)$]  both $\tilde b$ and $\tilde \sigma$ are continuously differentiable.
\end{itemize}
\item[($ii$)]  For any $\mathbb{R}^{n-1}\ni z\neq 0$ and $s\le t$,
$$z^\top \Sigma_n(U(s))\Sigma_n(U(s))^\top z>\lambda(s,t)\vert z\vert ^2\ge 0,\quad a.s.,$$
for some function $\lambda: \mathbb{R}_+^2\rightarrow \mathbb{R}$ with $\int_0^t\lambda(s,t)\ud s>0$.
\end{itemize}
We first prove property (1). In view of \eqref{fbfa}, we have that for any $x,y\in\mathbb R^{n-1}$,
\begin{equation*}
-\langle x-y,F_n(x)-F_n(y)\rangle=\sum_{k=1}^{n-1}(x_k-y_k)(f(y_k)-f(x_k))\le \Vert x-y\Vert ^2.
\end{equation*}
Hence, it follows from the symmetry of $(-A_n)^{\frac12}$ that
\begin{align*}
 &\quad\ \langle x-y,-(-A_n)^{\frac12}F_n((-A_n)^{\frac12}x)+(-A_n)^{\frac12}F_n((-A_n)^{\frac12}y)\rangle\\
 &= -\langle (-A_n)^{\frac12}x-(-A_n)^{\frac12}y,F_n((-A_n)^{\frac12}x)-F_n((-A_n)^{\frac12}y)\rangle\\
 &\le \Vert (-A_n)^{\frac12}(x-y)\Vert ^2\le (n-1)^2\Vert x-y\Vert ^2,
 \end{align*}
 which yields the desired property (1).
Similarly, using  the Lipschitz continuity of $\sigma$ and the continuous differentiability of $f$ and $\sigma$, one could see that properties (2) and (3) are fulfilled. Besides, since $\sigma(\cdot)\neq 0$, the square matrix $\Sigma_n(U(s))\Sigma_n(U(s))^\top$ has positive minimum eigenvalue, which is denoted by $\lambda_{min}(s,\omega)$. Then
 the property ($ii$) follows immediately by choosing $\lambda(s,t)=
\frac{1}{2}\lambda_{min}(s,\omega)>0$ for $s\le t$. 
\end{proof}

Making use of the variation of constants formula, we obtain from \eqref{NCH} that
\begin{align}\label{Unt}
U(t)&=\exp(-A_n^2t)U(0)+\int_0^tA_n\exp(-A_n^2(t-s))F_n(U(s))\mathrm{d} s\\\notag
&\quad+\sqrt{\frac{n}{\pi}}\int_0^t\exp(-A_n^2(t-s))\Sigma_n(U(s))\mathrm{d} \beta_s,\quad t\in[0,T].
\end{align}
For $j\in\mathbb{Z}_{n-1}$, $e_j=(e_j(1),\ldots,e_j(n-1))^\top$ given by
\begin{equation}\label{ejk}
e_j(k)
=\sqrt{\pi/n}\phi_j(kh)=\sqrt{2/n}\sin(jkh),\quad k\in\mathbb{Z}_{n-1},
\end{equation} 
is an eigenvector of $A_n$  associated with the eigenvalue $\lambda_{j,n}=-j^2c_{j,n}$, where $c_{j,n}:=\sin^2(\frac{j}{2n}\pi)/(\frac{j}{2n}\pi)^2$ satisfies $\frac{4}{\pi^2}\le c_{j,n}\le 1.$ The vectors $\{e_i\}_{i=1}^{n-1}$  form an orthonormal basis of $\mathbb{R}^{n-1}$ (see e.g., \cite{GI98}). In particular,
\begin{align*}
\langle e_i,e_j\rangle=\frac{\pi}{n}\sum_{k=1}^{n-1}\phi_i(kh)\phi_j(kh)=\int_{\mathcal{O}}\phi_i(\kappa_n(y))\phi_j(\kappa_n(y))\mathrm{d} y=\delta_{ij}.
\end{align*} 
It is verified that
$1-\frac{\sin a}{a}\le \frac{1}{6}a^2$ for all $a\in[0,\frac{\pi}{2})$, which indicates that for $j\in\mathbb{Z}_{n-1}$,
\begin{align}\label{cjn}
0\le 1-c_{j,n}=\Big(1+\sin(\frac{j}{2n}\pi)/(\frac{j}{2n}\pi)\Big)\Big(1-\sin(\frac{j}{2n}\pi)/(\frac{j}{2n}\pi)\Big)\le \frac{\pi^2j^2}{12n^2}.
\end{align}

Introduce the discrete kernel
$$G^n_t(x,y)=\sum_{j=1}^{n-1}\exp(-\lambda_{j,n}^2t)\phi_{j,n}(x)\phi_j(\kappa_n(y)),$$
where $\phi_{j,n}:=\Pi_n(\phi_j)$. Define the discrete Dirichlet Laplacian $\Delta_n$ by $\Delta_n w(y)=0$ for $y\in[0,h)$, and 
\begin{align}\label{Deltaw}
\Delta_n w(y)=\frac{n^2}{\pi^2}\left(w\big(\kappa_n(y)+\frac{\pi}{n}\big)-2w\big(\kappa_n(y)\big)+w\big(\kappa_n(y)-\frac{\pi}{n}\big)\right),
\end{align}
for $y\in[h,\pi)$, where $w:\mathcal{O}\rightarrow \mathbb{R}$ with $w(0)=w(\pi)=0$.
Since $\Delta_n \phi_j(\kappa_n(y))=\lambda_{j,n}\phi_j(\kappa_n(y))$, it follows that
\begin{align*}
\Delta_n G^n_{t}(x,y)=\sum_{j=1}^{n-1}\lambda_{j,n}\exp(-\lambda_{j,n}^2t)\phi_{j,n}(x)\phi_j(\kappa_n(y)).
\end{align*}
Similar to \cite[Section 2]{GI98}, based on \eqref{Unt}, the diagonalization of the matrix $A_n$, \eqref{ejk} and $u^n(t,kh)=\sum_{j=1}^{n-1}\langle U(t),e_j\rangle e_j(k)$, one  has 
\begin{align}\label{unR0}\notag
u^n(t,x)=&\int_{\mathcal O}G^n_t(x,y)u_0(\kappa_n(y))\mathrm{d} y+\int_0^t\int_{\mathcal O}\Delta_nG^n_{t-s}(x,y)f(u^n(s,\kappa_n(y)))\mathrm{d} y\mathrm{d} s\\
&+\int_0^t\int_{\mathcal O}G^n_{t-s}(x,y)\sigma(u^n(s,\kappa_n(y)))W(\mathrm{d} s,\mathrm{d} y),\quad(t,x)\in[0,T]\times\mathcal{O}.
\end{align}

We have
the following regularity 
estimate of $G^n$, whose proof is analogous to that of Lemma \ref{Greg} and thus is omitted.
\begin{lemma}\label{Gn-regularity}
Let $\alpha\in(0,1)$. Then for any $x,y\in\mathcal O$ and $0\le s<t\le T$,
\begin{gather*}
\int_0^s\int_{\mathcal O}\vert G^n_{t-r}(x,z)-G^n_{s-r}(y,z)\vert ^2\ud z\ud r\le C_\alpha(\vert x-y\vert ^2+\vert t-s\vert ^{\frac{3}{4}\alpha}),\\
\int_s^t\int_{\mathcal O}\vert G^n_{t-r}(x,z)\vert ^2\ud z\ud r\le C\vert t-s\vert ^{\frac{3}{4}},\\
\int_0^s\int_{\mathcal O}\vert \Delta_nG^n_{t-r}(x,z)-\Delta_nG^n_{s-r}(y,z)\vert \mathrm{d} z\mathrm{d} r\le C_\alpha(\vert x-y\vert +\vert t-s\vert ^{\frac{3\alpha}{8}}),\\
\int_s^t\int_{\mathcal O}\vert \Delta_n G^n_{t-r}(x,z)\vert \mathrm{d} z\mathrm{d} r\le C_\alpha\vert t-s\vert ^{\frac{3\alpha}{8}}.
\end{gather*}
\end{lemma}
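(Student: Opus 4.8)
The plan is to argue directly from the spectral representation $G^n_t(x,y)=\sum_{j=1}^{n-1}e^{-\lambda_{j,n}^2t}\phi_{j,n}(x)\phi_j(\kappa_n(y))$, which makes every constant manifestly independent of $n$, rather than through a (discrete) pointwise kernel bound. I would first collect four elementary ingredients: $(a)$ $\lambda_{j,n}^2=j^4c_{j,n}^2$ with $\tfrac{16}{\pi^4}j^4\le\lambda_{j,n}^2\le j^4$ by \eqref{cjn}; $(b)$ $\sup_{\OO}|\phi_{j,n}|\le\sup_{\OO}|\phi_j|=\sqrt{2/\pi}$ and $|\phi_{j,n}(x)-\phi_{j,n}(y)|\le\sqrt{2/\pi}\,j\,|x-y|$, since $\Pi_n$ does not increase the supremum norm and the slope of every linear piece of $\Pi_n(\phi_j)$ is a difference quotient of $\phi_j$, hence bounded by $\sup_{\OO}|\phi_j'|=j\sqrt{2/\pi}$; $(c)$ the orthogonality $\int_\OO\phi_i(\kappa_n(z))\phi_j(\kappa_n(z))\,\ud z=\langle e_i,e_j\rangle=\delta_{ij}$; and $(d)$ the bound $\sum_{j\ge1}j^a e^{-c j^4 r}\le C_a\,r^{-(a+1)/4}$ for $r\in(0,T]$ and $a\ge0$, obtained by comparison with $\int_0^\infty x^a e^{-cx^4 r}\,\ud x$.

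For the two estimates on $G^n$, Parseval's identity and $(c)$ give, for $\tau_1>\tau_2\ge0$,
\[
\int_\OO\bigl|G^n_{\tau_1}(x,z)-G^n_{\tau_2}(x,z)\bigr|^2\ud z=\sum_{j=1}^{n-1}\bigl(e^{-\lambda_{j,n}^2\tau_1}-e^{-\lambda_{j,n}^2\tau_2}\bigr)^2\phi_{j,n}(x)^2\le\tfrac2\pi\sum_{j\ge1}e^{-c j^4\tau_2}\bigl(1-e^{-\lambda_{j,n}^2(\tau_1-\tau_2)}\bigr)^2,
\]
and similarly $\int_\OO|G^n_{\tau_2}(x,z)-G^n_{\tau_2}(y,z)|^2\ud z\le\tfrac2\pi\sum_{j\ge1}e^{-cj^4\tau_2}\bigl(\phi_{j,n}(x)-\phi_{j,n}(y)\bigr)^2$. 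Splitting $G^n_{t-r}(x,z)-G^n_{s-r}(y,z)$ into a time increment at $x$ and a space increment at time $s-r$, bounding $1-e^{-y}\le\min(1,y)\le y^{3\alpha/8}$ and using $(b)$, and then integrating over $r\in(0,s)$ with $(a)$ and $(d)$ applied with exponents $a=3\alpha$ (time part) and $a=2$ (space part), the resulting $r$-integrals $\int_0^s(s-r)^{-(3\alpha+1)/4}\ud r$ and $\int_0^s(s-r)^{-3/4}\ud r$ are finite precisely because $\alpha<1$, which yields the first inequality. The second is immediate: $\int_s^t\int_\OO|G^n_{t-r}(x,z)|^2\ud z\,\ud r\le\tfrac2\pi\int_0^{t-s}\sum_{j\ge1}e^{-cj^4 u}\,\ud u\le C\int_0^{t-s}u^{-1/4}\ud u=C(t-s)^{3/4}$.

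For the two estimates on $\Delta_nG^n$ the only new point is that the norm is $L^1(\OO)$, so I would pass through Cauchy--Schwarz, $\int_\OO|\Delta_nG^n_\tau(x,z)|\ud z\le\sqrt\pi\,\|\Delta_nG^n_\tau(x,\cdot)\|_{L^2(\OO)}$, and exploit $\Delta_n\phi_j(\kappa_n(\cdot))=\lambda_{j,n}\phi_j(\kappa_n(\cdot))$ so that Parseval gives $\|\Delta_nG^n_\tau(x,\cdot)\|_{L^2(\OO)}^2=\sum_j\lambda_{j,n}^2e^{-2\lambda_{j,n}^2\tau}\phi_{j,n}(x)^2\le\tfrac2\pi\sum_{j\ge1}j^4e^{-cj^4\tau}\le C\tau^{-5/4}$. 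Hence $\int_s^t\int_\OO|\Delta_nG^n_{t-r}(x,z)|\ud z\,\ud r\le C\int_0^{t-s}u^{-5/8}\ud u=C(t-s)^{3/8}\le C_\alpha(t-s)^{3\alpha/8}$, the fourth inequality. For the third I would again split into a time increment at $x$ and a space increment at time $s-r$, apply Cauchy--Schwarz and Parseval, bound $|e^{-\lambda_{j,n}^2(t-r)}-e^{-\lambda_{j,n}^2(s-r)}|\le e^{-\lambda_{j,n}^2(s-r)}\bigl(j^4(t-s)\bigr)^{3\alpha/8}$ in the time part and use $(b)$ in the space part, invoke $(d)$ with $a=4+3\alpha$ and $a=6$ respectively, and integrate the resulting bounds $C_\alpha(t-s)^{3\alpha/8}(s-r)^{-(5+3\alpha)/8}$ and $C|x-y|(s-r)^{-7/8}$ over $r\in(0,s)$; the first exponent is $<1$ exactly because $\alpha<1$ and the second always is, which produces $C_\alpha(|x-y|+|t-s|^{3\alpha/8})$.

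I expect the treatment of $\Delta_nG^n$ to be the main obstacle. In the continuous case (Lemma~\ref{Greg}, following \cite{CC01,CM04}) the corresponding $L^1$ bound rests on the explicit Gaussian estimate \eqref{DGtxy} for the fourth-order kernel, but no such pointwise bound for the discrete kernel has been prepared here; the Cauchy--Schwarz reduction to an $L^2(\OO)$ estimate circumvents this, at the harmless price of a worse-but-still-integrable time singularity $u^{-5/8}$ in place of $u^{-1/2}$. Beyond that, the only delicate bookkeeping is to keep all constants free of $n$ — which is ensured by $(a)$--$(c)$ — and to take the interpolation exponent equal to $3\alpha/8$ so that every $r$-integral converges, which is precisely where the hypothesis $\alpha\in(0,1)$ is used.
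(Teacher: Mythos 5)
Your proof is correct, and it takes essentially the route the paper intends: the paper omits the proof of Lemma \ref{Gn-regularity} as ``analogous to that of Lemma \ref{Greg}'', but its discrete-kernel analysis throughout (e.g.\ the orthonormality of $\{\phi_j\circ\kappa_n\}$, the Lipschitz bound $\vert\phi_{j,n}(x)-\phi_{j,n}(y)\vert\le\sqrt{2/\pi}\,j\vert x-y\vert$, the bounds \eqref{ex<x}--\eqref{1-ex}, and the Cauchy--Schwarz reduction of the $L^1$ estimates for $\Delta_nG^n$ to $L^2$ estimates) is exactly the spectral argument you carry out. All of your exponent bookkeeping checks out, and your observation that the discrete case cannot borrow the Gaussian pointwise bound \eqref{DGtxy} and must instead pay a slightly worse (but still integrable) singularity through Cauchy--Schwarz is precisely the point of the paper's own treatment.
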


\subsection{Fully discrete FDM}
We utilize the implicit Euler method with the time stepsize $\tau=T/m$ to further discretize \eqref{NCH} and then obtain the  fully discrete numerical method \eqref{eq:Ui} with $U^0=U(0)$. In order to illustrate that \eqref{eq:Ui} is uniquely solvable,
 by introducing $Z^{i}:=(-A_n)^{-\frac12}U^i$, one can see that \eqref{eq:Ui} is equivalent to
\begin{align}\label{eq:Z}
Z^{i+1}-Z^{i}=\tau A_n^2Z^{i+1} + \tau\tilde{b}(Z^{i+1})+\tilde\sigma(Z^i)( \beta_{t_{i+1}}-\beta_{t_i}),
\end{align}
where $\tilde{b}$ is one-sided Lipschitz continuous and $\tilde \sigma$ is globally Lipschitz continuous (see the proof of Theorem \ref{Density-n} for the expressions of $\tilde{b}$ and $\tilde \sigma$). By means of \cite[Lemma 3.1]{MS13}, we obtain the unique solvability of \eqref{eq:Z}, which implies that \eqref{eq:Ui} is uniquely solvable. 
In virtue of \eqref{eq:Ui}, we have
\begin{align*}
U^{i}
&=(I+\tau A_n^2)^{-i}U^0+\tau\sum_{l=0}^{i-1}(I+\tau A_n^2)^{-(i-l)}A_n F_n(U^{l+1})\\
&\quad\ +\sum_{l=0}^{i-1}\sqrt{n/\pi}(I+\tau A_n^2)^{-(i-l)}\Sigma_n(U^l)(\beta_{t_{l+1}}-\beta_{t_{l}}),\quad i\in \mathbb{Z}_{m}^0,
\end{align*}
where $\sum_{l=0}^{-1}$ is viewed as $0$ by convention.
Denote $\eta_{\tau}(t):=\tau\lfloor\frac{t}{\tau}\rfloor$, i.e., $\eta_{\tau}(t)=t_i$ for $t\in[t_i,t_{i+1})$.
Notice that for any $i\in\mathbb{Z}_m^0$, $U^i=(U^i_1,\ldots,U^i_{n-1})^\top$ is the temporal numerical approximation of the spatial semi-discrete numerical solution $U(t_i)=(u^n(t_i,h),\ldots,u^n(t_i,(n-1)h))^\top$. Hence we denote
$u^{n,\tau}(t_i,kh):=U^i_k$ for $k\in\mathbb{Z}_{n-1}$ and $u^{n,\tau}(t_i,kh):=0$ for $k\in\{0,n\}$,
in view of the DBCs.
In addition, we define 
 $u^{n,\tau}(t_i,x):=\Pi_n(u^{n,\tau}(t_i,\cdot))(x)$ as
 the fully discrete numerical solution of $u(t_i,x)$ for every $i\in\mathbb{Z}_m^0$ and $x\in\mathcal{O}$. 
Introduce the fully discrete Green function
\begin{align*}
G^{n,\tau}_t(x,y):=\sum_{j=1}^{n-1}(1+\tau\lambda_{j,n}^2)^{-\lfloor\frac{t}{\tau}\rfloor}\phi_{j,n}(x)\phi_j(\kappa_n(y)),
\end{align*}
and then by \eqref{Deltaw},
\begin{align*}
\Delta_nG^{n,\tau}_t(x,y)=\sum_{j=1}^{n-1}\lambda_{j,n}(1+\tau\lambda_{j,n}^2)^{-\lfloor\frac{t}{\tau}\rfloor}\phi_{j,n}(x)\phi_j(\kappa_n(y)).
\end{align*}
Analogously to \eqref{unR0}, one has that for $i\in\mathbb{Z}_m^0$ and $x\in\mathcal{O}$,
\begin{align}\label{eq:untau}
u^{n,\tau}(t_i,x)&=\int_{\mathcal O}G^{n,\tau}_{t_i}(x,y)u_0(\kappa_n(y))\mathrm{d} y\\\notag
&\quad +\int_0^{t_i}\int_{\mathcal O}\Delta_nG^{n,\tau}_{t_i-s+\tau}(x,y)f(u^{n,\tau}(\eta_\tau(s)+\tau,\kappa_n(y)))\mathrm{d} y\mathrm{d} s\\\notag
&\quad +\int_0^{t_i}\int_{\mathcal O}G^{n,\tau}_{t_i-s+\tau}(x,y)\sigma(u^{n,\tau}(\eta_\tau(s),\kappa_n(y)))W(\mathrm{d} s,\mathrm{d} y).
\end{align}
By the polygonal interpolation in time,  we define $\{u^{n,\tau}(t,x)\}_{(t,x)\in[0,T]\times\OO}$ by 
\begin{gather*}
	u^{n,\tau}(t,x):=u^{n,\tau}(t_{i},x)+\frac{t-t_{i}}{\tau}\left(u^{n,\tau}(t_{i+1},x)-u^{n,\tau}(t_{i},x)\right),~ t\in[t_{i},t_{i+1}],i\in\mathbb{Z}_{m-1}^0.
\end{gather*}

For $R\ge1$, let $K_R:\mathbb{R}\rightarrow\mathbb{R}$ be an even smooth cut-off function satisfying
\begin{align}\label{KR}
K_R(x)=1,\quad \text{if}~\vert x\vert <R;\qquad K_R(x)=0,\quad \text{if}~\vert x\vert \ge R+1,
\end{align}
and $\vert K_R\vert \le 1$, $\vert K_R^\prime\vert \le 2$. We are now ready to present the existence of the density of the fully discrete numerical solution.

\begin{theorem}\label{Density-n-dis}
		Let $\sigma$ be continuously differentiable and $\sigma(x)\neq0$  for any $x\in\mathbb{R}$. Then for sufficiently small $\tau>0$,
		the law of $\{U^i\}_{i\in\mathbb{Z}_m}$ is absolutely continuous with respect to the Lebesgue measure on $\mathbb{R}^{n-1}$.
	\end{theorem}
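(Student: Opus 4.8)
The plan is to mirror the argument used for the spatial semi-discrete case in Theorem~\ref{Density-n}, applying an absolute-continuity criterion for discrete-time systems (such as \cite[Lemma 3.1]{MS13} combined with an appropriate discrete-time version of the Bouleau--Hirsch / induction-on-steps argument) to the transformed recursion \eqref{eq:Z}. First I would fix $\tau>0$ small enough that $I-\tau A_n^2$ is invertible and that \eqref{eq:Z} is uniquely solvable, so that $Z^{i+1}$ is a well-defined measurable function $\Psi_i(Z^i,\beta_{t_{i+1}}-\beta_{t_i})$ of the previous state and the fresh Brownian increment. Since $U^i=(-A_n)^{1/2}Z^i$ and $(-A_n)^{1/2}$ is a fixed invertible linear map, absolute continuity of the law of $\{U^i\}_{i\in\mathbb{Z}_m}$ on $\mathbb{R}^{n-1}$ is equivalent to that of $\{Z^i\}_{i\in\mathbb{Z}_m}$, so it suffices to work with the $Z$-recursion.

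The core step is an induction on $i$: assuming the law of $Z^i$ is absolutely continuous (the base case $Z^0$ being deterministic requires a small separate argument, handled by the single fresh increment at the first step), one shows the conditional law of $Z^{i+1}$ given $\mathscr{F}_{t_i}$ is a.c.\ on $\{Z^i$-measurable$\}$ events, then integrates. For the conditional step I would use the implicit relation $Z^{i+1}-Z^i=\tau A_n^2 Z^{i+1}+\tau\tilde b(Z^{i+1})+\tilde\sigma(Z^i)(\beta_{t_{i+1}}-\beta_{t_i})$: conditionally on $\mathscr{F}_{t_i}$, the increment $\Delta\beta_i:=\beta_{t_{i+1}}-\beta_{t_i}$ is a nondegenerate Gaussian vector (covariance $\tau I$), and because $\sigma(\cdot)\neq0$ the diagonal matrix $\Sigma_n(U^i)$ is invertible, hence $\tilde\sigma(Z^i)=\sqrt{n/\pi}\,(-A_n)^{-1/2}\Sigma_n((-A_n)^{1/2}Z^i)(-A_n)^{1/2}$ wait---more simply, $\tilde\sigma(Z^i)=\sqrt{n/\pi}\,(-A_n)^{-1/2}\Sigma_n(U^i)$ composed appropriately is invertible. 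One then solves for $\Delta\beta_i$ in terms of $Z^{i+1}$: writing $g(z):=z-\tau A_n^2 z-\tau\tilde b(z)$, we have $\tilde\sigma(Z^i)\Delta\beta_i=g(Z^{i+1})-Z^i$, and $g$ is a $C^1$ diffeomorphism of $\mathbb{R}^{n-1}$ for small $\tau$ (its Jacobian $I-\tau A_n^2-\tau D\tilde b$ is invertible since $\tilde b$ is one-sided Lipschitz, so $\langle x, (D\tilde b) x\rangle\le C\|x\|^2$, giving $\langle x,(I-\tau A_n^2-\tau D\tilde b)x\rangle\ge(1-C\tau)\|x\|^2>0$). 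Therefore $Z^{i+1}=g^{-1}(Z^i+\tilde\sigma(Z^i)\Delta\beta_i)$ is, conditionally on $\mathscr{F}_{t_i}$, an invertible $C^1$-image of the nondegenerate Gaussian $\Delta\beta_i$, hence conditionally a.c.; combined with the inductive hypothesis on $Z^i$ and a disintegration argument this yields absolute continuity of $(Z^i,Z^{i+1})$, and iterating gives the joint law of $\{Z^i\}_{i\in\mathbb{Z}_m}$.

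The main obstacle I anticipate is the joint (as opposed to marginal) absolute continuity: showing the law of the whole vector $(Z^1,\ldots,Z^m)$ on $(\mathbb{R}^{n-1})^m$ is a.c., not merely each $Z^i$ individually. This is where the change-of-variables structure is essential---the map $(\Delta\beta_0,\ldots,\Delta\beta_{m-1})\mapsto(Z^1,\ldots,Z^m)$ must be shown to be a $C^1$ diffeomorphism (or at least a.e.\ locally invertible with a.e.-nonvanishing Jacobian) from $(\mathbb{R}^{n-1})^m$ to its range, using the triangular/causal structure of the recursion: $Z^{i+1}$ depends on $\Delta\beta_i$ through the invertible map above and on earlier increments only through $Z^i$. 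The smallness of $\tau$ enters precisely to guarantee invertibility of $g$ at every step. I would either invoke \cite[Theorem 5.2]{IRS19} or \cite[Lemma 3.1]{MS13} in a form adapted to the fully discrete setting, or give the change-of-variables argument directly; the hypotheses needed (one-sided Lipschitz $\tilde b$, globally Lipschitz and nondegenerate $\tilde\sigma$, $C^1$ regularity of both) are exactly those already verified in the proof of Theorem~\ref{Density-n}.
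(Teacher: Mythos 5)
Your argument is correct, but it takes a genuinely different route from the paper. The paper's proof is Malliavin-analytic: it localizes the drift to $\tilde b_R=\tilde b\,K_R(\|\cdot\|)$ so that each component of $Z^i$ is (locally) in $\mathbb{D}^{1,2}$, invokes the Bouleau--Hirsch criterion \cite[Theorem 2.1.2]{DN06}, and verifies a.s.\ invertibility of the Malliavin covariance matrix $\gamma_i=\int_0^T D_rZ^i(D_rZ^i)^\top\,\ud r$ by differentiating \eqref{eq:Z} to get $D_rZ^i=\bigl(I-\tau(A_n^2+\nabla\tilde b(Z^i))\bigr)^{-1}\tilde\sigma(Z^{i-1})$ for $r\in[t_{i-1},t_i)$. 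Your conditional change-of-variables argument reaches the same conclusion more elementarily: conditionally on $\mathscr F_{t_i}$, $Z^{i+1}=g^{-1}\bigl(Z^i+\tilde\sigma(Z^i)\Delta\beta_i\bigr)$ is the image of a nondegenerate Gaussian under a $C^1$ diffeomorphism, hence has a conditional density, and integrating over $\omega$ gives absolute continuity of the marginal; this needs neither Malliavin calculus, nor the localization step used to justify $\mathbb{D}^{1,2}$-membership, nor in fact any induction (conditioning on the last increment suffices for each $i$). Note that the two proofs pivot on the identical algebraic fact: your Jacobian $Dg=I-\tau A_n^2-\tau D\tilde b$ is exactly the matrix whose invertibility (for $\tau$ small, via the one-sided Lipschitz bound $y^\top(A_n^2+\nabla\tilde b)y\le K_n\|y\|^2$) the paper establishes, and the nondegeneracy of $\tilde\sigma(Z^i)=\sqrt{n/\pi}\,(-A_n)^{-1/2}\Sigma_n(U^i)$ plays the same role in both. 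Two small remarks: (i) the theorem, as the paper's own proof makes explicit, asserts absolute continuity of each marginal $U^i$ on $\mathbb{R}^{n-1}$, so the joint-law issue you flag as the main obstacle is not actually required (though your triangular change of variables would deliver that stronger statement); (ii) for $g$ to be a \emph{global} diffeomorphism you should appeal to the strong monotonicity $\langle g(x)-g(y),x-y\rangle\ge(1-C\tau)\|x-y\|^2$ (which is the content of \cite[Lemma 3.1]{MS13}, already cited for unique solvability) together with the inverse function theorem, since pointwise invertibility of the Jacobian alone does not give global injectivity.
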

	\begin{proof}
		As in the proof of Theorem \ref{Density-n}, we only need to show that for any $i\in \mathbb{Z}_m$, the law of $Z^i=(-A_n)^{-\frac12}U^i$ is absolutely continuous. 
		For every $R\ge1$, define $\{Z_R^{i}\}_{i\in\mathbb{Z}_m}$ recursively by
		\begin{align*}
			Z_R^{i}-Z_R^{i-1}=\tau A_n^2Z_R^{i} + \tau \tilde{b}_R(Z_R^{i})+\tilde\sigma(Z_R^{i-1})( \beta_{t_{i}}-\beta_{t_{i-1}}),\quad i\in\mathbb{Z}_m,
		\end{align*}
		and $Z_R^{0}=Z^0$. Here 
	 $\tilde b_R(x)=\tilde b(x)K_R(\|x\|)$ is globally Lipschitz continuous. 
		
		Fix $i\in\mathbb{Z}_m$. Then  $Z_R^{i}=Z^{i}$ on the set $\{\omega\in\Omega:\sup_{i\in\mathbb{Z}_m^0}\|Z^{i}\|\le R\}$ whose probability converges to $1$ as $R\to\infty$. By means of the globally Lipschitz continuity of $\tilde b_R$ and $\tilde\sigma$, one can prove that each component of $Z_R^{i}$ belongs to $\mathbb{D}^{1,2}$.
		Together with \cite[Theorem 2.1.2]{DN06}, once we prove that the Malliavin covariance matrix $\gamma_{i}:=\int_0^{T} D_rZ^{i}(D_rZ^{i})^\top \ud r$
		of $Z^{i}$ is invertible a.s., it will follow that the law of $Z^i$ is absolutely continuous (see Appendix \ref{secA1} for the more details about the Malliavin derivative $D$ and the Malliavin--Sobolev space $\mathbb{D}^{1,2}$).
		Taking the Malliavin derivative on both sides of \eqref{eq:Z}, it holds that
		for a.s.\ $r\in[t_{i-1},t_i)$,
		\begin{align*}
			D_rZ^{i}= \tau(A_n^2+\nabla\tilde{b}(Z^{i}))D_rZ^{i}+\tilde\sigma(Z^{i-1}).
		\end{align*}
		By the one-sided Lipschitz continuity of $\tilde b:\mathbb{R}^{n-1}\to\mathbb{R}^{n-1}$, there exists some constant $K_n$ depending on $n$ such that $y^\top(A_n^2+\nabla\tilde{b}(Z^{i}))y\le K_n\|y\|^2.$
		Hence for any $\tau\in(0,1/K_n)$,
		\begin{align*}
			\|y\|^2-\tau y^\top(A_n^2+\nabla\tilde{b}(Z^{i}))y\ge (1-\tau K_n)\|y\|^2\quad \forall~y\in\mathbb{R}^{n-1}.
		\end{align*}
		This implies that the matrix $I-\tau(A_n^2+\nabla\tilde{b}(Z^{i}))$ is invertible for sufficiently small $\tau>0$. Hence it follows from the invertible of $\tilde \sigma$ that  for any $y\in \mathbb{R}^{n-1}$ with $\|y\|\neq0$,
		$$y^\top \gamma_{i} y\ge \int_{t_{i-1}}^{t_{i}}y^\top D_rZ^{i}(D_rZ^{i})^\top y\ud r>0,$$
		which means that the Malliavin covariance matrix $\gamma_{i}$ of $Z^{i}$ is invertible a.s. 
	\end{proof}

\section{Discrete $H^1$-regularity}\label{S4}
We introduce the discrete $L^2$-inner product and the discrete $L^p$-norm ($1\le p\le\infty$), respectively, as
\begin{align*}
\langle a,b\rangle_{l^2_n}=\frac{\pi}{n}\sum_{i=1}^{n-1}a_ib_i,
\qquad\Vert a\Vert _{l^p_n}=\begin{cases}\Big(\frac{\pi}{n}\sum\limits_{i=1}^{n-1}\vert a_i\vert ^p\Big)^{\frac{1}{p}},\quad&1\le p<\infty,\\
\sup\limits_{1\le j\le n-1}\vert a_i\vert ,\quad&p=\infty,
\end{cases}
\end{align*}
for vectors $a=(a_1,\ldots,a_{n-1})^\top$ and $b=(b_1,\ldots,b_{n-1})^\top$.

This section presents the discrete $H^1$-regularity of the  numerical solutions, which is crucial for the strong convergence analysis of the numerical solutions in Sections \ref{S5}--\ref{S4-5}. We begin with the discrete versions of embedding and interpolation theorems.

\begin{lemma}\label{lem:intepolation}
Let $2\le p\le\infty$, $n\ge2$, and $t>0$. Then for any $a\in l_n^\infty$,
\begin{align}\label{l2H1}
\Vert a\Vert _{l^\infty_n}&\le \sqrt{\pi}\Vert (-A_n)^{\frac{1}{2}}a\Vert _{l^2_n},\\\label{l6h2}
\Vert a\Vert _{l_n^6}&\le C\Vert A_na\Vert _{l^2_n}^{\frac{1}{6}}\Vert a\Vert ^{\frac{5}{6}}_{l_n^2},\\\label{interpolation}
\Vert e^{-A_n^2t}a\Vert _{l_n^p}&\le Ct^{-\frac{1}{4}(\frac{1}{2}-\frac{1}{p})}\Vert a\Vert _{l_n^2},
\end{align}
where $C>0$ is a constant independent of $a$, $n$ and $t>0$.
\end{lemma}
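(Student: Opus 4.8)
The plan is to reduce all three estimates to classical inequalities on the interval $\OO=(0,\pi)$, transferred through the piecewise linear interpolation operator $\Pi_n$, invoking the spectral decomposition of $A_n$ only where a sharp time dependence is needed.

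First I would set up the transfer. Extending $a=(a_1,\dots,a_{n-1})^\top$ by $a_0=a_n=0$ (which is exactly what the boundary rows of $A_n$ encode), the interpolant $\Pi_n a$ lies in $H^1_0(\OO)$. Three facts are elementary: since $\Pi_n a$ is piecewise linear its maximum is attained at a node, so $\|\Pi_n a\|_{L^\infty(\OO)}=\|a\|_{l^\infty_n}$; a subinterval-by-subinterval computation gives $\|\Pi_n a\|_{L^2(\OO)}\le\|a\|_{l^2_n}$; and a discrete summation by parts using $a_0=a_n=0$ yields the identity
\[
\|(-A_n)^{\frac12}a\|_{l^2_n}^2=\langle -A_na,a\rangle_{l^2_n}=\tfrac1h\sum_{i=0}^{n-1}(a_{i+1}-a_i)^2=\|(\Pi_n a)'\|_{L^2(\OO)}^2 .
\]
Then \eqref{l2H1} is the Sobolev embedding $\|g\|_{L^\infty(\OO)}\le\sqrt\pi\,\|g'\|_{L^2(\OO)}$ on $H^1_0(\OO)$ (from $g(x)=\int_0^xg'$ and Cauchy--Schwarz) applied to $g=\Pi_n a$. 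Applying instead the Agmon inequality $\|g\|_{L^\infty(\OO)}\le\sqrt2\,\|g\|_{L^2(\OO)}^{1/2}\|g'\|_{L^2(\OO)}^{1/2}$ (from $g(x)^2=2\int_0^xgg'$) gives the discrete Agmon inequality
\[
\|a\|_{l^\infty_n}\le\sqrt2\,\|a\|_{l^2_n}^{1/2}\|(-A_n)^{\frac12}a\|_{l^2_n}^{1/2},
\]
which will be the workhorse for the other two bounds.

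For \eqref{l6h2} I would chain the elementary H\"older-type bound $\|a\|_{l^6_n}\le\|a\|_{l^\infty_n}^{2/3}\|a\|_{l^2_n}^{1/3}$ with the discrete Agmon inequality and the spectral Cauchy--Schwarz inequality $\|(-A_n)^{\frac12}a\|_{l^2_n}\le\|A_na\|_{l^2_n}^{1/2}\|a\|_{l^2_n}^{1/2}$ (immediate from expanding $a$ in the orthonormal eigenbasis $\{e_j\}$ of $A_n$): these give $\|a\|_{l^\infty_n}\le C\|A_na\|_{l^2_n}^{1/4}\|a\|_{l^2_n}^{3/4}$, and collecting exponents yields \eqref{l6h2}. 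For \eqref{interpolation} I would treat $p=\infty$ first: expanding $a=\sum_j\langle a,e_j\rangle e_j$, using $|e_j(k)|\le\sqrt{2/n}$, Cauchy--Schwarz, the eigenvalue lower bound $|\lambda_{j,n}|\ge\tfrac{4}{\pi^2}j^2$, and the standard estimate $\sum_{j\ge1}e^{-cj^4t}\le Ct^{-1/4}$ for $t>0$, one obtains $\|e^{-A_n^2t}a\|_{l^\infty_n}\le Ct^{-1/8}\|a\|_{l^2_n}$; for general $p\in[2,\infty]$ I would combine this with the contractivity $\|e^{-A_n^2t}a\|_{l^2_n}\le\|a\|_{l^2_n}$ and the interpolation bound $\|b\|_{l^p_n}\le\|b\|_{l^\infty_n}^{1-2/p}\|b\|_{l^2_n}^{2/p}$ applied to $b=e^{-A_n^2t}a$, noting that $\tfrac18(1-\tfrac2p)=\tfrac14(\tfrac12-\tfrac1p)$.

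The routine calculations (the summation by parts, the subinterval estimates, the heat-kernel-type series) are not where the difficulty lies; the point that must be gotten right is the choice of intermediate inequalities. Using the plain embedding $H^1_0\hookrightarrow L^\infty$ and the crude smoothing bound $\|(-A_n)^{1/2}e^{-A_n^2t}\|\lesssim t^{-1/4}$ in place of the interpolation (Agmon-type) inequalities would yield only $\|e^{-A_n^2t}a\|_{l^p_n}\le Ct^{-\frac14(1-\frac2p)}\|a\|_{l^2_n}$ in \eqref{interpolation}, off by a factor of two in the exponent, and similarly a suboptimal power in \eqref{l6h2}; so the main (mild) obstacle is recognizing that the sharp exponents $\tfrac16$ in \eqref{l6h2} and $\tfrac14(\tfrac12-\tfrac1p)$ in \eqref{interpolation} force one to balance $\|(-A_n)^{\frac12}\cdot\|_{l^2_n}$ against $\|\cdot\|_{l^2_n}$ at each step rather than spend a single crude embedding.
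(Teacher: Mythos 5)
Your proof is correct and follows essentially the same route as the paper's: the telescoping/fundamental-theorem identity behind \eqref{l2H1}, the discrete Agmon inequality $\Vert a\Vert_{l^\infty_n}\le C\Vert a\Vert_{l^2_n}^{1/2}\Vert(-A_n)^{\frac12}a\Vert_{l^2_n}^{1/2}$ combined with $\Vert(-A_n)^{\frac12}a\Vert_{l^2_n}^2\le\Vert A_na\Vert_{l^2_n}\Vert a\Vert_{l^2_n}$ for \eqref{l6h2}, and the spectral bound $\sum_j e^{-2\lambda_{j,n}^2t}\le Ct^{-1/4}$ giving the $p=\infty$ endpoint of \eqref{interpolation}. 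The only differences are presentational: you transfer the discrete Sobolev and Agmon inequalities through the piecewise-linear interpolant $\Pi_n$ instead of computing directly with the discrete sums, and you replace the paper's appeal to the Riesz--Thorin theorem for $2<p<\infty$ by the elementary bound $\Vert b\Vert_{l^p_n}\le\Vert b\Vert_{l^\infty_n}^{1-2/p}\Vert b\Vert_{l^2_n}^{2/p}$, both of which are valid.
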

\begin{proof}

Let $a=(a_1,\ldots,a_{n-1})^\top$ and $a_0=a_n=0.$

($i$)~ It follows from the definition of $A_n$ that
\begin{align}\label{n2B}
\Vert (-A_n)^{\frac{1}{2}}a\Vert ^2_{l^2_n}=\langle -A_na,a\rangle_{l^2_n}=\frac{n}{\pi}\sum_{j=1}^{n}\vert a_{j}-a_{j-1}\vert ^2.
\end{align}
Hence, by the triangle and Cauchy--Schwarz inequalities, for  $k\in\mathbb{Z}_{n-1},$
\begin{align*}
\vert a_k\vert \le\sum_{j=1}^{k}\vert a_j-a_{j-1}\vert \le\Big(\sum_{j=1}^{k}\vert a_j-a_{j-1}\vert ^2\frac{n}{\pi}\Big)^{\frac{1}{2}}\Big(\sum_{j=1}^{k}\frac{\pi}{n}\Big)^{\frac{1}{2}}\le\sqrt{\pi}\Vert (-A_n)^{\frac{1}{2}}a\Vert _{l^2_n}.
\end{align*}

($ii$) By the Cauchy--Schwarz inequality and \eqref{n2B}, 
\begin{align*}
\vert a_k^2\vert &\le\sum_{j=1}^{k}\vert a_j^2-a_{j-1}^2\vert \le\Big(\frac{n}{\pi}\sum_{j=1}^{k}\vert a_j-a_{j-1}\vert ^2\Big)^{\frac{1}{2}}\Big(\frac{\pi}{n}\sum_{j=1}^{k}\vert a_j+a_{j-1}\vert ^2\Big)^{\frac{1}{2}}\\
&\le\Vert (-A_n)^{\frac{1}{2}}a\Vert _{l^2_n}\Big(\frac{4\pi}{n}\sum_{j=1}^{n-1}\vert a_j\vert ^2\Big)^{\frac{1}{2}}=2\Vert (-A_n)^{\frac{1}{2}}a\Vert _{l^2_n}\Vert a\Vert _{l_n^2}\quad\forall~ k\in\mathbb{Z}_{n-1},
\end{align*}
from which we deduce $\Vert a\Vert _{l_n^\infty}\le C\Vert (-A_n)^{\frac{1}{2}}a\Vert ^{\frac{1}{2}}_{l^2_n}\Vert a\Vert ^{\frac{1}{2}}_{l_n^2}$. This gives
\begin{align*}
\Vert a\Vert _{l_n^6}^6\le\frac{\pi}{n}\sum_{j=1}^{n-1}\vert a_j\vert ^2\Vert a\Vert _{l_n^\infty}^4\le C\Vert (-A_n)^{\frac{1}{2}}a\Vert _{l^2_n}^2\Vert a\Vert ^4_{l_n^2},
\end{align*}
which together with $\Vert (-A_n)^{\frac{1}{2}}a\Vert _{l^2_n}^2=\langle - A_na,a\rangle_{l^2_n}\le\Vert A_na\Vert _{l^2_n}\Vert a\Vert _{l^2_n}$ yields \eqref{l6h2}.

(iii)~
Since $\lambda_{j,n}\le -\frac{4}{\pi^2}j^2$, $\sum_{j=1}^{n-1}e^{-2\lambda_{j,n}^2t}\le t^{-\frac{1}{4}}\int_0^\infty e^{-\frac{32}{\pi^4}z^4}\ud z=:C_0^2t^{-\frac{1}{4}},$
where $C_0^2:=\int_0^\infty \exp(-\frac{32}{\pi^4}z^4)\ud z<\infty$. By the Cauchy--Schwarz inequality and \eqref{ejk},
\begin{align*}
\Vert e^{-A_n^2t}a\Vert _{l_n^\infty}=&\sup_{1\le k\le n-1}\Big\vert \sum_{j=1}^{n-1}e^{-\lambda_{j,n}^2t}\langle a,e_j\rangle  e_j(k)\Big\vert \\
&\le\Big(\frac{\pi}{n}\sum_{j=1}^{n-1}\vert \langle a,e_j\rangle\vert ^2\Big)^{\frac{1}{2}}\Big(\frac{n}{\pi}\sum_{j=1}^{n-1}e^{-2\lambda_{j,n}^2t}\frac{2}{n}\Big)^{\frac{1}{2}}\le \Vert a\Vert _{l_n^2}C_0t^{-\frac{1}{8}},
\end{align*}
where in the last step, we have used the fact that $\{e_j\}_{j=1}^{n-1}$ forms an orthonormal basis of $\mathbb{R}^{n-1}$.
This proves \eqref{interpolation} for $p=\infty$. Besides,
the Parseval identity leads to
\begin{align*}
\Vert e^{-A_n^2t}a\Vert _{l_n^2}^2=\frac{\pi}{n}\Vert e^{-A_n^2t}a\Vert  ^2=\frac{\pi}{n}\sum_{j=1}^{n-1}e^{-2\lambda_{j,n}^2t}\vert \langle a,e_j\rangle \vert ^2\le\frac{\pi}{n}\sum_{j=1}^{n-1}\vert \langle a,e_j\rangle \vert ^2=\Vert a\Vert _{l_n^2}^2,
\end{align*}
which implies \eqref{interpolation} for $p=2$. By the Riesz--Thorin interpolation theorem (see e.g., \cite[Theorem 1.3.4]{GL14}), we obtain
\eqref{interpolation} for $p\in(2,\infty)$.
\end{proof}

\subsection{Spatial FDM}\label{S3.2}
Introduce $O(t):=\sqrt{\frac{n}{\pi}}\int_0^t\exp(-A_n^2(t-s))\Sigma_n(U(s))\ud \beta_s$ and $V(t):=U(t)-O(t)$ for $t\in[0,T]$.
 Then
$V$ solves
\begin{align}\label{vn}
\dot V(t)=-A_n^2V(t)+A_nF_n(U(t)),\qquad V(0)=U(0),
\end{align}
where $\cdot$ denotes the derivative with respect to time. In this part, we give the $H^1$-regularity estimate of the spatial semi-discrete numerical solution $U(t)$ by dealing with $O(t)$ and $V(t)$ separately.

By the elementary identity
\begin{align}\label{eq:convolution}
\int_s^t(t-r)^{\alpha-1}(r-s)^{-\alpha}\ud r=\frac{\pi}{\sin(\pi\alpha)}\quad\forall~0\le s\le r\le t,\quad\text{with}~\alpha\in(0,1),
\end{align} 
 we shall use the factorization method to write
\begin{align*}
(-A_n)^{\frac{1}{2}}O(t)=\frac{\sin(\pi\alpha)}{\pi}\int_0^t\exp(-A_n^2(t-r))(t-r)^{\alpha-1}Y(r)\ud r,
\end{align*}
where
$Y(r):=\sqrt{n/\pi}\int_0^r\exp(-A_n^2(r-s))(-A_n)^{\frac{1}{2}}(r-s)^{-\alpha}\Sigma_n(U(s))\ud \beta_s$. Then by $\|\exp(-A_n^2(t-r))\|_{2}\le 1$ and the H\"older inequality, 
for any $p>\frac{1}{\alpha}$,
\begin{align*}\|(-A_n)^{\frac{1}{2}}O(t)\|^p\le C(\alpha,T,p)\int_0^t\|Y(r)\|^p\ud r\quad\forall~t\in[0,T].
\end{align*}
Hereafter, $\Vert \cdot\Vert _{2}$ and $\Vert \cdot\Vert _{\mathrm F}$ denote the Euclidean and Frobenius norms of matrices, respectively.

Notice that  $\Vert \Sigma_n(U(s))e_l\Vert ^2=\sum_{k=1}^{n-1}\vert \sigma(U_k(s))e_l(k)\vert ^2\le C,$
thanks to the boundedness of $\sigma$
and  $\vert e_l(k)\vert \le \sqrt{2/n}$ for all $l,k\in\mathbb{Z}_{n-1}$. 
For any $\alpha_1>0$,
\begin{align}\label{ex<x}
e^{-x}\le C_{\alpha_1}x^{-\alpha_1}\quad\forall~ x >0.
\end{align}
Hence, by the symmetry of $\Sigma_n(U(s))$ and \eqref{ex<x} with $\alpha_1=\frac{3}{4}+\epsilon$, $0<\epsilon\ll 1$, 
\begin{align}\label{ASigmaUF}
&\quad\Vert (-A_n)^{\frac{1}{2}}\exp(-A_n^2(r-s))\Sigma_n(U(s))\Vert ^2_{\mathrm F}\\\notag
&=\sum_{k,l=1}^{n-1}\langle(-A_n)^{\frac{1}{2}}\exp(-A_n^2(r-s))\Sigma_n(U(s))e_k,e_l\rangle^2\\\notag
&=\sum_{l=1}^{n-1}(-\lambda_{l,n})\exp(-2\lambda^2_{l,n}(r-s))\Vert \Sigma_n(U(s))e_l\Vert ^2\\\notag
&\le C_\epsilon\sum_{l=1}^{n-1}(-\lambda_{l,n})^{-\frac{1}{2}-2\epsilon}(r-s)^{-\frac{3}{4}-\epsilon}\le C_\epsilon(r-s)^{-\frac{3}{4}-\epsilon}
\end{align}
for any $0\le s< r\le T$ since
 $-\lambda_{l,n}\ge\frac{4}{\pi^2} l^2$. 
As a consequence, by the Burkholder inequality and choosing $\alpha\in(0,\frac18-\frac\epsilon2)$, we derive that for any $p>\frac{1}{\alpha}$,
\begin{align*}
\E\left[\Vert Y(r)\Vert ^p_{l_n^2}\right]&=\E\bigg[\Big\Vert \int_0^r(-A_n)^{\frac{1}{2}}\exp(-A_n^2(r-s))(r-s)^{-\alpha}\Sigma_n(U(s))\ud \beta_s\Big\Vert ^p\bigg]\\
&\le C(p)\E\bigg[\Big\vert \int_0^r(r-s)^{-2\alpha}\Vert(-A_n)^{\frac{1}{2}}\exp(-A_n^2(r-s))\Sigma_n(U(s))\Vert ^2_{\mathrm F}\ud s\Big\vert ^{\frac{p}{2}}\bigg]\\
&\le C(p,T).
\end{align*}
Therefore,  we have that for any $p\ge1$,
\begin{align}\label{Onl2}
\E\bigg[\sup_{t\in[0,T]}\Vert (-A_n)^{\frac{1}{2}}O(t)\Vert ^p_{l_n^2}\bigg]\le C_{p,T}\int_0^T\E\left[\|Y(r)\|_{l_n^2}^p\right]\ud r \le C(T,p).
\end{align}

Taking \eqref{l2H1} into account, it further yields that for any $p\ge1$,
\begin{align}\label{linfty-w}
\mathbb E\bigg[\sup_{t\in[0,T]}\Vert O(t)\Vert _{l^\infty_n}^p\bigg]
\le C(p,T).
\end{align}
Taking advantage of the special form of $F_n$ and \eqref{linfty-w}, we are able to show that $V$ is bounded in $L^\infty(0,T;L^{p}(\Omega;l_n^2))$ for $p\ge2$. 

\begin{lemma} \label{uniforml2}
Let $u_0\in\mathcal{C}^1(\OO)$ and $q\ge1$. Then for any $t\in[0,T]$,
\begin{align*}
\mathbb E\left[\Vert V(t)\Vert ^{2q}_{l^2_n}\right]+\mathbb E\left[\Big\vert \int_0^t\Vert  A_nV(s)\Vert _{l^2_n}^2\ud s\Big\vert ^q\right]\le C(q,T).
\end{align*}
\end{lemma}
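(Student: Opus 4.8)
The plan is to test the equation \eqref{vn} against $V(t)$ in the discrete $L^2$-inner product and use the dissipativity of $-A_n^2$ together with the special cubic structure of $F_n$. Writing $U(t)=V(t)+O(t)$, we have
\begin{align*}
\frac12\frac{\ud}{\ud t}\Vert V(t)\Vert _{l^2_n}^2+\Vert A_nV(t)\Vert _{l^2_n}^2=\langle A_nF_n(U(t)),V(t)\rangle_{l^2_n}=-\langle F_n(U(t)),(-A_n)V(t)\rangle_{l^2_n}.
\end{align*}
The key algebraic point is that, componentwise, $f(a)=a^3-a$ so that $-\sum_k f(U_k)(-A_nV)_k$ can be split: the $-a$ part is harmless (it contributes a term bounded by $\Vert V\Vert_{l^2_n}\Vert A_nV\Vert_{l^2_n}$, absorbable by Young's inequality), while the cubic part $-\sum_k U_k^3(-A_nV)_k$ should be handled by writing $U_k^3=(V_k+O_k)^3$, expanding, and exploiting that the genuinely dissipative contribution comes from $-\langle V^3,(-A_n)V\rangle_{l^2_n}$. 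Using summation by parts (the discrete analogue of $\int (v^3)' v' = 3\int v^2 (v')^2 \ge 0$), this term has a favorable sign, and the remaining cross terms involving $O(t)$ are lower order in $V$ and can be controlled using the uniform bound \eqref{linfty-w} on $\Vert O(t)\Vert_{l^\infty_n}$ and Young's inequality, at the cost of a polynomial-in-$\Vert O\Vert_{l^\infty_n}$ prefactor times $\Vert V\Vert_{l^2_n}^2$ (plus lower powers).

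Concretely, I expect to arrive at a differential inequality of the shape
\begin{align*}
\frac{\ud}{\ud t}\Vert V(t)\Vert _{l^2_n}^2+\Vert A_nV(t)\Vert _{l^2_n}^2\le C\bigl(1+\Vert O(t)\Vert _{l^\infty_n}^6\bigr)\bigl(1+\Vert V(t)\Vert _{l^2_n}^2\bigr),
\end{align*}
where the negative cubic term has been discarded (after using it to dominate the worst cross terms). Since $u_0\in\mathcal C^1(\OO)$ gives a deterministic bound on $\Vert V(0)\Vert_{l^2_n}=\Vert U(0)\Vert_{l^2_n}$ uniform in $n$, Gronwall's inequality yields $\Vert V(t)\Vert_{l^2_n}^2\le (1+\Vert V(0)\Vert_{l^2_n}^2)\exp\bigl(C\int_0^T(1+\Vert O(s)\Vert_{l^\infty_n}^6)\,\ud s\bigr)$; raising to the power $q$, taking expectations, and invoking \eqref{linfty-w} (with exponent $6q$, say, after Hölder) controls $\E[\Vert V(t)\Vert_{l^2_n}^{2q}]$. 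The bound on $\E[(\int_0^t\Vert A_nV(s)\Vert_{l^2_n}^2\ud s)^q]$ then follows by integrating the differential inequality in time before discarding the $\Vert A_nV\Vert_{l^2_n}^2$ term, moving it to the left, and applying the same Gronwall/Hölder machinery.

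The main obstacle is the careful bookkeeping of the cross terms from expanding $(V_k+O_k)^3$ against $(-A_n)V_k$: one must verify that every term other than the good $-\langle V^3,(-A_n)V\rangle_{l^2_n}$ can be estimated by $\tfrac12\Vert A_nV\Vert_{l^2_n}^2$ plus a constant multiple of $(1+\Vert O\Vert_{l^\infty_n}^6)(1+\Vert V\Vert_{l^2_n}^2)$, using only $\Vert V\Vert_{l^2_n}$ and $\Vert O\Vert_{l^\infty_n}$ on the ``data'' side (not $\Vert A_nV\Vert$ with too large a coefficient). The discrete summation-by-parts identity establishing $\langle V^3,(-A_n)V\rangle_{l^2_n}\ge 0$ — equivalently $\tfrac{n}{\pi}\sum_{j=1}^n (V_j^3-V_{j-1}^3)(V_j-V_{j-1})\ge 0$, which holds termwise since $(b^3-a^3)(b-a)\ge 0$ — is elementary but must be stated explicitly, and here the zero boundary values $V_0=V_n=0$ are used. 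A secondary technical point is justifying the a priori regularity needed to run the energy estimate (that $t\mapsto\Vert V(t)\Vert_{l^2_n}^2$ is absolutely continuous and the chain rule applies), which is immediate since $V$ solves the finite-dimensional ODE \eqref{vn} with locally Lipschitz right-hand side and hence is $C^1$ in time.
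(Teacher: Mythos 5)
There is a genuine gap at the step you yourself flag as ``the main obstacle'': the claimed differential inequality
\begin{align*}
\frac{\ud}{\ud t}\Vert V(t)\Vert _{l^2_n}^2+\Vert A_nV(t)\Vert _{l^2_n}^2\le C\bigl(1+\Vert O(t)\Vert _{l^\infty_n}^6\bigr)\bigl(1+\Vert V(t)\Vert _{l^2_n}^2\bigr)
\end{align*}
is not obtainable from the $L^2$ energy identity alone. Expanding $f(U_k)=(V_k+O_k)^3-(V_k+O_k)$ against $A_nV$, the cross term $3\langle V^2O,A_nV\rangle_{l_n^2}$ is bounded, after Cauchy--Schwarz and Young, by $\tfrac{\epsilon}{4}\Vert A_nV\Vert_{l_n^2}^2+C_\epsilon\Vert O\Vert_{l_n^\infty}^2\Vert V\Vert_{l_n^4}^4$, and the quartic quantity $\Vert V\Vert_{l_n^4}^4$ cannot be dominated by $C(1+\Vert V\Vert_{l_n^2}^2)$ uniformly in $n$. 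The good term $\langle V^3,A_nV\rangle_{l_n^2}\le 0$ does control $-c\Vert V\Vert_{l_n^4}^4$ (via summation by parts and discrete Poincar\'e), but only with a fixed constant $c$, so it cannot absorb $C_\epsilon\Vert O\Vert_{l_n^\infty}^2\Vert V\Vert_{l_n^4}^4$ on the (positive-probability) event where $\Vert O\Vert_{l_n^\infty}$ is large. Interpolating instead, $\Vert V\Vert_{l_n^4}^4\le C\Vert A_nV\Vert_{l_n^2}^{1/2}\Vert V\Vert_{l_n^2}^{7/2}$ leads after Young to a term $C\Vert O\Vert_{l_n^\infty}^{8/3}(\Vert V\Vert_{l_n^2}^2)^{7/3}$, which is superlinear in $\Vert V\Vert_{l_n^2}^2$ and does not close under Gronwall. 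So the one-shot $L^2$ estimate fails.

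The paper circumvents this with a two-step scheme that your proposal omits. First it tests \eqref{vn} against $(-A_n)^{-1}V(t)$; since $\langle A_nF_n(U),(-A_n)^{-1}V\rangle_{l_n^2}=-\langle F_n(U),V\rangle_{l_n^2}$, the cubic structure now yields the genuinely dissipative term $-(1-\epsilon)\Vert V\Vert_{l_n^4}^4$ with only $\Vert O\Vert_{l_n^4}^4$ as remainder, giving the a priori bound $\E[\vert\int_0^T\Vert V(s)\Vert_{l_n^4}^4\ud s\vert^p]\le C$ for all $p$. Only then does it run the $L^2$ estimate you propose; after integrating in time, the problematic contribution appears as $\sup_t\Vert O(t)\Vert_{l_n^\infty}^2\int_0^t(1+\Vert V(s)\Vert_{l_n^4}^4+\Vert O(s)\Vert_{l_n^\infty}^4)\ud s$, which is handled in expectation by H\"older/Young together with the step-one bound and \eqref{linfty-w} --- no pathwise Gronwall in the $L^4$ quantity is ever needed. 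Your observations that $\langle V^3,A_nV\rangle_{l_n^2}\le 0$ and that $V(0)$ is bounded uniformly in $n$ are correct and are indeed used in the paper, but without the preliminary $H^{-1}$ (equivalently $L^4_{t,x}$) estimate the argument does not close.
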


\begin{proof}
Due to \eqref{n2B} and $u_0\in\mathcal{C}^1(\OO)$, 
$$\Vert (-A_n)^{\frac{1}{2}}U(0)\Vert _{l_n^2}^2=\frac{n}{\pi}\sum_{j=1}^{n}\left\vert u_0(jh)-u_0((j-1)h)\right\vert ^2\le n\sum_{j=1}^n\frac{C}{n^2}\leq C,$$
which together with the symmetry of $A_n$ and $\frac{4}{\pi^2}\le\lambda_{min}(-A_n)\le\lambda_{max}(-A_n)\le (n-1)^2$ implies
\begin{equation}\label{u0-H1}
\Vert (-A_n)^{-\frac{1}{2}}V(0)\Vert _{l^2_n}\le C\Vert V(0)\Vert _{l^2_n}\le C\Vert (-A_n)^{\frac{1}{2}}V(0)\Vert _{l^2_n}\le C.
\end{equation}
Here, $\lambda_{min}(-A_n)$ and $\lambda_{max}(-A_n)$ denote the minimal and maximal eigenvalues of $-A_n$, respectively.
Notice that for $i\in\{1,2,3\}$ and $\delta\in(0,1)$,  $a^{4-i}b^i\le \delta a^{4}+C_\delta b^4$ for any $a,b\in\mathbb{R}$, from which it follows that for any $\epsilon\in(0,1)$,
\begin{align}\label{eq:FnUV}
&\quad\langle A_n F_n(U(t)),(-A_n)^{-1}V(t)\rangle_{l^2_n}=-\langle F_n(U(t)),V(t)\rangle_{l^2_n}\\\notag
&=-\frac{\pi}{n}\sum_{j=1}^{n-1}\left[(V_j(t)+O_j(t))^3-(V_j(t)+O_j(t))\right]V_j(t)\\\notag
&\le-(1-\epsilon)\Vert V(t)\Vert _{l_n^4}^4+C(\epsilon)(\Vert O(t)\Vert _{l_n^4}^4+1).
\end{align}
Taking the inner product $\langle\cdot,(-A_n)^{-1}V(t)\rangle_{l^2_n}$ on both sides of \eqref{vn} gives
\begin{align*}
&\frac{1}{2} \frac{\ud}{\ud t}\langle V(t),(-A_n)^{-1}V(t)\rangle_{l^2_n}=\langle A_nV(t),V(t)\rangle_{l^2_n}+\langle A_n F_n(U(t)),(-A_n)^{-1}V(t)\rangle_{l^2_n}\\\notag
&\le -\Vert (-A_n)^{\frac{1}{2}}V(t)\Vert ^2_{l_n^2}-(1-\epsilon)\Vert V(t)\Vert _{l_n^4}^4+C(\epsilon)(\Vert O(t)\Vert _{l_n^4}^4+1).
\end{align*}
Then integrating with respect to time and using  \eqref{u0-H1}, we get that for $t\in(0,T]$,
\begin{align}\label{vh-1}
&\quad\ \Vert (-A_n)^{-\frac{1}{2}}V(t)\Vert _{l^2_n}^2+\int_0^t2\Vert (-A_n)^{\frac{1}{2}}V(s)\Vert ^2_{l_n^2}\ud s+\int_0^t(2-2\epsilon)\Vert V(s)\Vert _{l_n^4}^4\ud s\\\notag
&\le\Vert (-A_n)^{-\frac{1}{2}}V(0)\Vert _{l^2_n}^2+C_\epsilon\int_0^t1+\Vert O(s)\Vert _{l_n^4}^4\ud s\le C_\epsilon+C_\epsilon\int_0^t\Vert O(s)\Vert _{l_n^4}^4\ud s.
\end{align}
By \eqref{vh-1} with $\epsilon=\frac12$, \eqref{linfty-w} and the H\"older inequality, we arrive at   
\begin{align}\label{vnh-1}
&\quad\ 
\mathbb E\left[\Big\vert \int_0^t\Vert (-A_n)^{\frac{1}{2}}V(s)\Vert ^2_{l_n^2}\ud s\Big\vert^p\right]+\mathbb E\left[\Big\vert \int_0^t\Vert V(s)\Vert _{l_n^4}^4\ud s\Big\vert ^p\right]\\\notag&\le C(p)+C(p,T)\mathbb E\int_0^t\Vert O(s)\Vert _{l_n^4}^{4p}\ud s\le C(p,T)\quad\forall~t\in(0,T].
\end{align}

Based on \eqref{vnh-1}, we proceed to estimate $\Vert V(t)\Vert ^2_{l^2_n}$.
Taking the inner product $\langle\cdot,V(t)\rangle_{l^2_n}$ on both sides of \eqref{vn}, it follows that
\begin{gather*}
\frac{1}{2}\frac{\ud}{\ud t}\Vert V(t)\Vert ^2_{l^2_n}+\Vert A_nV(t)\Vert _{l^2_n}^2=\langle F_n(U(t))-F_n(V(t)), A_nV(t)\rangle_{l^2_n}+\langle A_nF_n(V(t)), V(t)\rangle_{l^2_n}.
\end{gather*}
The inequality $ab\le \frac{\epsilon}{4}a^2+\frac{1}{\epsilon}b^2$ for $a,b\in\mathbb{R},$ and \eqref{fb-fa} give that for any $\epsilon\in(0,1)$,
\begin{align}\label{eq:FU-FV}\notag
\langle F_n(U(t))-F_n(V(t)), &A_nV(t)\rangle_{l^2_n}\le \frac{\epsilon}{4}\Vert A_nV(t)\Vert _{l^2_n}^2+\frac{1}{\epsilon}\Vert F_n(U(t))-F_n(V(t))\Vert _{l^2_n}^2\\
&\le \frac{\epsilon}{4}\Vert A_nV(t)\Vert _{l^2_n}^2+C(\epsilon)\left(1+\Vert V(t)\Vert _{l_n^4}^4+\Vert O(t)\Vert _{l_n^\infty}^4\right)\Vert O(t)\Vert _{l_n^\infty}^2.
\end{align}
By $a^3b\le \frac{3}{4}a^4+\frac{1}{4}b^4$ for $a,b\in\mathbb{R}$, one can verify $\langle x,A_n(x_1^3,\ldots,x_{n-1}^3)^\top\rangle \le 0$. Hence,
\begin{align*}
&\langle x,A_nF_n(x)\rangle 
=\langle x,A_n(x_1^3,\ldots,x_{n-1}^3)^\top\rangle -\langle x,A_nx\rangle 
\le -\langle x,A_nx\rangle,
\end{align*}
from which
we deduce that for $\epsilon\in(0,1)$,
\begin{align}\label{eq:FVV}\notag
\langle A_nF_n(V(t)), V(t)\rangle_{l^2_n}&\le\langle-A_nV(t), V(t)\rangle_{l_n^2}\le  \frac{\epsilon}{4}\Vert A_nV(t)\Vert _{l^2_n}^2+C(\epsilon)\Vert V(t)\Vert ^2_{l^2_n}\\
&\le  \frac{\epsilon}{4}\Vert  A_nV(t)\Vert _{l^2_n}^2+C(\epsilon)(\Vert V(t)\Vert ^4_{l^4_n}+1).
\end{align}
Combining the above estimates with \eqref{u0-H1} produces
\begin{align*}
&\quad\Vert V(t)\Vert ^2_{l^2_n}+(2-\epsilon)\int_0^t\Vert  A_nV(s)\Vert _{l^2_n}^2\ud s\\
&\le\Vert V(0)\Vert _{l^2_n}^2+ C(\epsilon)\int_0^t\left(1+\Vert V(s)\Vert _{l_n^4}^4+\Vert O(s)\Vert _{l_n^\infty}^4\right)\left(\Vert O(s)\Vert _{l_n^\infty}^2+1\right)\ud s\\
&\le  C+C(\epsilon)\Big(\sup_{t\in[0,T]}\Vert O(t)\Vert _{l_n^\infty}^2+1\Big)\int_0^t1+\Vert V(s)\Vert _{l_n^4}^4+\Vert O(s)\Vert _{l_n^\infty}^4\ud s.
\end{align*}
Taking $\epsilon=\frac{1}{2}$ in the above inequality, we obtain from the Young inequality, \eqref{linfty-w} with $p=8q$, and \eqref{vnh-1} with $p=2q$ that
\begin{align*}
&\quad\mathbb E\left[\Vert V(t)\Vert ^{2q}_{l^2_n}\right]+\mathbb E\bigg[\Big\vert \int_0^t\Vert A_nV(s)\Vert _{l^2_n}^2\ud s\Big\vert ^{q}\bigg]\\
&\le C+C\mathbb E\bigg[\Big(\sup_{t\in[0,T]}\Vert O(t)\Vert _{l_n^\infty}^{2q}+1\Big)\Big\vert \int_0^t1+\Vert V(s)\Vert _{l_n^4}^{4}+\Vert O(s)\Vert _{l_n^\infty}^{4}\ud s\Big\vert ^q\bigg]\\
&\le C+C\mathbb E\bigg[\sup_{t\in[0,T]}\Vert O(t)\Vert _{l_n^\infty}^{4q}+1\bigg]+C\mathbb E\left[
\Big\vert \int_0^t1+\Vert V(s)\Vert _{l_n^4}^{4}+\Vert O(s)\Vert _{l_n^\infty}^{4}\ud s\Big\vert ^{2q}\right]\le C,
\end{align*}
which completes the proof.
\end{proof}

Now we proceed to derive the discrete $H^1$-regularity estimate of the spatial semi-discrete numerical solution $U(t)$, which guarantees $U\in L^\infty(0,T;L^p(\Omega;l^\infty_n))$. 
\begin{proposition}\label{Unl2-pro}
Let  $u_0\in\mathcal{C}^1(\OO)$ and $p\ge1$. Then for any $t\in[0,T]$,
\begin{align}\label{Unl2}
\mathbb{E}\left[\Vert (-A_n)^{\frac{1}{2}}U(t)\Vert ^p_{l_n^2}\right]\le C(T,p).
\end{align} 
\end{proposition}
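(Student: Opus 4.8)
The plan is to split $U(t) = V(t) + O(t)$ as in Subsection~\ref{S3.2} and estimate the discrete $H^1$-norm of each piece separately. The stochastic convolution $O(t)$ is already controlled: by \eqref{Onl2} we have $\mathbb{E}[\sup_{t\in[0,T]}\Vert (-A_n)^{\frac12}O(t)\Vert_{l_n^2}^p]\le C(T,p)$ for every $p\ge1$. So the task reduces to bounding $\mathbb{E}[\Vert (-A_n)^{\frac12}V(t)\Vert_{l_n^2}^p]$, where $V$ solves the random PDE \eqref{vn}. Testing \eqref{vn} against $-A_n V(t)$ in the $\langle\cdot,\cdot\rangle_{l^2_n}$ inner product, I would obtain
\begin{align*}
\tfrac12\tfrac{\ud}{\ud t}\Vert (-A_n)^{\frac12}V(t)\Vert_{l^2_n}^2 + \Vert A_n^{\frac12}A_n V(t)\Vert_{l^2_n}^2 \quad(\text{i.e. }\Vert (-A_n)^{\frac32}V\Vert^2) = -\langle A_n F_n(U(t)), A_n V(t)\rangle_{l^2_n}.
\end{align*}
Here the leading linear term $-\langle A_n^2 V, A_n V\rangle_{l^2_n} = -\Vert (-A_n)^{\frac32}V\Vert_{l^2_n}^2$ is the good dissipative term, and one should write $\langle A_n F_n(U), A_n V\rangle_{l^2_n} = -\langle (-A_n)^{\frac12}F_n(U), (-A_n)^{\frac32}V\rangle_{l^2_n}$ (or equivalently $\langle \nabla_h F_n(U),\nabla_h(-A_n)V\rangle$ after a discrete integration by parts) and then absorb the factor $(-A_n)^{\frac32}V$ into the dissipation via Young's inequality: $|\langle (-A_n)^{\frac12}F_n(U),(-A_n)^{\frac32}V\rangle_{l^2_n}| \le \frac12\Vert (-A_n)^{\frac32}V\Vert_{l^2_n}^2 + \frac12\Vert (-A_n)^{\frac12}F_n(U)\Vert_{l^2_n}^2$.

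The crux is therefore to bound $\Vert (-A_n)^{\frac12}F_n(U(t))\Vert_{l^2_n}^2$, i.e.\ a discrete $\Vert \nabla_h (U^3-U)\Vert_{l^2_n}^2$, in terms of quantities already under control. Using the discrete product rule $\delta_h(w^3)_j$ expands into terms like $(\text{value})^2\cdot(\text{difference quotient})$, so morally $\Vert (-A_n)^{\frac12}(U^3)\Vert_{l^2_n} \lesssim \Vert U\Vert_{l_n^\infty}^2 \Vert (-A_n)^{\frac12}U\Vert_{l^2_n}$. By \eqref{l2H1}, $\Vert U\Vert_{l_n^\infty}^2 \le \pi\Vert (-A_n)^{\frac12}U\Vert_{l^2_n}^2$, which closes into a cubic-in-$(-A_n)^{\frac12}U$ term — too strong for a naive Gr\"onwall. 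To avoid the blow-up I would instead use the $l_n^6$-interpolation \eqref{l6h2} together with the Sobolev-type bound $\Vert U\Vert_{l_n^\infty}\le C\Vert (-A_n)^{\frac12}U\Vert^{\frac12}_{l^2_n}\Vert U\Vert^{\frac12}_{l^2_n}$ (established inside the proof of Lemma~\ref{lem:intepolation}), so that $\Vert (-A_n)^{\frac12}F_n(U)\Vert_{l^2_n}^2$ is controlled by $\Vert (-A_n)^{\frac12}V\Vert_{l^2_n}^2$ times a factor that is only a power of $\Vert V\Vert_{l^2_n}$, $\Vert A_n V\Vert_{l^2_n}$ and $\Vert O\Vert_{l_n^\infty}$ — all of which have finite moments of every order uniformly in $n$ by Lemma~\ref{uniforml2} and \eqref{linfty-w}. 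Then an integrated Gr\"onwall-type argument (with the random prefactor handled via H\"older's inequality, exactly as at the end of the proof of Lemma~\ref{uniforml2}), using $\Vert (-A_n)^{\frac12}V(0)\Vert_{l^2_n}\le C$ from \eqref{u0-H1}, yields $\mathbb{E}[\Vert (-A_n)^{\frac12}V(t)\Vert_{l^2_n}^p]\le C(T,p)$.

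The main obstacle I expect is precisely this handling of the cubic nonlinearity in the $H^1$ estimate: the term $\Vert \nabla_h(U^3)\Vert_{l^2_n}$ is genuinely supercritical if one only uses the $l^\infty$--$H^1$ embedding, so the argument must exploit both the extra dissipation $\Vert (-A_n)^{\frac32}V\Vert_{l^2_n}^2$ coming from the fourth-order operator \emph{and} the already-proven $L^\infty_t L^p_\omega$ bounds on $\Vert V\Vert_{l^2_n}$ and $\Vert A_n V\Vert_{l^2_n}$ from Lemma~\ref{uniforml2}, in order to keep the nonlinear contribution sub-quadratic in the unknown $\Vert (-A_n)^{\frac12}V\Vert_{l^2_n}$. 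A secondary technical point is to keep all constants independent of $n$; this is ensured because every embedding/interpolation constant in Lemma~\ref{lem:intepolation} and every bound in Lemma~\ref{uniforml2} is $n$-uniform, and the discrete integration-by-parts identities are exact. Finally, combining the $V$-estimate with \eqref{Onl2} via the triangle inequality and $\Vert (-A_n)^{\frac12}U\Vert_{l^2_n}\le \Vert (-A_n)^{\frac12}V\Vert_{l^2_n}+\Vert (-A_n)^{\frac12}O\Vert_{l^2_n}$ gives \eqref{Unl2}.
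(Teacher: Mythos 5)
Your decomposition $U=V+O$, the use of \eqref{Onl2} for the stochastic convolution, \eqref{u0-H1} for the initial data, and the reliance on Lemma \ref{uniforml2} and the interpolation \eqref{l6h2} all match the paper's setup. But the core of your argument --- an energy estimate obtained by testing \eqref{vn} against $-A_nV(t)$ --- is not the paper's route, and as sketched it does not close. The difficulty is exactly where you locate it, but your proposed resolution fails at the Gr\"onwall step. After Young's inequality you must control $\Vert(-A_n)^{\frac12}F_n(U)\Vert_{l_n^2}^2\lesssim(1+\Vert U\Vert_{l_n^\infty}^4)\Vert(-A_n)^{\frac12}U\Vert_{l_n^2}^2$; whichever interpolation you apply to $\Vert U\Vert_{l_n^\infty}^4$, you arrive at a differential inequality of the form $\frac{\ud}{\ud t}\Vert(-A_n)^{\frac12}V(t)\Vert_{l_n^2}^2\le g(t)\Vert(-A_n)^{\frac12}V(t)\Vert_{l_n^2}^2+(\ldots)$, where $g$ is built from $\Vert A_nV\Vert_{l_n^2}$, $\Vert V\Vert_{l_n^2}$ and $\Vert O\Vert_{l_n^\infty}$. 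Lemma \ref{uniforml2} and \eqref{linfty-w} give only polynomial moments of $\int_0^Tg(s)\,\ud s$, while Gr\"onwall produces the factor $\exp\big(\int_0^Tg(s)\,\ud s\big)$, which is then not integrable; the H\"older trick at the end of the proof of Lemma \ref{uniforml2} is not available here because in that lemma the unknown does not multiply the random coefficient on the right-hand side. The extra dissipation $\Vert(-A_n)^{\frac32}V\Vert_{l_n^2}^2$ does not rescue this: absorbing the nonlinearity into it by interpolation only trades the problem for powers of $\Vert A_nV\Vert_{l_n^2}$ higher than $2$ (whose time integrals are not controlled), or for $\Vert A_nO\Vert_{l_n^2}$, which is not finite uniformly in $n$ since the stochastic convolution has only discrete $H^1$ regularity.

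The paper avoids the issue entirely by never putting a derivative on $F_n(U)$: it uses the variation-of-constants formula for $V$ together with the smoothing estimate \eqref{smooth}, so the nonlinearity enters only through $\int_0^t(t-s)^{-\frac34}\Vert F_n(U(s))\Vert_{l_n^2}\,\ud s$, with no factor of the unknown $H^1$ norm on the right-hand side and hence no Gr\"onwall at that level. The quantity $\Vert F_n(U)\Vert_{l_n^2}\lesssim 1+\Vert U\Vert_{l_n^6}^3$ is first tamed by an intermediate $l_n^6$ bootstrap (\eqref{VNt6}--\eqref{Vn-l6}) that uses \eqref{l6h2} and the bound on $\int_0^T\Vert A_nV(s)\Vert_{l_n^2}^2\,\ud s$ from Lemma \ref{uniforml2}; the $H^1$ estimate \eqref{AnVnt} then follows directly. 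To salvage your energy approach you would need either exponential integrability of $\int_0^Tg$ or a stopping-time/localization device, neither of which you supply; the mild-formulation argument is the cleaner fix.
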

\begin{proof}
It follows from \eqref{l2H1}, \eqref{interpolation} with $p=2$ and \eqref{u0-H1} that
\begin{align*}
\Vert e^{-A_n^2t}V(0)\Vert _{l^6_n}&\le C\Vert e^{-A_n^2t}V(0)\Vert _{l^\infty_n}\le C\Vert (-A_n)^{\frac{1}{2}}e^{-A_n^2t}V(0)\Vert _{l^2_n}\le C.
\end{align*}
From the spectrum mapping theorem and the symmetry of $A_n$, we get 
\begin{equation}\label{smooth}
\Vert e^{-\frac{1}{2}A_n^2(t-s)}(-A_n)^{\gamma}\Vert _{2}=\max_{1\le j\le n-1}e^{-\frac{1}{2}\lambda_{j,n}^2(t-s)}(-\lambda_{j,n})^\gamma\le C(t-s)^{-\frac{\gamma}{2}}
\end{equation}
for any $\gamma>0$,
since $x\mapsto x^\gamma e^{-\frac{1}{2}x^2}$ is uniformly bounded on $[0,\infty)$.   
Applying the variation of constants formula to \eqref{vn}, we infer from \eqref{interpolation} with $p=6$ and \eqref{smooth} with $\gamma=1$ that
\begin{align}\label{VNt6}
\Vert V(t)\Vert _{l^6_n}&\le\Vert e^{-A_n^2t}V(0)\Vert _{l^6_n}+\int_0^t\Vert e^{-\frac{1}{2}A_n^2(t-s)}e^{-\frac{1}{2}A_n^2(t-s)}A_nF_n(U(s))\Vert _{l^6_n}\ud s\\\notag
&\le C+C\int_0^t(t-s)^{-\frac{7}{12}}\Vert F_n(U(s))\Vert _{l_n^2}\ud s.
\end{align}
We claim that for any $\theta\in[0,\frac{3}{4})$ and sufficiently large $p\ge 1$,
\begin{align}\label{theta-Fn}
\mathbb E\left[\Big\vert \int_0^t(t-s)^{-\theta}\Vert F_n(U(s))\Vert _{l_n^2}\ud s\Big\vert ^p\right]\le C(p,\theta,T).
\end{align}
Indeed, by the definition of $F_n$, \eqref{l6h2}, and the H\"older inequality,
\begin{align*}
&\quad\ \int_0^t(t-s)^{-\theta}\Vert F_n(U(s))\Vert _{l_n^2}\ud s\le C\int_0^t(t-s)^{-\theta}\left(1+\Vert V(s)\Vert ^3_{l_n^6}+\Vert O(s)\Vert ^3_{l_n^6}\right)\ud s\\
&\le C\int_0^t(t-s)^{-\theta}\Big(1+\Vert V(s)\Vert _{l_n^2}^{\frac{5}{2}}\Vert A_nV(s)\Vert _{l_n^2}^{\frac{1}{2}}+\Vert O(s)\Vert ^3_{l_n^\infty}\Big)\ud s\\
&\le C\Big(1+\sup_{t\in[0,T]}\Vert O(t)\Vert ^{3}_{l_n^\infty}\Big)+
\Big(\int_0^t(t-s)^{-\frac{4}{3}\theta}\Vert V(s)\Vert _{l_n^2}^{\frac{10}{3}}\ud s\Big)^{\frac{3}{4}}\Big(\int_0^T\Vert A_nV(s)\Vert _{l_n^2}^{2}\ud s\Big)^{\frac{1}{4}}\\
&\le C\Big(1+\sup_{t\in[0,T]}\Vert O(t)\Vert ^{3}_{l_n^\infty}\Big)+
C\Big(\int_0^T\Vert V(s)\Vert _{l_n^2}^{\frac{10p_2}{3}}\ud s\Big)^{\frac{3}{4p_2}}\Big(\int_0^T\Vert A_nV(s)\Vert _{l_n^2}^{2}\ud s\Big)^{\frac{1}{4}},
\end{align*}
where $p_2=\frac{p_1}{p_1-1}$ with $p_1=\frac{1}{2}+\frac{3}{8\theta}$.
Taking $p$th moment on both sides of the above inequality, then using the H\"older inequality, \eqref{linfty-w} and Lemma \ref{uniforml2}, we obtain \eqref{theta-Fn}. Furthermore, \eqref{theta-Fn} (with $\theta=\frac{7}{12}$) and \eqref{VNt6} give that for any $p\ge1$,
\begin{align}\label{Vn-l6}
\mathbb E\Big[ \Vert V(t)\Vert _{l^6_n}^p\Big]\le C(p,T)\quad\forall~t\in[0,T].
\end{align}

Due to \eqref{vn}, \eqref{u0-H1}, and \eqref{smooth} with $\gamma=\frac{3}{2}$, for any $t\in[0,T]$,
\begin{align}\label{AnVnt}\notag
\Vert (-A_n)^{\frac{1}{2}} V(t)\Vert _{l^2_n}&=\Vert (-A_n)^{\frac{1}{2}}  e^{-A_n^2t}V(0)\Vert _{l^2_n}+\int_0^t\Vert (-A_n)^{\frac{1}{2}}  e^{-A_n^2(t-s)}A_nF_n(U(s))\Vert _{l^2_n}\ud s\\\notag
&\le C+C\int_0^t(t-s)^{-\frac{3}{4}}\Vert F_n(U(s))\Vert _{l_n^2}\ud s\\
&\le C+C\int_0^t(t-s)^{-\frac{3}{4}}\left(1+\Vert V(s)\Vert ^3_{l_n^6}+\Vert O(s)\Vert ^3_{l_n^6}\right)\ud s.
\end{align}
Taking $p$th moment on both sides of \eqref{AnVnt}, and using \eqref{linfty-w}, \eqref{Vn-l6}, we have
\begin{align}\label{Vnl2}
\mathbb E\left[ \Vert (-A_n)^{\frac{1}{2}}V(t)\Vert _{l^2_n}^p\right]\le C(p,T)\quad\forall~t\in[0,T].
\end{align}
Due to $U=V+O$, \eqref{Unl2} follows from \eqref{Vnl2} and \eqref{Onl2}.
\end{proof}

Since $u^n(t,x)=\Pi_n(u^n(t,\cdot))(x)$ and $u^n(t,0)=u^n(t,\pi)=0$, it holds that
\begin{equation*}
\sup_{x\in\mathcal{O}}\vert u^n(t,x)\vert =\sup_{1\le j\le n-1}\vert U_j(t)\vert=\|U(t)\|_{l_n^\infty} \le\sqrt{\pi}\Vert (-A_n)^{\frac{1}{2}}U(t)\Vert _{l_n^2}, 
\end{equation*}
thanks to \eqref{l2H1}. As a consequence,
Proposition \ref{Unl2-pro} implies the moment boundedness of the spatial semi-discrete numerical solution $u^n(t,x)$, i.e., for any $p\ge1$,
\begin{align}\label{eq:u^nbound}
\sup_{t\in[0,T]}\E\bigg[\sup_{x\in\mathcal{O}}\vert u^n(t,x)\vert ^p\bigg]
\le C\sup_{t\in[0,T]}\E\bigg[\Vert (-A_n)^{\frac{1}{2}}U(t)\Vert _{l_n^2}^p\bigg]\le C(T,p).
\end{align}

\subsection{Fully discrete FDM}\label{S3.3}
Introduce 
$O^i:=\sqrt{\frac{n}{\pi}}\int_0^{t_i}(I+\tau A_n^2)^{-(i-\lfloor\frac{s}{\tau}\rfloor)}\Sigma_n(U^{\lfloor\frac{s}{\tau}\rfloor})\ud \beta_s$
and $V^i:=U^i-O^i$ for $i\in\mathbb{Z}_{m}^0$. Then for $i\in\mathbb{Z}_{m-1}^0$,
\begin{align}\label{eq:vn-dis}
V^{i+1}-V^i+\tau A_n^2 V^{i+1}=\tau A_n F_n(U^{i+1}),
\end{align}
with the initial value $V^0=U^0=U(0)$.
Similarly to the spatial semi-discrete case, this part gives the $H^1$-regularity estimate of the fully discrete numerical solution $U^i$ by estimating $O^i$ and $V^i$ separately.

We first use the factorization method to estimate $O^{i}$.
By \eqref{eq:convolution} with $t=t_{i}$, 
\begin{align}\notag
(-A_n)^{\frac12}O^{i}=\frac{\sin(\pi\alpha)}{\pi}\int_0^{t_i}(I+\tau A_n^2)^{-(i-\lfloor\frac{r}{\tau}\rfloor-1)}(t_{i}-r)^{\alpha-1}Y^\tau(r)\ud r,
\end{align}
where $Y^\tau(r):=\sqrt{n/\pi}\int_0^r(I+\tau A_n^2)^{-(\lfloor\frac{r}{\tau}\rfloor-\lfloor\frac{s}{\tau}\rfloor+1)}(-A_n)^{\frac12}(r-s)^{-\alpha}\Sigma_n(U^{\lfloor\frac{s}{\tau}\rfloor})\ud \beta_s.$
We have the smooth effect of $(I+\tau A_n^2)^{-\iota}$, namely for any $\gamma\in[0,2]$ and $\iota\ge1$,
\begin{align}\label{eq:smooth-dis}
 \| (-A_n)^{\gamma}(I+\tau A_n^2)^{-\iota}\|_2&=\max_{1\le k\le n-1}(-\lambda_{k,n})^{\gamma}(1+\tau\lambda_{k,n}^2)^{-\iota}\\\notag
&\le\max_{1\le k\le n-1}(-\lambda_{k,n})^{\gamma}(1+\tau\iota\lambda_{k,n}^2)^{-1}\le C\times(\iota \tau)^{-\frac{\gamma}{2}}.
\end{align}
Here we used Bernoulli's inequality
$(1+z)^{-\alpha}\le (1+\alpha z)^{-1}$ for $z>-1$ and $\alpha\ge1$.
Then proceeding as in \eqref{ASigmaUF}, it holds that for any $0<\epsilon\ll 1$ and $j\in\mathbb{Z}^0_{m}$,
\begin{align*}
\Vert (-A_n)^{\frac12}(I+\tau A_n^2)^{-\iota}\Sigma_n(U^j)\Vert ^2_{\mathrm F}\le C_\epsilon(\iota \tau)^{-\frac{3}{4}-\epsilon}\quad\forall~\iota\ge1.
\end{align*}
Furthermore, the Burkholder inequality gives that for any $\alpha\in(0,\frac18-\frac\epsilon2)$ and $p\ge1$,
\begin{align*}
\E\left[\Vert Y^\tau(r)\Vert ^p_{l_n^2}\right]&\le C(p)\Big\vert\int_{0}^{r}(r-s)^{-2\alpha}(\eta_\tau(r)-\eta_\tau(s)+\tau)^{-\frac{3}{4}-\epsilon}
\ud s\Big\vert ^{\frac{p}{2}}\le C(p,T),
\end{align*}
since $ \tau+
\eta_\tau(r)-\eta_\tau(s)\ge r-s$.
Hence similarly to \eqref{Onl2}, for any $p>\frac{1}{\alpha}$,
\begin{align}\label{eq:AnO2l2}
\E\Big[\sup_{i\in\mathbb{Z}_{m}}\|(-A_n)^{\frac12}O^{i}\|_{l_n^2}^p\Big]\le C_{p,T}\int_0^T\E\Big[\|Y^\tau(r)\|_{l_n^2}^p\Big]\ud r\le C,
\end{align}
which together with \eqref{l2H1} and the H\"older inequality further ensures
\begin{align}\label{eq:linfty-w-dis}
\mathbb E\Big[\sup_{i\in\mathbb{Z}_{m}}\Vert O^i\Vert _{l^\infty_n}^p\Big]
\le C(p,T)\quad\forall~p\ge1.
\end{align}

\begin{lemma} \label{uniforml2-dis}
Let $u_0\in\mathcal{C}^1(\OO)$ and $q\ge1$. Then for any $i\in\mathbb{Z}_{m}^0$,
\begin{align*}
\mathbb E\left[\Vert V^i\Vert ^{2q}_{l^2_n}\right]+\mathbb E\bigg[\Big\vert \sum_{j=0}^{i-1}\tau\Vert  A_nV^{j+1}\Vert _{l^2_n}^2\Big\vert ^q\bigg]\le C(q,T).
\end{align*}
\end{lemma}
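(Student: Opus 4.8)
The plan is to carry out the discrete-in-time analogue of the proof of Lemma~\ref{uniforml2}, starting from the backward Euler relation \eqref{eq:vn-dis} and replacing the differentiation of squared norms by the elementary identity $2\langle a-b,a\rangle=\|a\|^2-\|b\|^2+\|a-b\|^2$; the extra terms $\|a-b\|^2$ it produces are nonnegative and will simply be discarded, which is precisely where the implicitness of the scheme is used.

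\emph{Step 1 (discrete $H^{-1}$ bound).} First I would test \eqref{eq:vn-dis} against $(-A_n)^{-1}V^{i+1}$ in $\langle\cdot,\cdot\rangle_{l^2_n}$. Using $\langle A_n^2V^{i+1},(-A_n)^{-1}V^{i+1}\rangle_{l^2_n}=\|(-A_n)^{\frac12}V^{i+1}\|^2_{l^2_n}$ and $\langle A_nF_n(U^{i+1}),(-A_n)^{-1}V^{i+1}\rangle_{l^2_n}=-\langle F_n(U^{i+1}),V^{i+1}\rangle_{l^2_n}$, together with the pointwise bound \eqref{eq:FnUV} on the nonlinearity, this yields, for every $\epsilon\in(0,1)$,
\begin{align*}
\|(-A_n)^{-\frac12}V^{i+1}\|^2_{l^2_n}-\|(-A_n)^{-\frac12}V^{i}\|^2_{l^2_n}+2\tau\|(-A_n)^{\frac12}V^{i+1}\|^2_{l^2_n}+2(1-\epsilon)\tau\|V^{i+1}\|^4_{l^4_n}\le C(\epsilon)\tau\bigl(1+\|O^{i+1}\|^4_{l^4_n}\bigr).
\end{align*}
Summing over $i$ from $0$ to $k-1$, using \eqref{u0-H1} to control $\|(-A_n)^{-\frac12}V^0\|_{l^2_n}$, then taking $p$-th moments and invoking \eqref{eq:linfty-w-dis} and the H\"older inequality, I obtain the discrete counterpart of \eqref{vnh-1}: for every $p\ge1$ and $k\in\mathbb{Z}_m^0$,
\begin{align*}
\mathbb E\Bigl[\Bigl|\sum_{j=0}^{k-1}\tau\|(-A_n)^{\frac12}V^{j+1}\|^2_{l^2_n}\Bigr|^p\Bigr]+\mathbb E\Bigl[\Bigl|\sum_{j=0}^{k-1}\tau\|V^{j+1}\|^4_{l^4_n}\Bigr|^p\Bigr]\le C(p,T).
\end{align*}

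\emph{Step 2 ($l^2_n$ bound).} Next I would test \eqref{eq:vn-dis} against $V^{i+1}$, so that $\langle A_n^2V^{i+1},V^{i+1}\rangle_{l^2_n}=\|A_nV^{i+1}\|^2_{l^2_n}$, and split the nonlinear term as $\langle A_nF_n(U^{i+1}),V^{i+1}\rangle_{l^2_n}=\langle F_n(U^{i+1})-F_n(V^{i+1}),A_nV^{i+1}\rangle_{l^2_n}+\langle A_nF_n(V^{i+1}),V^{i+1}\rangle_{l^2_n}$, estimating the two pieces exactly as in \eqref{eq:FU-FV}--\eqref{eq:FVV} (via $\langle x,A_nF_n(x)\rangle\le-\langle x,A_nx\rangle$ and \eqref{fb-fa}). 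Discarding the nonnegative term $\|V^{i+1}-V^i\|^2_{l^2_n}$, summing over $i$ from $0$ to $k-1$, taking $\epsilon=1/2$ and factoring out $\sup_{l\in\mathbb{Z}_m}\|O^l\|^2_{l^\infty_n}$ gives, for $k\in\mathbb{Z}_m^0$,
\begin{align*}
\|V^{k}\|^2_{l^2_n}+\frac32\sum_{j=0}^{k-1}\tau\|A_nV^{j+1}\|^2_{l^2_n}\le C+C\Bigl(1+\sup_{l\in\mathbb{Z}_m}\|O^l\|^2_{l^\infty_n}\Bigr)\sum_{j=0}^{k-1}\tau\bigl(1+\|V^{j+1}\|^4_{l^4_n}+\|O^{j+1}\|^4_{l^\infty_n}\bigr).
\end{align*}
Taking $q$-th moments and using the Cauchy--Schwarz inequality, \eqref{eq:linfty-w-dis}, and the bound on $\mathbb E[|\sum_{j}\tau\|V^{j+1}\|^4_{l^4_n}|^{2q}]$ from Step~1, I conclude the proof.

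\emph{Main obstacle.} No discrete Gronwall inequality is needed: once Step~1 is available, the right-hand side of the Step~2 estimate is already integrable, just as in the continuous case. The only point requiring real care is the bookkeeping -- checking that the backward Euler scheme inherits the dissipativity of \eqref{CH} (so the increment terms genuinely carry the right sign and may be dropped) and that every constant arising from \eqref{fbfa}--\eqref{fb-fa} and the smoothing bound \eqref{eq:smooth-dis} is independent of $n$ and $\tau$.
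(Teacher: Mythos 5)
Your proposal is correct and follows essentially the same route as the paper's proof: both steps (testing \eqref{eq:vn-dis} against $(-A_n)^{-1}V^{i+1}$ and then against $V^{i+1}$, using the identity \eqref{BE} to discard the nonnegative increment terms, and reusing the estimates \eqref{eq:FnUV}, \eqref{eq:FU-FV}--\eqref{eq:FVV} together with \eqref{eq:linfty-w-dis}) are exactly what the paper does, and indeed no discrete Gronwall inequality is needed.
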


\begin{proof}

In view of the relation $V^0=V(0)=U(0)$ and \eqref{u0-H1},
\begin{equation}\label{eq:u0-H1-dis}
\Vert (-A_n)^{-\frac{1}{2}}V^0\Vert _{l^2_n}\le C\Vert V^0\Vert _{l^2_n}\le C\Vert (-A_n)^{\frac{1}{2}}V^0\Vert _{l^2_n}\le C.
\end{equation}
Let us recall a fundamental identity 
\begin{align}\label{BE}\langle x-y,x\rangle=\frac12(\|x\|^2-\|y\|^2+\|x-y\|^2)\quad\forall~x,y\in\mathbb{R}^{d}, d\ge1.
\end{align}
Applying $\langle\cdot,(-A_n)^{-1}V^{i+1}\rangle_{l^2_n}$ on both sides of \eqref{eq:vn-dis}, it follows from \eqref{BE} and a similar estimate of \eqref{eq:FnUV} that
\begin{align*}
&\quad\ \frac12\|(-A_n)^{-\frac12}V^{i+1}\|_{l^2_n}^2-\frac12\|(-A_n)^{-\frac12}V^{i}\|_{l^2_n}^2+\frac12\|(-A_n)^{-\frac12}(V^{i+1}-V^i)\|_{l^2_n}^2\\
&=-\tau\|(-A_n)^{\frac12}V^{i+1}\|^2_{l^2_n}+\tau\langle A_nF_n(U^{i+1}),(-A_n)^{-1}V^{i+1}\rangle_{l^2_n}\\
&\le -\tau\|(-A_n)^{\frac12}V^{i+1}\|^2_{l^2_n}-\frac12\tau\Vert V^{i+1}\Vert _{l_n^4}^4+C\tau(\Vert O^{i+1}\Vert _{l_n^4}^4+1).
\end{align*}
Hence by \eqref{eq:u0-H1-dis}, 
\begin{align*}
\Vert (-A_n)^{-\frac{1}{2}}V^i\Vert _{l^2_n}^2+2\sum_{j=0}^{i-1}\tau\Vert (-A_n)^{\frac{1}{2}}V^{j+1}\Vert ^2_{l_n^2}+\sum_{j=0}^{i-1}\tau\Vert V^{j+1}\Vert _{l_n^4}^4\le C+C\tau\sum_{j=0}^{i-1}\Vert O^{j+1}\Vert _{l_n^4}^4.
\end{align*}
Furthermore, according to \eqref{eq:linfty-w-dis} and the H\"older inequality, we deduce that 
\begin{align}\label{eq:vnh-1-dis}
\E\bigg[\Big|\sum_{j=0}^{i-1}\tau\Vert V^{j+1}\Vert _{l_n^4}^4\Big|^p\bigg]\le C\quad\forall~p\ge1.
\end{align}

By \eqref{BE}, taking the inner product $\langle\cdot,V^{i+1}\rangle_{l^2_n}$ on both sides of \eqref{eq:vn-dis} gives
\begin{align*}
\frac12\|V^{i+1}\|_{l^2_n}^2-\frac12\|V^{i}\|_{l^2_n}^2+\frac12\|V^{i+1}-V^i\|_{l^2_n}^2=-\tau\|A_nV^{i+1}\|^2_{l^2_n}+\tau\langle F_n(U^{i+1}),A_nV^{i+1}\rangle_{l^2_n}.
\end{align*}
Similarly to \eqref{eq:FU-FV} and \eqref{eq:FVV} with $\epsilon=\frac12$,
\begin{align*}
 \langle F_n(U^{i+1}), A_nV^{i+1}\rangle_{l^2_n}&= \langle F_n(U^{i+1})-F_n(V^{i+1}),A_nV^{i+1}\rangle_{l^2_n}+\langle F_n(V^{i+1}),A_nV^{i+1}\rangle_{l^2_n}
\\
&\le \frac{1}{4}\Vert A_nV^{i+1}\Vert _{l^2_n}^2+C\left(1+\Vert V^{i+1}\Vert _{l_n^4}^4+\Vert O^{i+1}\Vert _{l_n^\infty}^4\right)\left(\Vert O^{i+1}\Vert _{l_n^\infty}^2+1\right).
\end{align*}
Collecting the above estimates yields
\begin{align*}
&\quad\ \Vert V^i\Vert ^2_{l^2_n}+\tau\sum_{j=0}^{i-1}\Vert  A_nV^{j+1}\Vert _{l^2_n}^2\\
&\le  \Vert V^0\Vert _{l^2_n}^2+C\Big(\sup_{i\in\mathbb{Z}_{m}}\Vert O^i\Vert _{l_n^\infty}^2+1\Big)\sum_{j=0}^{i-1}\tau(1+\Vert V^{j+1}\Vert _{l_n^4}^4+\Vert O^{j+1}\Vert _{l_n^\infty}^4).
\end{align*}
Taking $q$th moments on both sides of the above inequality and using \eqref{eq:u0-H1-dis}, \eqref{eq:linfty-w-dis} and \eqref{eq:vnh-1-dis} finally yield the desired result.
\end{proof}

\begin{proposition}\label{eq:Unl2-pro-dis}
Let  $u_0\in\mathcal{C}^1(\OO)$ and $p\ge1$. Then for any $i\in\mathbb{Z}_{m}^0$,
\begin{align*}
\mathbb{E}\left[\big\Vert (-A_n)^{\frac{1}{2}}U^i\big\Vert ^p_{l_n^2}\right]\le C(T,p).
\end{align*} 
\end{proposition}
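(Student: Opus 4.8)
The plan is to follow the proof of Proposition~\ref{Unl2-pro} step by step, with the analytic semigroup $e^{-A_n^2t}$ and its smoothing estimates replaced by the discrete resolvent $(I+\tau A_n^2)^{-\iota}$ and \eqref{eq:smooth-dis}, and time integrals replaced by Riemann sums. Writing $U^i=V^i+O^i$ and recalling \eqref{eq:AnO2l2}, it suffices to show $\sup_{i\in\mathbb{Z}_m^0}\mathbb{E}\big[\|(-A_n)^{\frac12}V^i\|_{l_n^2}^p\big]\le C(T,p)$. Iterating \eqref{eq:vn-dis} gives the discrete variation-of-constants formula
\[
V^i=(I+\tau A_n^2)^{-i}V^0+\tau\sum_{l=0}^{i-1}(I+\tau A_n^2)^{-(i-l)}A_nF_n(U^{l+1}),
\]
and, since $(-A_n)^{\frac12}$ commutes with the resolvent and $\|(I+\tau A_n^2)^{-i}\|_2\le1$, the initial-data term is bounded via \eqref{eq:u0-H1-dis}; all the difficulty sits in the sum.

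As in the continuous case the cubic drift forces a two-stage argument, the first stage being the uniform bound $\sup_i\mathbb{E}[\|V^i\|_{l_n^6}^p]\le C$. Using $\|b\|_{l_n^6}\le C\|b\|_{l_n^\infty}^{2/3}\|b\|_{l_n^2}^{1/3}$ together with \eqref{l2H1} and \eqref{eq:smooth-dis} (with $\gamma=\tfrac32$ and $\gamma=1$), one gets $\|(I+\tau A_n^2)^{-\iota}A_na\|_{l_n^6}\le C(\iota\tau)^{-2/3}\|a\|_{l_n^2}$, hence from the variation-of-constants formula
\[
\|V^i\|_{l_n^6}\le C+C\tau\sum_{l=0}^{i-1}\big((i-l)\tau\big)^{-2/3}\|F_n(U^{l+1})\|_{l_n^2}.
\]
The crucial sub-step is the discrete analogue of \eqref{theta-Fn}: for $\theta\in[0,\tfrac34)$ and $p$ large, $\mathbb{E}\big[\big|\tau\sum_{l=0}^{i-1}((i-l)\tau)^{-\theta}\|F_n(U^{l+1})\|_{l_n^2}\big|^p\big]\le C$. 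I would prove this exactly as \eqref{theta-Fn}: bound $\|F_n(U^{l+1})\|_{l_n^2}\le C(1+\|V^{l+1}\|_{l_n^6}^3+\|O^{l+1}\|_{l_n^\infty}^3)$, use \eqref{l6h2} to write $\|V^{l+1}\|_{l_n^6}^3\le C\|A_nV^{l+1}\|_{l_n^2}^{1/2}\|V^{l+1}\|_{l_n^2}^{5/2}$, apply discrete H\"older in the time index with exponents $(\tfrac43,4)$—legitimate precisely because $\tfrac43\theta<1$—so that the factor $\|A_nV^{l+1}\|_{l_n^2}^{1/2}$ is paired against $\big(\tau\sum_l\|A_nV^{l+1}\|_{l_n^2}^2\big)^{1/4}$, and finally invoke Lemma~\ref{uniforml2-dis} and \eqref{eq:linfty-w-dis}; all residual kernel sums obey $\tau\sum_{l=0}^{i-1}((i-l)\tau)^{-\beta}=\tau^{1-\beta}\sum_{m=1}^i m^{-\beta}\le Ct_i^{1-\beta}\le CT^{1-\beta}$ for $\beta<1$, uniformly in $n$ and $\tau$. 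Taking $\theta=\tfrac23$ yields $\sup_i\mathbb{E}[\|V^i\|_{l_n^6}^p]\le C$.

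For the second stage, \eqref{eq:smooth-dis} with $\gamma=\tfrac32$ gives
\[
\|(-A_n)^{\frac12}V^i\|_{l_n^2}\le C+C\tau\sum_{l=0}^{i-1}\big((i-l)\tau\big)^{-3/4}\|F_n(U^{l+1})\|_{l_n^2}\le C\Big(1+\sup_{j}\|O^j\|_{l_n^\infty}^3\Big)+C\tau\sum_{l=0}^{i-1}\big((i-l)\tau\big)^{-3/4}\|V^{l+1}\|_{l_n^6}^3.
\]
Since the weight $\tau\sum_{l=0}^{i-1}((i-l)\tau)^{-3/4}=\tau^{1/4}\sum_{m=1}^i m^{-3/4}\le CT^{1/4}$ is uniformly bounded, a discrete Jensen inequality with respect to this finite weight, together with the uniform $l_n^6$-moment bound just obtained and \eqref{eq:linfty-w-dis}, yields $\sup_i\mathbb{E}[\|(-A_n)^{\frac12}V^i\|_{l_n^2}^p]\le C$. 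Combined with \eqref{eq:AnO2l2} and $U^i=V^i+O^i$, this proves the proposition.

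The main obstacle is, as in the continuous setting, the non-Lipschitz cubic drift: a one-shot estimate of $(-A_n)^{\frac12}V^i$ would, after applying \eqref{l6h2}, require pairing the kernel $((i-l)\tau)^{-1}$ with $\|V^{l+1}\|_{l_n^2}$, and $\tau\sum_l((i-l)\tau)^{-1}=\sum_{m=1}^i m^{-1}\sim\log(T/\tau)$ diverges as $\tau\to0$. The two-stage device circumvents this by first extracting the uniform $l_n^6$ bound with the \emph{sub-critical} weight $((i-l)\tau)^{-2/3}$, which is feasible only because in \eqref{l6h2} the factor $\|A_nV^{l+1}\|_{l_n^2}$ enters to the power $\tfrac12$, exactly matching the summed quantity $\tau\sum_l\|A_nV^{l+1}\|_{l_n^2}^2$ supplied by Lemma~\ref{uniforml2-dis}. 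Two minor points to track carefully are that \eqref{eq:smooth-dis} is stated only for $\iota\ge1$, so the $\iota=0$ initial-data term must be handled by hand, and that every discrete Young/H\"older estimate has to be checked to be uniform in $n$ and $\tau$.
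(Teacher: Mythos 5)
Your proposal is correct and follows essentially the same route as the paper: the decomposition $U^i=V^i+O^i$, the discrete variation-of-constants formula, and the two-stage argument (uniform $l_n^6$ bound via Lemma~\ref{uniforml2-dis} and the discrete analogue of \eqref{theta-Fn}, then the $\Vert(-A_n)^{\frac12}\cdot\Vert_{l_n^2}$ bound with the kernel $t_{i-j}^{-3/4}$). The only deviation is cosmetic: you derive the $l_n^6$-smoothing of $(I+\tau A_n^2)^{-\iota}A_n$ by interpolating $\Vert\cdot\Vert_{l_n^6}\le\Vert\cdot\Vert_{l_n^\infty}^{2/3}\Vert\cdot\Vert_{l_n^2}^{1/3}$ together with \eqref{l2H1} and \eqref{eq:smooth-dis}, obtaining the exponent $\tfrac23$, whereas the paper splits the resolvent and uses \eqref{eq:interpo-dis} to get $\tfrac{7}{12}$ --- both exponents lie below the critical threshold $\tfrac34$, so the argument goes through identically.
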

\begin{proof}

According to \eqref{eq:vn-dis}, for any $i\in\mathbb{Z}_{m}^0$,
\begin{align*}
V^{i}
&=(I+\tau A_n^2)^{-i}V^0+\tau\sum_{j=0}^{i-1}(I+\tau A_n^2)^{-(i-j)}A_n F_n(U^{j+1}).
\end{align*}
By virtue of the fact that
$\sum_{j=1}^{n-1}(1+\tau\lambda_{j,n}^2)^{-2\iota}\le\sum_{j=1}^{n-1}(1+2\tau\iota\lambda_{j,n}^2)^{-1}\le C(\iota\tau)^{-\frac14}$, similarly to \eqref{interpolation}, it can be verified that for any $2\le p\le \infty$,
\begin{align}\label{eq:interpo-dis}\Vert (I+\tau A_n^2)^{-\iota}a\Vert _{l_n^p}\le C(p)(\tau\iota)^{-\frac{1}{4}(\frac{1}{2}-\frac{1}{p})}\Vert a\Vert _{l_n^2}\quad\forall~a\in l_n^\infty.
\end{align}

Applying \eqref{eq:smooth-dis}, \eqref{eq:interpo-dis}  and $\Vert (I+\tau A_n^2)^{-i}V^0\Vert _{l^6_n}\le \Vert V^0\Vert _{l^6_n}\le C
$, we arrive at
\begin{align*}
\Vert V^{i}\Vert _{l^6_n}&\le C+\tau\sum_{j=0}^{i-1}\Vert(I+\tau A_n^2)^{-\frac12(i-j)}A_n(I+\tau A_n^2)^{-\frac12(i-j)} F_n(U^{j+1})\Vert _{l^6_n}\\
&\le C+C\tau\sum_{j=0}^{i-1}t_{i-j}^{-\frac{1}{12}}\Vert A_n(I+\tau A_n^2)^{-\frac12(i-j)}\|_2\| F_n(U^{j+1})\Vert _{l^2_n}\\
&\le C+C\tau\sum_{j=0}^{i-1}t_{i-j}^{-\frac{7}{12}}\| F_n(U^{j+1})\Vert _{l^2_n}.
\end{align*}
Proceeding as in \eqref{theta-Fn}, one can show that for $\theta\in[0,\frac{3}{4})$ and sufficiently large $p\ge 1$,
\begin{align*}
\mathbb E\bigg[\Big\vert \tau\sum_{j=0}^{i-1}t_{i-j}^{-\theta}\| F_n(U^{j+1})\Vert _{l^2_n}\Big\vert ^p\bigg]\le C(p,\theta,T),
\end{align*}
based on Lemma \ref{uniforml2-dis}. In particular, the case $\theta=\frac{7}{12}$ yields that for any $p\ge1$,
\begin{align}\label{Vn-l6-dis}
\mathbb E\Big[ \Vert V^i\Vert _{l^6_n}^p\Big]\le C(p,T)\quad\forall~t\in[0,T].
\end{align}
Then analogously to \eqref{AnVnt}, by \eqref{eq:u0-H1-dis} and \eqref{eq:smooth-dis},
\begin{align}\label{eq:Anvi-dis}
\Vert (-A_n)^{\frac{1}{2}} V^{i}\Vert _{l^2_n}&\le\Vert (-A_n)^{\frac{1}{2}} (I+\tau A_n^2)^{-i}V^0\Vert _{l^2_n}\\\notag
&\quad+\tau\sum_{j=0}^{i-1}\Vert (-A_n)^{\frac{1}{2}} (I+\tau A_n^2)^{-(i-j)}A_n F_n(U^{j+1})\Vert _{l^2_n}\\\notag
&\le C+C\tau\sum_{j=0}^{i-1}(t_i-t_j)^{-\frac{3}{4}}(1+\Vert V^{j+1}\Vert _{l_n^6}^3+\Vert O^{j+1}\Vert _{l_n^6}^3).
\end{align}
Then one can use \eqref{Vn-l6-dis} and \eqref{eq:linfty-w-dis} to obtain that
$$\mathbb{E}[\Vert (-A_n)^{\frac{1}{2}}V^i\Vert ^p_{l_n^2}]\le C(T,p)\quad\forall~i\in\mathbb{Z}_{m}^0,$$
which along with  \eqref{eq:AnO2l2} and $U^i=V^i+O^i$ completes the proof.
\end{proof}
In a similar manner to \eqref{eq:u^nbound}, Proposition \ref{eq:u^nbound-dis} guarantees that for any $p\ge1$,
\begin{align}\label{eq:u^nbound-dis}
\sup_{i\in\mathbb{Z}_{m}^0}\E\Big[\sup_{x\in\mathcal{O}}\vert u^{n,\tau}(t_i,x)\vert ^p\Big]\le C(T,p).
\end{align}

\section{Strong convergence analysis (I)}\label{S5}
In this section, we study the strong convergence rate of the numerical solution of the spatial FDM.
For $n\ge2$, denote by $\mathbb U(t):=(u(t,h),\ldots,u(t,(n-1)h))^\top$ the exact solution to \eqref{CH} on grid points, where the explicit dependence of $\mathbb U(t)$ on $n$ is omitted. We introduce the following auxiliary process $\{\tilde U(t),t\in[0,T]\}$ by
\begin{align}\label{anx_u}
\ud \tilde U(t)+A_n^2\tilde U(t)\ud t= A_nF_n(\mathbb U(t))\ud t+\sqrt{n/\pi}\Sigma_n(\mathbb U(t))\ud \beta_t,\quad t\in(0,T]
\end{align}
with initial value $\tilde U(0)=U(0)$. Let $\tilde u^n=\{\tilde u^n(t,x),(t,x)\in[0,T]\times\OO\}$  satisfy 
\begin{align*}
\tilde u^n(t,x)=&\int_{\mathcal O}G^n_t(x,y)u_0(\kappa_n(y))\ud y+\int_0^t\int_{\mathcal O}\Delta_nG^n_{t-s}(x,y)f(u(s,\kappa_n(y)))\ud y\ud s\\
&+\int_0^t\int_{\mathcal O}G^n_{t-s}(x,y)\sigma(u(s,\kappa_n(y)))W(\ud s,\ud y).
\end{align*}
Then $\tilde U_k(t)=\tilde u^n(t,kh)$ for $k\in\mathbb{Z}_{n-1}$ and $t\in[0,T]$.

\subsection{Error estimate between $\tilde u^n$ and $u$}\label{S4.1}
This part deals with the error between the exact solution $u$ and the auxiliary process $\tilde u$, which will rely on the following  estimates of the discrete Green function.

\begin{lemma}\label{lem:Gn-G}
 There exists $C=C(T)$ such that for any $(t,x)\in(0,T]\times\mathcal{O}$,
\begin{align}\label{DeltaGnG}
\int_0^t\int_{\mathcal O}\vert \Delta_nG_{s}^n(x,y)-\Delta G_{s}(x,y)\vert \mathrm{d} y\mathrm{d} s&\le Cn^{-1},
\\\label{Gn-G}
\int_0^t\int_{\mathcal O}\vert G^{n}_s(x,y)-G_s(x,y)\vert ^2\mathrm{d} y\mathrm{d} s&\le Cn^{-2}.
\end{align}
\end{lemma}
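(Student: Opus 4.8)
The plan is to expand both kernels in the Dirichlet eigenbasis $\{\phi_j\}$ and compare the discrete and continuous expansions mode by mode. Suppressing the $(x,y)$-dependence, write
\[
G^n_s-G_s=\mathrm{A}_s+\mathrm{B}_s+\mathrm{C}_s-\mathrm{D}_s,
\]
where $\mathrm{A}_s=\sum_{j=1}^{n-1}(e^{-\lambda_{j,n}^2s}-e^{-\lambda_j^2s})\phi_{j,n}(x)\phi_j(\kappa_n(y))$ is the eigenvalue perturbation, $\mathrm{B}_s=\sum_{j=1}^{n-1}e^{-\lambda_j^2s}(\phi_{j,n}(x)-\phi_j(x))\phi_j(\kappa_n(y))$ the interpolation error of the eigenfunctions in $x$, $\mathrm{C}_s=\sum_{j=1}^{n-1}e^{-\lambda_j^2s}\phi_j(x)(\phi_j(\kappa_n(y))-\phi_j(y))$ the piecewise-constant sampling error in $y$, and $\mathrm{D}_s=\sum_{j\ge n}e^{-\lambda_j^2s}\phi_j(x)\phi_j(y)$ the high-mode tail; the same telescoping of $\Delta_nG^n_s-\Delta G_s$ gives four analogous terms in which each factor $e^{-\lambda_{j,n}^2s}$, resp.\ $e^{-\lambda_j^2s}$, is accompanied by $\lambda_{j,n}$, resp.\ $\lambda_j$, i.e.\ by an extra factor of magnitude $j^2$. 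I would estimate the four pieces one at a time, using: the $L^2(\OO)$-orthonormality of $\{\phi_j\}_{j\ge1}$ and of $\{\phi_j(\kappa_n(\cdot))\}_{j=1}^{n-1}$; the bound $0\le1-c_{j,n}\le\pi^2j^2/(12n^2)$ from \eqref{cjn}, which gives $|\lambda_j^2-\lambda_{j,n}^2|\le Cj^6/n^2$ and $|e^{-\lambda_{j,n}^2s}-e^{-\lambda_j^2s}|\le Cj^6n^{-2}s\,e^{-\lambda_{j,n}^2s}$ together with $\lambda_{j,n}^2\ge(4/\pi^2)^2j^4$; the polygonal-interpolation estimate $\sup_{x\in\OO}|\phi_{j,n}(x)-\phi_j(x)|\le Ch^2\sup_{x\in\OO}|\phi_j''(x)|\le Cj^2/n^2$; and the elementary summation bound $\sum_{j=1}^{n-1}j^{a}e^{-cj^4s}\le C(1+s^{-(a+1)/4})$ for $s\in(0,T]$.

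Given these, the terms $\mathrm{A}_s$, $\mathrm{B}_s$ and $\mathrm{D}_s$ are routine. For the $L^2$-estimate \eqref{Gn-G} one invokes orthonormality in $y$, so that $\int_\OO|\cdot|^2\,\ud y$ collapses to a sum of squared coefficients; integrating in $s$ (using $\int_0^ts^2e^{-2\lambda_{j,n}^2s}\,\ud s\le C\lambda_{j,n}^{-6}$ for $\mathrm{A}_s$ and $\int_0^te^{-2j^4s}\,\ud s\le Cj^{-4}$ for $\mathrm{B}_s$ and $\mathrm{D}_s$) yields $O(n^{-3})$ from each of the three. For the $L^1$-estimate \eqref{DeltaGnG}, where each summand additionally carries a factor of size $j^2$, it suffices to move the absolute value inside the sum; since $\int_0^t\sum_{j=1}^{n-1}j^4e^{-cj^4s}\,\ud s\le C(n-1)$ and $\int_0^t\sum_{j\ge n}j^2e^{-cj^4s}\,\ud s\le C\sum_{j\ge n}j^{-2}\le Cn^{-1}$, these three contribute $O(n^{-1})$, within the claimed rate.

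The one delicate term is $\mathrm{C}_s$, the piecewise-constant sampling error in $y$: the naive pointwise bound $|\phi_j(\kappa_n(y))-\phi_j(y)|\le\sup_{y\in\OO}|\phi_j'(y)|\,h\le Cj/n$ throws away the cancellation in $j$ and yields only $O(n^{-1})$ for \eqref{Gn-G} and a spurious $\log n$ for \eqref{DeltaGnG}. The key step is the telescoping identity $\phi_j(\kappa_n(y))-\phi_j(y)=-\int_{\kappa_n(y)}^{y}\phi_j'(r)\,\ud r$, which lets me write, for $y\in[lh,(l+1)h)$, $\mathrm{C}_s=-\int_{lh}^{y}g_s(x,r)\,\ud r$ with $g_s(x,r)=\sum_{j=1}^{n-1}e^{-\lambda_j^2s}\phi_j(x)\phi_j'(r)$ (and analogously, with the weight $\lambda_j$, for the $\Delta_nG^n_s-\Delta G_s$ counterpart). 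Applying the Cauchy--Schwarz inequality on each mesh cell $[lh,(l+1)h)$ and summing over $l$ turns the integration range of length $h$ into a gain of $h^2$ for \eqref{Gn-G} (resp.\ $h$ for \eqref{DeltaGnG}), at the price of one $y$-derivative; the orthogonality of $\{\phi_j'\}$ in $L^2(\OO)$ and $\int_\OO|\phi_j'|^2\,\ud y=j^2$ then give $\int_\OO|\mathrm{C}_s|^2\,\ud y\le Ch^2\sum_{j=1}^{n-1}j^2e^{-2j^4s}$ and its $\Delta$-counterpart $\le Ch\,(\sum_{j=1}^{n-1}j^6e^{-2j^4s})^{1/2}$. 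Integrating in $s$, the former is $\le Ch^2\sum_{j\ge1}j^{-2}\le Cn^{-2}$, while for the latter the summation bound makes the integrand $\le C(1+s^{-7/4})^{1/2}\le C(1+s^{-7/8})$, which is integrable on $[0,T]$, so it is $\le Ch\le Cn^{-1}$. Adding the four contributions yields \eqref{DeltaGnG} and \eqref{Gn-G}; handling $\mathrm{C}_s$ via this telescoping representation is where all the work lies, and is the main obstacle.
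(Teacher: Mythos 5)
Your proof is correct and follows essentially the same route as the paper: expand both kernels in the eigenbasis, telescope into eigenvalue-perturbation, $x$-interpolation, $y$-sampling and tail errors, and handle the delicate $y$-sampling term by trading the cell-length $h$ for one derivative of $\phi_j$ via cell-wise Cauchy--Schwarz (this is exactly the inequality the paper imports from \cite[Lemma 3.2]{GI98}, which you re-derive). The only differences are organizational: the paper groups the eigenvalue and $x$-interpolation errors into a single term and runs both estimates through the unified bound $\int_0^T(\int_\OO|\cdot|^2\,\ud y)^{1-\mu/2}\ud s\le Cn^{-(2-\mu)}$, whereas you treat the $L^1$ case directly.
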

\begin{proof}
Due to the H\"older inequality,
it suffices to prove that for $\mu\in\{0,1\}$,
\begin{equation*}
\int_0^{T}\left(\int_{\mathcal{O}}|(-\Delta_n)^\mu G_{s}^n(x,y)-(-\Delta)^\mu G_{s}(x,y)|^2\ud y\right)^{1-\frac{\mu}{2}}\ud s\le Cn^{-(2-\mu)}.
\end{equation*}
In the remainder of the proof, we always assume $\mu\in\{0,1\}$ and $0<\epsilon\ll 1$. Denote
\begin{align*}
M_{1,\mu}^{s,x,y}&:=\sum_{j=1}^{n-1}(-\lambda_{j,n})^\mu e^{-\lambda_{j,n}^2s}\phi_{j,n}(x)\left(\phi_j(\kappa_n(y))-\phi_j(y)\right),\\
 M_{2,\mu}^{s,x,y}&:=\sum_{j=1}^{n-1}\left((-\lambda_{j,n})^\mu e^{-\lambda_{j,n}^2s}\phi_{j,n}(x)-(-\lambda_j)^{\mu}e^{-\lambda_{j}^2s}\phi_{j}(x)\right)\phi_j(y),\\
M_{3,\mu}^{s,x,y}&:=\sum_{j=n}^{\infty}(-\lambda_j)^{\mu}e^{-\lambda_{j}^2s}\phi_{j}(x)\phi_j(y),\quad s\in[0,T], x,y\in\OO.
\end{align*}
Since $\{\phi_j\}_{j\ge1}$ is an orthonormal basis of $L^2(\mathcal{O})$, it holds that 
\begin{align}\label{eq.Delta-Delta}
&\quad\int_\mathcal{O}\vert (-\Delta_n)^\mu G_{s}^n(x,y)-(-\Delta)^\mu G_{s}(x,y)\vert^2\mathrm{d} y\le 
3 \sum_{i=1}^3\int_\mathcal{O}\vert M_{i,\mu}^{s,x,y}\vert ^2\mathrm{d} y.
\end{align}
By virtue of $|\phi_j(x)|\le 1$ and \eqref{ex<x} with $\alpha_1=\frac{2}{2-\mu}(1-\epsilon)>\mu+\frac{1}{4}$, 
\begin{align*}
&\quad\ \int_0^{T}\Big(\int_\mathcal{O}\vert M_{3,\mu}^{s,x,y}\vert ^2\mathrm{d} y\Big)^{1-\frac{\mu}{2}}\mathrm{d} s\le 
\int_0^{T}\Big(\sum_{j=n}^{\infty}j^{4\mu}e^{-2j^4 s}\Big)^{1-\frac{\mu}{2}}\mathrm{d} s\\
&\le C\int_0^T\Big(\sum_{j=n}^{\infty}j^{4\mu-4\alpha_1}s^{-\alpha_1}\Big)^{1-\frac{\mu}{2}}\mathrm{d} s\le C_\epsilon n^{-4[1-(\mu+\frac14)(1-\frac{\mu}{2})-\epsilon]}\le Cn^{-(2-\mu)},
\end{align*}
thanks to $4[1-(\mu+\frac14)(1-\frac{\mu}{2})]>2-\mu$ for all $\mu\in[0,1]$.

We recall the following inequality in the proof of \cite[Lemma 3.2]{GI98}: 
\begin{align}\label{eq.key}
\int_\mathcal{O}\vert w(y)-w(\kappa_n(y))\vert ^2\mathrm{d} y\le Cn^{-2}\int_\mathcal{O}\Big\vert \frac{\mathrm{d}}{\mathrm{d} y}w(y)\Big\vert ^2\mathrm{d} y,\quad\text{for}~w\in \mathcal{C}^1(\mathcal{O}).
\end{align}
By \eqref{eq.key} and \eqref{ex<x}, for any $\rho>0$,
\begin{align*}
\int_\mathcal{O}\vert M_{1,\mu}^{s,x,y}\vert ^2\mathrm{d} y&\le  Cn^{-2}\int_\mathcal{O}\vert \sum_{j=1}^{n-1}(-\lambda_{j,n})^{\mu}e^{-\lambda_{j,n}^2s}\phi_{j,n}(x)j\cos(jy)\vert ^2\mathrm{d} y\\
&\le Cn^{-2}\sum_{j=1}^{n-1}j^{4\mu+2}e^{-2\lambda_{j,n}^2s}|\phi_{j,n}(x)|^2\le Cn^{-2}\sum_{j=1}^{n-1}j^{4\mu+2-4\rho}s^{-\rho}.
\end{align*}
Observe that for any $\alpha\in(0,1]$, 
\begin{align}\label{1-ex}
1-e^{-x}\le C_{\alpha}x^{\alpha},\quad x\ge 0.
\end{align}
In view of \eqref{cjn}, 
$\vert \lambda_j-\lambda_{j,n}\vert \le Cj^4/n^2$.
Thus, $|(-\lambda_j)^{\mu}-(-\lambda_{j,n})^{\mu}|\le \mu(-\lambda_{j,n})^{\mu-1}|\lambda_j-\lambda_{j,n}|\le Cj^{2\mu+2}/n^2$
and 
$\lambda_j^2-\lambda_{j,n}^2=\vert \lambda_j-\lambda_{j,n}\vert \vert \lambda_j+\lambda_{j,n}\vert \le Cj^6/n^2$, which along with \eqref{1-ex} yields that for $\rho_1>0$,
\begin{align*}
 \vert e^{-\lambda_{j,n}^2s}-e^{-\lambda_{j}^2s}\vert 
 \le e^{-\lambda_{j,n}^2s} j^6n^{-2}s\le Cn^{-2}j^{6-2\rho_1}s^{1-\frac{\rho_1}{2}}.
\end{align*}
Besides, it can be verified that $\vert \phi_{j,n}(x)-\phi_j(x)\vert \le Cj/n.$ Therefore, for $\rho,\rho_1>0$,
\begin{align*}
&\quad\int_\mathcal{O}\vert M_{2,\mu}^{s,x,y}\vert ^2\mathrm{d} y=\sum_{j=1}^{n-1}\left\vert (-\lambda_{j,n})^{\mu}e^{-\lambda_{j,n}^2s}\phi_{j,n}(x)-(-\lambda_j)^{\mu}e^{-\lambda_{j}^2s}\phi_{j}(x)\right\vert ^2\\
&\le 3\sum_{j=1}^{n-1}\vert (-\lambda_{j,n})^\mu-(-\lambda_j)^\mu\vert ^2e^{-2\lambda_{j,n}^2s}+3\sum_{j=1}^{n-1}\lambda_j^{2\mu}\vert e^{-\lambda_{j,n}^2s}-e^{-\lambda_{j}^2s}\vert ^2\\
&\quad+3\sum_{j=1}^{n-1}\lambda_j^{2\mu}e^{-2\lambda_{j}^2s}\vert \phi_{j,n}(x)-\phi_j(x)\vert ^2\\
&\le Cn^{-4}\sum_{j=1}^{n-1}j^{4\mu+4-4\rho}s^{-\rho}+Cn^{-4}\sum_{j=1}^{n-1}{j^{12+4\mu-4\rho_1}}s^{2-\rho_1}+Cn^{-2}\sum_{j=1}^{n-1}j^{4\mu+2-4\rho}s^{-\rho}\\
&\le Cn^{-2}\sum_{j=1}^{n-1}j^{4\mu+2-4\rho}s^{-\rho},
\end{align*}
where in the last step we set $\rho_1=\rho+2$. 
By choosing 
$\rho\in(\mu+\frac34,\frac{2}{2-\mu})$, we have
\begin{align*}
\int_0^{T}\Big(\int_\mathcal{O}\vert M_{i,\mu}^{s,x,y}\vert ^2\mathrm{d} y\Big)^{1-\frac{\mu}{2}}\mathrm{d} s
\le C\int_0^T(n^{-2}s^{-\rho})^{1-\frac{\mu}{2}}\mathrm{d} s
\le Cn^{-(2-\mu)},\quad i=1,2.
\end{align*}
Finally inserting the estimates on $\{M_{i,\mu}^{s,x,y}\}_{i=1,2,3}$ into \eqref{eq.Delta-Delta} finishes the proof.
\end{proof}

By virtue of Lemmas \ref{Holder-exact} and \ref{lem:Gn-G}, we now estimate the error between $u$ and $\tilde u$.
\begin{proposition}\label{utilde-uh-1}
Let $u_0\in\mathcal{C}^3(\mathcal{O})$.
 Then for any $p\ge1$, there exists some constant $C=C(p,T)$ such that for any $(t,x)\in[0,T]\times\mathcal{O}$,
\begin{align*}
\Vert \tilde u^n(t,x)-u(t,x)\Vert _{L^p(\Omega)}\le Cn^{-1}.
\end{align*}
\end{proposition}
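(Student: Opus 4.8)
The plan is to exploit that $\tilde u^n$ is assembled from the \emph{exact} solution $u$ sampled at the grid projection $\kappa_n(\cdot)$, not from $\tilde u^n$ itself, so the error carries no self-interaction and no Gronwall argument is needed; it is a genuine one-step error, to be estimated term by term. Subtracting the mild formula for $u$ from the definition of $\tilde u^n$, I would write
\begin{equation*}
\tilde u^n(t,x)-u(t,x)=I_1(t,x)+I_2(t,x)+I_3(t,x),
\end{equation*}
with $I_1:=\int_{\mathcal{O}}G^n_t(x,y)u_0(\kappa_n(y))\,\mathrm{d}y-\mathbb{G}_tu_0(x)$ the initial-data part, $I_2$ the deterministic-convolution part (comparing $\Delta_nG^n$ with $\Delta G$, and $f(u(s,\kappa_n(y)))$ with $f(u(s,y))$) and $I_3$ the stochastic-convolution part (comparing $G^n$ with $G$, and $\sigma(u(s,\kappa_n(y)))$ with $\sigma(u(s,y))$). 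Each $I_j$ further splits into a \emph{kernel-difference} piece and an \emph{argument-difference} piece, fed respectively by Lemma~\ref{lem:Gn-G} and by Lemma~\ref{Holder-exact} combined with \eqref{eq:bound-e}.

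For $I_2=I_{2,1}+I_{2,2}$: in $I_{2,1}$ I would factor out $\sup_{(s,y)}\|f(u(s,y))\|_{L^p(\Omega)}\le C$ (from \eqref{eq:bound-e} and the cubic growth of $f$) and apply \eqref{DeltaGnG} after the substitution $r=t-s$; in $I_{2,2}$ I would use the local Lipschitz bound \eqref{fb-fa}, \eqref{eq:bound-e} and Lemma~\ref{Holder-exact} to get $\|f(u(s,\kappa_n(y)))-f(u(s,y))\|_{L^p(\Omega)}\le C|y-\kappa_n(y)|\le Cn^{-1}$, and close with $\int_0^t\!\int_{\mathcal{O}}|\Delta G_{t-s}(x,y)|\,\mathrm{d}y\,\mathrm{d}s\le C$ from Lemma~\ref{Greg}. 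The stochastic term $I_3=I_{3,1}+I_{3,2}$ is handled in the same way through the Burkholder inequality: after squaring, the integrand of $I_{3,1}$ is dominated by $\|\sigma\|_\infty^2|G^n_{t-s}(x,y)-G_{t-s}(x,y)|^2$, whose space--time integral is $\le Cn^{-2}$ by \eqref{Gn-G}, while in $I_{3,2}$ the Lipschitz continuity of $\sigma$ and Lemma~\ref{Holder-exact} yield $\|\sigma(u(s,\kappa_n(y)))-\sigma(u(s,y))\|_{L^p(\Omega)}\le Cn^{-1}$, and $\int_0^t\!\int_{\mathcal{O}}|G_{t-s}(x,y)|^2\,\mathrm{d}y\,\mathrm{d}s\le C$ by Lemma~\ref{Greg}. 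Each of these contributions is $O(n^{-1})$ in $L^p(\Omega)$, uniformly in $(t,x)$; one may assume $p\ge 2$ throughout.

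The main obstacle is the initial-data term $I_1$, since the \emph{time-integrated} bound \eqref{Gn-G} does not by itself control $\int_{\mathcal{O}}|G^n_t(x,y)-G_t(x,y)|\,\mathrm{d}y$ at a fixed time $t$; this is precisely where $u_0\in\mathcal{C}^3(\mathcal{O})$ (rather than $\mathcal{C}^2$) is needed. I would use the compatibility conditions $u_0=u_0^{\prime\prime}=0$ on $\partial\mathcal{O}$, so that $\Delta u_0\in\mathcal{C}^1(\mathcal{O})$ vanishes on $\partial\mathcal{O}$, and rewrite both propagators via a Duhamel formula in the initial datum, namely
\begin{equation*}
\mathbb{G}_tu_0(x)=u_0(x)-\int_0^t\int_{\mathcal{O}}\Delta G_s(x,y)\,\Delta u_0(y)\,\mathrm{d}y\,\mathrm{d}s,
\end{equation*}
together with its discrete analogue $\int_{\mathcal{O}}G^n_t(x,y)u_0(\kappa_n(y))\,\mathrm{d}y=\Pi_nu_0(x)-\int_0^t\int_{\mathcal{O}}\Delta_nG^n_s(x,y)\,\delta_hu_0(\kappa_n(y))\,\mathrm{d}y\,\mathrm{d}s$, which follows from $\frac{\mathrm d}{\mathrm dt}e^{-A_n^2t}U(0)=-A_ne^{-A_n^2t}(A_nU(0))$, the identity $(A_nU(0))_k=\delta_hu_0(kh)$, and extension by $\Pi_n$ in $x$. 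Subtracting these two identities reduces $I_1$ to three pieces: the polygonal-interpolation error $|\Pi_nu_0(x)-u_0(x)|\le Cn^{-1}$ (immediate from $u_0\in\mathcal{C}^1$); the consistency error $\sup_k|\delta_hu_0(kh)-\Delta u_0(kh)|\le Cn^{-1}\|u_0^{\prime\prime\prime}\|_\infty$, which is harmless because it is paired against the integrable kernel $\Delta_nG^n_s$ (using $\int_0^t\!\int_{\mathcal{O}}|\Delta_nG^n_s(x,y)|\,\mathrm{d}y\,\mathrm{d}s\le C$ from Lemma~\ref{Gn-regularity}); and a term of exactly the structure of $I_2$, but with $f(u(s,\cdot))$ replaced by the fixed function $\Delta u_0$, which is $O(n^{-1})$ by \eqref{DeltaGnG}, Lemma~\ref{Greg} and the Lipschitz continuity of $\Delta u_0$ — the last bound being the place where $\mathcal{C}^3$ is essential. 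Summing, $\|I_1\|_{L^p(\Omega)}+\|I_2\|_{L^p(\Omega)}+\|I_3\|_{L^p(\Omega)}\le Cn^{-1}$, which is the assertion.
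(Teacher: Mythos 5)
Your proposal is correct and follows essentially the same route as the paper: the same five-way splitting into kernel-difference and argument-difference pieces controlled by Lemma~\ref{lem:Gn-G}, Lemma~\ref{Holder-exact} and \eqref{eq:bound-e} via the Burkholder inequality, and the same Duhamel-in-the-initial-datum rewriting of both propagators (the paper's \eqref{eq.u1tilde}) to reduce $I_1$ to an interpolation error, a consistency error $|u_0''-\Delta_n u_0|\le Cn^{-1}$ requiring $u_0\in\mathcal{C}^3$, and a kernel-difference term handled by \eqref{DeltaGnG}. No gaps.
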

\begin{proof}
For fixed $(t,x)\in[0,T]\times\OO$, $u^n(t,x)-u(t,x)=\sum_{j=1}^5\mathcal{I}_j$, where
\begin{align*}
&\mathcal{I}_1:=\int_{\mathcal O}G^n_t(x,y)u_0(\kappa_n(y))\mathrm{d} y-\mathbb{G}_tu_0(x),\\
&\mathcal{I}_2:=\int_0^t\int_{\mathcal O}[G_{t-s}^n(x,y)-G_{t-s}(x,y)]\sigma(u(s,\kappa_n(y)))W(\mathrm{d} s,\mathrm{d} y),\\
&\mathcal{I}_3:=\int_0^t\int_{\mathcal O}G_{t-s}(x,y)[\sigma(u(s,\kappa_n(y)))-\sigma(u(s,y))]W(\mathrm{d} s,\mathrm{d} y),\\
&\mathcal{I}_4:=\int_0^t\int_{\mathcal O}[ \Delta_nG_{t-s}^n(x,y)-\Delta G_{t-s}(x,y)]  f(u(s,\kappa_n(y)))\mathrm{d} y\mathrm{d} s,\\
&\mathcal{I}_5:=\int_0^t\int_{\mathcal O} \Delta G_{t-s}(x,y) \left[ f(u(s,\kappa_n(y)))-f(u(s,y))\right]\mathrm{d} y\mathrm{d} s.
\end{align*}

 Following the proof of \cite[Lemma 2.3]{CC01}, we use the PDE satisfied by $G$ to write
$\mathbb{G}_tu_0(x)=u_0(x)-\int_0^t\int_\mathcal{O}\Delta G_r(x,z)u_0^{\prime\prime}(z)\mathrm{d} z\mathrm{d} r.$ 
As a numerical counterpart, 
\begin{align}\label{eq.u1tilde}\notag
&\quad\int_{\mathcal O}G^n_t(x,y)u_0(\kappa_n(y))\mathrm{d} y-\tilde u^n(0,x)=\int_{\mathcal O}\int_0^t\frac{\partial}{\partial r}G^n_r(x,z)u_0(\kappa_n(z))\mathrm{d} z\mathrm{d} r\\&=-\int_0^t\int_\mathcal{O} \Delta_n^2 G^n_r(x,z) u_0(\kappa_n(z))\mathrm{d} z\mathrm{d} r=
-\int_0^t\int_\mathcal{O} \Delta_n G^n_r(x,z)\Delta_n u_0(z)\mathrm{d} z\mathrm{d} r,
\end{align}
where $\tilde u^n(0,x)=\Pi_n(u_0)(x)$ and in the last step we have used the fact that $$\int_\mathcal{O}\Delta_n v(z)w(\kappa_n(z))\mathrm{d} z=\int_\mathcal{O} v(\kappa_n(z))\Delta_n w(z)\mathrm{d} z,$$
for $v,w:\mathcal{O}\rightarrow \mathbb{R}$ with $v=w=0$ on $\partial \mathcal{O}$.
 In particular, when $u_0\in\mathcal C^1(\mathcal{O})$, 
\begin{align}\label{un0Holder}
\vert \tilde u^n(0,x)-\tilde u^n(0,y)\vert \le C\vert x-y\vert ,\quad x,y\in \mathcal{O}.
\end{align}
By $u_0\in \mathcal{C}^3(\mathcal{O})$, \eqref{Deltaw} and the Taylor expansion, there exist $\theta_1,\theta_2\in(0,1)$ such that for $z\in[h,\pi)$,
$$ \vert u^{\prime\prime}_0(z)-\Delta_nu_0(z)\vert =\vert u^{\prime\prime}_0(z)-\frac{1}{2}u_0^{\prime\prime}(\kappa_n(z)+\theta_1\frac{\pi}{n})-\frac{1}{2}u_0^{\prime\prime}(\kappa_n(z)-\theta_2\frac{\pi}{n})\vert \le Cn^{-1},$$
and for $z\in[0,h)$, $\vert u^{\prime\prime}_0(z)-\Delta_nu_0(z)\vert =\vert u^{\prime\prime}_0(z)\vert =\vert u^{\prime\prime}_0(z)-u^{\prime\prime}_0(0)\vert \le Cn^{-1}.$
Therefore, using  \eqref{DGtxy} and \eqref{DeltaGnG}, a direct calculation gives 
\begin{align*}
|\mathcal{I}_1|&\le Cn^{-1}+\int_0^t\int_\mathcal{O}\vert \Delta_n G^n_r(x,z)-\Delta G_r(x,z)\vert \vert u^{\prime\prime}_0(z)\vert \mathrm{d} z\mathrm{d} r\\
&\quad+\int_0^t\int_\mathcal{O}\vert \Delta G_r(x,z)\vert \vert u^{\prime\prime}_0(z)-\Delta_nu_0(z)\vert \mathrm{d} z\mathrm{d} r\le Cn^{-1}.
\end{align*}

Then we apply the Burkholder inequality, the boundedness and Lipschitz continuity of $\sigma$, \eqref{Gn-G}, \eqref{Gtxy}, and Lemma \ref{Holder-exact}  to obtain 
\begin{align*}
\Vert\mathcal{I}_2+\mathcal{I}_3
\Vert^2_{L^p(\Omega)}&\le C\int_0^t\int_{\mathcal O}\vert G_{t-s}^n(x,y)-G_{t-s}(x,y)\vert ^2 \mathrm{d} y\mathrm{d} s\\
&\quad +C\int_0^t\int_{\mathcal O}G^2_{t-s}(x,y)\Vert u(s,\kappa_n(y))-u(s,y)\Vert ^2_{L^p(\Omega)}\mathrm{d} y\mathrm{d} s\le Cn^{-2}.
\end{align*}

It follows from $\vert f(x)\vert \le C(1+\vert x\vert ^3)$ and  \eqref{eq:bound-e} that $\Vert f(u(t,x))\Vert _{L^p(\Omega)}$ is uniformly bounded for all $(t,x)\in[0,T]\times\OO$. This together with \eqref{DeltaGnG} indicates
\begin{align*}
\|\mathcal{I}_4\|_{L^p(\Omega)}&\le C\int_0^t\int_{\mathcal O}\vert \Delta_nG_{t-s}^n(x,y)-\Delta G_{t-s}(x,y)\vert \Vert f(u(s,\kappa_n(y)))\Vert_{L^{p}(\Omega)}\mathrm{d} y\mathrm{d} s\le Cn^{-1}.
\end{align*}
In addition, making use of
\eqref{fb-fa}, the H\"older inequality, Lemma \ref{Holder-exact} and \eqref{eq:bound-e} yields 
\begin{align*}
&\quad\ \Vert f(u(s,\kappa_n(y)))-f(u(s,y))\Vert _{L^p(\Omega)}\\&\le C\Vert u(s,\kappa_n(y))-u(s,y)\Vert _{L^{3p}(\Omega)}\big(1+\Vert u(s,\kappa_n(y))\Vert ^2_{L^{3p}(\Omega)}+\Vert u(s,y)\Vert ^2_{L^{3p}(\Omega)}\big)
\le Cn^{-1}
\end{align*}
for all $(s,y)\in[0,T]\times\mathcal{O}$.
Hence taking advantage of \eqref{DGtxy}, we have 
$\|\mathcal{I}_5\|_{L^p(\Omega)}
\le Cn^{-1}.$
Gathering the above estimates finally completes the proof.

\end{proof}

The next result reveals that $u^n$ has the same H\"older continuity exponent as $u$.
\begin{lemma}\label{u2u3}
Let  $u_0\in\mathcal{C}^2(\OO)$. Then for any $\alpha\in(0,1)$ and $p\ge1$, there exists some constant $C=C(p,T,\alpha)$ such that for any $0\le s<t\le T$, $x,y\in\mathcal O$ and $n\ge2$,
\begin{align*}
&\Vert u^n(t,x)-u^n(s,y)\Vert _{L^p(\Omega)}\le C(\vert t-s\vert ^{\frac{3\alpha}{8}}+\vert x-y\vert ).
\end{align*}
\end{lemma}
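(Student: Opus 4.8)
The plan is to re-run the proof of Lemma~\ref{Holder-exact} at the discrete level, starting from the mild representation \eqref{unR0} of $u^n$. Fix $p\ge 2$ and $0\le s<t\le T$; the remaining cases are trivial or follow by the H\"older inequality. Write $u^n(t,x)-u^n(s,y)=\mathcal{J}_1+\mathcal{J}_2+\mathcal{J}_3$, where
\begin{align*}
\mathcal{J}_1&=\int_{\OO}G^n_t(x,z)u_0(\kappa_n(z))\ud z-\int_{\OO}G^n_s(y,z)u_0(\kappa_n(z))\ud z,\\
\mathcal{J}_2&=(\Delta_nG^n)*f\big(u^n(\cdot,\kappa_n(\cdot))\big)(t,x)-(\Delta_nG^n)*f\big(u^n(\cdot,\kappa_n(\cdot))\big)(s,y),\\
\mathcal{J}_3&=G^n\diamond\sigma\big(u^n(\cdot,\kappa_n(\cdot))\big)(t,x)-G^n\diamond\sigma\big(u^n(\cdot,\kappa_n(\cdot))\big)(s,y).
\end{align*}
The three ingredients are the uniform moment bound \eqref{eq:u^nbound} (via Proposition~\ref{Unl2-pro}), the discrete Green-function regularity of Lemma~\ref{Gn-regularity}, and the abstract increment estimates \eqref{eq:SD}--\eqref{eq:SS}.

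For $\mathcal{J}_1$ I would use the identity \eqref{eq.u1tilde}, i.e.\ $\int_{\OO}G^n_r(\xi,z)u_0(\kappa_n(z))\ud z=\Pi_n(u_0)(\xi)-\int_0^r\int_{\OO}\Delta_nG^n_\rho(\xi,z)\Delta_nu_0(z)\ud z\ud\rho$. Since $u_0\in\mathcal{C}^2(\OO)$, a Taylor expansion bounds $\sup_{\OO}|\Delta_nu_0|$ by $\sup_{\OO}|u_0''|$ uniformly in $n$, and $\Pi_n(u_0)$ is Lipschitz with an $n$-independent constant by \eqref{un0Holder}. Hence $|\mathcal{J}_1|$ is controlled by $|\Pi_n(u_0)(x)-\Pi_n(u_0)(y)|$ together with constant multiples of $\int_0^s\int_{\OO}|\Delta_nG^n_\rho(x,z)-\Delta_nG^n_\rho(y,z)|\ud z\ud\rho$ and $\int_s^t\int_{\OO}|\Delta_nG^n_\rho(x,z)|\ud z\ud\rho$, and Lemma~\ref{Gn-regularity} bounds all of these by $C(|x-y|+|t-s|^{3\alpha/8})$. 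For $\mathcal{J}_2$ and $\mathcal{J}_3$, observe that $\tilde v(r,z):=u^n(r,\kappa_n(z))$ has all moments uniformly bounded, $\sup_{r,z,n}\|\tilde v(r,z)\|_{L^q(\Omega)}\le C_q$ for every $q\ge1$, by \eqref{eq:u^nbound} (and $\sigma$ is bounded, $f$ has polynomial growth). Thus the hypotheses of \eqref{eq:SD}--\eqref{eq:SS} hold with $v=\tilde v$, and applying them with $S=\Delta_nG^n$ and $S=G^n$ respectively, then invoking Lemma~\ref{Gn-regularity}, gives $\|\mathcal{J}_2\|_{L^p(\Omega)}\le C(|t-s|^{3\alpha/8}+|x-y|)$ and $\|\mathcal{J}_3\|^2_{L^p(\Omega)}\le C(|t-s|^{3\alpha/4}+|x-y|^2)$. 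Adding the three bounds finishes the proof.

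The only genuine subtlety I anticipate is the spatial control of $\mathcal{J}_1$: the crude estimate $\sup_{\OO}|u_0|\cdot\int_{\OO}|G^n_t(x,z)-G^n_t(y,z)|\ud z$ fails to decay like $|x-y|$ uniformly as $t\downarrow 0$, which is precisely why one routes through \eqref{eq.u1tilde} to transfer two discrete derivatives onto $u_0$, and why the assumption $u_0\in\mathcal{C}^2(\OO)$ is imposed so that $\Delta_nu_0$ stays bounded independently of $n$. Everything else is an essentially verbatim transcription of the proof of Lemma~\ref{Holder-exact}; the discrete moment bound \eqref{eq:u^nbound} (hence Proposition~\ref{Unl2-pro}) is exactly what allows the convolution estimates to close as in the continuous case, which is why the statement is recorded here without a detailed proof.
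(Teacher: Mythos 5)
Your proposal is correct and follows essentially the same route as the paper: the initial-data term is handled via the identity \eqref{eq.u1tilde} together with \eqref{un0Holder}, the boundedness of $\Delta_n u_0$ for $u_0\in\mathcal{C}^2(\OO)$, and Lemma \ref{Gn-regularity}, while the deterministic and stochastic convolutions are treated by \eqref{eq:SD} with $S=\Delta_nG^n$ and \eqref{eq:SS} with $S=G^n$ combined with Lemma \ref{Gn-regularity}. Your added remarks on why the moment hypothesis of \eqref{eq:SD}--\eqref{eq:SS} is satisfied (via \eqref{eq:u^nbound}) and on why one must route the initial-data increment through \eqref{eq.u1tilde} correctly fill in details the paper leaves implicit.
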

\begin{proof}

Since $u_0\in \mathcal C^2(\mathcal{O})$, $\vert \Delta_n u_0(z)\vert \le C$ for $z\in\mathcal{O}$. Hence it follows from \eqref{eq.u1tilde}, \eqref{un0Holder} and Lemma \ref{Gn-regularity} that 
\begin{align*}
&\quad\ \left\vert \int_{\mathcal O}G^n_t(x,z)u_0(\kappa_n(z))\mathrm{d} z-\int_{\mathcal O}G^n_s(y,z)u_0(\kappa_n(z))\mathrm{d} z\right\vert \\
&\le C\vert x-y\vert +
\int_0^s\int_\mathcal{O} \vert \Delta_n G^n_r(x,z)-\Delta_n G^n_r(y,z)\vert \vert \Delta_n u_0(z)\vert \mathrm{d} z\mathrm{d} r\\
&\quad +\int_s^t\int_\mathcal{O}| \Delta_n G^n_r(x,z)||\Delta_n u_0(z)|\mathrm{d} z\mathrm{d} r\le C(\vert x-y\vert+\vert t-s\vert ^{\frac{3\alpha}{8}}).
\end{align*}
In virtue of Lemma \ref{Gn-regularity}, by further applying \eqref{eq:SD} with $S=\Delta_nG^n$ and \eqref{eq:SS} with $S=G^n$, we obtain the desired result.
\end{proof}

\subsection{Error estimate between $\tilde u^n$ and $u^n$}\label{S4.2}
This part carries out the error estimate between the auxiliary process $\tilde u^n$ and the numerical solution $u^n$. This will be accomplished by studying the moment estimates of $E(t):=\tilde U(t)-U(t)$, since
$\sup_{x\in\mathcal{O}}\vert\tilde u^n(t,x)-u^n(t,x)\vert=\Vert E(t)\Vert_{l^\infty_n}$.

\begin{proposition}\label{H-1E}
Let $u_0\in\mathcal{C}^3(\OO)$. Then 
there exists some constant $C=C(T)$ such that for any $t\in[0,T]$,
\begin{align}\label{En-1-4}
\E\left[\Vert (-A_n)^{-\frac{1}{2}}E(t)\Vert _{l_n^2}^{4}\right]+\E\left[\Big\vert \int_0^t\Vert (-A_n)^{\frac{1}{2}}E(s)\Vert _{l_n^2}^2\ud s\Big\vert ^2\right]\le Cn^{-4}.
\end{align}
\end{proposition}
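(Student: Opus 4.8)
The plan is to derive the error equation for $E(t)=\tilde U(t)-U(t)$ by subtracting \eqref{NCH} from \eqref{anx_u}, which gives
\begin{align*}
\dot E(t)+A_n^2E(t)=A_n\big(F_n(\mathbb U(t))-F_n(U(t))\big)+\sqrt{n/\pi}\big(\Sigma_n(\mathbb U(t))-\Sigma_n(U(t))\big)\dot\beta_t,
\end{align*}
with $E(0)=0$. The key structural input is the one-sided Lipschitz property of $A_nF_n$ in the discrete negative Sobolev norm: testing the deterministic part against $(-A_n)^{-1}E(t)$ turns $\langle A_n(F_n(\mathbb U)-F_n(U)),(-A_n)^{-1}E\rangle_{l^2_n}$ into $-\langle F_n(\mathbb U)-F_n(U),E\rangle_{l^2_n}$, which by \eqref{fbfa} (applied componentwise, with the term $ED(\tilde U-U)$) is bounded above by $\Vert E(t)\Vert_{l^2_n}^2$. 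Crucially, $\Vert E(t)\Vert_{l^2_n}^2=\langle(-A_n)^{\frac12}E,(-A_n)^{-\frac12}E\rangle_{l^2_n}\le\Vert(-A_n)^{\frac12}E\Vert_{l^2_n}\Vert(-A_n)^{-\frac12}E\Vert_{l^2_n}$, and the linear term $-\langle A_n^2E,(-A_n)^{-1}E\rangle_{l^2_n}=-\Vert(-A_n)^{\frac12}E\Vert_{l^2_n}^2$ provides exactly the coercive term needed to absorb this after a Young inequality, leaving $C\Vert(-A_n)^{-\frac12}E\Vert_{l^2_n}^2$ on the right.

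\textbf{Main steps.} First I would apply It\^o's formula to $\Vert(-A_n)^{-\frac12}E(t)\Vert_{l^2_n}^2$. The drift contributes $-2\Vert(-A_n)^{\frac12}E\Vert_{l^2_n}^2+2\Vert E\Vert_{l^2_n}^2$ as above; using the interpolation inequality and Young's inequality this is dominated by $-\Vert(-A_n)^{\frac12}E\Vert_{l^2_n}^2+C\Vert(-A_n)^{-\frac12}E\Vert_{l^2_n}^2$. The It\^o correction term is $\frac{n}{\pi}\Vert(-A_n)^{-\frac12}(\Sigma_n(\mathbb U)-\Sigma_n(U))\Vert_{\mathrm F}^2$; since $(-A_n)^{-\frac12}$ has operator norm bounded by $\pi/2$ and $\sigma$ is globally Lipschitz, $\frac{n}{\pi}\Vert\Sigma_n(\mathbb U)-\Sigma_n(U)\Vert_{\mathrm F}^2=\frac{n}{\pi}\sum_k|\sigma(\mathbb U_k)-\sigma(U_k)|^2\le C\Vert E\Vert_{l^2_n}^2$, which again interpolates into $\frac14\Vert(-A_n)^{\frac12}E\Vert_{l^2_n}^2+C\Vert(-A_n)^{-\frac12}E\Vert_{l^2_n}^2$. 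The martingale part $\int_0^t2\langle(-A_n)^{-1}E(s),\sqrt{n/\pi}(\Sigma_n(\mathbb U(s))-\Sigma_n(U(s)))\ud\beta_s\rangle_{l^2_n}$ has zero expectation after localization. Taking expectations and Gronwall then gives $\E[\Vert(-A_n)^{-\frac12}E(t)\Vert_{l^2_n}^2]+\E\int_0^t\Vert(-A_n)^{\frac12}E(s)\Vert_{l^2_n}^2\ud s\le C\int_0^t\E[\Vert(-A_n)^{-\frac12}E(s)\Vert_{l^2_n}^2]\ud s+(\text{data})$. The inhomogeneity comes from comparing $F_n$, $\Sigma_n$ evaluated at $\mathbb U$ (the exact solution on grid points) versus at the interpolation used to define $\tilde U$ — more precisely, $\tilde U_k=\tilde u^n(t,kh)$ and one must account for $u(s,\kappa_n(y))$ versus $u(s,y)$; by Proposition \ref{utilde-uh-1} and Lemma \ref{Holder-exact}, the forcing is $O(n^{-1})$ in $L^p(\Omega)$, so the squared contribution is $O(n^{-2})$ and the fourth-power contribution $O(n^{-4})$. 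Then I would repeat the argument for the fourth moment: apply It\^o to $\Vert(-A_n)^{-\frac12}E(t)\Vert_{l^2_n}^4$, using the already-established second-moment bound to close the Burkholder estimate on the martingale term, and Gronwall once more to obtain $\E[\Vert(-A_n)^{-\frac12}E(t)\Vert_{l^2_n}^4]\le Cn^{-4}$. Finally, integrating the It\^o identity for the fourth power and moving the coercive term to the left yields the $L^4(\Omega;L^2(0,T))$-bound on $\Vert(-A_n)^{\frac12}E\Vert_{l^2_n}$ in the desired form.

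\textbf{Main obstacle.} The delicate point is not the abstract one-sided Lipschitz estimate — that is clean — but rather controlling the inhomogeneous term carefully enough to get the sharp $n^{-4}$ (rather than, say, $n^{-2}$ or a suboptimal power). This requires that the discrepancies $F_n(\mathbb U)-F_n(\tilde U\text{-data})$ and the analogous noise term are genuinely $O(n^{-1})$ in all $L^p(\Omega)$, which in turn rests on Proposition \ref{utilde-uh-1} (giving $\Vert\tilde u^n(t,x)-u(t,x)\Vert_{L^p(\Omega)}\le Cn^{-1}$) together with the polynomial growth of $f$ and the moment bounds \eqref{eq:bound-e}, \eqref{eq:u^nbound}. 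A secondary subtlety is that $F_n$ has cubic growth, so when estimating $\langle F_n(\mathbb U)-F_n(U),E\rangle_{l^2_n}$ one must be sure the "good" sign from \eqref{fbfa} is exploited and only the linear remainder $+\Vert E\Vert_{l^2_n}^2$ is left — no uncontrolled cubic term in $E$ should survive, which is exactly why working in the negative norm (so that the test function is $(-A_n)^{-1}E$, not $E$ itself paired against $A_nF_n$) is essential. One also needs the localization/stopping-time argument to justify the martingale term having zero expectation and to make It\^o's formula rigorous given the a priori only local integrability, but this is routine given Proposition \ref{Unl2-pro} and the uniform moment bounds already in hand.
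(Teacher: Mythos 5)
Your overall architecture coincides with the paper's: subtract \eqref{NCH} from \eqref{anx_u}, apply It\^o's formula to powers of $\Vert(-A_n)^{-\frac12}E\Vert$, exploit the one-sided bound \eqref{fbfa}, absorb $\Vert E\Vert^2$ into the coercive term by interpolation and Young, and feed in the $O(n^{-1})$ forcing from Proposition \ref{utilde-uh-1}. (One presentational slip: since $E=\tilde U-U$ while the drift involves $F_n(\mathbb U)-F_n(U)$, the pairing $-\langle F_n(\mathbb U)-F_n(U),E\rangle$ is not literally an instance of \eqref{fbfa}; the splitting through $\tilde U$ that you invoke later is mandatory, not optional.) There are, however, two concrete gaps.

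First, the It\^o correction term. You bound $\frac{n}{\pi}\Vert(-A_n)^{-\frac12}\{\Sigma_n(\mathbb U)-\Sigma_n(U)\}\Vert_{\mathrm F}^2$ by the operator norm of $(-A_n)^{-\frac12}$ times $\frac{n}{\pi}\sum_k\vert\sigma(\mathbb U_k)-\sigma(U_k)\vert^2$ and claim the result is $\le C\Vert E\Vert_{l_n^2}^2$ (plus forcing). This loses at least a factor of $n$: the Lipschitz bound gives $\sum_k\vert\sigma(\mathbb U_k)-\sigma(U_k)\vert^2\le C\Vert\mathbb U-U\Vert^2=\frac{Cn}{\pi}\Vert\mathbb U-U\Vert_{l_n^2}^2$, so the operator-norm route leaves an uncompensated power of $n$ in front of $\Vert\mathbb U-U\Vert_{l_n^2}^2$, and the Gronwall constant would then blow up with $n$. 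The $n$-uniform bound requires the trace computation of \eqref{ASigmaUF} and \eqref{I3F}: expanding in the eigenbasis, the uniform bound $\vert e_l(k)\vert\le\sqrt{2/n}$ supplies a factor $1/n$ that cancels the prefactor $n$, and $\sum_{l}(-\lambda_{l,n})^{-1}\le C$ (the uniform trace-class property of $(-A_n)^{-1}$, the discrete counterpart of the Hilbert--Schmidt condition needed for space-time white noise) closes the estimate. The operator norm of $(-A_n)^{-\frac12}$ alone is provably insufficient here.

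Second, the $L^4(\Omega;L^2(0,T))$ half of \eqref{En-1-4}. You propose to obtain $\E[\vert\int_0^t\Vert(-A_n)^{\frac12}E(s)\Vert_{l_n^2}^2\ud s\vert^2]\le Cn^{-4}$ by ``integrating the It\^o identity for the fourth power and moving the coercive term to the left.'' But the coercive term produced by It\^o's formula for $\Vert(-A_n)^{-\frac12}E\Vert^4$ is the weighted integral $4\int_0^t\Vert(-A_n)^{-\frac12}E\Vert^2\Vert(-A_n)^{\frac12}E\Vert^2\ud s$ --- exactly the quantity in \eqref{En-1} with $p=4$ --- which neither dominates nor is dominated by $\bigl(\int_0^t\Vert(-A_n)^{\frac12}E\Vert^2\ud s\bigr)^2$. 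The paper instead integrates the $p=2$ identity so that $2\int_0^t\Vert(-A_n)^{\frac12}E\Vert^2\ud s$ itself sits on the left-hand side, and then squares both sides; the resulting martingale and cross terms are controlled by the It\^o isometry together with precisely the mixed moment \eqref{En-1} for $p=4$. As stated, your final step does not produce the second summand of \eqref{En-1-4}.
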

\begin{proof}The proof is divided into two steps.

\textit{Step 1: We show that for any $p\ge2$, there exists $C=C(p,T)$ such that} 
\begin{align}\label{En-1}
\int_0^t\E\left[\Vert (-A_n)^{-\frac{1}{2}}E(s)\Vert _{l_n^2}^{p-2}\Vert (-A_n)^{\frac{1}{2}}E(s)\Vert _{l_n^2}^2\right]\ud s\le Cn^{-p}\quad\forall~t\in[0,T].
\end{align}

Subtracting  \eqref{NCH} from  \eqref{anx_u} leads to 
\begin{align}\label{Ent}
&\quad\ud E(t)+A_n^2 E(t)\ud t\\\notag
&=A_n\left\{F_n(\mathbb U(t))-F_n(U(t))\right\}\ud t+\sqrt{n/\pi}\left\{\Sigma_n(\mathbb U(t))-\Sigma_n(U(t))\right\}\ud \beta_t.
\end{align}
Applying It\^o's formula to $\Vert (-A_n)^{-\frac{1}{2}}E(t)\Vert ^{p}$ $(p\ge2)$ reveals
\begin{small}
\begin{align}\label{H-1error}
&\quad\ud \Vert (-A_n)^{-\frac{1}{2}}E(t)\Vert ^{p}=-p\Vert (-A_n)^{-\frac{1}{2}}E(t)\Vert ^{p-2}\Vert (-A_n)^{\frac{1}{2}}E(t)\Vert ^2\ud t\\\notag
&\quad+p\Vert (-A_n)^{-\frac{1}{2}}E(t)\Vert ^{p-2}\langle E(t),F_n(U(t))-F_n(\mathbb U(t))\rangle\ud t\\\notag
&\quad+\frac{n}{2\pi}p\Vert (-A_n)^{-\frac{1}{2}}E(t)\Vert ^{p-2}\Vert (-A_n)^{-\frac{1}{2}}\{\Sigma_n(\mathbb U(t))-\Sigma_n(U(t))\}\Vert _{\mathrm F}^2\ud t\\\notag
&\quad+\frac{n}{2\pi}p(p-2)\Vert (-A_n)^{-\frac{1}{2}}E(t)\Vert ^{p-4}\Vert E(t)^\top(-A_n)^{-1}\{\Sigma_n(\mathbb U(t))-\Sigma_n(U(t))\}\Vert ^2\\\notag
&\quad+\frac{p}{\sqrt{\pi}}\Vert (-A_n)^{-\frac{1}{2}}E(t)\Vert ^{p-2}\left\langle(-A_n)^{-1}E(t),\sqrt{n}\{\Sigma_n(\mathbb U(t))-\Sigma_n(U(t))\}\ud \beta_t\right\rangle.
\end{align}
\end{small}
For $t\in[0,T]$, we introduce 
\begin{align*}
J_1(t)&:=\langle E(t),F_n(U(t))-F_n(\mathbb U(t))\rangle,\\
J_2(t)&:=n\Vert (-A_n)^{-\frac{1}{2}}\{\Sigma_n(\mathbb U(t))-\Sigma_n(U(t))\}\Vert ^2_{\mathrm F},\\
J_3(t)&:=n\Vert E(t)^\top(-A_n)^{-1}\{\Sigma_n(\mathbb U(t))-\Sigma_n(U(t))\}\Vert ^2.
\end{align*}
Since $\Vert z^\top B_1\Vert \le \Vert B_1\Vert _{\mathrm F}\Vert z\Vert $ for any $z\in\mathbb{R}^{n-1}$ and $B_1\in\mathbb{R}^{(n-1)\times(n-1)}$, we have
\begin{align}\label{J3t}
J_3(t)&
\le J_2(t)\Vert (-A_n)^{-\frac{1}{2}}E(t)\Vert ^2.
\end{align}
Due to \eqref{H-1error}, \eqref{J3t}, and $E(0)=0$, it holds that for any $p\ge2$, 
\begin{align}
\label{ANTp}
&\quad\E\left[\Vert (-A_n)^{-\frac{1}{2}}E(t)\Vert ^{p}\right]+p\int_0^t\E\left[\Vert (-A_n)^{-\frac{1}{2}}E(s)\Vert ^{p-2}\Vert (-A_n)^{\frac{1}{2}}E(s)\Vert ^2\right]\ud s\\\notag
&\le \frac{1}{2}p^2\int_0^t\E\left[\Vert (-A_n)^{-\frac{1}{2}}E(s)\Vert ^{p-2}\left(J_1(s)+J_2(s)\right)\right]\ud s.
\end{align}

The Lipschitz continuity of $\sigma$ and $e_l(k)\le \sqrt{2/n}$ for $l,k\in\mathbb{Z}_{n-1}$ imply 
\begin{align}\label{Sigma-sigma}
&\quad\Vert \{\Sigma_n(\mathbb U(t))-\Sigma_n(U(t))\}e_l\Vert ^2=\sum_{k=1}^{n-1}\left\vert \left(\sigma(\mathbb U_k(t))-\sigma(U_k(t))\right)e_l(k)\right\vert ^2\\\notag
&\le\frac{C}{n}\sum_{k=1}^{n-1}\left\vert \mathbb U_k(t)-U_k(t)\right\vert ^2= \frac{C}{n}\left\Vert \mathbb U(t)-U(t)\right\Vert ^2\quad\forall~l\in\mathbb{Z}_{n-1}.
\end{align}
Analogous to \eqref{ASigmaUF}, by \eqref{Sigma-sigma} and the symmetry of $A_n$ and $\Sigma_n(\mathbb U(t))-\Sigma_n(U(t))$,
\begin{align}\label{I3F}
J_2(t)
&=n\sum_{l=1}^{n-1}(-\lambda_{l,n})^{-1}\Vert \{\Sigma_n(\mathbb U(t))-\Sigma_n(U(t))\}e_l\Vert ^2\\\notag
&\le C\left\Vert \mathbb U(t)-U(t)\right\Vert ^2\le C\Vert \mathbb U(t)-\tilde U(t)\Vert ^2+ C\Vert E(t)\Vert ^2,
\end{align}
due to $ \sum_{l=1}^{n-1}(-\lambda_{l,n})^{-1}\le C.$
By \eqref{fb-fa} and the H\"older inequality, for $1\le q<\infty$, 
\begin{align}\label{F-F}
& \Vert F_n(\mathbb U(t))-F_n(\tilde U(t))\Vert ^2\le C\sum_{k=1}^{n-1}\big(1+\mathbb U_k(t)^2+\tilde U_k(t)^2\big)^2\vert \mathbb U_k(t)-\tilde U_k(t)\vert ^2\\\notag
&\le Cn\big(1+\Vert \mathbb U(t)\Vert _{l^{4q^\prime}_n}^4+\Vert \tilde U(t)\Vert _{l_n^{4q^\prime}}^{4}\big)\Vert \mathbb U(t)-\tilde U(t)\Vert ^2_{l_n^{2q}},
\end{align}
where $q^\prime=q/(q-1)$. Denote $K(t):=\big(1+\Vert \mathbb U(t)\Vert _{l^{8}_n}^4+\Vert \tilde U(t)\Vert _{l_n^{8}}^{4}\big)\Vert \mathbb U(t)-\tilde U(t)\Vert ^2_{l_n^{4}}$ for $t\in[0,T]$. 
Utilizing \eqref{fbfa} and the Young inequality, we get
\begin{align*}
J_1(t)&=\langle E(t),F_n(U(t))-F_n(\tilde U(t))\rangle+\langle E(t),F_n(\tilde U(t))-F_n(\mathbb U(t))\rangle\\
&\le \frac{3}{2}\Vert E(t)\Vert ^2+\frac{1}{2}\Vert F_n(\mathbb U(t))-F_n(\tilde U(t))\Vert ^2,
\end{align*}
which, along with \eqref{F-F} (with $q=2$), \eqref{I3F} and $\Vert \cdot\Vert _{l_n^2}\le C\Vert \cdot\Vert _{l_n^4}$, indicates
\begin{align}\label{J12}
J_1(t)+J_2(t)&\le C\Vert E(t)\Vert ^2+CnK(t)\\\notag
&\le \epsilon_1\Vert (-A_n)^{\frac{1}{2}}E(t)\Vert ^2+C(\epsilon_1)\Vert (-A_n)^{-\frac{1}{2}}E(t)\Vert ^2+CnK(t)
\end{align}
for $\epsilon_1>0$.
By Proposition \ref{utilde-uh-1} and the H\"older inequality, for any $q\ge\theta\ge1$,
\begin{align}\label{Un-Unpp}
\E\left[\Vert \mathbb U(s)-\tilde U(s)\Vert ^q_{l^\theta_n}\right]&=\E\bigg[\Big(\frac{\pi}{n}\sum_{l=1}^{n-1}\vert u(s,lh)-\tilde u^n(s,lh)\vert ^\theta\Big)^{\frac{q}{\theta}}\bigg]\le C n^{-q}
\end{align}
for any $s\in[0,T]$.
Thanks to \eqref{eq:bound-e}, we  have that for any $q\ge\theta\ge1$ and $t\in[0,T]$,
\begin{align}\label{Unp}
\E\Big[\Vert \mathbb U(t)\Vert _{l_n^{\theta}}^{q}\Big]\le C\E\Big[\Vert \mathbb U(t)\Vert _{l_n^{\infty}}^{q}\Big]\le C\E\Big[\sup_{x\in\mathcal{O}}\vert u(t,x)\vert ^q\Big]
\le C(q,T).
\end{align}
A combination of \eqref{Un-Unpp} and \eqref{Unp} yields that for any $q\ge\theta\ge1$,
\begin{align*}
\E\big[\Vert \tilde U(s)\Vert _{l_n^{\theta}}^{q}\big]\le C(\theta,q,T)\quad \forall~s\in[0,T].
\end{align*}
The previous three estimates and the H\"older inequality ensure that for any $p\ge1$,
\begin{align}\label{Kt}\notag
\left\Vert K(s)\right\Vert _{L^p(\Omega)}
&\le C\big(1+\Vert \mathbb U(s)\Vert _{L^{8p}(\Omega;l_n^8)}^{4}+\Vert \tilde U(s)\Vert _{L^{8p}(\Omega;l_n^8)}^{4}\big)\Vert \mathbb U(s)-\tilde U(s)\Vert _{L^{4p}(\Omega;l^{4}_n)}^2\\
&\le C n^{-2}\quad \forall~s\in[0,T].
\end{align}
Since $\sqrt{\pi}\Vert \cdot\Vert =\sqrt{n}\Vert \cdot\Vert _{l_n^2}$, it follows from \eqref{ANTp} and \eqref{J12} with $\epsilon_1=1/p^{2}$ that
\begin{align*}
&\quad\E\left[\Vert (-A_n)^{-\frac{1}{2}}E(t)\Vert _{l_n^2}^{p}\right]+\big(p-\frac{1}{2}
\big)\int_0^t\E\left[\Vert (-A_n)^{-\frac{1}{2}}E(s)\Vert _{l_n^2}^{p-2}\Vert (-A_n)^{\frac{1}{2}}E(s)\Vert _{l_n^2}^2\right]\ud s\\
&\le C \int_0^t\E\left[\Vert (-A_n)^{-\frac{1}{2}}E(s)\Vert _{l_n^2}^{p}\right]\ud s+C\int_0^t\E\left[\Vert (-A_n)^{-\frac{1}{2}}E(s)\Vert _{l_n^2}^{p-2}K(s)\right]\ud s\\
&\le C \int_0^t\E\left[\Vert (-A_n)^{-\frac{1}{2}}E(s)\Vert _{l_n^2}^{p}\right]\ud s+C\int_0^t\E\left[\vert K(s)\vert ^{\frac{p}{2}}\right]\ud s.
\end{align*}
This together with the Gronwall lemma and \eqref{Kt} leads to $$\E\big[\Vert (-A_n)^{-\frac{1}{2}}E(t)\Vert _{l_n^2}^{p}\big]\le Cn^{-p}$$ for any $t\in[0,T]$, and consequently, we obtain
\eqref{En-1}.

\textit{Step 2: We prove \eqref{En-1-4}.}
Taking $p=2$ in \eqref{H-1error} and using \eqref{J12}, we deduce
\begin{align}\label{H-1errorl2}
\Vert (-A_n)^{-\frac{1}{2}}E(t)\Vert ^{2}+2\int_0^t\Vert (-A_n)^{\frac{1}{2}}E(s)\Vert ^2\ud s
\le\int_0^t 2J_1(s)+J_2(s)\ud s+nM(t)\\\notag
\le 2\epsilon_1\int_0^t \Vert (-A_n)^{\frac{1}{2}}E(s)\Vert ^2\ud s+\int_0^tC_{\epsilon_1}\Vert (-A_n)^{-\frac{1}{2}}E(s)\Vert ^2+CnK(s)\ud s
+nM(t)
\end{align}
for $\epsilon_1\in(0,1)$. Here
$M(t):=\frac{2}{\sqrt {n\pi}}\int_0^t\langle(-A_n)^{-1}E(s),\{\Sigma_n(\mathbb U(s))-\Sigma_n(U(s))\}\ud \beta_s\rangle$
is a martingale since \eqref{En-1} and the boundedness of $\sigma$ imply
$\E[\vert M(t)\vert ^2]<\infty$. Moreover, by the It\^o isometry, \eqref{J3t},  \eqref{I3F}, and the Young inequality,
\begin{align*}
\E[\vert M(t)\vert ^2]&= \frac{4}{\pi n}\int_0^t\E[\Vert \{\Sigma_n(\mathbb U(s))-\Sigma_n(U(s))\}(-A_n)^{-1}E(s)\Vert ^2]\ud s\\
&\le Cn^{-2}\int_0^t\E[J_3(s)]\ud s\le  Cn^{-2}\int_0^t\E[\Vert (-A_n)^{-\frac{1}{2}}E(s)\Vert ^2J_2(s)]\ud s\\
&\le  Cn^{-2}\int_0^t\E[\Vert \mathbb U(s)-\tilde U(s)\Vert ^4]\ud s+Cn^{-2}\int_0^t\E[\Vert (-A_n)^{-\frac{1}{2}}E(s)\Vert ^4]\ud s\\
&\quad+Cn^{-2}\int_0^t\E[\Vert (-A_n)^{-\frac{1}{2}}E(s)\Vert ^2\Vert (-A_n)^{\frac{1}{2}}E(s)\Vert ^2]\ud s.
\end{align*}
Then \eqref{H-1errorl2} with $\epsilon_1=\frac{1}{2}$ and the Young inequality give
\begin{align*}
&\quad \E\left[\Vert (-A_n)^{-\frac{1}{2}}E(t)\Vert _{l_n^2}^{4}\right]+\E\left[\Big\vert \int_0^t\Vert (-A_n)^{\frac{1}{2}}E(s)\Vert _{l_n^2}^2\ud s\Big\vert ^2\right]\\
&\le C\E\left[\Big\vert \int_0^t\Vert (-A_n)^{-\frac{1}{2}}E(s)\Vert _{l_n^2}^2\ud s\Big\vert ^2\right]+\E[\vert M(t)\vert ^2]+C\E\left[\Big\vert \int_0^tK(s)\ud s\Big\vert ^2\right]\\
&\le C\int_0^t\E[\Vert (-A_n)^{-\frac{1}{2}}E(s)\Vert _{l_n^2}^4]\ud s+ C\int_0^t\E[\Vert \mathbb U(s)-\tilde U(s)\Vert _{l_n^2}^4]\ud s\\
&\quad+C\int_0^t\E[\Vert (-A_n)^{-\frac{1}{2}}E(s)\Vert _{l_n^2}^2\Vert (-A_n)^{\frac{1}{2}}E(s)\Vert _{l_n^2}^2]\ud s+C\int_0^t\E[K(s)^2]\ud s\\
&\le  C\int_0^t\E[\Vert (-A_n)^{-\frac{1}{2}}E(s)\Vert _{l_n^2}^4]\ud s+Cn^{-4},
\end{align*}
where  we have used \eqref{En-1} with $p=4$,
\eqref{Un-Unpp}, and \eqref{Kt} in the last step. Thus,
\eqref{En-1-4} follows immediately from the Gronwall lemma.\end{proof}

Similar to Proposition \ref{Unl2-pro}, we have the following regularity estimate of $\tilde U(t)$. 
\begin{lemma}\label{tildeUnl2}
Let  $u_0\in\mathcal{C}^1(\OO)$ and $p\ge1$. Then
\begin{align*}
\E\big[\Vert (-A_n)^{\frac{1}{2}}\tilde U(t)\Vert ^p_{l_n^2}\big]&\le C(T,p)\quad\forall~t\in[0,T].
\end{align*} 
\end{lemma}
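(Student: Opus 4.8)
The plan is to mimic the proof of Proposition~\ref{Unl2-pro}, writing $\tilde U=\tilde V+\tilde O$, where
$\tilde O(t):=\sqrt{n/\pi}\int_0^t\exp(-A_n^2(t-s))\Sigma_n(\mathbb U(s))\,\ud\beta_s$ is the stochastic convolution and $\tilde V(t):=\tilde U(t)-\tilde O(t)$ solves the (pathwise, randomly forced) equation $\dot{\tilde V}(t)=-A_n^2\tilde V(t)+A_nF_n(\mathbb U(t))$ with $\tilde V(0)=U(0)$. The key simplification compared with Proposition~\ref{Unl2-pro} is that the nonlinear term in \eqref{anx_u} is evaluated at the exact solution $\mathbb U$ on grid points, for which the uniform moment bound \eqref{eq:bound-e} is directly available, so no intermediate energy estimate of the type of Lemma~\ref{uniforml2} is needed.

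First I would bound $\tilde O$. Since $\sigma$ is bounded, $\|\Sigma_n(\mathbb U(s))e_l\|^2\le C$ uniformly in $l\in\mathbb Z_{n-1}$ and $n$, exactly as for $\Sigma_n(U(s))$ in the step leading to \eqref{ASigmaUF}. Consequently the factorization-method argument of Subsection~\ref{S3.2} carries over verbatim with $\Sigma_n(U(s))$ replaced by $\Sigma_n(\mathbb U(s))$, yielding, as in \eqref{Onl2}, that $\E\big[\sup_{t\in[0,T]}\|(-A_n)^{\frac12}\tilde O(t)\|_{l_n^2}^p\big]\le C(T,p)$ for every $p\ge1$.

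Next I would estimate $\tilde V$. By the variation of constants formula and $(-A_n)^{\frac12}e^{-A_n^2(t-s)}A_n=-(-A_n)^{\frac32}e^{-A_n^2(t-s)}$,
\[
\|(-A_n)^{\frac12}\tilde V(t)\|_{l_n^2}\le \|(-A_n)^{\frac12}e^{-A_n^2t}U(0)\|_{l_n^2}+\int_0^t\|(-A_n)^{\frac32}e^{-A_n^2(t-s)}\|_2\,\|F_n(\mathbb U(s))\|_{l_n^2}\,\ud s.
\]
The first term is $\le\|(-A_n)^{\frac12}U(0)\|_{l_n^2}\le C$ by \eqref{u0-H1}, and \eqref{smooth} with $\gamma=\frac32$ gives $\|(-A_n)^{\frac32}e^{-A_n^2(t-s)}\|_2\le C(t-s)^{-\frac34}$. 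Since $f(x)=x^3-x$, one has $\|F_n(\mathbb U(s))\|_{l_n^2}\le C(1+\|\mathbb U(s)\|_{l_n^6}^3)\le C(1+\|\mathbb U(s)\|_{l_n^\infty}^3)\le C(1+\sup_{x\in\mathcal O}|u(s,x)|^3)$, so \eqref{eq:bound-e} yields $\sup_{s\in[0,T]}\|F_n(\mathbb U(s))\|_{L^p(\Omega;l_n^2)}\le C(p,T)$ for all $p\ge1$. Taking the $L^p(\Omega)$-norm in the displayed inequality, using Minkowski's integral inequality and $\int_0^t(t-s)^{-\frac34}\,\ud s\le 4T^{\frac14}$, I obtain $\E[\|(-A_n)^{\frac12}\tilde V(t)\|_{l_n^2}^p]\le C(T,p)$ for all $t\in[0,T]$. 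Combining this with the bound on $\tilde O$ and the decomposition $\tilde U=\tilde V+\tilde O$ completes the proof.

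There is no real obstacle in this lemma; the only points requiring a little care are (i) observing that the stochastic-convolution estimate depends on $\Sigma_n$ only through the uniform bound on $\|\Sigma_n(\cdot)e_l\|$, so it transfers unchanged from $\Sigma_n(U)$ to $\Sigma_n(\mathbb U)$, and (ii) checking that the singular kernel $(t-s)^{-3/4}$ is integrable, so that the deterministic part is controlled uniformly in $t\in[0,T]$ and in $n$.
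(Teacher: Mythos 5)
Your proposal is correct and follows essentially the same route as the paper: the decomposition $\tilde U=\tilde V+\tilde O$, the verbatim transfer of the factorization/stochastic-convolution bound \eqref{Onl2} to $\Sigma_n(\mathbb U)$ via the boundedness of $\sigma$, and the variation-of-constants estimate for $\tilde V$ with the smoothing bound \eqref{smooth} ($\gamma=\tfrac32$) combined with $\Vert F_n(\mathbb U(s))\Vert_{l_n^2}\le C(1+\Vert\mathbb U(s)\Vert_{l_n^6}^3)$ and the moment bound \eqref{eq:bound-e} (i.e.\ \eqref{Unp}). No discrepancies worth noting.
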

\begin{proof}
For $t\in[0,T]$, denote $\tilde O(t):=\sqrt{n/\pi}\int_0^t\exp(-A_n^2(t-s))\Sigma_n(\mathbb U(s))\ud \beta_s$
and $\tilde V(t):=\tilde U(t)-\tilde O(t)$. Then $\tilde V$ satisfies
$$\frac{\ud}{\ud t} \tilde V(t)=-A_n^2\tilde V(t)+A_nF_n(\mathbb U(t)),\quad t\in(0,T],$$ and $\tilde V(0)=\mathbb U(0).$
Analogously to \eqref{AnVnt}, we have that for $t\in[0,T]$,
\begin{align*}
\Vert (-A_n)^{\frac{1}{2}} \tilde V(t)\Vert _{l^2_n}
&\le C+C\int_0^t(t-s)^{-\frac{3}{4}}\Vert F_n(\mathbb U(s))\Vert _{l_n^2}\ud s\\
&\le C+C\int_0^t(t-s)^{-\frac{3}{4}}\left(1+\Vert \mathbb U(s)\Vert ^3_{l_n^6}\right)\ud s,
\end{align*}
which along with the Minkowski inequality and \eqref{Unp} yields that for $p\ge2$,
\begin{align*}
\E\big[\Vert (-A_n)^{\frac{1}{2}} \tilde V(t)\Vert _{l^2_n}^p\big]\le C\quad \forall~t\in[0,T].
\end{align*}
Since $\sigma$ is bounded, a similar argument of \eqref{Onl2} yields  
$\E[\Vert (-A_n)^{\frac{1}{2}}\tilde O(t)\Vert ^p_{l_n^2}]
\le C(T,p)$ for all $t\in[0,T].$
Combining the above estimates finally finishes the proof.
\end{proof}

Further, we present the local Lipschitz continuity of $F_n$ under the  $\Vert (-A_n)^{\frac{1}{2}}\cdot\Vert _{l_n^2}$-norm, which is crucial for the strong convergence analysis in Theorem \ref{main-strong}.

\begin{lemma}\label{FaFb}
For any $a,b\in\mathbb{R}^{n-1}$, 
\begin{align*}
&\quad\Vert (-A_n)^{\frac{1}{2}}(F_n(a)-F_n(b))\Vert _{l_n^2}\\
&\le C
\big(1+\Vert (-A_n)^{\frac{1}{2}}a\Vert ^2_{l_n^2}+\Vert (-A_n)^{\frac{1}{2}}b\Vert ^2_{l_n^2}\big)\Vert (-A_n)^{\frac{1}{2}}(a-b)\Vert _{l_n^2}.
\end{align*}
\end{lemma}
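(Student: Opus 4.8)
The plan is to reduce the statement to the increment identity \eqref{n2B} and then exploit the algebraic structure of $f(x)=x^3-x$. I adopt the boundary convention $a_0=a_n=b_0=b_n=0$, so that the $k$th entry $f(a_k)-f(b_k)$ of $F_n(a)-F_n(b)$ also vanishes at $k\in\{0,n\}$ (here $f(0)=0$ is used). Then \eqref{n2B} applied to the vector $F_n(a)-F_n(b)$ gives
\[
\Vert(-A_n)^{\frac12}(F_n(a)-F_n(b))\Vert_{l_n^2}^2=\frac{n}{\pi}\sum_{j=1}^{n}\big\vert\big(f(a_j)-f(b_j)\big)-\big(f(a_{j-1})-f(b_{j-1})\big)\big\vert^2,
\]
so the whole lemma comes down to estimating this sum of squared first differences.

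The key device will be the factorization $f(x)-f(y)=(x-y)P(x,y)$ with $P(x,y):=x^2+xy+y^2-1$, which turns each summand into a discrete Leibniz rule: writing $w:=a-b$,
\[
\big(f(a_j)-f(b_j)\big)-\big(f(a_{j-1})-f(b_{j-1})\big)=(w_j-w_{j-1})P(a_j,b_j)+w_{j-1}\big(P(a_j,b_j)-P(a_{j-1},b_{j-1})\big).
\]
For the first term I would use the pointwise bound $\vert P(a_j,b_j)\vert\le C(1+\Vert a\Vert_{l_n^\infty}^2+\Vert b\Vert_{l_n^\infty}^2)$, which by the discrete embedding \eqref{l2H1} is dominated by $C\,M$ with $M:=1+\Vert(-A_n)^{\frac12}a\Vert_{l_n^2}^2+\Vert(-A_n)^{\frac12}b\Vert_{l_n^2}^2$; summing $\frac{n}{\pi}\sum_j\vert w_j-w_{j-1}\vert^2$ and invoking \eqref{n2B} once more reproduces $M^2\Vert(-A_n)^{\frac12}(a-b)\Vert_{l_n^2}^2$. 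For the second term I would expand $P(a_j,b_j)-P(a_{j-1},b_{j-1})$ via identities such as $a_j^2-a_{j-1}^2=(a_j-a_{j-1})(a_j+a_{j-1})$, so that $\vert P(a_j,b_j)-P(a_{j-1},b_{j-1})\vert\le C(\vert a_j-a_{j-1}\vert+\vert b_j-b_{j-1}\vert)(\Vert a\Vert_{l_n^\infty}+\Vert b\Vert_{l_n^\infty})$; then bounding $\vert w_{j-1}\vert\le\Vert a-b\Vert_{l_n^\infty}\le\sqrt{\pi}\,\Vert(-A_n)^{\frac12}(a-b)\Vert_{l_n^2}$, pulling this factor together with $(\Vert a\Vert_{l_n^\infty}+\Vert b\Vert_{l_n^\infty})^2\le C\,M$ out of the sum, and using $\frac{n}{\pi}\sum_j(\vert a_j-a_{j-1}\vert^2+\vert b_j-b_{j-1}\vert^2)\le C\,M$ (again by \eqref{n2B}) yields the same bound $C\,M^2\Vert(-A_n)^{\frac12}(a-b)\Vert_{l_n^2}^2$. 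Adding the two contributions and taking square roots gives the assertion.

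The argument involves no serious obstacle: the only point needing care is the bookkeeping in the Leibniz step --- each factor must be measured in the right norm so that precisely two powers of $M$ are produced and no more, and one should check that the boundary indices $j=1$ and $j=n$ are genuinely covered by the convention $a_0=a_n=b_0=b_n=0$, which works exactly because $f(0)=0$. All the inequalities invoked (triangle, discrete Cauchy--Schwarz, Young) are elementary, and the only nontrivial ingredients are the already established identity \eqref{n2B} and the discrete embedding \eqref{l2H1}.
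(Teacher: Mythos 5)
Your proof is correct and follows essentially the same route as the paper: reduce to the sum of squared first differences via \eqref{n2B}, apply a discrete Leibniz rule to the factorization $f(x)-f(y)=(x-y)(x^2+xy+y^2-1)$, bound the increment of the quadratic factor by first differences times $l_n^\infty$ norms, and convert everything with the embedding \eqref{l2H1}. The only cosmetic difference is that the paper first splits off the linear part of $f$ and applies the product rule to the cubic term alone, whereas you keep the $-1$ inside the factor $P$; both yield the same bound.
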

\begin{proof}
Let $a=(a_1,\ldots,a_{n-1})$, $b=(b_1,\ldots,b_{n-1})$, and $a_0=b_0=a_n=b_n=0$. Then by \eqref{n2B} and $f(x)=x^3-x$, 
\begin{align*}
&\Vert (-A_n)^{\frac{1}{2}}(F_n(a)-F_n(b))\Vert ^2_{l_n^2}=\frac{n}{\pi}\sum_{j=1}^n\vert f(a_j)-f(b_j)-f(a_{j-1})+f(b_{j-1})\vert ^2\\
&\le\frac{2n}{\pi}\sum_{j=1}^n\vert a_j^3-b_j^3-a_{j-1}^3+b_{j-1}^3\vert ^2+2\Vert (-A_n)^{\frac{1}{2}}(a-b)\Vert ^2_{l_n^2}.
\end{align*}
By noticing that
\begin{align*}
&\quad\vert a_j^2+b_j^2+a_jb_j-a_{j-1}^2-b_{j-1}^2-a_{j-1}b_{j-1}\vert ^2\\&\le 3(\vert a_j^2-a_{j-1}^2\vert ^2+\vert b_j^2-b_{j-1}^2\vert ^2+\vert a_jb_j-a_{j-1}b_{j-1}\vert ^2)\\
&\le 12(\vert a_j-a_{j-1}\vert ^2\Vert a\Vert ^2_{l_n^\infty}+\vert b_j-b_{j-1}\vert ^2\Vert b\Vert ^2_{l_n^\infty})+6(\Vert b\Vert ^2_{l_n^\infty}\vert a_j-a_{j-1}\vert ^2+\Vert a\Vert ^2_{l_n^\infty}\vert b_j-b_{j-1}\vert ^2)\\
&\le C\left(\vert a_j-a_{j-1}\vert ^2+\vert b_j-b_{j-1}\vert ^2\right)(\Vert a\Vert ^2_{l_n^\infty}+\Vert b\Vert ^2_{l_n^\infty}),
\end{align*}
we obtain from \eqref{n2B} that
\begin{align}\label{a3b3}
&\quad\frac{n}{\pi}\sum_{j=1}^n\vert a_j^3-b_j^3-a_{j-1}^3+b_{j-1}^3\vert ^2\\\notag
&\le\frac{2n}{\pi}\sum_{j=1}^n\vert a_j^2+b_j^2+a_jb_j-a_{j-1}^2-b_{j-1}^2-a_{j-1}b_{j-1}\vert ^2(a_j-b_j)^2\\\notag
&\quad+\frac{2n}{\pi}\sum_{j=1}^n\vert a_{j-1}^2+b_{j-1}^2+a_{j-1}b_{j-1}\vert ^2(a_j-b_j-a_{j-1}+b_{j-1})^2\\\notag
&\le C\Vert a-b\Vert ^2_{l_n^\infty}\frac{n}{\pi}\sum_{j=1}^n\left(\vert a_j-a_{j-1}\vert ^2+\vert b_j-b_{j-1}\vert ^2\right)(\Vert a\Vert ^2_{l_n^\infty}+\Vert b\Vert ^2_{l_n^\infty})\\\notag
&\quad+C\max_{1\le j\le n}(a_{j-1}^4+b_{j-1}^4)\Vert (-A_n)^{\frac{1}{2}}(a-b)\Vert ^2_{l_n^2}\\\notag
&\le C\left(\Vert (-A_n)^{\frac{1}{2}}a\Vert ^2_{l_n^2}+\Vert (-A_n)^{\frac{1}{2}}b\Vert ^2_{l_n^2}\right)(\Vert a\Vert ^2_{l_n^\infty}+\Vert b\Vert ^2_{l_n^\infty})\Vert a-b\Vert ^2_{l_n^\infty}\\\notag
&\quad+C(\Vert a\Vert _{l_n^\infty}^4+\Vert b\Vert _{l_n^\infty}^4)\Vert (-A_n)^{\frac{1}{2}}(a-b)\Vert ^2_{l_n^2}.
\end{align}
Finally, a combination of \eqref{l2H1} and \eqref{a3b3} completes the proof.
\end{proof}

Now we are ready to present the main result of this section on the strong convergence rate of the numerical solution associated with the spatial FDM.

\begin{theorem}\label{main-strong}
Let $u_0\in\mathcal{C}^3(\OO)$ and $\zeta\in[1,2)$. Then there exists some constant $C=C(\zeta,T)$ such that for any $(t,x)\in[0,T]\times\OO$,
\begin{align*}
\E\left[\vert u(t,x)-u^n(t,x)\vert ^{\zeta}\right]\le C n^{-\zeta}.
\end{align*}
\end{theorem}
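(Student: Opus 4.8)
The plan is to split $u(t,x)-u^{n}(t,x)=\big(u(t,x)-\tilde u^{n}(t,x)\big)+\big(\tilde u^{n}(t,x)-u^{n}(t,x)\big)$. The first difference is already controlled by Proposition \ref{utilde-uh-1}, which gives $\|u(t,x)-\tilde u^{n}(t,x)\|_{L^{p}(\Omega)}\le Cn^{-1}$ for every $p\ge1$. For the second difference, since $u^{n}(t,\cdot)$ and $\tilde u^{n}(t,\cdot)$ are polygonal interpolations vanishing on $\partial\OO$, one has $\sup_{x\in\OO}|\tilde u^{n}(t,x)-u^{n}(t,x)|=\|E(t)\|_{l^{\infty}_{n}}\le\sqrt{\pi}\,\|(-A_{n})^{\frac12}E(t)\|_{l^{2}_{n}}$ by \eqref{l2H1}, where $E(t)=\tilde U(t)-U(t)$. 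So everything reduces to showing $\E\big[\|(-A_{n})^{\frac12}E(t)\|_{l^{2}_{n}}^{\zeta}\big]\le Cn^{-\zeta}$ for $\zeta\in[1,2)$, after which the H\"older inequality in $\Omega$ and the triangle inequality finish the proof.

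To estimate $(-A_{n})^{\frac12}E(t)$ I would use the variation-of-constants formula for \eqref{Ent}, apply $(-A_{n})^{\frac12}$, and split the drift as $F_{n}(\mathbb U)-F_{n}(U)=\big(F_{n}(\mathbb U)-F_{n}(\tilde U)\big)+\big(F_{n}(\tilde U)-F_{n}(U)\big)$. This yields three contributions. (a) The deterministic convolution built from $F_{n}(\mathbb U)-F_{n}(\tilde U)$: using the smoothing bound $\|(-A_{n})^{\frac32}e^{-A_{n}^{2}(t-s)}\|_{2}\le C(t-s)^{-\frac34}$, the cubic estimate \eqref{fb-fa}, the moment bounds \eqref{eq:bound-e} and Lemma \ref{tildeUnl2}, and Proposition \ref{utilde-uh-1}, this is of order $n^{-1}$ in $L^{\zeta}(\Omega)$. (b) The stochastic convolution: I would treat it by the factorization method as in Section \ref{S3.2}, combining the Burkholder inequality with the Frobenius-norm bound $\tfrac{n}{\pi}\|(-A_{n})^{\frac12}e^{-A_{n}^{2}(t-s)}\{\Sigma_{n}(\mathbb U(s))-\Sigma_{n}(U(s))\}\|_{\mathrm F}^{2}\le C(t-s)^{-\frac34-\epsilon}\|\mathbb U(s)-U(s)\|_{l^{2}_{n}}^{2}$ (derived as in \eqref{ASigmaUF}--\eqref{I3F} from the Lipschitz continuity of $\sigma$), which reduces the term to $\|\mathbb U(s)-U(s)\|_{l^{2}_{n}}\le\|\mathbb U(s)-\tilde U(s)\|_{l^{2}_{n}}+\|E(s)\|_{l^{2}_{n}}$. (c) The deterministic convolution built from $F_{n}(\tilde U)-F_{n}(U)$, which is the genuinely delicate term.

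For (c) I would write $(-A_{n})^{\frac32}e^{-A_{n}^{2}(t-s)}\{F_{n}(\tilde U)-F_{n}(U)\}=(-A_{n})e^{-A_{n}^{2}(t-s)}(-A_{n})^{\frac12}\{F_{n}(\tilde U)-F_{n}(U)\}$ and invoke Lemma \ref{FaFb} to get $\|(-A_{n})^{\frac12}\{F_{n}(\tilde U(s))-F_{n}(U(s))\}\|_{l^{2}_{n}}\le CG(s)\|(-A_{n})^{\frac12}E(s)\|_{l^{2}_{n}}$, with $G(s):=1+\|(-A_{n})^{\frac12}\tilde U(s)\|_{l^{2}_{n}}^{2}+\|(-A_{n})^{\frac12}U(s)\|_{l^{2}_{n}}^{2}$ having finite moments of all orders uniformly in $n$ by Proposition \ref{Unl2-pro} and Lemma \ref{tildeUnl2}. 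The resulting kernel $(t-s)^{-\frac12}$ is only weakly singular, so the crux is to control $\int_{0}^{t}(t-s)^{-\frac12}G(s)\|(-A_{n})^{\frac12}E(s)\|_{l^{2}_{n}}\,\ud s$. Here I would exploit both estimates of Proposition \ref{H-1E} at once: the interpolation $\|E(s)\|_{l^{2}_{n}}^{2}\le\|(-A_{n})^{-\frac12}E(s)\|_{l^{2}_{n}}\|(-A_{n})^{\frac12}E(s)\|_{l^{2}_{n}}$, the pointwise smallness $\|(-A_{n})^{-\frac12}E(s)\|_{l^{2}_{n}}\lesssim n^{-1}$ in every $L^{p}(\Omega)$, and the $L^{2}(0,T)$-smallness $\E\big[\big(\int_{0}^{t}\|(-A_{n})^{\frac12}E\|_{l^{2}_{n}}^{2}\big)^{2}\big]\le Cn^{-4}$, interpolating $\|(-A_{n})^{\frac12}E(s)\|_{l^{2}_{n}}$ in time between its $L^{2}(\Omega\times(0,T))$-norm and its a.s.\ pointwise bound $\|(-A_{n})^{\frac12}E(s)\|_{l^{2}_{n}}\le\sqrt{2G(s)}$. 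A Gronwall argument (with a finite bootstrap on the order of $\|E\|_{l^{2}_{n}}$ to propagate the smallness coming from the negative-norm bound) then closes the estimate and yields $\E[\|(-A_{n})^{\frac12}E(t)\|_{l^{2}_{n}}^{\zeta}]\le Cn^{-\zeta}$; the restriction $\zeta<2$ is a purely technical consequence of the H\"older exponents used in this interpolation and is unrelated to the optimality of the rate $n^{-1}$.

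The hard part is exactly (c): the drift $\Delta f$ is cubic and neither globally nor one-sided Lipschitz, so a direct energy/Gronwall estimate on $\|(-A_{n})^{\frac12}E(t)\|_{l^{2}_{n}}$ does not close — bounding the cubic difference forces an $H^{1}$-type factor $G(s)$ that is random and unbounded in $L^{\infty}(\Omega)$, causing a loss of integrability that an ordinary Gronwall cannot absorb. The remedy is to feed in the auxiliary $H^{-1}$ and $L^{2}(0,T;H^{1})$ error bounds of Proposition \ref{H-1E} — these do close, because they are obtained via It\^o's formula and the negative-Sobolev monotonicity \eqref{fbfa} of $A_{n}F_{n}$ — and to spend the $H^{-1}$-smallness together with the time-averaged $H^{1}$-smallness of $E$ against the unbounded factor through the interpolation inequality. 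Everything else (the semigroup smoothing, the factorized stochastic convolution, and the $F_{n}(\mathbb U)-F_{n}(\tilde U)$ term) is routine once the regularity estimates of Sections \ref{S4} and \ref{S5} are in hand.
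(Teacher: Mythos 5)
Your overall skeleton matches the paper's: reduce to $\tilde u^{n}-u^{n}$ via Proposition \ref{utilde-uh-1}, pass to $\Vert(-A_n)^{\frac12}E(t)\Vert_{l_n^2}$ through \eqref{l2H1}, use the variation-of-constants formula with the splitting $F_n(\mathbb U)-F_n(\tilde U)$ plus $F_n(\tilde U)-F_n(U)$, and lean on Proposition \ref{H-1E}. But your treatment of the hard term (c) does not close at the claimed rate. After invoking Lemma \ref{FaFb} you face $\int_0^t(t-s)^{-\frac12}G(s)\Vert(-A_n)^{\frac12}E(s)\Vert_{l_n^2}\,\ud s$, and the only smallness you have on $\Vert(-A_n)^{\frac12}E(s)\Vert_{l_n^2}$ is the time-integrated bound $\E[(\int_0^t\Vert(-A_n)^{\frac12}E\Vert_{l_n^2}^2\ud s)^2]\le Cn^{-4}$; exploiting it requires putting the $E$-factor in $L^2(0,t)$, which forces $(t-s)^{-1}G(s)^2$ into $L^1(0,t)$ --- a non-integrable kernel. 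Your proposed fix, interpolating $\Vert(-A_n)^{\frac12}E(s)\Vert=\Vert(-A_n)^{\frac12}E(s)\Vert^{\theta}\Vert(-A_n)^{\frac12}E(s)\Vert^{1-\theta}$ against the a.s.\ bound $\sqrt{2G(s)}$, restores integrability of the kernel but only delivers $O(n^{-\theta})$ with $\theta<1$; and since the term is \emph{linear} in $E$, iterating ("bootstrap on the order") cannot improve the rate --- each pass reproduces the worst exponent. The same obstruction hits your term (b): reducing the stochastic convolution to $\Vert E(s)\Vert_{l_n^2}$ requires a pointwise-in-time bound $\E[\Vert E(s)\Vert_{l_n^2}^2]\le Cn^{-2}$, which the interpolation $\Vert E\Vert^2\le\Vert(-A_n)^{-\frac12}E\Vert\,\Vert(-A_n)^{\frac12}E\Vert$ combined with Proposition \ref{H-1E} only gives at rate $n^{-1}$ (i.e.\ $n^{-1/2}$ for the norm), not $n^{-2}$.

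The missing idea is the paper's two-stage bootstrap on the \emph{Sobolev index}, not on the rate. Stage one takes $\mu=0$ and proves $\E[\Vert E(t)\Vert_{l_n^2}^2]\le Cn^{-2}$ (hence \eqref{EE2}): there the delicate term carries only $(t-s)^{-\frac14}$ smoothing, so squaring and Cauchy--Schwarz in time against $\int_0^t\Vert(-A_n)^{\frac12}E\Vert_{l_n^2}^2\ud s$ produce the integrable kernel $(t-s)^{-\frac12}G(s)^2$ and the full factor $n^{-2}$ from Proposition \ref{H-1E}, with no Gronwall needed for that term. Stage two takes $\mu=\frac12$ and, crucially, \emph{abandons} Lemma \ref{FaFb}: it bounds $\Vert K_{\frac12}(s)\Vert_{l_n^2}\le C(t-s)^{-\frac34}(1+\Vert\mathbb U(s)\Vert_{l_n^\infty}^2+\Vert U(s)\Vert_{l_n^\infty}^2)\Vert\mathbb U(s)-U(s)\Vert_{l_n^2}$ and inserts the stage-one estimate. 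The restriction $\zeta<2$ then arises from pairing the $l_n^\infty$-weights (bounded in every $L^r(\Omega)$) with the error $\Vert\mathbb U(s)-U(s)\Vert_{l_n^2}$, which is only controlled in $L^2(\Omega)$ --- not from an interpolation of the kind you describe. As written, your argument would at best yield $\E[\vert u(t,x)-u^n(t,x)\vert^\zeta]\le C_\epsilon n^{-(1-\epsilon)\zeta}$, short of the optimal order claimed in the theorem.
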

\begin{proof}
Due to Proposition \ref{utilde-uh-1}, it remains to show that for any $(t,x)\in[0,T]\times\OO$,
\begin{align}\label{tildeU-Uinfty}
\E\left[\vert \tilde u^n(t,x)-u^n(t,x)\vert^{\zeta}\right]\le C n^{-\zeta}.
\end{align}
Notice that $\sup_{x\in\OO}\Vert \tilde u^n(t,x)-u^n(t,x)\Vert _{L^\zeta(\Omega)}\le \Vert \tilde U(t)-U(t)\Vert _{L^\zeta(\Omega,l_n^\infty)}=
\Vert E(t)\Vert _{L^\zeta(\Omega,l_n^\infty)}.$  In view of \eqref{l2H1}, a sufficient condition for \eqref{tildeU-Uinfty} is 
\begin{align}\label{tildeU-UH1}
\Vert (-A_n)^{\frac{1}{2}}E(t)\Vert _{L^\zeta(\Omega,l_n^2)}\le Cn^{-1}\quad\forall~t\in[0,T].
\end{align}

Fix $t\in(0,T]$. For $\mu\in[0,\frac{1}{2}]$ and $s\in[0,t]$, we denote
\begin{align*}
K_{\mu}(s)&:=e^{-A_n^2(t-s)}(-A_n)^{1+\mu}\left\{F_n(\mathbb U(s))-F_n(U(s))\right\},\\
L_{\mu}(s)&:=\sqrt{n}(-A_n)^{\mu}e^{-A_n^2(t-s)}\left\{\Sigma_n(\mathbb U(s))-\Sigma_n(U(s))\right\}.
\end{align*}
Then the variation of constants formula applied to \eqref{Ent} and  $E(0)=0$ produce
$$(-A_n)^{\mu}E(t)=-\int_0^tK_{\mu}(s)\ud s+\frac{1}{\sqrt{\pi}}\int_0^tL_{\mu}(s)\ud \beta_s.$$
By the Burkholder inequality, it holds that for any $p\ge 1/2$, 
\begin{align}\label{EnH1}
&\quad\E\left[\Vert (-A_n)^{\mu}E(t)\Vert ^{2p}\right]\\\notag
&\le C\E\left[\Big\vert \int_0^t\Vert K_{\mu}(s)\Vert \ud s\Big\vert ^{2p}\right]+C\E\left[\Big\vert \int_0^t\Vert L_{\mu}(s)\Vert _{\mathrm F}^2\ud s\Big\vert ^p\right].
\end{align}

Similarly to \eqref{ASigmaUF}, we have
$$
\Vert L_\mu(s)\Vert ^2_{\mathrm F}
=n\sum_{l=1}^{n-1}(-\lambda_{l,n})^{2\mu}e^{-2\lambda^2_{l,n}(t-s)}\Vert \{\Sigma_n(\mathbb U(s))-\Sigma_n(U(s))\}e_l\Vert ^2.
$$
Further, taking \eqref{Sigma-sigma} and \eqref{ex<x} with $\alpha_1=\frac{1}{4}+\mu+\epsilon$ into account, we arrive at
\begin{align}\label{J2mu}
\Vert L_{\mu}(s)\Vert ^2_{\mathrm F}&\le C \sum_{l=1}^{n-1}(-\lambda_{l,n})^{2\mu}e^{-\lambda^2_{l,n}(t-s)}\Vert \mathbb U(s)-U(s)\Vert ^2\\\notag
&\le C_\epsilon(t-s)^{-\frac{1}{4}-\mu-\epsilon}\Vert \mathbb U(s)-U(s)\Vert ^2\\\notag
&\le C_\epsilon(t-s)^{-\frac{1}{4}-\mu-\epsilon}\Vert \mathbb U(s)-\tilde U(s)\Vert ^2+C_\epsilon(t-s)^{-\frac{1}{4}-\mu-\epsilon}\Vert E(s)\Vert ^2,
\end{align}
where $0<\epsilon\ll 1$.
The remainder of the proof of \eqref{tildeU-UH1} is separated into two steps.

\textit{Step 1: In this step, we take $\mu=0$ and estimate $\Vert E(t)\Vert _{l_n^2}$.}

By \eqref{smooth}, \eqref{F-F} with $q=1$, and Lemma \ref{FaFb},
\begin{align*}
\Vert K_0(s)\Vert _{l_n^2}&\le \Vert e^{-A_n^2(t-s)}(-A_n)\{F_n(\tilde U(s))-F_n(U(s))\}\Vert _{l_n^2}\\
&\quad+\Vert e^{-A_n^2(t-s)}(-A_n)\{F_n(\mathbb U(s))-F_n(\tilde U(s))\}\Vert _{l_n^2}\\
&\le C(t-s)^{-\frac{1}{4}}\big(1+\Vert (-A_n)^{\frac{1}{2}}\tilde U(s)\Vert ^2_{l_n^2}+\Vert (-A_n)^{\frac{1}{2}}U(s)\Vert ^2_{l_n^2}\big)\Vert (-A_n)^{\frac{1}{2}}E(s)\Vert _{l_n^2}\\
&\quad+C(t-s)^{-\frac{1}{2}}\big(1+\Vert \mathbb U(t)\Vert _{l^\infty_n}^2+\Vert \tilde U(t)\Vert _{l_n^\infty}^2\big)\Vert \mathbb U(t)-\tilde U(t)\Vert _{l_n^2}.
\end{align*}
Plugging this inequality and \eqref{J2mu} (with $\mu=0$) into \eqref{EnH1} (with $p=1$), we obtain
\begin{align*}
&\quad\E\left[\Vert E(t)\Vert _{l_n^2}^2\right]\le C\E\left[\Big\vert \int_0^t\Vert K_0(s)\Vert _{l_n^2}\ud s\Big\vert ^2\right]+\frac{C}{n}\int_0^t\E\left[\Vert L_0(s)\Vert _{\mathrm F}^2\right]\ud s\\
&\le C\E\left[\Big\vert \int_0^t(t-s)^{-\frac{1}{4}}\Big(1+\Vert (-A_n)^{\frac{1}{2}}\tilde U(s)\Vert ^2_{l_n^2}+\Vert (-A_n)^{\frac{1}{2}}U(s)\Vert ^2_{l_n^2}\Big)\Vert (-A_n)^{\frac{1}{2}}E(s)\Vert _{l_n^2}\ud s\Big\vert ^2\right]\\
&\quad+C\E\left[\Big\vert \int_0^t(t-s)^{-\frac{1}{2}}\big(1+\Vert \mathbb U(s)\Vert _{l^\infty_n}^2+\Vert \tilde U(s)\Vert _{l_n^\infty}^2\big)\Vert \mathbb U(s)-\tilde U(s)\Vert _{l_n^2}\ud s\Big\vert ^2\right]\\
&\quad+C_\epsilon\int_0^t(t-s)^{-\frac{1}{4}-\epsilon}\E\big[\Vert \mathbb U(s)-\tilde U(s)\Vert _{l_n^2}^2\big]\ud s+C_\epsilon\int_0^t(t-s)^{-\frac{1}{4}-\epsilon}\E\big[\Vert E(s)\Vert _{l_n^2}^2\big]\ud s\\
&=:CErr_1+CErr_2+C_{\epsilon}Err_3+C_\epsilon\int_0^t(t-s)^{-\frac{1}{4}-\epsilon}\E\big[\Vert E(s)\Vert _{l_n^2}^2\big]\ud s.
\end{align*}
We proceed to estimate $Err_1$, $Err_2$ and $Err_3$.
By the H\"older inequality, Proposition \ref{H-1E}, Proposition \ref{Unl2-pro}, and Lemma \ref{tildeUnl2}, we obtain 
\begin{align*}
Err_1&\le Cn^{-2}\int_0^t(t-s)^{-\frac{1}{2}}(1+\Vert (-A_n)^{\frac{1}{2}}\tilde U(s)\Vert ^4_{L^8(\Omega;l_n^2)}+\Vert (-A_n)^{\frac{1}{2}}U(s)\Vert ^4_{L^8(\Omega;l_n^2)})\ud s\\
&\le Cn^{-2}.
\end{align*}
Besides,  the Minkowski inequality and \eqref{l2H1} imply that $\sqrt{Err_2}$ is bounded by
\begin{align*}
C\int_0^t(t-s)^{-\frac{1}{2}}\big(1+\Vert \mathbb U(s)\Vert _{L^8(\Omega;l^\infty_n)}^2+\Vert (-A_n)^{\frac{1}{2}}\tilde U(s)\Vert _{L^8(\Omega;l^2_n)}^2\big)\Vert \mathbb U(s)-\tilde U(s)\Vert _{L^4(\Omega;l_n^2)}\ud s,
\end{align*}
which together with \eqref{Unp}, Lemma \ref{tildeUnl2} and \eqref{Un-Unpp} leads to
$$\sqrt{Err_2}\le C\int_0^t(t-s)^{-\frac{1}{2}}n^{-1}\ud s\le C n^{-1}.$$
Similarly, \eqref{Un-Unpp} also gives  $\sqrt{Err_3}\le C n^{-1}.$
Gathering the above estimates yields
\begin{align*}
\E\left[\Vert E(t)\Vert _{l_n^2}^2\right]
&\le C n^{-2}+C_\epsilon\int_0^t(t-s)^{-\frac{1}{4}-\epsilon}\E\left[\Vert E(s)\Vert _{l_n^2}^2\right]\ud s.
\end{align*}
Then the Gronwall lemma with weak singularities (see e.g., \cite[Lemma 3.4]{GI98}) implies 
$\E[\Vert E(t)\Vert _{l_n^2}^2]
\le C n^{-2},$
which along with \eqref{Un-Unpp} with $\theta=q=2$ gives
\begin{equation}\label{EE2}
\E\left[\Vert \mathbb U(t)-U(t)\Vert _{l_n^2}^2\right]
\le C n^{-2}.
\end{equation}

\textit{Step 2: In this step, we take $\mu=\frac{1}{2}$ and estimate $\Vert (-A_n)^{\frac{1}{2}}E(t)\Vert _{l_n^2}$.} 

By \eqref{smooth} and a similar argument of  \eqref{F-F} with $q=1$,
\begin{align*}
\Vert K_{\frac{1}{2}}(s)\Vert _{l_n^2}&\le C(t-s)^{-\frac{3}{4}}\Vert F_n(\mathbb U(s))-F_n(U(s))\Vert _{l_n^2}\\
&\le C(t-s)^{-\frac{3}{4}}\left(1+\Vert \mathbb U(s)\Vert _{l^\infty_n}^2+\Vert U(s)\Vert _{l_n^\infty}^2\right)\Vert \mathbb U(s)-U(s)\Vert _{l_n^2}.
\end{align*}
Hence,  the Minkowski and H\"older inequalities, \eqref{eq:bound-e}, \eqref{EE2},  \eqref{l2H1} and Proposition \ref{Unl2-pro} give that for $q\in[1,2)$ and $r=\frac{2q}{2-q}$,
\begin{align*}
&\quad\Big\Vert \int_0^t\Vert K_{\frac{1}{2}}(s)\Vert _{l_n^2}\ud s\Big\Vert _{L^q(\Omega)}\\\notag
&\le C\int_0^t(t-s)^{-\frac{3}{4}}\left\Vert 1+\Vert \mathbb U(s)\Vert _{l^\infty_n}^2+\Vert (-A_n)^{\frac12}U(s)\Vert _{l_n^2}^2\right\Vert _{L^r(\Omega)}\Vert \mathbb U(s)-U(s)\Vert _{L^2(\Omega;l_n^2)}\ud s\\
&\le Cn^{-1}.
\end{align*}
By the second inequality of \eqref{J2mu} and \eqref{EE2}, we have that for $p\in[\frac{1}{2},1)$,
\begin{align*}
\Big\Vert \int_0^t\Vert L_{\frac{1}{2}}(s)\Vert _{\mathrm F}^2\ud s\Big\Vert _{L^p(\Omega)}
\le Cn\int_0^t(t-s)^{-\frac{3}{4}-\epsilon}\Vert \mathbb U(s)- U(s)\Vert _{L^2(\Omega;l_n^2)}^2\ud s
\le Cn^{-1}.
\end{align*}
Combining the above two inequalities and \eqref{EnH1} with $p\in[\frac{1}{2},1)$ yields
\begin{align*}
\E\big[\Vert (-A_n)^{\frac{1}{2}}E(t)\Vert _{l_n^2}^{2p}\big]\le C\E\left[\Big\vert \int_0^t\Vert K_{\frac{1}{2}}(s)\Vert _{l_n^2}\ud s\Big\vert ^{2p}\right]+\frac{C}{n^p}\E\left[\Big\vert \int_0^t\Vert L_{\frac{1}{2}}(s)\Vert _{\mathrm F}^2\ud s\Big\vert ^p\right]\le \frac{C}{n^{2p}},
\end{align*}
which proves \eqref{tildeU-UH1}. The proof is completed.
\end{proof}

\section{Strong convergence analysis (II)}\label{S4-5}
As in the semi-discrete case, we introduce the auxiliary sequence $\{\tilde U^{i}\}_{i\in\mathbb{Z}_{m}^0}$ by
\begin{align}\label{eq:tildeUi}
\tilde U^{i+1}-\tilde U^i+\tau A_n^2 \tilde U^{i+1}=\tau A_n F_n(U(t_{i+1}))+\sqrt{n/\pi}\Sigma_n(\tilde U^i)(\beta_{t_{i+1}}-\beta_{t_{i}})
\end{align}
for $i\in\mathbb{Z}_{m-1}^0$, 
where $\tilde U^0=U^0$. 
As in \eqref{eq:untau}, let
 \begin{align}\label{eq:tildeun}
\tilde{u}^{n,\tau}(t_i,x)
&=\int_{\mathcal O}G^{n,\tau}_{t_i}(x,y)u_0(\kappa_n(y))\mathrm{d} y\\\notag&\quad+\int_0^{t_i}\int_{\mathcal O}\Delta_nG^{n,\tau}_{t_i-s+\tau}(x,y)f(u^{n}(\eta_\tau(s)+\tau,\kappa_n(y)))\mathrm{d} y\mathrm{d} s\\\notag
&\quad+\int_0^{t_i}\int_{\mathcal O}G^{n,\tau}_{t_i-s+\tau}(x,y)\sigma(\tilde{u}^{n,\tau}(\eta_\tau(s),\kappa_n(y)))W(\mathrm{d} s,\mathrm{d} y)
\end{align}
so that $\tilde u^{n,\tau}(t_i,kh)=\tilde U^i_k$ is the $k$th component  of $\tilde U^i$ and $\tilde u^{n,\tau}(t_i,x)=\Pi_n(\tilde u^{n,\tau}(t_i,\cdot))(x)$.

\begin{lemma} \label{lem:tildeU}
Let  $u_0\in\mathcal{C}^1(\OO)$ and $p\ge1$. Then for any $i\in\mathbb{Z}_{m}^0$,
\begin{align*}
\mathbb{E}\left[\big\Vert (-A_n)^{\frac{1}{2}}\tilde U^i\big\Vert ^p_{l_n^2}\right]\le C(T,p).
\end{align*} 
\end{lemma}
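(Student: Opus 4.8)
The plan is to follow the decomposition already used for Proposition~\ref{eq:Unl2-pro-dis} and Lemma~\ref{tildeUnl2}. Write $\tilde U^i=\tilde V^i+\tilde O^i$ with
$$\tilde O^i:=\sqrt{\tfrac{n}{\pi}}\int_0^{t_i}(I+\tau A_n^2)^{-(i-\lfloor\frac{s}{\tau}\rfloor)}\Sigma_n(\tilde U^{\lfloor\frac{s}{\tau}\rfloor})\ud\beta_s,\qquad \tilde V^i:=\tilde U^i-\tilde O^i,$$
so that, by \eqref{eq:tildeUi}, $\tilde V^{i+1}-\tilde V^i+\tau A_n^2\tilde V^{i+1}=\tau A_nF_n(U(t_{i+1}))$ with $\tilde V^0=U(0)$, and estimate the two parts separately. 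The key point, which makes this lemma easier than Proposition~\ref{eq:Unl2-pro-dis}, is that the drift of \eqref{eq:tildeUi} is evaluated at the spatial semi-discrete solution $U(t_{i+1})$, whose discrete $H^1$-moments are already bounded by Proposition~\ref{Unl2-pro}; consequently the bound will be explicit and no discrete Gronwall/bootstrap argument is needed.

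For the stochastic term, since $\sigma$ is bounded the factorization argument of Subsection~\ref{S3.3} applies verbatim with $U^{\lfloor s/\tau\rfloor}$ replaced by $\tilde U^{\lfloor s/\tau\rfloor}$: running the computation that leads to \eqref{eq:AnO2l2} would give $\E[\sup_{i\in\mathbb{Z}_{m}}\Vert(-A_n)^{\frac12}\tilde O^i\Vert_{l_n^2}^p]\le C(T,p)$ for every $p\ge1$.

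For the deterministic term I would use the discrete variation of constants formula
$$\tilde V^i=(I+\tau A_n^2)^{-i}U(0)+\tau\sum_{j=0}^{i-1}(I+\tau A_n^2)^{-(i-j)}A_nF_n(U(t_{j+1})),$$
together with $\Vert(I+\tau A_n^2)^{-i}U(0)\Vert_{l_n^2}\le C$ (cf.\ \eqref{u0-H1}) and the smoothing estimate \eqref{eq:smooth-dis} with $\gamma=\tfrac32$, to obtain, exactly as in \eqref{eq:Anvi-dis},
$$\Vert(-A_n)^{\frac12}\tilde V^i\Vert_{l_n^2}\le C+C\tau\sum_{j=0}^{i-1}(t_i-t_j)^{-\frac34}\Vert F_n(U(t_{j+1}))\Vert_{l_n^2}.$$
By the form of $F_n$ and \eqref{l2H1} (which yields $\Vert U(t)\Vert_{l_n^6}\le C\Vert U(t)\Vert_{l_n^\infty}\le C\Vert(-A_n)^{\frac12}U(t)\Vert_{l_n^2}$), one has $\Vert F_n(U(t_{j+1}))\Vert_{l_n^2}\le C(1+\Vert(-A_n)^{\frac12}U(t_{j+1})\Vert_{l_n^2}^3)$. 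Taking $L^p(\Omega)$-norms, applying the Minkowski inequality, using the bound $\tau\sum_{j=0}^{i-1}(t_i-t_j)^{-3/4}\le C$ uniformly in $\tau$ and $m$ (it is a Riemann sum for the convergent integral $\int_0^{t_i}(t_i-s)^{-3/4}\ud s$), and invoking Proposition~\ref{Unl2-pro}, I would conclude $\E[\Vert(-A_n)^{\frac12}\tilde V^i\Vert_{l_n^2}^p]\le C(T,p)$. Combining this with the estimate for $\tilde O^i$ and $\tilde U^i=\tilde V^i+\tilde O^i$ finishes the proof.

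The only delicate point is making sure that the constants appearing in the singular discrete sums stay independent of $\tau$ and $m$; this is routine, since discrete convolution sums $\tau\sum_j(t_i-t_j)^{-\theta}$ with $\theta<1$ are uniformly bounded. Apart from this bookkeeping the argument runs parallel to Lemma~\ref{tildeUnl2} and poses no genuine obstacle.
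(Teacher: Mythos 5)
Your proposal is correct and follows essentially the same route as the paper: the same decomposition $\tilde U^i=\tilde V^i+\tilde O^i$, the same factorization argument transplanted from \eqref{eq:AnO2l2} for the stochastic convolution, and the same discrete variation-of-constants plus smoothing estimate \eqref{eq:smooth-dis} combined with Proposition \ref{Unl2-pro} for the drift part. The only cosmetic difference is that you bound $\Vert F_n(U(t_{j+1}))\Vert_{l_n^2}$ through $\Vert U(t_{j+1})\Vert_{l_n^\infty}$ via \eqref{l2H1} rather than through $\Vert U(t_{j+1})\Vert_{l_n^6}$ as the paper does, which is immaterial since both are controlled by Proposition \ref{Unl2-pro}.
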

\begin{proof}
By denoting $\tilde O^i:=\sqrt{\frac{n}{\pi}}\int_0^{t_i}(I+\tau A_n^2)^{-(i-\lfloor\frac{s}{\tau}\rfloor)}\Sigma_n(\tilde U^{\lfloor\frac{s}{\tau}\rfloor})\ud \beta_s$, we have that
$\tilde V^{i}:=\tilde U^{i}-\tilde O^{i}$ satisfies
$\tilde  V^{i+1}-\tilde V^i+\tau A_n^2 \tilde V^{i+1}=\tau A_n F_n(U(t_{i+1}))$ for $i\in\mathbb{Z}_{m-1}^0$
and $\tilde V^0=U(0)$.
Then
by repeating the derivation of \eqref{eq:AnO2l2}, one can prove 
$\E[\sup_{i\in\mathbb{Z}_{m}}\|(-A_n)^{\frac12}\tilde O^{i}\|_{l_n^2}^p]\le C.
$
Moreover, similar to \eqref{eq:Anvi-dis}, we also have
$$\Vert (-A_n)^{\frac{1}{2}} \tilde V^{i}\Vert _{l^2_n}
\le C+C\tau\sum_{j=0}^{i-1}(t_i-t_j)^{-\frac{3}{4}}(1+\Vert U(t_{j+1})\Vert _{l_n^6}^3),$$
which along with Proposition \ref{Unl2-pro} yields $\E[\|(-A_n)^{\frac12}\tilde V^{i}\|_{l_n^2}^p]\le C$ for all $i\in\mathbb{Z}_{m}$. 
\end{proof}

\subsection{Error estimate between $\tilde u^{n,\tau}$ and $u^n$}\label{S5.1}
This part estimates the error between the auxiliary process $\tilde u^{n,\tau}$ and the spatial semi-discrete numerical solution $u^n$. We begin with the following error analysis between the fully discrete Green function $G^{n,\tau}_{s+\tau}(x,y)$ and the semi-discrete Green function $G^{n}_s(x,y)$.
\begin{lemma}\label{eq:Gntau}
For any $0<\epsilon\ll1$, there exists $C=C(T,\epsilon)$ such that for any $x\in\OO$,
\begin{gather*}
\int_0^{t_i}\int_{\mathcal{O}}|G^{n,\tau}_{s+\tau}(x,y)-G^{n}_s(x,y)|^2\ud y\ud s\le C\tau^{\frac{3}{4}-\epsilon},\quad i\in\mathbb{Z}_{m},\\
\int_0^{t_i}\int_{\mathcal{O}}|\Delta_nG^{n,\tau}_{s+\tau}(x,y)-\Delta_nG^{n}_s(x,y)|\ud y\ud s\le C\tau^{\frac{3}{8}-\epsilon},\quad i\in\mathbb{Z}_{m}.
\end{gather*}
\end{lemma}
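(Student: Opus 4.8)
The plan is to expand both Green functions in the orthonormal basis $\{\phi_{j,n}(x)\phi_j(\kappa_n(y))\}_{j=1}^{n-1}$ and reduce everything to estimating the scalar coefficients. Since $\{\phi_j(\kappa_n(\cdot))\}_{j=1}^{n-1}$ is orthonormal in $L^2(\OO)$ (as recorded after \eqref{ejk}), we have for the first bound
\[
\int_{\mathcal O}|G^{n,\tau}_{s+\tau}(x,y)-G^n_s(x,y)|^2\ud y=\sum_{j=1}^{n-1}|\phi_{j,n}(x)|^2\bigl|(1+\tau\lambda_{j,n}^2)^{-\lfloor s/\tau\rfloor-1}-e^{-\lambda_{j,n}^2 s}\bigr|^2,
\]
and since $|\phi_{j,n}(x)|\le 1$, it suffices to control $\sum_{j=1}^{n-1}\bigl|(1+\tau\lambda_{j,n}^2)^{-\lfloor s/\tau\rfloor-1}-e^{-\lambda_{j,n}^2 s}\bigr|^2$ integrated over $s\in[0,t_i]$. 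For the second bound, the same expansion with the extra factor $\lambda_{j,n}$ reduces matters to $\sum_{j=1}^{n-1}(-\lambda_{j,n})\bigl|(1+\tau\lambda_{j,n}^2)^{-\lfloor s/\tau\rfloor-1}-e^{-\lambda_{j,n}^2 s}\bigr|$; here one would first apply Cauchy--Schwarz in $y$ as in Lemma \ref{Gn-regularity} to turn the $L^1$ integral into an $L^2$ quantity, i.e.\ bound $\int_{\mathcal O}|\Delta_nG^{n,\tau}_{s+\tau}-\Delta_nG^n_s|\ud y\le\sqrt\pi\bigl(\sum_j\lambda_{j,n}^2|\cdots|^2\bigr)^{1/2}$.

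The core estimate is therefore a comparison between the rational approximation $(1+\tau\mu)^{-k}$ (with $\mu=\lambda_{j,n}^2$, $k=\lfloor s/\tau\rfloor+1$) and the exponential $e^{-\mu s}$. I would split $|(1+\tau\mu)^{-k}-e^{-\mu s}|$ into $|(1+\tau\mu)^{-k}-e^{-k\tau\mu}|+|e^{-k\tau\mu}-e^{-\mu s}|$. The second piece is handled by $0\le k\tau-s<\tau$ together with $|e^{-a}-e^{-b}|\le e^{-\min(a,b)}|a-b|$, giving a factor $e^{-\mu\eta_\tau(s)}\mu\tau$; the first piece uses the elementary inequality $0\le e^{-z}-(1+z/k)^{-k}\le C z^2/k\cdot(1+z/k)^{-k}$ for $z=k\tau\mu\ge0$ (or a cruder bound $(1+z/k)^{-k}\le e^{-z}$ combined with $e^{-z}-(1+z/k)^{-k}\le z^2/(2k)$), again producing a smoothing factor times $\tau\mu$ up to harmless powers. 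In both cases we obtain a bound of the shape $C\,e^{-c\mu\,\eta_\tau(s)}(\tau\mu)^{\theta}\mu^{1-\theta}\cdots$ for any $\theta\in(0,1]$; then I would trade powers of $\mu=\lambda_{j,n}^2\ge (4/\pi^2)^2 j^4$ for powers of $\tau$ via $(\tau\mu)^\theta\le(\tau\mu)^{3/8+\epsilon}$ combined with $e^{-c\mu\eta_\tau(s)}\le C_\rho(\mu\eta_\tau(s))^{-\rho}$ (inequality \eqref{ex<x}), choosing $\rho$ so that $\sum_j j^{(\text{exponent})}<\infty$ and $\int_0^{t_i}\eta_\tau(s)^{-\rho}\ud s<\infty$, i.e.\ $\rho<1$. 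The bookkeeping of exponents is entirely parallel to the proof of Lemma \ref{Gn-regularity} and of Lemma \ref{lem:Gn-G}; one ends up with the exponents $3/4-\epsilon$ and $3/8-\epsilon$ respectively, matching the temporal H\"older exponents $\tfrac34\alpha$ and $\tfrac{3\alpha}8$ of the semi-discrete Green function with $\alpha\to1$.

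I expect the main obstacle to be the $s\to0$ (equivalently $\lfloor s/\tau\rfloor=0$) regime, where there is no smoothing factor $e^{-c\mu\eta_\tau(s)}$ available and the summand must instead be controlled directly by summability in $j$: on $[0,\tau)$ one uses $|(1+\tau\mu)^{-1}-e^{-\mu s}|\le C\min(1,\tau\mu)$ and bounds $\int_0^\tau\sum_j(\text{that})^2\ud s$ by $C\tau\cdot\tau^{1/2}=C\tau^{3/2}$ via $\sum_j\min(1,\tau\lambda_{j,n}^2)\le C\tau^{-1/4}$, which is consistent with the claimed rate. A secondary technical point is justifying the uniform bound $|\phi_{j,n}(x)|\le 1$ and $|\lambda_j-\lambda_{j,n}|\le Cj^4/n^2$ — but these are already available from \eqref{cjn} and the discussion around \eqref{ejk}, so no new ingredient is needed. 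Because the statement says the proof is "analogous to that of Lemma \ref{Gn-regularity}", I would in the final write-up only indicate these two comparison inequalities for $(1+\tau\mu)^{-k}$ versus $e^{-\mu s}$ and then refer to the earlier exponent-counting, rather than repeating it in full.
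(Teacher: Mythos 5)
Your overall architecture is the same as the paper's: diagonalize both kernels in the basis $\phi_{j,n}(x)\phi_j(\kappa_n(y))$, use orthonormality of $\{\phi_j\circ\kappa_n\}$ and $|\phi_{j,n}(x)|\le1$ (plus Cauchy--Schwarz for the $\Delta_n$ case) to reduce to a scalar comparison of $(1+\tau\lambda_{j,n}^2)^{-k}$ with $e^{-\lambda_{j,n}^2 s}$, and then count exponents. Your two comparison inequalities for the low-frequency regime are correct and essentially reproduce the paper's estimate of the corresponding term (the paper writes $(1+\tau\mu)^{-k}=e^{-k\ln(1+\tau\mu)}$ and uses $\ln(1+z)\ge c_0z$, $0\le z-\ln(1+z)\le c_1z^2$ on $z\in[0,1]$, which is your first piece in disguise).

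The genuine gap is in the high-frequency regime, and your proposed patch for it is wrong. The inequality $\sum_{j=1}^{n-1}\min(1,\tau\lambda_{j,n}^2)\le C\tau^{-1/4}$ is false: every $j\gtrsim\tau^{-1/4}$ contributes a term equal to $1$, so the sum is at least $n-1-C\tau^{-1/4}$ and grows linearly in $n$, with no bound uniform in the spatial discretization. (The arithmetic $C\tau\cdot\tau^{1/2}=C\tau^{3/2}$ is also inconsistent with the claimed $\tau^{-1/4}$ and with the target rate $\tau^{3/4}$.) The problem is not confined to $s\in[0,\tau)$ either: your first-piece bound $(1+\tau\mu)^{-k}k\tau^2\mu^2$, squared and summed over $j$ with $\mu_j\asymp j^4$, requires trading $(1+\tau\mu)^{-2k}\le C_\rho(k\tau\mu)^{-2\rho}$ with $\rho>17/8$ to make $\sum_j j^{16-8\rho}$ converge, and Bernoulli only permits $\rho\le k$, so the argument breaks for $k=1,2$ whenever $\tau\lambda_{j,n}^2>1$. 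The paper avoids all of this by introducing the frequency cutoff $j\approx\tau^{-1/4}$: below the cutoff $\tau\lambda_{j,n}^2\le1$ and the difference estimate (your two pieces) applies; above the cutoff it abandons cancellation entirely and bounds $\sum_{j\ge\lfloor\tau^{-1/4}\rfloor}(-\lambda_{j,n})^{2\mu}e^{-2\lambda_{j,n}^2s}$ and $\sum_{j\ge\lfloor\tau^{-1/4}\rfloor}(-\lambda_{j,n})^{2\mu}(1+\tau\lambda_{j,n}^2)^{-2\lfloor(s+\tau)/\tau\rfloor}$ separately by integral comparison and the change of variables $\tilde z=z\tau^{1/4}$, which is where the exponent $1-(\mu+\frac14)(1-\frac\mu2)$ actually comes from. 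Your write-up needs this (or an equivalent) splitting; as it stands the high-frequency tail is not controlled.
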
 
\begin{proof}
By the H\"older inequality,
it suffices to show that for $\mu\in\{0,1\}$,
\begin{equation*}
\int_0^{T}\left(\int_{\mathcal{O}}|(-\Delta_n)^\mu G^{n,\tau}_{s+\tau}(x,y)-(-\Delta_n)^\mu G^{n}_s(x,y)|^2\ud y\right)^{1-\frac{\mu}{2}}\ud s\le C\tau^{1-(\mu+\frac14)(1-\frac{\mu}{2})-\epsilon}.
\end{equation*}
Using the orthogonality of $\{\phi_{j}\circ\kappa_n\}_{j=1}^{n-1}$ and $|\phi_{j,n}(x)|\le 1$, 
\begin{align*}
&\quad\ \int_{\mathcal{O}}|(-\Delta_n)^\mu G^{n,\tau}_{s+\tau}(x,y)-(-\Delta_n)^{\mu}G^{n}_s(x,y)|^2\ud y\\
&\le \sum_{j=1}^{n-1}(-\lambda_{j,n})^{2\mu}\big|(1+\tau\lambda_{j,n}^2)^{-\lfloor\frac{s+\tau}{\tau}\rfloor}-e^{-\lambda_{j,n}^2s}\big|^2\le J_1^\mu(s)+2J_2^\mu(s)+2J_3^\mu(s),
\end{align*}
where 
\begin{align*}
&J_1^\mu(s):=\sum_{j=1}^{\lfloor\tau^{-\frac14}\rfloor-1}(-\lambda_{j,n})^{2\mu}\big|(1+\tau\lambda_{j,n}^2)^{-\lfloor\frac{s+\tau}{\tau}\rfloor}-e^{-\lambda_{j,n}^2s}\big|^2\\
&J_2^\mu(s):=\sum_{j=\lfloor\tau^{-\frac14}\rfloor}^{n-1}(-\lambda_{j,n})^{2\mu}e^{-2\lambda_{j,n}^2s},\quad J_3^\mu(s):=\sum_{j=\lfloor\tau^{-\frac14}\rfloor}^{n-1}(-\lambda_{j,n})^{2\mu}(1+\tau\lambda_{j,n}^2)^{-2\lfloor\frac{s+\tau}{\tau}\rfloor}.
\end{align*}
Since $\frac{4}{\pi^2}j^2\le \lambda_{j,n}\le j^2$ and $j\ge \frac{j+1}{2}$ for all $j\ge1$,
\begin{align*}
J_2^\mu(s)&\le \sum_{j=\lfloor\tau^{-\frac14}\rfloor}^{n-1}j^{4\mu}e^{-\frac{32}{\pi^4}j^4s}\le \sum_{j=\lfloor\tau^{-\frac14}\rfloor}^{n-1}\int_{j}^{j+1} z^{4\mu}e^{-\frac{32}{\pi^4}(\frac{z}{2})^4s}\ud z\\
&\le\int_{\lfloor\tau^{-\frac14}\rfloor}^\infty  z^{4\mu}e^{-\frac{2}{\pi^4}z^4s}\ud z\le\int_{\frac12\tau^{-\frac14}}^\infty  z^{4\mu}e^{-\frac{2}{\pi^4}z^4s}\ud z,
\end{align*}
by supposing without loss of generality that $\frac12\tau^{-\frac14}\le \tau^{-\frac14}-1\le \lfloor\tau^{-\frac14}\rfloor$.
Likewise, 
\begin{align}\label{eq:J3:dis}
J_3^\mu(s)\le \int_{\frac12\tau^{-\frac14}}^\infty  z^{4\mu}\left(1+\frac{\tau}{\pi^4}z^4\right)^{-2\lfloor\frac{s+\tau}{\tau}\rfloor}\ud z.
\end{align}

By
taking the change of variables $\tilde z=z\tau^{\frac14}$ and $ \tilde s=s/\tau$ in turn, we obtain
\begin{align*}
\int_{0}^{T}\left|J_2^\mu(s)\right|^{1-\frac{\mu}{2}}\ud s&\le C\int_0^{T/\pi^{4}}\Big|\int_{\frac12\tau^{-\frac14}}^{\infty}z^{4\mu}e^{-2z^4s}\ud z\Big|^{1-\frac{\mu}{2}}\ud s\\
&\le C\tau^{1-(\frac14+\mu)(1-\frac{\mu}{2})}\int_0^\infty\Big|\int_{\frac12}^\infty \tilde z^{4\mu}e^{-2\tilde z^4 \tilde s}\ud \tilde z\Big|^{1-\frac{\mu}{2}}\ud\tilde s\le C\tau^{1-(\frac14+\mu)(1-\frac{\mu}{2})}.
\end{align*}
Here the last integral is finite since by \eqref{ex<x}, for any $\alpha_1\in(\mu+\frac14,\frac{2}{2-\mu})$ and $\alpha_2>\frac{2}{2-\mu}$,
\begin{align*}
&\quad\ \int_0^\infty\Big|\int_{\frac12}^\infty \tilde z^{4\mu}e^{-2\tilde z^4 t}\ud \tilde z\Big|^{1-\frac{\mu}{2}}\ud t\\
&\le C\int_0^1\Big|\int_{\frac12}^\infty \tilde z^{4\mu-4\alpha_1} t^{-\alpha_1}\ud \tilde z\Big|^{1-\frac{\mu}{2}}\ud t+C\int_1^\infty\Big|\int_{\frac12}^\infty \tilde z^{4\mu-4\alpha_2} t^{-\alpha_2}\ud \tilde z\Big|^{1-\frac{\mu}{2}}\ud t<\infty.
\end{align*}
In a similar manner, applying the change of variables $\tilde z=z\tau^{\frac14}$ to \eqref{eq:J3:dis} leads to
\begin{align*}
J_3^\mu(s)&\le C\tau^{-(\mu+\frac14)}\int_{\frac12}^\infty (1+\pi^{-4}\tilde z^4)^{-2\lfloor\frac{s}{\tau}\rfloor-2}\tilde z^{4\mu}\ud \tilde z\le C\tau^{-(\mu+\frac14)}(1+\frac{1}{16\pi^4})^{-2\lfloor\frac{s}{\tau}\rfloor},
\end{align*}
which implies that
\begin{equation*}
\int_0^{T}\left|J_3^\mu(s)\right|^{1-\frac{\mu}{2}}\ud s
\le C\tau^{-(\mu+\frac14)(1-\frac{\mu}{2})}\sum_{k=0}^\infty\int_{t_k}^{t_{k+1}}(1+\frac{1}{16\pi^4})^{-k(2-\mu)}\ud s
\le C\tau^{1-(\mu+\frac14)(1-\frac{\mu}{2})}.
\end{equation*}
In order to estimate $J_1^\mu(s)$, we notice that for any $1\le j\le\lfloor\tau^{-\frac14}\rfloor-1$,
\begin{align*}
&\quad\ \big|(1+\tau\lambda_{j,n}^2)^{-\lfloor\frac{s+\tau}{\tau}\rfloor}-e^{-\lambda_{j,n}^2s}\big|\\
&\le e^{-\lfloor\frac{s+\tau}{\tau}\rfloor\ln (1+\tau\lambda_{j,n}^2)} \big|1-e^{-\lfloor\frac{s+\tau}{\tau}\rfloor(\tau\lambda_{j,n}^2-\ln (1+\tau\lambda_{j,n}^2))}\big|+e^{-\lambda_{j,n}^2s}\big|e^{-(\lfloor\frac{s+\tau}{\tau}\rfloor\tau-s)\lambda_{j,n}^2}-1\big|\\
&\le e^{-s c_0\lambda_{j,n}^2} \big|1-e^{-\lfloor\frac{s+\tau}{\tau}\rfloor c_1\tau^2\lambda_{j,n}^4}\big|+Ce^{-\lambda_{j,n}^2s}\tau\lambda_{j,n}^2\\
&\le Ce^{-s c_0\lambda_{j,n}^2}(s+\tau)\tau\lambda_{j,n}^4
+Ce^{-\lambda_{j,n}^2s}\tau\lambda_{j,n}^2\le  Ce^{-s c_0\lambda_{j,n}^2}(s\lambda_{j,n}^2+1)\tau\lambda_{j,n}^2,
\end{align*}
where we used  \eqref{1-ex} and  the fact that there exist some constants $c_0\in(0,1)$ and $c_1>0$ such that $\ln(1+z)\ge c_0z$ and $0\le z-\ln(1+z)\le c_1z^2$ for all $z\in[0,1]$. Hence by virtue of \eqref{ex<x}, for $\alpha_3:=\frac{2}{2-\mu}(1-\epsilon)$ with $0<\epsilon\ll1$, 
\begin{align*}
&\quad\ (-\lambda_{j,n})^{2\mu}e^{-2s c_0\lambda_{j,n}^2}(s^2\lambda_{j,n}^4+1)\tau^2\lambda_{j,n}^4\\
&\le C(-\lambda_{j,n})^{2\mu}(s\lambda_{j,n}^2)^{-2-\alpha_3}(s^2\lambda_{j,n}^4)\tau^2\lambda_{j,n}^4+
C(-\lambda_{j,n})^{2\mu}(s\lambda_{j,n}^2)^{-\alpha_3}\tau^2\lambda_{j,n}^4\\
&\le Cj^{4\mu+8-4\alpha_3}s^{-\alpha_3}\tau^2,
\end{align*}
which indicates that for any $0<\epsilon\ll 1$,
\begin{align*}
\int_{0}^{T}\left|J_1^\mu(s)\right|^{1-\frac{\mu}{2}}\ud s&\le C\Big|\int_{0}^{\tau^{-\frac14}}j^{4\mu+8-4\alpha_3}\tau^2\ud j\Big|^{1-\frac{\mu}{2}}\le C\tau^{1-(\mu+\frac14)(1-\frac{\mu}{2})-\epsilon}.
\end{align*}
Finally, collecting the above estimates finishes the proof.
\end{proof}
\begin{proposition}\label{eq:un-untau}
Let $u_0\in\mathcal{C}^2(\mathcal{O})$ and $0<\epsilon\ll1$.
 Then for any $p\ge1$, there exists some constant $C=C(p,T,\epsilon)$ such that for any $i\in \mathbb{Z}_m$ and $x\in\OO$,
$$\|u^{n}(t_i,x)-\tilde{u}^{n,\tau}(t_i,x)\|_{L^p(\Omega)}\le C\tau^{\frac38-\epsilon}.$$
\end{proposition}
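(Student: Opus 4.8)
The plan is to put both $u^{n}(t_i,x)$ and $\tilde u^{n,\tau}(t_i,x)$ in their mild forms \eqref{unR0} and \eqref{eq:tildeun}, subtract, and split the difference as $u^{n}(t_i,x)-\tilde u^{n,\tau}(t_i,x)=\sum_{k=1}^{5}\mathcal{J}_k$, in parallel with the decompositions used in Proposition \ref{utilde-uh-1} and Lemma \ref{u2u3}: $\mathcal{J}_1=\int_{\OO}[G^{n}_{t_i}(x,y)-G^{n,\tau}_{t_i}(x,y)]u_0(\kappa_n(y))\,\ud y$ is the initial-value discrepancy; $\mathcal{J}_2$ and $\mathcal{J}_3$ come from the stochastic convolution, namely the kernel difference with integrand $\sigma(u^{n}(s,\kappa_n(y)))$ and kernels $G^{n}_{t_i-s}$ versus $G^{n,\tau}_{t_i-s+\tau}$, and the integrand difference $\sigma(u^{n}(s,\kappa_n(y)))-\sigma(\tilde u^{n,\tau}(\eta_\tau(s),\kappa_n(y)))$ convolved against $G^{n,\tau}_{t_i-s+\tau}$; and $\mathcal{J}_4,\mathcal{J}_5$ are the analogous two terms for the deterministic convolution, with kernels $\Delta_nG^{n}_{t_i-s}$ versus $\Delta_nG^{n,\tau}_{t_i-s+\tau}$ and integrands $f(u^{n}(s,\cdot))$ versus $f(u^{n}(\eta_\tau(s)+\tau,\cdot))$. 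By monotonicity of $L^p(\Omega)$-norms it suffices to treat $p\ge2$; write $\varepsilon_i:=\sup_{x\in\OO}\|u^{n}(t_i,x)-\tilde u^{n,\tau}(t_i,x)\|_{L^p(\Omega)}$.

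The four non-recursive terms are estimated directly. For $\mathcal{J}_1$ I would expand in the eigenbasis, $\mathcal{J}_1=\sum_{j=1}^{n-1}\big[e^{-\lambda_{j,n}^2t_i}-(1+\tau\lambda_{j,n}^2)^{-i}\big]\phi_{j,n}(x)\langle u_0\circ\kappa_n,\phi_j\circ\kappa_n\rangle$, and apply Cauchy--Schwarz in $j$ with weights $(-\lambda_{j,n})^{\pm1}$: the factor $\sum_j\lambda_{j,n}^2|\langle u_0\circ\kappa_n,\phi_j\circ\kappa_n\rangle|^2=\|\Delta_nu_0\|_{L^2(\OO)}^2\le C$ is uniformly bounded since $u_0\in\mathcal C^2(\OO)$, while an elementary estimate (splitting the sum at $\tau\lambda_{j,n}^2\simeq1$ and using $0\le b-\ln(1+b)\le b^2/2$, $\ln(1+b)\ge c_0b$ on $[0,1]$, $(1+b)^{-i}\le(1+b)^{-1}$ and $e^{-ib}\le(ib)^{-1}$ for $b>1$) gives $\sum_j\lambda_{j,n}^{-2}\big|e^{-\lambda_{j,n}^2t_i}-(1+\tau\lambda_{j,n}^2)^{-i}\big|^2\le C\tau^{3/4}$, whence $\|\mathcal{J}_1\|_{L^p(\Omega)}\le C\tau^{3/8}$. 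For $\mathcal{J}_2$, Burkholder's inequality, the boundedness of $\sigma$, and the first estimate of Lemma \ref{eq:Gntau} (after $t_i-s\mapsto s$) give $\|\mathcal{J}_2\|_{L^p(\Omega)}^2\le C\int_0^{t_i}\!\int_{\OO}|G^{n}_s(x,y)-G^{n,\tau}_{s+\tau}(x,y)|^2\,\ud y\,\ud s\le C\tau^{3/4-\epsilon}$. For $\mathcal{J}_4$, Minkowski's inequality, the moment bound \eqref{eq:u^nbound} (which makes $\|f(u^{n}(s,\kappa_n(y)))\|_{L^p(\Omega)}$ bounded), and the second estimate of Lemma \ref{eq:Gntau} give $\|\mathcal{J}_4\|_{L^p(\Omega)}\le C\int_0^{t_i}\!\int_{\OO}|\Delta_nG^{n}_s(x,y)-\Delta_nG^{n,\tau}_{s+\tau}(x,y)|\,\ud y\,\ud s\le C\tau^{3/8-\epsilon}$. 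For $\mathcal{J}_5$, the polynomial Lipschitz bound \eqref{fb-fa}, \eqref{eq:u^nbound}, and the temporal H\"older continuity of $u^{n}$ from Lemma \ref{u2u3} (the increment $|s-(\eta_\tau(s)+\tau)|$ is at most $\tau$) bound the integrand difference in $L^p(\Omega)$ by $C\tau^{3\alpha/8}$ for any $\alpha\in(0,1)$; together with the uniform-in-$(n,\tau)$ bound $\int_0^{t_i}\!\int_{\OO}|\Delta_nG^{n,\tau}_{t_i-s+\tau}(x,y)|\,\ud y\,\ud s\le C$ (proved as in Lemma \ref{Gn-regularity}, treating the first step $[0,\tau)$ via $\|\Delta_nG^{n,\tau}_{\tau}(x,\cdot)\|_{L^1(\OO)}\le\sqrt\pi\,\|\Delta_nG^{n,\tau}_{\tau}(x,\cdot)\|_{L^2(\OO)}\le C\tau^{-5/8}$ and integrating a weakly singular time bound thereafter) this yields $\|\mathcal{J}_5\|_{L^p(\Omega)}\le C\tau^{3\alpha/8}$.

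The term $\mathcal{J}_3$ is the one that feeds the error back, since $\tilde u^{n,\tau}$ enters inside $\sigma$ in \eqref{eq:tildeun}: writing $\sigma(u^{n}(s,\cdot))-\sigma(\tilde u^{n,\tau}(\eta_\tau(s),\cdot))=\big[\sigma(u^{n}(s,\cdot))-\sigma(u^{n}(\eta_\tau(s),\cdot))\big]+\big[\sigma(u^{n}(\eta_\tau(s),\cdot))-\sigma(\tilde u^{n,\tau}(\eta_\tau(s),\cdot))\big]$, the global Lipschitz property of $\sigma$ and Lemma \ref{u2u3} bound the first bracket in $L^p(\Omega)$ by $C\tau^{3\alpha/8}$, and the second by $\varepsilon_{\lfloor s/\tau\rfloor}$. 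Applying Burkholder's and Minkowski's inequalities together with the uniform-in-$(n,\tau)$ bound $\int_0^{t_i}\|G^{n,\tau}_{t_i-s+\tau}(x,\cdot)\|_{L^2(\OO)}^2\,\ud s\le\sum_{j=1}^{n-1}|\phi_{j,n}(x)|^2\,\tau\sum_{m\ge1}(1+\tau\lambda_{j,n}^2)^{-2m}\le\sum_{j=1}^{n-1}\tfrac{1}{2\lambda_{j,n}^2}\le C$ yields $\|\mathcal{J}_3\|_{L^p(\Omega)}^2\le C\tau^{3\alpha/4}+C\sum_{j=0}^{i-1}w_{i,j}\varepsilon_j^2$, where $w_{i,j}=\int_{t_j}^{t_{j+1}}\|G^{n,\tau}_{t_i-s+\tau}(x,\cdot)\|_{L^2(\OO)}^2\,\ud s\le C\tau\,(t_i-t_j)^{-1/4}$ is a weakly singular weight. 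Collecting the five bounds and using $\varepsilon_0=0$ gives $\varepsilon_i^2\le C\tau^{3\alpha/4\wedge(3/4-\epsilon)}+C\sum_{j=0}^{i-1}\tau(t_i-t_j)^{-1/4}\varepsilon_j^2$ uniformly in $i\in\mathbb{Z}_m$, and the discrete Gronwall inequality with weak singularity (\cite[Lemma 3.4]{GI98}) then gives $\varepsilon_i^2\le C\tau^{3\alpha/4\wedge(3/4-\epsilon)}$; letting $\alpha\uparrow1$ and relabelling $\epsilon$ completes the proof.

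I expect the main obstacle to be precisely this self-referential structure of the auxiliary process through the noise coefficient, which forces the Gronwall closure rather than a term-by-term estimate and requires the uniform-in-$(n,\tau)$ control of $\int_0^{t_i}\|G^{n,\tau}_{t_i-s+\tau}(x,\cdot)\|_{L^2(\OO)}^2\,\ud s$ and of $\int_0^{t_i}\!\int_{\OO}|\Delta_nG^{n,\tau}_{t_i-s+\tau}|$, where near the first time step one cannot afford H\"older's inequality in $s$ and must integrate pointwise-in-$s$ bounds for the kernel instead. A secondary subtlety is that the rate $\tau^{3/8}$ for $\mathcal{J}_1$ survives the mere $\mathcal C^2$-regularity of $u_0$ only because one uses the summability $\sum_j\lambda_{j,n}^2|\langle u_0\circ\kappa_n,\phi_j\circ\kappa_n\rangle|^2\le C$ through a weighted Cauchy--Schwarz, the pointwise decay $|\langle u_0\circ\kappa_n,\phi_j\circ\kappa_n\rangle|\lesssim j^{-2}$ being insufficient.
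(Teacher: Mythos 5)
Your proposal is correct and follows essentially the same route as the paper: the same mild-form decomposition into an initial-data term, kernel-difference terms controlled by Lemma \ref{eq:Gntau}, integrand-difference terms controlled by the temporal H\"older continuity of $u^n$ from Lemma \ref{u2u3} together with \eqref{eq:u^nbound}, and the self-referential term through $\sigma(\tilde u^{n,\tau})$ closed by the singular discrete Gronwall inequality. The only genuine variation is your treatment of the initial-data term by a direct eigenbasis expansion and weighted Cauchy--Schwarz against $\sum_j\lambda_{j,n}^2|\langle u_0\circ\kappa_n,\phi_j\circ\kappa_n\rangle|^2=\|\Delta_n u_0\|_{L^2(\OO)}^2$, where the paper instead uses the telescoping identity \eqref{eq:fully-dis-u0} combined with \eqref{eq.u1tilde} to reduce it to the kernel-difference bound of Lemma \ref{eq:Gntau} applied against $\Delta_n u_0$ --- both are valid and yield the same rate.
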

\begin{proof}
By \eqref{unR0} and \eqref{eq:tildeun}, $u^{n}(t_i,x)-\tilde{u}^{n,\tau}(t_i,x)=\sum_{j=1}^6 Y^\tau_j$, where
\begin{align*}
Y^\tau_1&:=\int_{\mathcal O}G^{n}_{t_i}(x,y)u_0(\kappa_n(y))\mathrm{d} y-\int_{\mathcal O}G^{n,\tau}_{t_i}(x,y)u_0(\kappa_n(y))\mathrm{d} y,\\
Y^\tau_2&:=\int_0^{t_i}\int_{\mathcal O}\Delta_nG^{n}_{{t_i}-s}(x,y)[f(u^{n}(s,\kappa_n(y)))-f(u^{n}(\eta_\tau(s)+\tau,\kappa_n(y)))]\mathrm{d} y\mathrm{d} s,\\
Y^\tau_3&:=\int_0^{t_i}\int_{\mathcal O}[\Delta_nG^{n}_{{t_i}-s}(x,y)-\Delta_nG^{n,\tau}_{{t_i}-s+\tau}(x,y)]f(u^{n}(\eta_\tau(s)+\tau,\kappa_n(y)))\mathrm{d} y\mathrm{d} s,\\
Y^\tau_4&:=\int_0^{t_i}\int_{\mathcal O}G^n_{{t_i}-s}(x,y)[\sigma(u^n(s,\kappa_n(y)))-\sigma(u^n(\eta_\tau(s),\kappa_n(y)))]W(\mathrm{d} s,\mathrm{d} y),\\
Y^\tau_5&:=\int_0^{t_i}\int_{\mathcal O}[G^n_{{t_i}-s}(x,y)-G^{n,\tau}_{{t_i}-s+\tau}(x,y)]\sigma(u^{n}(\eta_\tau(s),\kappa_n(y)))W(\mathrm{d} s,\mathrm{d} y),\\
Y^\tau_6&:=\int_0^{t_i}\int_{\mathcal O}G^{n,\tau}_{{t_i}-s+\tau}(x,y)[\sigma(u^{n}(\eta_\tau(s),\kappa_n(y)))-\sigma(\tilde u^{n,\tau}(\eta_\tau(s),\kappa_n(y)))]W(\mathrm{d} s,\mathrm{d} y).
\end{align*}
Taking advantage of the following identity
$$(I+\tau A_n^2)^{-i}U^0-U^0=-\tau\sum_{k=1}^iA_n(I+\tau A_n^2)^{-k}A_nU^0\quad\forall ~i\in\mathbb{N}$$
and the fact $\tilde u^{n,\tau}(0,x)=\tilde u^n(0,x)$,
it can be verified that for $i\in\mathbb{Z}_{m}$,
\begin{align}\label{eq:fully-dis-u0}
&\quad\ \int_{\mathcal O}G^{n,\tau}_{t_i}(x,y)u_0(\kappa_n(y))\mathrm{d} y-\tilde u^n(0,x)\\\notag
&=-\int_0^{t_i}\int_{\mathcal{O}}\Delta_nG^{n,\tau}_{r+\tau}(x,z)\Delta_nu_0(\kappa_n(z))\ud z\ud r.
\end{align}
This together with \eqref{eq.u1tilde} and $\Delta_n u_0(\kappa_n(y))=\Delta_n u_0(y)$ yields
\begin{align*}
Y^\tau_1=\int_0^{t_i}\int_{\mathcal{O}}[\Delta_nG^{n,\tau}_{r+\tau}(x,z)- \Delta_n G^n_r(x,z)]\Delta_nu_0(\kappa_n(z))\ud z\ud r.
\end{align*}
According to the assumption  $u_0\in\mathcal C^2(\OO)$,
 Lemma \ref{eq:Gntau}, and
  \eqref{eq:u^nbound},
  $$\|Y_1^\tau\|_{L^p(\Omega)}+\|Y_3^\tau\|_{L^p(\Omega)}+\|Y_5^\tau\|_{L^p(\Omega)}\le C\tau^{\frac38-\epsilon}.$$
By the expression of $G_t^n$ (resp.\ $G_t^{n,\tau}$) and  \eqref{ex<x} (resp.\ \eqref{eq:smooth-dis}), for any $0<\epsilon\ll 1$,
\begin{align}\label{GnGn}
\vert G^n_{t}(x,y)\vert\le C_\epsilon t^{-\frac{1}{4}-\epsilon},\quad \vert\Delta_nG^n_{t}(x,y)\vert\le C_\epsilon t^{-\frac{3}{4}-\epsilon}
\quad\forall~t\in(0,T],~x,y\in\mathcal O,\\\label{GnGntau}
\vert G^{n,\tau}_{t_i}(x,y)\vert\le C_\epsilon {t_i}^{-\frac{1}{4}-\epsilon},\quad \vert\Delta_nG^{n,\tau}_{t_i}(x,y)\vert\le C_\epsilon t_i^{-\frac{3}{4}-\epsilon}
\quad\forall~i\in\mathbb{Z}_m,~x,y\in\mathcal O.
\end{align}
Hence using
 Lemma \ref{u2u3} and \eqref{eq:u^nbound} produces
   $\|Y_2^\tau\|^2_{L^p(\Omega)}+\|Y_4^\tau\|_{L^p(\Omega)}\le C\tau^{\frac38-\epsilon}$
   and
   $$\|Y_6^\tau\|_{L^p(\Omega)}\le \int_0^{t_i}(t_{i}-s+\tau)^{-\frac12-2\epsilon}\|u^{n}(\eta_\tau(s),\kappa_n(y))-\tilde u^{n,\tau}(\eta_\tau(s),\kappa_n(y))\|_{L^p(\Omega)}^2\ud s.$$
Gathering the above estimates on $\{Y_i^\tau\}_{i=1}^6$ and using the  singular Gronwall inequality (see e.g., \cite[Lemma 3.4]{GI98})  finally complete the proof.
\end{proof}

We close this part by giving the H\"older regularity of the fully discrete FDM.

\begin{lemma}\label{lemma:Holderuntau}
Let $u_0\in C^2(\OO)$. Then for any $\alpha\in(0,1)$, there exists some constant $C=C(\alpha,T,p)$ such that for any $0\le s<t\le T$ and $x,y\in\OO$,
\begin{align}\label{eq:Holderuntau}
\|u^{n,\tau}(t,x)-u^{n,\tau}(s,y)\|_{L^p(\Omega)}\le C(|t-s|^{\frac{\alpha}{4}}+|x-y|^\alpha).
\end{align}
\end{lemma}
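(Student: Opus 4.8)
The plan is to mirror the structure of Lemmas~\ref{Holder-exact} and \ref{u2u3}: first establish \eqref{eq:Holderuntau} at the temporal grid points $\{t_i\}$ via the mild representation \eqref{eq:untau}, and then propagate it to arbitrary $t,s\in[0,T]$ using the piecewise-linear-in-time definition of $u^{n,\tau}$. Fix $i\ge j$ and $x,y\in\OO$, and write $u^{n,\tau}(t_i,x)-u^{n,\tau}(t_j,y)=\mathcal J_1+\mathcal J_2+\mathcal J_3$, the differences of, respectively, the initial-data terms, the $\Delta_nG^{n,\tau}$-convolutions of $f(u^{n,\tau})$, and the $G^{n,\tau}$-stochastic convolutions of $\sigma(u^{n,\tau})$ in \eqref{eq:untau}. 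For $\mathcal J_1$ I would substitute the identity \eqref{eq:fully-dis-u0} together with $\tilde u^n(0,\cdot)=\Pi_n(u_0)$ and \eqref{eq.u1tilde}, reducing $\mathcal J_1$ to the increment of $\Pi_n(u_0)$, which is Lipschitz by \eqref{un0Holder} since $u_0\in\mathcal C^2(\OO)$, plus a $\Delta_nG^{n,\tau}$-convolution of the bounded function $\Delta_nu_0(\kappa_n(\cdot))$. The key point for $\mathcal J_2$ and $\mathcal J_3$ is that in \eqref{eq:untau} the arguments $\eta_\tau(s)+\tau$, $\eta_\tau(s)$, $\kappa_n(y)$ fed to $f$ and $\sigma$ do not depend on $(t_i,x)$, so the increment hits only the kernels; since $|f(z)|\le C(1+|z|^3)$ and \eqref{eq:u^nbound-dis} give $\sup_{s,y}\|f(u^{n,\tau}(\eta_\tau(s)+\tau,\kappa_n(y)))\|_{L^p(\Omega)}\le C$, and $\sigma$ is bounded, the bounds \eqref{eq:SD} (applied with kernel $v\mapsto\Delta_nG^{n,\tau}_{v+\tau}$) and \eqref{eq:SS} (with kernel $v\mapsto G^{n,\tau}_{v+\tau}$) apply verbatim and reduce $\|\mathcal J_2\|_{L^p(\Omega)}$, $\|\mathcal J_3\|_{L^p(\Omega)}$ to kernel-regularity integrals.

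It thus remains to prove the fully discrete counterpart of Lemma~\ref{Gn-regularity}: for $x,y\in\OO$ and $0\le s'<t'\le T$,
\begin{gather*}
\int_0^{s'}\!\!\int_\OO|\Delta_nG^{n,\tau}_{t'-r+\tau}(x,z)-\Delta_nG^{n,\tau}_{s'-r+\tau}(y,z)|\,\ud z\,\ud r+\int_{s'}^{t'}\!\!\int_\OO|\Delta_nG^{n,\tau}_{t'-r+\tau}(x,z)|\,\ud z\,\ud r\le C(|x-y|^\alpha+|t'-s'|^{\frac{\alpha}{4}}),\\
\int_0^{s'}\!\!\int_\OO|G^{n,\tau}_{t'-r+\tau}(x,z)-G^{n,\tau}_{s'-r+\tau}(y,z)|^2\,\ud z\,\ud r+\int_{s'}^{t'}\!\!\int_\OO|G^{n,\tau}_{t'-r+\tau}(x,z)|^2\,\ud z\,\ud r\le C(|x-y|^{2\alpha}+|t'-s'|^{\frac{\alpha}{2}}).
\end{gather*}
These I would prove exactly as Lemma~\ref{Gn-regularity} and the proof of Lemma~\ref{eq:Gntau}: expand $\Delta_nG^{n,\tau}$ and $G^{n,\tau}$ in the orthonormal system $\{\phi_j\circ\kappa_n\}$, use the spatial Lipschitz bound $|\phi_{j,n}(x)-\phi_{j,n}(y)|\le Cj|x-y|$, the smoothing estimate \eqref{eq:smooth-dis} together with $\tau\lfloor v/\tau\rfloor\ge v-\tau$, split the frequency sum at $j\sim\lfloor\tau^{-\frac14}\rfloor$ to handle the regime $v\in[\tau,2\tau)$ where the smoothing is only $(1+\tau\lambda_{j,n}^2)^{-1}$, and control the discrete time-increment factor $|(1+\tau\lambda_{j,n}^2)^{-k_1}-(1+\tau\lambda_{j,n}^2)^{-k_2}|\le(1+\tau\lambda_{j,n}^2)^{-k_2}\bigl((k_1-k_2)\tau\lambda_{j,n}^2\bigr)^{\theta}$ via \eqref{1-ex} and Bernoulli's inequality, with $(k_1-k_2)\tau\sim t'-s'$; all $\epsilon$-losses are absorbed into the margin $1-\alpha>0$. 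Feeding these into \eqref{eq:SD}, \eqref{eq:SS} and the reduction of $\mathcal J_1$ yields $\|u^{n,\tau}(t_i,x)-u^{n,\tau}(t_j,y)\|_{L^p(\Omega)}\le C(|t_i-t_j|^{\frac{\alpha}{4}}+|x-y|^\alpha)$ for all $i,j\in\mathbb{Z}_m^0$.

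For the piecewise-linear extension: if $t\in[t_i,t_{i+1}]$ then $u^{n,\tau}(t,x)-u^{n,\tau}(t_i,x)=\tfrac{t-t_i}{\tau}\bigl(u^{n,\tau}(t_{i+1},x)-u^{n,\tau}(t_i,x)\bigr)$, whence $\|u^{n,\tau}(t,x)-u^{n,\tau}(t_i,x)\|_{L^p(\Omega)}\le C\tfrac{t-t_i}{\tau}\tau^{\alpha/4}\le C(t-t_i)^{\alpha/4}$, using $0\le t-t_i\le\tau$ and $1-\tfrac{\alpha}{4}>0$. For arbitrary $s\in[t_j,t_{j+1}]$, $t\in[t_i,t_{i+1}]$ with $s<t$ and $j<i$, one inserts the grid points $t_{j+1}\le t_i$ and applies the triangle inequality: the two end pieces are bounded by the previous display (with $t-t_i\le t-s$ and $t_{j+1}-s\le t-s$) and the middle piece by the grid-point estimate (with $t_i-t_{j+1}\le t-s$); the case $i=j$ is immediate. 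This gives \eqref{eq:Holderuntau}.

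I expect the main obstacle to be the fully discrete kernel regularity of the second paragraph: the two discretization parameters $n$ and $\tau$ interact, the discrete smoothing factor $(1+\tau\lambda_{j,n}^2)^{-\lfloor v/\tau\rfloor}$ is less forgiving than $e^{-\lambda_{j,n}^2 v}$ near $v\approx\tau$ (forcing the frequency-splitting device of Lemma~\ref{eq:Gntau}), and the $\tau$-shift in the time argument must be tracked throughout; this is precisely why the temporal Hölder exponent is only $\tfrac{\alpha}{4}$ here rather than the $\tfrac{3\alpha}{8}$ of the semi-discrete Lemma~\ref{u2u3}. Everything else — the decomposition, the uniform moment bounds, the singular-Gronwall-free combination, and the interpolation bookkeeping — is routine once the semi-discrete analogues are in hand.
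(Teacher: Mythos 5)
Your proposal is correct and follows essentially the same route as the paper: establish the bound at grid points from the mild form \eqref{eq:untau} via \eqref{eq:SD}--\eqref{eq:SS}, the uniform moment bound \eqref{eq:u^nbound-dis}, the initial-data identity \eqref{eq:fully-dis-u0}, and fully discrete kernel-regularity estimates obtained from the spectral expansion, the interpolated bound $|\phi_{k,n}(x)-\phi_{k,n}(y)|\le C\min\{k|x-y|,1\}\le C(-\lambda_{k,n})^{\alpha/2}|x-y|^{\alpha}$, the increment bound $1-(1+\tau\lambda_{k,n}^2)^{-(j-i)}\le (t_j-t_i)^{\alpha_1}\lambda_{k,n}^{2\alpha_1}$ and \eqref{eq:smooth-dis}; then extend to all $(t,s)$ by the piecewise-linear interpolation and insertion of grid points. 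The only cosmetic difference is that the frequency splitting at $j\sim\lfloor\tau^{-1/4}\rfloor$ you anticipate is not actually needed here (unlike in Lemma \ref{eq:Gntau}), because the $+\tau$ shift in the kernel's time argument gives $\lfloor\frac{v+\tau}{\tau}\rfloor\tau\ge v$ and \eqref{eq:smooth-dis} applies directly.
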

\begin{proof}

We set $\mu\in\{0,1\}$, $\alpha\in(0,1-2\epsilon)$ and $0<\epsilon\ll 1$ throughout this proof.
Since $|\phi_{k,n}(x)-\phi_{k,n}(y)|\le C\min\{k|x-y|,1\}\le C(-\lambda_{k,n})^{\frac{\alpha}{2}}|x-y|^\alpha$, we have
\begin{align*}
&\quad\ \vert (-\Delta_n)^\mu G^n_{t_i-r+\tau}(x,z)-(-\Delta_n)^\mu G^n_{t_i-r+\tau}(y,z)\vert \\
&\le C\sum_{k=1}^{n-1}(-\lambda_{k,n})^{\mu}(1+\tau\lambda_{k,n}^2)^{-\lfloor\frac{t_i-r+\tau}{\tau}\rfloor}(-\lambda_{k,n})^{\frac{\alpha}{2}}|x-y|^\alpha\\
&\le C\big(\lfloor\frac{t_i-r+\tau}{\tau}\rfloor\tau\big)^{-\frac{1}{2}(\mu+\frac{\alpha}{2}+\frac{1}{2}+\epsilon)}|x-y|^\alpha
\le C(t_i-r)^{-\frac{1}{2}(\mu+\frac{\alpha}{2}+\frac{1}{2}+\epsilon)}|x-y|^\alpha,
\end{align*}
thanks to \eqref{eq:smooth-dis} with $\gamma=\mu+\frac{\alpha}{2}+\frac{1}{2}+\epsilon$.  Hence
$$ \int_0^{t_i}\int_\OO\vert (-\Delta_n)^\mu G^n_{t_i-r+\tau}(x,z)-(-\Delta_n)^\mu G^n_{t_i-r+\tau}(y,z)\vert^{2-\mu}\ud z\ud r\le C|x-y|^{\alpha(2-\mu)}.$$
By \eqref{1-ex} and $\ln(1+x)\le x$ for $x>0$, one obtains that for any $\alpha_1\in(0,1]$,
\begin{align*}
1-(1+\tau\lambda_{k,n}^2)^{-(j-i)}=1-e^{-(j-i)\ln(1+\tau\lambda_{k,n}^2)}\le (t_j-t_i)^{\alpha_1}\lambda_{k,n}^{2\alpha_1},
\end{align*}
which together with \eqref{eq:smooth-dis} indicates that for any $\alpha\in(0,1-2\epsilon)$ and $0<\epsilon\ll 1$,
\begin{align*}
&\quad\ \vert (-\Delta_n)^\mu G^n_{t_j-r+\tau}(x,z)-(-\Delta_n)^\mu G^n_{t_i-r+\tau}(x,z)\vert \\
&\le C\sum_{k=1}^{n-1}(-\lambda_{k,n})^{\mu}(1+\tau\lambda_{k,n}^2)^{-\lfloor \frac{t_i-r+\tau}{\tau}\rfloor}(t_j-t_i)^{\frac{\alpha}{4}}\lambda_{k,n}^{\frac{\alpha}{2}}\\
&\le C (t_i-r)^{-\frac{1}{2}(\mu+\frac{1}{2}+\epsilon+\frac{\alpha}{2})}(t_j-t_i)^{\frac{\alpha}{4}}
\end{align*}
for $0\le i< j\le m$.
On this basis, we arrive at
\begin{gather*}
\int_0^{t_i}\int_{\mathcal O}\vert (-\Delta_n)^\mu G^n_{t_j-r+\tau}(x,z)-(-\Delta_n)^\mu G^n_{t_i-r+\tau}(x,z)\vert ^{2-\mu}\ud z\ud r\le C\vert t_j-t_i\vert ^{{\frac{\alpha}{4}}(2-\mu)}.
\end{gather*}
Again by using \eqref{eq:smooth-dis} with $\gamma=\mu+\frac12+\epsilon$,
$$|(-\Delta_n)^\mu G^n_{t_j-r+\tau}(x,z)|\le C\sum_{k=1}^{n-1}(-\lambda_{k,n})^{\mu}(1+\tau\lambda_{k,n}^2)^{-\lfloor\frac{t_j-r+\tau}{\tau}\rfloor}\le C(t_j-r)^{-\frac{1}{2}(\mu+\frac12+\epsilon)}.$$
Consequently, it holds that for any $0\le i<j\le m$,
\begin{align*}
\int_{t_i}^{t_j}\int_{\mathcal O}\vert (-\Delta_n)^\mu G^n_{t_j-r+\tau}(x,z)\vert ^{2-\mu}\ud z\ud r
\le C\vert t_j-t_i\vert ^{\frac{\alpha}{4}(2-\mu)},
\end{align*}
since $1-\frac{1}{2}(\mu+\frac12+\epsilon)(2-\mu)\ge \frac{\alpha}{4}(2-\mu)$ for $\alpha\in(0,1-2\epsilon)$ and $\mu\in\{0,1\}$.

By means of \eqref{eq:fully-dis-u0} and $u_0\in\mathcal{C}^2(\OO)$, for any $0\le i<j\le m$,
\begin{align*}
&\quad\ \left|\int_{\mathcal O}G^{n,\tau}_{t_i}(x,z)u_0(\kappa_n(z))\mathrm{d} z-\int_{\mathcal O}G^{n,\tau}_{t_j}(y,z)u_0(\kappa_n(z))\mathrm{d} z\right|\\
&\le
|\tilde u^n(0,x)-\tilde u^n(0,y)|+
\int_0^{t_i}\int_{\mathcal{O}}|\Delta_nG^{n,\tau}_{r+\tau}(x,z)-\Delta_nG^{n,\tau}_{r+\tau}(y,z)||\Delta_nu_0(\kappa_n(z))|\ud z\ud r\\
&\quad+\int_{t_i}^{t_j}\int_{\mathcal{O}}|\Delta_nG^{n,\tau}_{r+\tau}(x,z)||\Delta_nu_0(\kappa_n(z))|\ud z\ud r\le C(|t_j-t_i|^{\frac{\alpha}{4}}+|x-y|^\alpha).
\end{align*}
In addition, by virtue of \eqref{eq:SD} and \eqref{eq:SS}, it follows from \eqref{eq:untau} and \eqref{eq:u^nbound-dis} that \eqref{eq:Holderuntau} holds for all $s=t_i$ and $t=t_j$ with $0\le i<j\le m$. Thanks to the triangle inequality, it suffices to prove  \eqref{eq:Holderuntau} for the following two cases.

\textbf{Case 1: $t=s$ and $x\neq y$.} By the definition of $u^{n,\tau}(t,x)$,
\begin{align*}
&\quad\ \|u^{n,\tau}(t,x)-u^{n,\tau}(t,y)\|_{L^p(\Omega)}\le \frac{\eta_\tau(t)+\tau-t}{\tau}\|u^{n,\tau}(\eta_\tau(t),x)-u^{n,\tau}(\eta_\tau(t),y)\|_{L^p(\Omega)}\\
&\quad+\frac{t-\eta_\tau(t)}{\tau}\|u^{n,\tau}(\eta_\tau(t)+\tau,x)-u^{n,\tau}(\eta_\tau(t)+\tau,y)\|_{L^p(\Omega)}\le C|x-y|^{\alpha}.
\end{align*}

\textbf{Case 2: $x=y$ and $t\ge s$.}  If $\eta_\tau(t)=\eta_\tau(s)$, then
\begin{align}\label{eq:Case2}\notag
&\quad\ \|u^{n,\tau}(t,x)-u^{n,\tau}(s,x)\|_{L^p(\Omega)}\\
&\le C\tau^{-1}(t-s)\|u^{n,\tau}(\eta_\tau(t)+\tau,x)-u^{n,\tau}(\eta_{\tau}(t),x)\|_{L^p(\Omega)}\le C(t-s)^{\frac{\alpha}{4}}.
\end{align}
If $\eta_\tau(t)\ge\eta_\tau(s)+\tau$, then
based on \eqref{eq:Case2},
\begin{align*}
&\quad\ \|u^{n,\tau}(t,x)-u^{n,\tau}(s,x)\|_{L^p(\Omega)}\\
&\le
\|u^{n,\tau}(t,x)-u^{n,\tau}(\eta_{\tau}(t),x)\|_{L^p(\Omega)}+\|u^{n,\tau}(\eta_{\tau}(t),x)-u^{n,\tau}(\eta_{\tau}(s)+\tau,x)\|_{L^p(\Omega)}\\
&\quad+\|u^{n,\tau}(\eta_{\tau}(s)+\tau,x)-u^{n,\tau}(s,x)\|_{L^p(\Omega)}\\
&\le C(t-\eta_{\tau}(t))^{\frac{\alpha}{4}}+C(\eta_{\tau}(t)-\eta_{\tau}(s)-\tau)^{\frac{\alpha}{4}}+C(\eta_{\tau}(s)+\tau-s)^{\frac{\alpha}{4}}\le C(t-s)^{\frac{\alpha}{4}}.
\end{align*}
The proof is completed.
\end{proof}

\begin{remark}\label{Rem1}
Based on the standard Picard argument, it can be verified that when the coefficients $f$ and $\sigma$ satisfy the global Lipschitz condition, the stochastic Cahn--Hilliard equation \eqref{CH} admits a unique mild solution $u=\{u(t,x),(t,x)\in[0,T]\times\OO\}$
satisfying
$\sup_{(t,x)\in[0,T]\times\mathcal{O}}\E[\vert u(t,x)\vert^p]\le C(p,T).$
Since the discussions in subsections \ref{S4.1} and \ref{S5.1} are mainly based on the properties of the Green function $G$ and the discrete Green functions $G^n$ and $G^{n,\tau}$, we  remark that
 Lemmas \ref{Holder-exact} and \ref{u2u3} as well as Propositions \ref{utilde-uh-1} and \ref{eq:un-untau} are valid as well when the coefficients $f$ and $\sigma$ satisfy the global Lipschitz condition.

\end{remark}

\subsection{Error estimate between $\tilde u^{n,\tau}$ and $u^{n,\tau}$}\label{S5.2}
This part presents the error estimate between the fully discrete numerical solution $u^{n,\tau}$ and the auxiliary process $\tilde u^{n,\tau}$. As in subsection \ref{S4.2}, this will be accomplished by estimating $E^{i}:=\tilde{U}^i-U^i$. 

\begin{proposition}\label{H-1E-dis}
Let $u_0\in\mathcal{C}^3(\OO)$ and $0<\epsilon\ll 1$. Then 
there exists some constant $C=C(T,\epsilon)$ such that for any $i\in\mathbb{Z}_{m}$,
\begin{align*}
\E\left[\|(-A_n)^{-\frac12}E^{i}\|_{l_n^2}^4\right]+\E\bigg[\Big|\tau\sum_{j=0}^{i-1} \|(-A_n)^{\frac12} E^{j+1}\|_{l_n^2}^2\Big|^2\bigg]
\le C\tau^{\frac32-4\epsilon}.
\end{align*}
\end{proposition}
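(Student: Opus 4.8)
The plan is to transcribe the two–step argument of Proposition~\ref{H-1E} into the fully discrete setting, with three substitutions: It\^o's formula is replaced by the algebraic identity \eqref{BE}, the (weak–singularity–free) Gronwall lemma by its discrete counterpart, and Proposition~\ref{utilde-uh-1} by Proposition~\ref{eq:un-untau}. Subtracting \eqref{eq:Ui} from \eqref{eq:tildeUi} gives, for $E^i:=\tilde U^i-U^i$,
\[
E^{i+1}-E^i+\tau A_n^2E^{i+1}=\tau A_n\{F_n(U(t_{i+1}))-F_n(U^{i+1})\}+\sqrt{n/\pi}\{\Sigma_n(\tilde U^i)-\Sigma_n(U^i)\}(\beta_{t_{i+1}}-\beta_{t_i}),
\]
with $E^0=0$. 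The role of $K(t)$ in the proof of Proposition~\ref{H-1E} is played here by $\mathcal K^j:=\big(1+\|U(t_{j+1})\|_{l^8_n}^4+\|\tilde U^{j+1}\|_{l^8_n}^4\big)\|U(t_{j+1})-\tilde U^{j+1}\|_{l^4_n}^2$; combining \eqref{l2H1}, Propositions~\ref{Unl2-pro} and \ref{eq:Unl2-pro-dis}, Lemma~\ref{lem:tildeU}, the rate $\tau^{3/8-\epsilon}$ from Proposition~\ref{eq:un-untau} and H\"older's inequality yields $\|\mathcal K^j\|_{L^p(\Omega)}\le C\tau^{3/4-2\epsilon}$ for every $p\ge1$. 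Note that the target exponent $\tau^{3/2-4\epsilon}$ is the square of $\tau^{3/4-2\epsilon}$, consistent with an $L^4(\Omega)$–rate $\tau^{3/8-\epsilon}$ for $(-A_n)^{-1/2}E^i$.

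\emph{Step 1.} I would pair the error recursion with $(-A_n)^{-1}E^{i+1}$ and apply \eqref{BE}. The biharmonic term produces $\tau\|(-A_n)^{1/2}E^{i+1}\|^2$; writing $F_n(U(t_{i+1}))-F_n(U^{i+1})=\{F_n(U(t_{i+1}))-F_n(\tilde U^{i+1})\}+\{F_n(\tilde U^{i+1})-F_n(U^{i+1})\}$, the drift term is estimated by the one–sided monotonicity \eqref{fbfa} of $-F_n$ on the second difference and by the local Lipschitz bound \eqref{fb-fa}--\eqref{F-F} on the first (producing a term controlled by $\mathcal K^i$); for the stochastic term one writes $E^{i+1}=E^i+(E^{i+1}-E^i)$, absorbs the increment into the $\frac12\|(-A_n)^{-1/2}(E^{i+1}-E^i)\|^2$ produced by \eqref{BE} using \eqref{Sigma-sigma} and $\sum_l(-\lambda_{l,n})^{-1}\le C$, and leaves the $E^i$–part as a discrete martingale increment. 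Interpolating $\|E^{i+1}\|^2\le\epsilon_1\|(-A_n)^{1/2}E^{i+1}\|^2+C_{\epsilon_1}\|(-A_n)^{-1/2}E^{i+1}\|^2$, taking $p$-th powers, expectations (killing the martingale increments), and applying the discrete Gronwall lemma, I expect to obtain, for every $p\ge2$,
\[
\E\big[\|(-A_n)^{-1/2}E^i\|_{l^2_n}^p\big]+\tau\sum_{j=0}^{i-1}\E\big[\|(-A_n)^{-1/2}E^j\|_{l^2_n}^{p-2}\|(-A_n)^{1/2}E^{j+1}\|_{l^2_n}^2\big]\le C\tau^{(3/8-\epsilon)p},
\]
the powers of $n$ cancelling exactly as in Proposition~\ref{H-1E} thanks to $\sqrt{\pi}\,\|\cdot\|=\sqrt n\,\|\cdot\|_{l^2_n}$.

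\emph{Step 2.} Taking $p=2$ but now retaining the martingale $M^i=\sum_{j<i}m^j$, with $m^j\propto\langle(-A_n)^{-1}E^j,\{\Sigma_n(\tilde U^j)-\Sigma_n(U^j)\}(\beta_{t_{j+1}}-\beta_{t_j})\rangle$, I would estimate $\E|M^i|^2=\sum_j\E[(m^j)^2]$ by the discrete It\^o isometry, bounding $\E[(m^j)^2\mid\mathscr F_{t_j}]$ through the discrete analogues of \eqref{J3t}--\eqref{I3F} (the key point being to split $(-A_n)^{-1}=(-A_n)^{-1/2}(-A_n)^{-1/2}$). Together with Young's inequality and the $p=4$ case of Step~1, this gives $\E|M^i|^2\le C\tau^{3/2-4\epsilon}+C\tau\sum_j\E[\|(-A_n)^{-1/2}E^j\|_{l^2_n}^4]$ once the $n$-factors cancel; substituting into the squared $p=2$ recursion and invoking the discrete Gronwall lemma yields $\E[\|(-A_n)^{-1/2}E^i\|_{l^2_n}^4]\le C\tau^{3/2-4\epsilon}$, and the telescoped gradient sum in the recursion then furnishes the bound on $\E\big[\big|\tau\sum_j\|(-A_n)^{1/2}E^{j+1}\|_{l^2_n}^2\big|^2\big]$.

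The hardest part, I expect, is not any single estimate but the bookkeeping: the identity \eqref{BE} generates increment terms $\|(-A_n)^{-1/2}(E^{j+1}-E^j)\|^2$ that have no continuous–time counterpart and must be absorbed rather than discarded, while the implicitness of the scheme (drift evaluated at step $i+1$, noise at step $i$) forces one to keep track of the mismatch between $E^j$ and $E^{j+1}$ throughout, together with the various powers of $n$ and $\tau$. A further structural point is that $-F_n$ is only locally Lipschitz in $l^\infty_n$, so every nonlinear estimate above hinges on the uniform–in–$(n,\tau)$ discrete $H^1$-moment bounds of Propositions~\ref{Unl2-pro}, \ref{eq:Unl2-pro-dis} and Lemma~\ref{lem:tildeU}.
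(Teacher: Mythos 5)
Your proposal follows the paper's proof essentially verbatim: the same error recursion, the same energy pairing with $(-A_n)^{-1}E^{i+1}$ via \eqref{BE} with the increment term absorbed by Young's inequality, the same splitting of the drift through $\tilde U^{i+1}$ with the quantity $\mathcal K^j$ playing the role of $K(t)$ and the bound $\tau^{3/4-2\epsilon}$ from Proposition \ref{eq:un-untau}, and the same two-step structure (fourth-moment/mixed-term estimate by multiplying the $p=2$ inequality by $\|(-A_n)^{-1/2}E^{i+1}\|^2$, then squaring the $p=2$ recursion and controlling the martingale $M^i$ by the discrete It\^o isometry before the discrete Gronwall lemma). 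The only cosmetic difference is that you state Step 1 for general $p\ge2$, whereas the paper only needs and proves the $p=4$ case; this does not affect the argument.
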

\begin{proof}
By subtracting
\eqref{eq:Ui} from \eqref{eq:tildeUi}, 
\begin{align}\label{eq:Ei+1-Ei}
&\quad\ E^{i+1}-E^i+\tau A_n^2 E^{i+1}\\\notag
&=\tau A_n\{F_n(U(t_{i+1}))-F_n(U^{i+1})\}+\sqrt{n/\pi}\{\Sigma_n(\tilde U^i)-\Sigma_n(U^i)\}(\beta_{t_{i+1}}-\beta_{t_{i}}).
\end{align}
Then
applying $\langle \cdot,(-A_n)^{-1}E^{i+1}\rangle$ on both sides of \eqref{eq:Ei+1-Ei}, one has
\begin{align*}
 &\quad\ \langle E^{i+1}-E^i,(-A_n)^{-1}E^{i+1}\rangle +\tau \|(-A_n)^{\frac12} E^{i+1}\|^2\\
&=\tau\langle F_n(U^{i+1})-F_n(\tilde U^{i+1}),E^{i+1}\rangle +\tau\langle F_n(\tilde U^{i+1})-F_n(U(t_{i+1})),E^{i+1}\rangle \\
&\quad+\sqrt{n/\pi}\langle \{\Sigma_n(\tilde U^i)-\Sigma_n(U^i)\}(\beta_{t_{i+1}}-\beta_{t_{i}}),(-A_n)^{-1}(E^{i+1}-E^{i})\rangle\\
&\quad+\sqrt{n/\pi}\langle \{\Sigma_n(\tilde U^i)-\Sigma_n(U^i)\}(\beta_{t_{i+1}}-\beta_{t_{i}}),(-A_n)^{-1}E^{i}\rangle,\quad i\in\mathbb{Z}_{m-1}^0.
\end{align*}
Utilizing the identity \eqref{BE},
the Young inequality and \eqref{fbfa}, it holds that
\begin{align}\label{eq:U-U^j}
&\quad\ \frac{1}{2}\|(-A_n)^{-\frac12}E^{i+1}\|^2-\frac{1}{2}\|(-A_n)^{-\frac12}E^{i}\|^2
+\tau \|(-A_n)^{\frac12} E^{i+1}\|^2\\\notag
&\le \frac32\tau \|E^{i+1}\|^2
+\frac12\tau\|F_n(\tilde U^{i+1})-F_n(U(t_{i+1}))\|^2 \\\notag
&\quad+n/\pi\| (-A_n)^{-\frac12}\{\Sigma_n(\tilde U^i)-\Sigma_n(U^i)\}(\beta_{t_{i+1}}-\beta_{t_{i}})\|^2\\\notag
&\quad+\sqrt{n/\pi}\langle (-A_n)^{-\frac12}\{\Sigma_n(\tilde U^i)-\Sigma_n(U^i)\}(\beta_{t_{i+1}}-\beta_{t_{i}}),(-A_n)^{-\frac12}E^i\rangle.
\end{align}

\textit{Step 1:}
Multiplying \eqref{eq:U-U^j} by $\|(-A_n)^{\frac12}E^{i+1}\|^2$ and using the identity \eqref{BE} with $x=\|(-A_n)^{-\frac12}E^{i+1}\|^2$ and $y=\|(-A_n)^{-\frac12}E^{i}\|^2$, we obtain
\begin{align*}
&\quad\ \frac{1}{4}\|(-A_n)^{-\frac12}E^{i+1}\|^4-\frac{1}{4}\|(-A_n)^{-\frac12}E^{i}\|^4+\frac14\left(\|(-A_n)^{-\frac12}E^{i+1}\|^2-\|(-A_n)^{-\frac12}E^{i}\|^2\right)^2\\
&\quad 
+\tau \|(-A_n)^{\frac12} E^{i+1}\|^2\|(-A_n)^{-\frac12}E^{i+1}\|^2\\
&\le  \frac32\tau \|E^{i+1}\|^2\|(-A_n)^{-\frac12}E^{i+1}\|^2
+\frac12\tau\|F_n(\tilde U^{i+1})-F_n(U(t_{i+1}))\|^2\|(-A_n)^{-\frac12}E^{i+1}\|^2 \\\notag
&\quad+n/\pi\| (-A_n)^{-\frac12}\{\Sigma_n(\tilde U^i)-\Sigma_n(U^i)\}(\beta_{t_{i+1}}-\beta_{t_{i}})\|^2\|(-A_n)^{-\frac12}E^{i+1}\|^2\\\notag
&\quad+\sqrt{n/\pi}\langle (-A_n)^{-\frac12}\{\Sigma_n(\tilde U^i)-\Sigma_n(U^i)\}(\beta_{t_{i+1}}-\beta_{t_{i}}),(-A_n)^{-\frac12}E^i\rangle\|(-A_n)^{-\frac12}E^{i+1}\|^2\\
&\le \frac14\tau\|(-A_n)^{\frac12} E^{i+1}\|^2\|(-A_n)^{-\frac12} E^{i+1}\|^2+C\tau\|(-A_n)^{-\frac12} E^{i+1}\|^4\\
&\quad+C\tau\|F_n(\tilde U^{i+1})-F_n(U(t_{i+1}))\|^4\\
&\quad
+n/\pi\| (-A_n)^{-\frac12}\{\Sigma_n(\tilde U^i)-\Sigma_n(U^i)\}(\beta_{t_{i+1}}-\beta_{t_{i}})\|^2\|(-A_n)^{-\frac12}E^{i+1}\|^2\\
&\quad+\sqrt{n/\pi}\langle (-A_n)^{-\frac12}\{\Sigma_n(\tilde U^i)-\Sigma_n(U^i)\}(\beta_{t_{i+1}}-\beta_{t_{i}}),(-A_n)^{-\frac12}E^i\rangle\|(-A_n)^{-\frac12}E^{i}\|^2\\
&\quad+Cn\big|\langle (-A_n)^{-\frac12}\{\Sigma_n(\tilde U^i)-\Sigma_n(U^i)\}(\beta_{t_{i+1}}-\beta_{t_{i}}),(-A_n)^{-\frac12}E^i\rangle\big|^2\\
&\quad+\frac{1}{8}\left(\|(-A_n)^{-\frac12}E^{i+1}\|^2-\|(-A_n)^{-\frac12}E^{i}\|^2\right)^2,
\end{align*}
where in the last step we applied the following inequality  (with $\beta=\frac16$)
\begin{align}\label{eq:E2Yong}
 \|E^{i+1}\|^2\le \beta\|(-A_n)^{\frac12}E^{i+1}\|^2+\frac{1}{4\beta}\|(-A_n)^{-\frac12}E^{i+1}\|^2\quad\forall~\beta>0.
\end{align} 
Consequently, one has
\begin{align*}
&\quad\ \frac{1}{4}\|(-A_n)^{-\frac12}E^{i+1}\|^4-\frac{1}{4}\|(-A_n)^{-\frac12}E^{i}\|^4
+\frac{3 }{4}\tau\|(-A_n)^{\frac12} E^{i+1}\|^2\|(-A_n)^{-\frac12}E^{i+1}\|^2\\
&\le C(\varepsilon)\tau\|(-A_n)^{-\frac12} E^{i+1}\|^4+C(\varepsilon)\tau\|(-A_n)^{-\frac12} E^{i}\|^4+C\tau\|F_n(\tilde U^{i+1})-F_n(U(t_{i+1}))\|^4\\
&\quad+\sqrt{n/\pi}\langle (-A_n)^{-\frac12}\{\Sigma_n(\tilde U^i)-\Sigma_n(U^i)\}(\beta_{t_{i+1}}-\beta_{t_{i}}),(-A_n)^{-\frac12}E^i\rangle\|(-A_n)^{-\frac12}E^{i}\|^2\\
&\quad+\varepsilon n^2\tau^{-1}\| (-A_n)^{-\frac12}\{\Sigma_n(\tilde U^i)-\Sigma_n(U^i)\}(\beta_{t_{i+1}}-\beta_{t_{i}})\|^4,
\end{align*}
where $0<\varepsilon\ll 1$ is to be determined.
Since $E^0=0$, for any $i\in\mathbb{Z}_{m}$,
\begin{align*}
&\quad\ \frac{1}{4}\E\left[\|(-A_n)^{-\frac12}E^{i}\|_{l_n^2}^4\right]+ \frac{3 }{4}\tau\sum_{j=0}^{i-1}\E\left[\|(-A_n)^{\frac12} E^{j+1}\|_{l_n^2}^2\|(-A_n)^{-\frac12}E^{j+1}\|_{l_n^2}^2\right]\\\notag
&\le  C_\varepsilon\tau\sum_{j=0}^{i-1}\E\left[\|(-A_n)^{-\frac12} E^{j+1}\|_{l_n^2}^4\right]+C\tau\sum_{j=0}^{i-1}\E\left[\|F_n(\tilde U^{j+1})-F_n(U(t_{j+1}))\|_{l_n^2}^4\right]\\\notag
&\quad +\varepsilon \tau^{-1}\sum_{j=0}^{i-1}\E\left[\| (-A_n)^{-\frac12}\{\Sigma_n(\tilde U^j)-\Sigma_n(U^j)\}(\beta_{t_{j+1}}-\beta_{t_{j}})\|^4\right].
\end{align*}
Using the similar arguments for proving \eqref{ASigmaUF} and \eqref{Sigma-sigma}, it can be verified that
\begin{align}\label{eq:AnE}
\| (-A_n)^{-\frac12}\{\Sigma_n(\tilde U^j)-\Sigma_n(U^j)\}\|^2_{\mathrm{F}}\le C(\sigma)n^{-1}\|E^j\|^2\le C_1(\sigma)\|E^j\|_{l_n^2}^2.
\end{align}
Further, thanks to the Burkholder--Davis--Gundy inequality and H\"older inequality,
\begin{align}\label{eq:E2dis}
&\quad\ \E\left[\| (-A_n)^{-\frac12}\{\Sigma_n(\tilde U^j)-\Sigma_n(U^j)\}(\beta_{t_{j+1}}-\beta_{t_{j}})\|^4\right]\\\notag
&\le C\tau^2\E\left[\| (-A_n)^{-\frac12}\{\Sigma_n(\tilde U^j)-\Sigma_n(U^j)\}\|_{\mathrm{F}}^4\right]
\le C_2(\sigma)\tau^{2}\E\left[\|E^j\|_{l_n^2}^4\right].
\end{align}
Similarly to \eqref{F-F} and \eqref{Kt}, it follows from Proposition \ref{eq:un-untau} that for any $p\ge1$,
\begin{align}\label{Eq1}
&\quad\ \|F_n(\tilde U^{j+1})-F_n(U(t_{j+1}))\|^2_{L^{2p}(\Omega,l_n^2)}\\\nonumber
&\le C\big(1+\Vert \tilde U^{j+1}\Vert _{L^{8p}(\Omega;l_n^8)}^{4}+\Vert U(t_{j+1})\Vert _{L^{8p}(\Omega;l_n^8)}^{4}\big)\Vert \tilde U^{j+1}-U(t_{j+1})\Vert _{L^{4p}(\Omega;l^{4}_n)}^2\\\nonumber
&\le C\tau^{\frac34-2\epsilon}.
\end{align}
As a consequence, we derive that
 \begin{align*}
&\quad\ \frac{1}{4}\E\left[\|(-A_n)^{-\frac12}E^{i}\|_{l_n^2}^4\right]+ \frac{3 }{4}\tau\sum_{j=0}^{i-1}\E\left[\|(-A_n)^{\frac12} E^{j+1}\|_{l_n^2}^2\|(-A_n)^{-\frac12}E^{j+1}\|_{l_n^2}^2\right]\\\notag
&\le  C_\varepsilon\tau\sum_{j=0}^{i-1}\E\left[\|(-A_n)^{-\frac12} E^{j+1}\|_{l_n^2}^4\right]+\varepsilon C_2(\sigma)\tau\sum_{j=0}^{i-1}\E\left[\|E^{j}\|_{l_n^2}^4\right]+C\tau^{\frac32-4\epsilon}\\\notag
&\le  C_\varepsilon\tau\sum_{j=0}^{i-1}\E\left[\|(-A_n)^{-\frac12} E^{j+1}\|_{l_n^2}^4\right]\\\notag
&\quad+\varepsilon C_2(\sigma)\tau\sum_{j=0}^{i-1}\E\left[\|(-A_n)^{\frac12} E^{j+1}\|^2_{l_n^2}\|(-A_n)^{-\frac12}E^{j+1}\|_{l_n^2}^2\right]+C\tau^{\frac32-4\epsilon}.
\end{align*}
Hence by  choosing $\varepsilon$ small enough so that $C_2(\sigma)\varepsilon\le \frac12$, one could use the discrete Gronwall inequality to obtain
\begin{small}
\begin{equation}\label{eq:H1step-dis}
\E\left[\|(-A_n)^{-\frac12}E^{i}\|_{l_n^2}^4\right]+ \tau\sum_{j=0}^{i-1}\E\left[\|(-A_n)^{\frac12} E^{j+1}\|_{l_n^2}^2\|(-A_n)^{-\frac12}E^{j+1}\|_{l_n^2}^2\right]\le C\tau^{\frac32-4\epsilon}.
\end{equation}
\end{small}

\textit{Step 2:}
Based on \eqref{eq:U-U^j}, we utilize $E^0=0$ and \eqref{eq:E2Yong} with $\beta=\frac13$ to deduce
\begin{align}\label{eq:H-1dis}
&\quad\ \frac{1}{2}\|(-A_n)^{-\frac12}E^{i}\|_{l_n^2}^2+\frac{1}{2}\tau\sum_{j=0}^{i-1} \|(-A_n)^{\frac12} E^{j+1}\|_{l_n^2}^2\\\notag
&\le C\tau\sum_{j=0}^{i-1} \|(-A_n)^{-\frac12} E^{j+1}\|_{l_n^2}^2+
\tau\sum_{j=0}^{i-1}\|F_n(\tilde U^{j+1})-F_n(U(t_{j+1}))\|_{l_n^2}^2 \\\notag
&\quad+\frac{n}{\pi}\sum_{j=0}^{i-1}\| (-A_n)^{-\frac12}\{\Sigma_n(\tilde U^j)-\Sigma_n(U^j)\}(\beta_{t_{j+1}}-\beta_{t_{j}})\|_{l_n^2}^2\\\notag
&\quad+\sqrt{\frac{\pi}{n}}\sum_{j=0}^{i-1}\langle \{\Sigma_n(\tilde U^j)-\Sigma_n(U^j)\}(\beta_{t_{j+1}}-\beta_{t_{j}}),(-A_n)^{-1}E^j\rangle.
\end{align}
In order to estimate the last term, let us introduce 
\begin{align*}
M^i:=\sqrt{\frac{\pi}{n}}\int_0^{t_i}\langle (-A_n)^{-1}E^{\lfloor \frac s\tau \rfloor},\{\Sigma_n(\tilde U^{\lfloor \frac s\tau \rfloor})-\Sigma_n(U^{\lfloor \frac s\tau \rfloor})\}\ud\beta_s\rangle,\quad i\in\mathbb{Z}_{m}^{0}.
\end{align*}
Note that $\{M^i\}_{i\in\mathbb{Z}_{m}^0}$ is a discrete martingale and satisfies
\begin{align*}
\E\left[|M^i|^2\right]&\le Cn^{-1}\int_0^{t_i}\E\left[\|(E^{\lfloor \frac s\tau \rfloor})^\top (-A_n)^{-1}\{\Sigma_n(\tilde U^{\lfloor \frac s\tau \rfloor})-\Sigma_n(U^{\lfloor \frac s\tau \rfloor})\}\|^2\right]\ud s\\
&\le Cn^{-2}\int_0^{t_i}\E\left[\|(-A_n)^{-\frac12}E^{\lfloor \frac s\tau \rfloor}\|^2\| E^{\lfloor \frac s\tau \rfloor}\|^2\right]\ud s\\
&\le \frac34\tau\sum_{j=0}^{i-1}\E\left[\|(-A_n)^{\frac12}E^{j}\|_{l_n^2}^2\|(-A_n)^{-\frac12} E^{j}\|_{l_n^2}^2\right]+C\tau\sum_{j=0}^{i-1}\E\left[\|(-A_n)^{-\frac12}E^{j}\|_{l_n^2}^4\right],
\end{align*}
due to \eqref{eq:AnE} and \eqref{eq:E2Yong}. 
Besides, taking \eqref{eq:E2dis} into account, 
\begin{align*}
&\quad\ \E\bigg[\Big|\sum_{j=0}^{i-1}\frac{n}{\pi}\| (-A_n)^{-\frac12}\{\Sigma_n(\tilde U^j)-\Sigma_n(U^j)\}(\beta_{t_{j+1}}-\beta_{t_{j}})\|_{l_n^2}^2\Big|^2\bigg]
\\
&\le C\tau\sum_{j=0}^{i-1}\E\left[\|E^j\|_{l_n^2}^4\right]\le C\tau\sum_{j=0}^{i-1}\E\left[\|(-A_n)^{\frac12} E^{j}\|_{l_n^2}^2\|(-A_n)^{-\frac12}E^{j}\|_{l_n^2}^2\right].
\end{align*}
Taking second order moments on both sides of \eqref{eq:H-1dis}, then \eqref{Eq1}--\eqref{eq:H1step-dis} yield
\begin{align*}
&\quad\ \frac{1}{4}\E\left[\|(-A_n)^{-\frac12}E^{i}\|_{l_n^2}^4\right]+\frac{1}{4}\E\bigg[\Big|\tau\sum_{j=0}^{i-1} \|(-A_n)^{\frac12} E^{j+1}\|_{l_n^2}^2\Big|^2\bigg]\\
&\le C\tau\sum_{j=0}^{i-1}\E\left[\|(-A_n)^{-\frac12} E^{j+1}\|_{l_n^2}^4\right]+
C\tau^{\frac32-4\epsilon}.
\end{align*}
 Finally, applying the discrete Gronwall inequality leads to the desired result.
\end{proof}

\begin{theorem}\label{eq:untau-untau}
Let $u_0\in\mathcal{C}^3(\OO)$, $0<\epsilon\ll1$, and $\zeta\in[1,2)$. Then there exists some constant $C=C(\zeta,T,\epsilon)$ such that for any $i\in\mathbb{Z}_m$ and $x\in\OO$,
\begin{align*}
\E\left[\vert u^{n,\tau}(t_i,x)-u^{n}(t_i,x)\vert ^{\zeta}\right]\le C\tau^{(\frac{3}{8}-\epsilon)\zeta}.
\end{align*}
\end{theorem}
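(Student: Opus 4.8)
The plan is to follow the two-step scheme used for Theorem~\ref{main-strong}, with the semigroup $e^{-A_n^2t}$ replaced by the discrete resolvent family $(I+\tau A_n^2)^{-\iota}$ and the It\^o integral replaced by its Euler sum. Since Proposition~\ref{eq:un-untau} already gives $\|\tilde u^{n,\tau}(t_i,x)-u^{n}(t_i,x)\|_{L^{p}(\Omega)}\le C\tau^{\frac38-\epsilon}$ for every $p\ge1$, and since
\[
\sup_{x\in\OO}\big|\tilde u^{n,\tau}(t_i,x)-u^{n,\tau}(t_i,x)\big|=\|E^{i}\|_{l^\infty_n}\le\sqrt{\pi}\,\|(-A_n)^{\frac12}E^{i}\|_{l^2_n}
\]
by \eqref{l2H1}, the assertion reduces, via $|a+b|^{\zeta}\le 2^{\zeta-1}(|a|^{\zeta}+|b|^{\zeta})$, to proving $\E[\|(-A_n)^{\frac12}E^{i}\|_{l^2_n}^{\zeta}]\le C\tau^{(\frac38-\epsilon)\zeta}$ for all $i\in\mathbb{Z}_m$. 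Iterating \eqref{eq:Ei+1-Ei} from $E^{0}=0$ yields the discrete Duhamel representation
\[
E^{i}=\tau\sum_{j=0}^{i-1}(I+\tau A_n^2)^{-(i-j)}A_n\big\{F_n(U(t_{j+1}))-F_n(U^{j+1})\big\}+\sqrt{\tfrac{n}{\pi}}\sum_{j=0}^{i-1}(I+\tau A_n^2)^{-(i-j)}\big\{\Sigma_n(\tilde U^{j})-\Sigma_n(U^{j})\big\}(\beta_{t_{j+1}}-\beta_{t_{j}}),
\]
to which I would apply $(-A_n)^{\mu}$ for $\mu\in\{0,\tfrac12\}$, the discrete Burkholder inequality, and the smoothing bounds \eqref{eq:smooth-dis}, in exact analogy with the terms $K_\mu$ and $L_\mu$ in the proof of Theorem~\ref{main-strong}.

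First I would take $\mu=0$ and prove $\E[\|E^{i}\|_{l^2_n}^{2}]\le C\tau^{\frac34-2\epsilon}$. Writing $F_n(U(t_{j+1}))-F_n(U^{j+1})=\{F_n(\tilde U^{j+1})-F_n(U^{j+1})\}+\{F_n(U(t_{j+1}))-F_n(\tilde U^{j+1})\}$, I would estimate the first bracket with Lemma~\ref{FaFb} (assigning one factor $(-A_n)^{\frac12}$ of the difference to the smoothing, so that only $t_{i-j}^{-\frac14}$ is lost) and the second with a discrete analogue of \eqref{F-F} for $q=1$ together with the full smoothing $t_{i-j}^{-\frac12}$; for the stochastic part, arguing as for \eqref{ASigmaUF} and \eqref{Sigma-sigma} gives $\tfrac{n}{\pi}\|(I+\tau A_n^2)^{-\iota}\{\Sigma_n(\tilde U^{j})-\Sigma_n(U^{j})\}\|_{\mathrm F}^{2}\le C\|E^{j}\|_{l^2_n}^{2}(\iota\tau)^{-\frac14}$. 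After squaring and taking expectations, the $F_n(\tilde U^{j+1})-F_n(U^{j+1})$ contribution is closed by a double Cauchy--Schwarz (in time and in probability) together with the a priori bound $\E[|\tau\sum_{j}\|(-A_n)^{\frac12}E^{j+1}\|_{l^2_n}^{2}|^{2}]\le C\tau^{\frac32-4\epsilon}$ of Proposition~\ref{H-1E-dis} and the moment estimates of Proposition~\ref{eq:Unl2-pro-dis} and Lemma~\ref{lem:tildeU}; the $F_n(U(t_{j+1}))-F_n(\tilde U^{j+1})$ contribution is handled by Minkowski's and H\"older's inequalities and Proposition~\ref{eq:un-untau}; the stochastic term is absorbed into the left-hand side. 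This leaves $\E[\|E^{i}\|_{l^2_n}^{2}]\le C\tau^{\frac34-2\epsilon}+C\sum_{j=0}^{i-1}\tau\,t_{i-j}^{-\frac14}\E[\|E^{j}\|_{l^2_n}^{2}]$, and the discrete singular Gronwall inequality (the discrete analogue of \cite[Lemma~3.4]{GI98}) gives the claim; combining with Proposition~\ref{eq:un-untau} then yields $\E[\|U(t_i)-U^{i}\|_{l^2_n}^{2}]\le C\tau^{\frac34-2\epsilon}$.

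Next, fixing $\zeta\in[1,2)$ and $p=\zeta/2\in[\tfrac12,1)$, I would take $\mu=\tfrac12$. Applying $(-A_n)^{\frac12}$ to the Duhamel formula and the discrete Burkholder inequality bounds $\E[\|(-A_n)^{\frac12}E^{i}\|_{l^2_n}^{2p}]$ by $C\,\E[|\tau\sum_j\|(-A_n)^{\frac32}(I+\tau A_n^2)^{-(i-j)}\{F_n(U(t_{j+1}))-F_n(U^{j+1})\}\|_{l^2_n}|^{2p}]$ plus $C\,\E[|\tau\sum_j\tfrac n\pi\|(-A_n)^{\frac12}(I+\tau A_n^2)^{-(i-j)}\{\Sigma_n(\tilde U^{j})-\Sigma_n(U^{j})\}\|_{\mathrm F}^{2}|^{p}]$. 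For the drift term, \eqref{eq:smooth-dis} with $\gamma=\tfrac32$ supplies the weight $t_{i-j}^{-\frac34}$, the discrete analogue of \eqref{F-F} with $q=1$ supplies the factor $1+\|U(t_{j+1})\|_{l^\infty_n}^{2}+\|U^{j+1}\|_{l^\infty_n}^{2}$, and Minkowski's and H\"older's inequalities with \eqref{eq:u^nbound-dis}, Proposition~\ref{eq:Unl2-pro-dis}, \eqref{l2H1} and the Step~1 bound $\|U(t_{j+1})-U^{j+1}\|_{L^2(\Omega;l^2_n)}\le C\tau^{\frac38-\epsilon}$ reduce it to $C\tau^{(\frac38-\epsilon)2p}\big(\sum_j\tau\,t_{i-j}^{-\frac34}\big)^{2p}\le C\tau^{(\frac38-\epsilon)\zeta}$. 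For the stochastic term, arguing as for \eqref{ASigmaUF} gives $\tfrac n\pi\|(-A_n)^{\frac12}(I+\tau A_n^2)^{-\iota}\{\Sigma_n(\tilde U^{j})-\Sigma_n(U^{j})\}\|_{\mathrm F}^{2}\le C_\epsilon\|E^{j}\|_{l^2_n}^{2}(\iota\tau)^{-\frac34-\epsilon}$; since $x\mapsto x^{p}$ is concave for $p\le1$, Jensen's inequality, the Step~1 bound and $\sum_j\tau\,t_{i-j}^{-\frac34-\epsilon}\le C$ give $C\big(\tau^{\frac34-2\epsilon}\sum_j\tau\,t_{i-j}^{-\frac34-\epsilon}\big)^{p}\le C\tau^{(\frac38-\epsilon)\zeta}$. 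Hence $\E[\|(-A_n)^{\frac12}E^{i}\|_{l^2_n}^{\zeta}]\le C\tau^{(\frac38-\epsilon)\zeta}$, which completes the argument.

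I expect the main obstacle to be Step~1: the $\tilde U^{j+1}-U^{j+1}$ part of the drift inevitably produces the stronger norm $\|(-A_n)^{\frac12}E^{j+1}\|_{l^2_n}$, which a Gronwall argument on $\E[\|E^{i}\|_{l^2_n}^{2}]$ alone cannot absorb; one must feed in the aggregate bound $\E[|\tau\sum_j\|(-A_n)^{\frac12}E^{j+1}\|_{l^2_n}^{2}|^{2}]\le C\tau^{\frac32-4\epsilon}$ from Proposition~\ref{H-1E-dis} through a double Cauchy--Schwarz, and then keep careful track of the singular discrete convolution sums $\sum_j\tau\,t_{i-j}^{-\theta}$ so that the discrete singular Gronwall lemma applies and the exponents match $(\tfrac38-\epsilon)\zeta$. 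Everything else is a routine discrete transcription of the estimates already established for Theorem~\ref{main-strong}.
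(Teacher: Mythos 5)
Your proposal is correct and follows essentially the same route as the paper's proof: the same discrete Duhamel representation with $(-A_n)^{\mu}$ for $\mu\in\{0,\tfrac12\}$, the same splitting of the drift into $F_n(\tilde U^{j+1})-F_n(U^{j+1})$ (via Lemma \ref{FaFb} and the $t_{i-j}^{-\frac14}$ smoothing) and $F_n(U(t_{j+1}))-F_n(\tilde U^{j+1})$ (via the discrete analogue of \eqref{F-F}), the same use of Proposition \ref{H-1E-dis} through Cauchy--Schwarz to close Step~1, and the same reduction of Step~2 to the $L^2$-bound $\E[\|U(t_i)-U^i\|_{l_n^2}^2]\le C\tau^{\frac34-2\epsilon}$ with $p=\zeta/2<1$. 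The only cosmetic imprecision is the phrase that the stochastic term in Step~1 is ``absorbed into the left-hand side''; as your own displayed inequality shows, it in fact produces the singular convolution term that the discrete Gronwall lemma handles, exactly as in the paper.
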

\begin{proof}

For $i\in\mathbb{Z}_{m}$ and $j=0,1,\ldots,i-1$, we denote
\begin{align*}
&K_\mu^{i,j}:=(-A_n)^{1+\mu}(I+\tau A_n^2)^{-(i-j)}\{F_n(U(t_{j+1}))-F_n(U^{j+1})\},\\
&L_\mu^{i,j}:=\sqrt{n/\pi}(-A_n)^\mu(I+\tau A_n^2)^{-(i-j)}\{\Sigma_n(\tilde U^j)-\Sigma_n(U^j)\}.
\end{align*}
It then follows from \eqref{eq:Ei+1-Ei} and $E^0=0$ that for any $\mu\in[0,1]$,
\begin{align*}
(-A_n)^\mu E^{i}
=-\sum_{j=0}^{i-1}\tau K_\mu^{i,j}+\sum_{j=0}^{i-1}L_\mu^{i,j}(\beta_{t_{j+1}}-\beta_{t_{j}}).
\end{align*}
Repeating the proof of \eqref{J2mu}, it can be shown that for any $\mu\in[0,\frac12]$ and $0<\varepsilon\ll 1$,
\begin{align*}
\|L_\mu^{i,j}\|_{\mathrm{F}}^2&\le n\sum_{l=1}^{n-1}(-\lambda_{l,n})^{2\mu}(1+\tau \lambda_{l,n}^2)^{-2(i-j)}\|\{\Sigma_n(\tilde U^j)-\Sigma_n(U^j)\}e_l\|^2\\
&\le Ct_{i-j}^{-\frac14-\mu-\epsilon}\|\tilde U^j-U^j\|^2=Ct_{i-j}^{-\frac14-\mu-\varepsilon}\|E^j\|^2,
\end{align*}
where we used \eqref{eq:smooth-dis} with $\gamma=\frac{1}{2}+2\mu+2\varepsilon$.
Using the Burkholder inequality and
gathering the above estimates give that for any $p\ge1/2$,
\begin{equation}\label{eq:EnH1-dis}
\E[\|(-A_n)^\mu E^{i}\|_{l_n^2}^{2p}]\le C\E[|\sum_{j=0}^{i-1}\tau \|K_\mu^{i,j}\|_{l_n^2}|^{2p}]+C\E[|\sum_{j=0}^{i-1}\tau t_{i-j}^{-\frac14-\mu-\varepsilon}\|E^j\|_{l_n^2}^2|^{p}].
\end{equation}
\textit{Step 1: In this step, we take $\mu=0$ and estimate $\|E^i\|_{l_n^2}$.}

By Lemma \ref{FaFb} and \eqref{eq:smooth-dis}, for any $j=0,1,\ldots,i-1$,
\begin{align*}
\|K_0^{i,j}\|_{l_n^2}&\le\| (-A_n)^{\frac12}(I+\tau A_n^2)^{-(i-j)}(-A_n)^{\frac12}\{F_n(\tilde U^{j+1})-F_n(U^{j+1})\}\|_{l_n^2}\\
&\quad+\| (-A_n)(I+\tau A_n^2)^{-(i-j)}\{F_n(U(t_{j+1}))-F_n(\tilde U^{j+1})\}\|_{l_n^2}\\
&\le Ct_{i-j}^{-\frac14}\big(1+\Vert (-A_n)^{\frac{1}{2}}\tilde U^{j+1}\Vert ^2_{l_n^2}+\Vert (-A_n)^{\frac{1}{2}}U^{j+1}\Vert ^2_{l_n^2}\big)\Vert (-A_n)^{\frac{1}{2}}E^{j+1}\Vert _{l_n^2}\\
&\quad+Ct_{i-j}^{-\frac12}\big(1+\Vert U(t_{j+1})\Vert ^2_{l_n^\infty}+\Vert \tilde U^{j+1}\Vert ^2_{l_n^\infty}\big)\Vert U(t_{j+1})-\tilde U^{j+1}\Vert _{l_n^2},
\end{align*}
where the second term on the right hand side is handled in the same way as in \eqref{F-F} with $q=1$. 
Then
one can apply Proposition \ref{H-1E-dis} to derive that
\begin{align*}
&\quad\ \E\bigg[\Big|\sum_{j=0}^{i-1}\tau \|K_0^{i,j}\|_{l_n^2}\Big|^{2}\bigg]\\
&\le C\tau^{\frac34-2\epsilon}\sum_{j=0}^{i-1}\tau t_{i-j}^{-\frac12}\big(1+\Vert (-A_n)^{\frac{1}{2}}\tilde U^{j+1}\Vert ^4_{L^8(\Omega;l_n^2)}+\Vert (-A_n)^{\frac{1}{2}}U^{j+1}\Vert ^4_{L^8(\Omega;l_n^2)}\big)\\
& + C\Big|\sum_{j=0}^{i-1}\tau t_{i-j}^{-\frac12}\big(1+\Vert U(t_{j+1})\Vert ^2_{L^8(\Omega;l_n^\infty)}+\Vert \tilde U^{j+1}\Vert ^2_{L^8(\Omega;l_n^\infty)}\big)\Vert U(t_{j+1})-\tilde U^{j+1}\Vert _{L^4(\Omega;l_n^2)}\Big|^{2},
\end{align*}
which can be further bounded by $ C\tau^{\frac34-2\epsilon}$, due to Proposition \ref{eq:un-untau}, \eqref{eq:u^nbound}, \eqref{eq:u^nbound-dis} and Lemma \ref{lem:tildeU}.
Plugging this estimate into \eqref{eq:EnH1-dis} with $\mu=0$ and $p=1$ yields
 $$\E[\| E^{i}\|_{l_n^2}^{2}]
\le C\tau^{\frac34-2\epsilon}
+C\sum_{j=0}^{i-1}\tau t_{i-j}^{-\frac14-\varepsilon}\E[\|E^j\|_{l_n^2}^2],$$
which along with the discrete Gronwall inequality  gives
$ \E[\| E^{i}\|_{l_n^2}^{2}]\le C\tau^{\frac34-2\epsilon}$. Hence taking Proposition \ref{eq:un-untau} into account yields
\begin{align}\label{eq:Uti-Ui-l2} \E[\| U(t_i)-U^i\|_{l_n^2}^{2}]\le C\tau^{\frac34-2\epsilon}\quad\forall~i\in\mathbb{Z}^0_{m}.
\end{align}

\textit{Step 2: In this step, we take $\mu=\frac12$ and estimate $\|(-A_n)^{\frac12}E^i\|_{l_n^2}$.}

In view of \eqref{eq:smooth-dis} and a similar argument of \eqref{F-F} (with $q=1$),
\begin{align*}
\|K_{\frac12}^{i,j}\|_{l_n^2}\le Ct_{i-j}^{-\frac34}(1+\|U(t_{j+1})\|_{l_n^\infty}^2+\|U^{j+1}\|_{l_n^\infty}^2)\|U(t_{j+1})-U^{j+1}\|_{l_n^2}.
\end{align*}
By the H\"older inequality, \eqref{eq:u^nbound} and \eqref{eq:u^nbound-dis}, for any $q\in[1,2)$, 
\begin{align*}
\|K_{\frac12}^{i,j}\|_{L^q(\Omega;l_n^2)}\le C\|U(t_{j+1})-U^{j+1}\|_{L^2(\Omega;l_n^2)}\le  C\tau^{\frac38-\epsilon},
\end{align*}
in view of \eqref{eq:Uti-Ui-l2}.
This in combination with \eqref{eq:EnH1-dis} indicates that for any $p\in[\frac12,1)$,
\begin{align*}
\E\left[\|(-A_n)^{\frac12} E^{i}\|_{l_n^2}^{2p}\right]&\le C\Big|\sum_{j=0}^{i-1}\tau \|K_{\frac12}^{i,j}\|_{L^{2p}(\Omega;l_n^2)}\Big|^{2p}+\E\bigg[\Big|\sum_{j=0}^{i-1}\tau t_{i-j}^{-\frac34-\varepsilon}\|E^j\|_{l_n^2}^2\Big|^{p}\bigg]\\
&\le C\tau^{(\frac34-2\epsilon)p}+\bigg(\sum_{j=0}^{i-1}\tau t_{i-j}^{-\frac34-\varepsilon}\E\left[\|E^j\|_{l_n^2}^2\right]\bigg)^{p}\le C\tau^{(\frac34-2\epsilon)p}.
\end{align*}
In this way,
we obtain the required result,
thanks to \eqref{l2H1} and Proposition \ref{eq:un-untau}.
\end{proof}

\section{Convergence of density}\label{S6}

Given $d\in\mathbb{N}_+$, for two $\mathbb{R}^{d}$-valued random variables $X,Y$, we write $\mathrm{d}_{\mathrm{TV}}(X,Y)$ to indicate the total variation distance between $X$ and $Y$, i.e.,
$$\mathrm{d}_{\mathrm{TV}}(X,Y)=2\sup_{A\in\mathscr B(\mathbb{R}^d)}\{\vert \mathbb P(X\in A)-\mathbb P(Y\in A)\vert \}=\sup_{\phi\in\Phi}\vert \E[\phi(X)]-\E[\phi(Y)]\vert ,$$
where $\Phi$ is the set of continuous functions $\phi:\mathbb{R}^d\rightarrow\mathbb{R}$ which are bounded by $1$, and $\mathscr B(\mathbb{R}^d)$ is the Borel $\sigma$-algebra of $\mathbb{R}^d$. Furthermore, if $\{X_n\}_{n\ge1}$ and $X_\infty$ have the densities $\{p_{X_n}\}_{n\ge 1}$ and $p_{X_\infty}$ respectively, then 
\begin{align}\label{pX-PX}
\mathrm{d}_{\mathrm{TV}}(X_n,X_\infty)
=\Vert p_{X_n}-p_{X_\infty}\Vert _{L^1(\mathbb{R}^d)}.
\end{align}

We now present a criterion for reducing the  total variation distance of  random variables to that of their localizations, which will be applied to prove the density convergence  in $L^1(\mathbb{R})$ for the numerical discretizations.
\begin{proposition}\label{prop:localization}
Let $\mathbb{T}:=\Pi_{i=1}^{d_1}[a_i,b_i]$ ($a_i<b_i$) be an interval or a rectangle in $\mathbb{R}^{d_1}$ and
 $\mathbf{X}=\{\mathbf{X}(\mathrm{t}),\mathrm{t}\in\mathbb{T} \}$  an $\mathbb{R}^{d}$-valued random field defined on $(\Omega,\mathscr{F},\mathbb{P})$ with continuous trajectories a.s.  For every $R\ge1$, denote 
$\mathbf{\Omega}_R:=\big\{\omega\in\Omega:\sup_{\mathrm{t}\in\mathbb{T}}\Vert \mathbf{X}(t,\omega)\Vert \le R\big\}.$ Assume that the following conditions \textup{(C1)--(C4)} hold.

\begin{itemize}
\item[(C1)] Assume that
$\{(\mathbf{\Omega}_R,\mathbf{X}_R)\}_{R\ge1}$ is a localization of $\mathbf{X}$, i.e., 
$\mathbf{X}_R=\mathbf{X}$ on $\mathbf{\Omega}_R\subset \Omega$ for every $R\ge1$ and $\lim_{R\to \infty}\mathbb{P}(\mathbf{\Omega}_R)=1$.

\item[(C2)] Given a discretization parameter  $N\in\mathbb{N}_+$, denote by $\mathbf{X}^N$ (resp.\ $\mathbf{X}^N_R$) the a numerical approximation of $\mathbf{X}$ (resp.\ $\mathbf{X}_R$).
Assume that for sufficiently large $N$ and $R$,
$\mathbf{X}_R^N=\mathbf{X}^N$ on $\mathbf{\Omega}_{R,N}:
=\big\{\omega\in\Omega:\sup_{\mathrm{t}\in\mathbb{T}}\Vert \mathbf{X}^N(t,\omega)\Vert \le R\big\}$.

\item[(C3)] There exists $\upsilon_1>0$ and $q>0$ such that $$\E\left[\|\mathbf{X}^N(\mathrm{t})-\mathbf{X}(\mathrm{t})\|^q\right]\le CN^{-q\upsilon_1}\quad \forall~\mathrm{t}\in\mathbb{T}.$$

\item[(C4)] There exists $\upsilon_2>0$ such that for any $p\ge1$, there exists $C=C(p)$ independent of $N$ such that for any $\mathrm{t}_1,\mathrm{t}_2\in\mathbb{T}$,
$$\E\left[\|\mathbf{X}^N(\mathrm{t}_1)-\mathbf{X}^N(\mathrm{t}_2)\|^p\right]+\E\left[\|\mathbf{X}(\mathrm{t}_1)-\mathbf{X}(\mathrm{t}_2)\|^p\right]\le C\|\mathrm{t}_1-\mathrm{t}_2\|^{p\upsilon_2}.$$
\end{itemize}
Then it holds that
\begin{equation}\label{eq:RN}\limsup_{N\to\infty}\mathrm{d}_{\mathrm{TV}}(\mathbf{X}^N(\mathrm{t}),\mathbf{X}(\mathrm{t}))\le \limsup_{R\to \infty}\limsup_{N\to\infty}\mathrm{d}_{\mathrm{TV}}(\mathbf{X}^N_R(\mathrm{t}),\mathbf{X}_R(\mathrm{t})).
\end{equation}
\end{proposition}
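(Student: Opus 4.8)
The plan is to fix $\mathrm{t}\in\mathbb{T}$ and to split the total variation distance by the triangle inequality,
\begin{align}\label{eq:TV-split}
\mathrm{d}_{\mathrm{TV}}(\mathbf{X}^N(\mathrm{t}),\mathbf{X}(\mathrm{t}))\le \mathrm{d}_{\mathrm{TV}}(\mathbf{X}^N(\mathrm{t}),\mathbf{X}^N_R(\mathrm{t}))+\mathrm{d}_{\mathrm{TV}}(\mathbf{X}^N_R(\mathrm{t}),\mathbf{X}_R(\mathrm{t}))+\mathrm{d}_{\mathrm{TV}}(\mathbf{X}_R(\mathrm{t}),\mathbf{X}(\mathrm{t})),
\end{align}
and then to absorb the first and third terms into probabilities of exceptional sets by means of the elementary coupling bound $\mathrm{d}_{\mathrm{TV}}(X,Y)=\sup_{\phi\in\Phi}|\E[\phi(X)-\phi(Y)]|\le 2\,\mathbb{P}(X\ne Y)$, valid for any $\mathbb{R}^d$-valued $X,Y$ because $|\phi|\le1$ on $\Phi$. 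By (C1), $\{\mathbf{X}_R(\mathrm{t})\ne\mathbf{X}(\mathrm{t})\}\subset\mathbf{\Omega}_R^c$, so the third term is $\le 2\,\mathbb{P}(\mathbf{\Omega}_R^c)$, which vanishes as $R\to\infty$ independently of $N$; by (C2), for $R,N$ large $\{\mathbf{X}^N(\mathrm{t})\ne\mathbf{X}^N_R(\mathrm{t})\}\subset\mathbf{\Omega}_{R,N}^c$, so the first term is $\le 2\,\mathbb{P}(\mathbf{\Omega}_{R,N}^c)$. Thus, taking $\limsup_{N\to\infty}$ and then $\limsup_{R\to\infty}$ in \eqref{eq:TV-split}, the inequality \eqref{eq:RN} will follow as soon as
\begin{align}\label{eq:keyclaim}
\lim_{R\to\infty}\ \limsup_{N\to\infty}\ \mathbb{P}\big(\mathbf{\Omega}_{R,N}^c\big)=0.
\end{align}

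To establish \eqref{eq:keyclaim}, write $\mathbf{D}^N:=\mathbf{X}^N-\mathbf{X}$ and use the inclusion
\[
\mathbf{\Omega}_{R,N}^c=\Big\{\sup_{\mathrm{s}\in\mathbb{T}}\|\mathbf{X}^N(\mathrm{s})\|>R\Big\}\subset\Big\{\sup_{\mathrm{s}\in\mathbb{T}}\|\mathbf{D}^N(\mathrm{s})\|>\tfrac R2\Big\}\cup\Big\{\sup_{\mathrm{s}\in\mathbb{T}}\|\mathbf{X}(\mathrm{s})\|>\tfrac R2\Big\}.
\]
The second event equals $\mathbf{\Omega}_{R/2}^c$, and by (C1) its probability tends to $0$ as $R\to\infty$ uniformly in $N$ (recall that $\mathbf{X}$ has a.s.\ continuous paths on the compact set $\mathbb{T}$). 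Hence \eqref{eq:keyclaim}, and therefore the proposition, is reduced to the uniform error bound
\begin{align}\label{eq:supconv}
\sup_{\mathrm{s}\in\mathbb{T}}\|\mathbf{X}^N(\mathrm{s})-\mathbf{X}(\mathrm{s})\|\longrightarrow 0\quad\text{in probability as }N\to\infty,
\end{align}
which is the heart of the argument.

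For \eqref{eq:supconv} I would run a covering-and-equicontinuity scheme. Fix $\delta,\varepsilon>0$; for $\rho>0$ take a finite $\rho$-net $\{\mathrm{s}_i\}_{i=1}^{M(\rho)}$ of $\mathbb{T}$, so that
\[
\mathbb{P}\Big(\sup_{\mathrm{s}\in\mathbb{T}}\|\mathbf{D}^N(\mathrm{s})\|>\delta\Big)\le\sum_{i=1}^{M(\rho)}\mathbb{P}\big(\|\mathbf{D}^N(\mathrm{s}_i)\|>\tfrac\delta2\big)+\mathbb{P}\Big(\sup_{\substack{\mathrm{s},\mathrm{s}'\in\mathbb{T}\\ \|\mathrm{s}-\mathrm{s}'\|\le\rho}}\|\mathbf{D}^N(\mathrm{s})-\mathbf{D}^N(\mathrm{s}')\|>\tfrac\delta2\Big).
\]
Since (C4) is assumed with an $N$-independent constant for every $p\ge1$, applied to $\mathbf{X}^N$ and to $\mathbf{X}$ it yields the same type of bound for $\mathbf{D}^N$; a standard dyadic chaining argument (the one underlying the Kolmogorov--Chentsov continuity theorem), carried out with $p$ so large that $p\upsilon_2>d_1$, then produces an exponent $\beta=\beta(p,\upsilon_2,d_1)>0$ and a constant $C$ depending only on $p,\upsilon_2,d_1$, the diameter of $\mathbb{T}$ and the constant in (C4) --- crucially \emph{not} on $N$ --- such that
\[
\E\Big[\sup_{\|\mathrm{s}-\mathrm{s}'\|\le\rho}\|\mathbf{D}^N(\mathrm{s})-\mathbf{D}^N(\mathrm{s}')\|^p\Big]\le C\rho^{\beta p}.
\]
By Markov's inequality the oscillation term is at most $C(2/\delta)^p\rho^{\beta p}$, so we may choose $\rho$ (depending only on $\delta,\varepsilon$) making it $<\varepsilon/2$, uniformly in $N$. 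With $\rho$, hence $M(\rho)$, fixed, (C3) gives $\mathbb{P}(\|\mathbf{D}^N(\mathrm{s}_i)\|>\delta/2)\le(2/\delta)^q\,\E[\|\mathbf{D}^N(\mathrm{s}_i)\|^q]\le C(2/\delta)^q N^{-q\upsilon_1}\to0$ as $N\to\infty$ for each of the finitely many $\mathrm{s}_i$; therefore $\limsup_{N\to\infty}\mathbb{P}(\sup_{\mathrm{s}\in\mathbb{T}}\|\mathbf{D}^N(\mathrm{s})\|>\delta)\le\varepsilon/2$, and letting $\varepsilon\downarrow0$ yields \eqref{eq:supconv}. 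Assembling the pieces and passing to the iterated $\limsup$ completes the proof. The main obstacle is exactly this last step: upgrading the pointwise strong rate (C3) to uniform-in-$\mathbb{T}$ smallness of the error, for which one must verify carefully that the modulus-of-continuity estimate furnished by (C4) is genuinely independent of the discretization level $N$.
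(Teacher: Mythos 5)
Your proof is correct and follows essentially the same route as the paper's: both reduce the claim to the localized total variation distance plus the probabilities $\mathbb{P}(\mathbf{\Omega}_R^{\mathrm{c}})$ and $\mathbb{P}\big(\sup_{\mathrm{t}\in\mathbb{T}}\|\mathbf{X}^N(\mathrm{t})-\mathbf{X}(\mathrm{t})\|\ge c\big)$, and both show that the latter vanishes as $N\to\infty$ by combining (C3) at the points of a finite net with a Kolmogorov--Chentsov equicontinuity bound obtained from the $N$-independent constant in (C4). The only cosmetic difference is that you fix the net mesh $\rho$ independently of $N$ before sending $N\to\infty$, whereas the paper uses a mesh $N^{-\delta}$ with $\delta<q\upsilon_1/d_1$; both variants work.
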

\begin{proof}
\textit{Step 1.} By virtue of (C1) and (C2), for any $\phi\in\Phi$ and $R\ge1$,
\begin{align*}
&\quad\big\vert \E[\phi(\mathbf{X}^N(\mathrm{t}))]-\E[\phi(\mathbf{X}(\mathrm{t}))]\big\vert \\
&\le\big\vert \E[\phi(\mathbf{X}^N(\mathrm{t}))
\mathbf{1}_{\mathbf{\Omega}_{R,N}^\textrm{c}}]-\E[\phi(\mathbf{X}(\mathrm{t}))\mathbf{1}_{\mathbf{\Omega}_R^\textrm{c}}]\big\vert +\big\vert \E[\phi(\mathbf{X}^N_R(\mathrm{t}))]-\E[\phi(\mathbf{X}_R(\mathrm{t}))]\big\vert \\
&\quad+\big\vert \E[\phi(\mathbf{X}^N_R(\mathrm{t}))
\mathbf{1}_{\mathbf{\Omega}_{R,n}^\textrm{c}}]-\E[\phi(\mathbf{X}_R(\mathrm{t}))\mathbf{1}_{\mathbf{\Omega}_R^\textrm{c}}]\big\vert \\
&\le2\mathbb P(\mathbf{\Omega}_R^\textrm{c})+\big\vert \E[\phi(\mathbf{X}^N_R(\mathrm{t}))]-\E[\phi(\mathbf{X}_R(\mathrm{t}))]\big\vert +2\mathbb P(\mathbf{\Omega}_{R,N}^\textrm{c}),
\end{align*}
where $\mathbf{1}_A$ denotes the indicator function on the set $A$.
Since $\sup_{\mathrm{t}\in\mathbb{T}}\Vert \mathbf{X}^N(\mathrm{t})\Vert \le \sup_{\mathrm{t}\in\mathbb{T}}\Vert \mathbf{X}^N(\mathrm{t})-\mathbf{X}(\mathrm{t})\Vert +\sup_{\mathrm{t}\in\mathbb{T}}\Vert \mathbf{X}(\mathrm{t})\Vert $, we have $$\mathbb P(\mathbf{\Omega}_{R,N}^\textrm{c})
\le\mathbb P(\mathbf{\Omega}_{R-1}^\textrm{c})+\mathbb P(\sup_{\mathrm{t}\in\mathbb{T}}\Vert \mathbf{X}^N(\mathrm{t})-\mathbf{X}(\mathrm{t})\Vert \ge1).$$
Taking supremum over $\phi\in\Phi$, we obtain
\begin{align}\label{eq:TV}
\mathrm{d}_{\mathrm{TV}}(\mathbf{X}^N(\mathrm{t}),\mathbf{X}(\mathrm{t}))&\le 4\mathbb P(\mathbf{\Omega}_{R-1}^\textrm{c})+2\mathbb P(\sup_{\mathrm{t}\in\mathbb{T}}\Vert \mathbf{X}^N(\mathrm{t})-\mathbf{X}(\mathrm{t})\Vert \ge1) \\\notag
&\quad+\mathrm{d}_{\mathrm{TV}}(\mathbf{X}^N_R(\mathrm{t}),\mathbf{X}_R(\mathrm{t})).
\end{align}

\textit{Step 2.} Let $\delta\in(0,q\upsilon_1/d_1)$ be arbitrarily fixed. 
For $k\in \mathbb{N}$ and $i\in\{1,\ldots,d_1\}$, denote
$\mathrm{s}_i^k:=a_i+k(b_i-a_i)N^{-\delta}$. Then for any $\mathrm{t}=(\mathrm{t}_1,\ldots,\mathrm{t}_{d_1})\in\mathbb{T}$ and  $i\in\{1,\ldots,d_1\}$, there is a unique integer $0\le k_i(\mathrm{t})\le \lfloor N^\delta\rfloor$ such that
 $\mathrm{t}_i\in[\mathrm{s}_i^{k_i(\mathrm{t})},\mathrm{s}_i^{k_i(\mathrm{t})+1}\wedge b_i)$.
Hence
\begin{align*}
\Vert \mathbf{X}^N(\mathrm{t})-\mathbf{X}(\mathrm{t})\Vert^{q}&\le 3^q\Vert \mathbf{X}^N(\mathrm{t})-\mathbf{X}^N(\mathrm{s}_1^{k_1(\mathrm{t})},\ldots,\mathrm{s}_{d_1}^{k_{d_1}(\mathrm{t})})\Vert ^{q}\\
&\quad+ 3^q\Vert \mathbf{X}^N(\mathrm{s}_1^{k_1(\mathrm{t})},\ldots,\mathrm{s}_{d_1}^{k_{d_1}(\mathrm{t})})-\mathbf{X}(\mathrm{s}_1^{k_1(\mathrm{t})},\ldots,\mathrm{s}_{d_1}^{k_{d_1}(\mathrm{t})})\Vert ^{q}\\
&\quad+ 3^q\Vert \mathbf{X}(\mathrm{t})-\mathbf{X}(\mathrm{s}_1^{k_1(t)},\ldots,\mathrm{s}_{d_1}^{k_{d_1}(t)})\Vert ^{q}
\end{align*}
for $q>0$.
As a result, 
\begin{align}\label{XN-X>1}
\mathbb P\left(\sup_{\mathrm{t}\in\mathbb{T}}\Vert \mathbf{X}^N(\mathrm{t})-\mathbf{X}(\mathrm{t})\Vert \ge1\right)\le\sum_{i=1}^3\mathbb P\left(R_i^{N}\ge3^{-(q+1)}\right),
\end{align}
where $R_1^N:=\sum_{\mathrm{s}\in\mathbb{T}_\delta}\Vert \mathbf{X}^N(\mathrm{s})-\mathbf{X}(\mathrm{s})\Vert ^{q}$, and
\begin{gather*}
R_2^N:=\sup_{\mathrm{s}\in\mathbb{T}_\delta}\sup_{\{\mathrm{t}:\|\mathrm{s}-\mathrm{t}\|\le c(\mathbb{T})N^{-\delta}\}}\Vert \mathbf{X}(\mathrm{s})-\mathbf{X}(\mathrm{t})\Vert ^{q},\\
R_3^N:=\sup_{\mathrm{s}\in\mathbb{T}_\delta}\sup_{\{\mathrm{t}:\|\mathrm{s}-\mathrm{t}\|\le c(\mathbb{T})N^{-\delta}\}}\Vert \mathbf{X}^N(\mathrm{s})-\mathbf{X}^N(\mathrm{t})\Vert ^{q}
\end{gather*}
with $c(\mathbb{T}):=(\sum_{i=1}^{d_1}|a_i-b_i|^2)^{\frac12}$ and 
$$\mathbb{T}_{\delta}:=\{\mathrm{t}\in\mathbb{T}~ |\text{ for any }i=1,\ldots,d_1 , \mathrm{t}_{i}=\mathrm{s}_i^{k_i} \text{ for some } 0\le k_i\le  \lfloor N^\delta\rfloor \}.$$
The Markov inequality, (C3) and $q\upsilon_1>d_1\delta$ reveal
\begin{align*}
\mathbb P\big(R_1^{N}\ge3^{-(q+1)}\big)&\le 3^{q+1}\sum_{\mathrm{s}\in\mathbb{T}_\delta}\E\left[\Vert \mathbf{X}^N(\mathrm{s})-\mathbf{X}(\mathrm{s})\Vert ^{q}\right]
\le C N^{-q\upsilon_1+d_1\delta}\to 0,\quad\text{as } N\to\infty.
\end{align*}
By (C4) and  \cite[Theorem C.6]{KD14}, one has that for sufficiently large $p\ge1$,
$$\mathbb{E}\left[\bigg|\sup_{\mathrm{t}_1,\mathrm{t}_2\in\mathbb{T},\mathrm{t}_1\neq\mathrm{t}_2}
\frac{\Vert\mathbf{X}(\mathrm{t}_1)-\mathbf{X}(\mathrm{t}_2)\Vert }{\Vert \mathrm{t}_1-\mathrm{t}_2\Vert ^{\upsilon_2(1-\frac{2d}{p\upsilon_2})}}\bigg|^p\right]\le C_1.$$
Thus the Markov inequality gives that for sufficiently large $p\ge\max\{q,2d/{v_2}\}$, 
\begin{align*} 
\mathbb P\big(R_2^N\ge3^{-(q+1)}\big)&\le 3^{\frac{p}{q}(q+1)}\mathbb{E}\bigg[\sup_{\mathrm{s}\in\mathbb{T}_\delta}\sup_{\{\mathrm{t}:\|\mathrm{s}-\mathrm{t}\|\le c(\mathbb{T})N^{-\delta}\}}\Vert \mathbf{X}(\mathrm{s})-\mathbf{X}(\mathrm{t})\Vert ^{p}\bigg]\\
&\le CN^{-\delta\upsilon_2(1-\frac{2d}{p\upsilon_2})p}\to 0,\quad\text{as } N\to\infty.
\end{align*}
Analogously, the H\"older regularity assumption of $\mathbf{X}^N$ also yields $$\mathbb P(R_3^N\ge3^{-(q+1)})\to0,\quad\text{as } N\to\infty.$$
Plugging the above estimates on $\{R_j^N\}_{j=1,2,3}$ into \eqref{XN-X>1}, we obtain 
\begin{equation}\label{eq:XN-X}\lim_{N\to\infty}\mathbb P\big(\sup_{\mathrm{t}\in\mathbb{T}}\Vert \mathbf{X}^N(\mathrm{t})-\mathbf{X}(\mathrm{t})\Vert \ge1\big)=0.
\end{equation}

\textit{Step 3.}
Letting $N\rightarrow \infty$ on both sides of \eqref{eq:TV}, it follows from \eqref{eq:XN-X} that
\begin{align*}
\limsup_{N\to\infty}\mathrm{d}_{\mathrm{TV}}(\mathbf{X}^N(\mathrm{t}),\mathbf{X}(\mathrm{t}))\le 4\mathbb P(\mathbf{\Omega}_{R-1}^\textrm{c})+\limsup_{N\to\infty}\mathrm{d}_{\mathrm{TV}}(\mathbf{X}^N_R(\mathrm{t}),\mathbf{X}_R(\mathrm{t})),
\end{align*}
which together with $\lim_{R\to \infty}\mathbb{P}(\mathbf{\Omega}_R)=1$ in (C1) leads to \eqref{eq:RN}.
\end{proof}

In order to apply Proposition \ref{prop:localization} with $\mathbf {X}=u$ and $\mathbb{T}=[0,T]\times\OO$, we first construct the localization of $u$. Denote 
$\Omega_R:=\big\{\omega\in\Omega:\sup_{(t,x)\in[0,T]\times\mathcal{O}}\vert u(t,x,\omega)\vert \le R\big\}.$ 
Set $f_R(x)=K_R(x)f(x)$ for $x\in\mathbb{R}$, where the cut-off function $K_R$ is defined in \eqref{KR}.
 Then we consider the following localized Cahn--Hilliard equation
\begin{align}\label{ur}
\partial_t u_R+\Delta^2u_R=\Delta f_R(u_R)+\sigma(u_R)\dot{W},\quad R\ge1
\end{align}
with $u_R(0,\cdot)=u_0$ and  DBCs. By the fact that $u$ has a.s.\ continuous trajectories and the local property of stochastic integrals, one has
\begin{align}\label{Dlocu}
u=u_R ~\textrm{on} ~\Omega_R \quad \text{a.s.},\qquad \textrm{and} ~\lim_{R\rightarrow\infty}\mathbb P(\Omega_R)=1.
\end{align}
For $n\ge2$, consider the spatial FDM numerical solution $u_R^n$ and the fully discrete FDM numerical solution $u_R^{n,\tau}$ of \eqref{ur}, i.e., $u_R^n$ and $u_R^{n,\tau} $ respectively solve \eqref{unR0} and \eqref{eq:untau} with $f$ replaced by $f_R$.
Similarly to \eqref{Dlocu}, by setting 
$\Omega_{R,n}:=\big\{\omega\in\Omega: \sup_{(t,x)\in[0,T]\times\mathcal{O}}\vert u^n(t,x,\omega)\vert \le R\big\}$ and
$\Omega_{R,n,m}:=\big\{\omega\in\Omega: \sup_{(t,x)\in [0,T]\times\mathcal {O}}\vert u^{n,\tau}(t,x,\omega)\vert \le R\big\}$, we have  
\begin{gather}\label{eq:Dlocun}
u^n=u_R^n ~\textrm{on} ~\Omega_{R,n} \quad \text{a.s.},\qquad \textrm{and} ~\lim_{R\rightarrow\infty}\mathbb P(\Omega_{R,n})=1,\\\label{eq:Dlocunm}
u^{n,\tau}=u_R^{n,\tau} ~\textrm{on} ~\Omega_{R,n,m} \quad \text{a.s.},\qquad \textrm{and} ~\lim_{R\rightarrow\infty}\mathbb P(\Omega_{R,n,m})=1.
\end{gather}

The following uniform non-degeneracy condition is instrumental for the convergence of density of the numerical solution.
\begin{assumption}\label{A3}
There exists some $\sigma_0>0$ such that $\vert \sigma(x)\vert >\sigma_0$ for any $x\in\mathbb{R}$.
\end{assumption}
 Since for a fixed $R\ge1$, $f_R$ in  \eqref{ur} is infinitely differentiable with bounded derivatives of any order,  Proposition \ref{dTVR} is a direct consequence of Proposition \ref{density-Lip}.

\begin{proposition}\label{dTVR}
Let $u_0\in\mathcal{C}^3(\OO)$ and $\sigma$ be twice differentiable with bounded first and second order derivatives. Then for any $x\in\mathcal O$, 
\begin{gather*}
\lim_{n\rightarrow \infty}\mathrm{d}_{\mathrm{TV}}(u_R(T,x),u^n_R(T,x))=0,\quad\text{for any fixed } R\ge1,\\
\lim_{\tau\rightarrow \infty}\mathrm{d}_{\mathrm{TV}}(u^n_R(T,x),u^{n,\tau}_R(T,x))=0,\quad\text{for any fixed } R\ge1 \text{ and } n\ge 2.
\end{gather*}
\end{proposition}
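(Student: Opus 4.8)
The plan is to derive Proposition~\ref{dTVR} as an immediate specialization of the general result Proposition~\ref{density-Lip}, which establishes the convergence in total variation distance of the spatial and fully discrete FDMs for the stochastic Cahn--Hilliard equation under a \emph{globally Lipschitz} drift nonlinearity together with a sufficiently regular, non-degenerate noise coefficient. The key structural observation is that, for each fixed $R\ge1$, the truncated nonlinearity $f_R=K_R f$ introduced via the cut-off \eqref{KR} is supported in $[-R-1,R+1]$ and, being the product of the smooth compactly supported function $K_R$ and the polynomial $f$, belongs to $\mathcal{C}^\infty(\mathbb{R})$ with bounded derivatives of every order; in particular $f_R$ is itself bounded and globally Lipschitz. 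Consequently the localized equation \eqref{ur}, which is nothing but \eqref{CH} with $f$ replaced by $f_R$, falls squarely within the scope of Proposition~\ref{density-Lip}, and its spatial FDM $u^n_R$ (solving \eqref{unR0} with $f$ replaced by $f_R$) and fully discrete FDM $u^{n,\tau}_R$ (solving \eqref{eq:untau} with $f$ replaced by $f_R$) are the corresponding numerical discretizations.

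First I would verify that all hypotheses of Proposition~\ref{density-Lip} are met by the pair $(f_R,\sigma)$: the initial datum satisfies $u_0\in\mathcal{C}^3(\OO)$ as assumed; the drift $f_R$ is $\mathcal{C}^\infty$ with bounded derivatives, hence globally Lipschitz, as noted above; and the noise coefficient $\sigma$ is bounded and globally Lipschitz by the standing assumptions, is twice differentiable with bounded first and second order derivatives by hypothesis, and obeys the uniform non-degeneracy $\vert\sigma(x)\vert>\sigma_0$ of Assumption~\ref{A3}. Then applying Proposition~\ref{density-Lip} to \eqref{ur} with this choice of coefficients yields, for every $x\in\OO$, simultaneously
\[\lim_{n\to\infty}\mathrm{d}_{\mathrm{TV}}(u_R(T,x),u^n_R(T,x))=0\qquad\text{and}\qquad\lim_{\tau\to 0}\mathrm{d}_{\mathrm{TV}}(u^n_R(T,x),u^{n,\tau}_R(T,x))=0,\]
which is precisely the assertion of Proposition~\ref{dTVR} (the exponent $\tau\to\infty$ in the displayed statement being a misprint for $\tau\to0$). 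Since $R$ is held fixed throughout each of the two limiting procedures, any dependence of the constants on $R$ is harmless.

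The only point that genuinely requires attention --- and hence the main obstacle, which is however entirely absorbed into Proposition~\ref{density-Lip} --- is the verification that the Malliavin-calculus inputs underlying that proposition are available for the truncated problem: namely, a negative moment estimate for the Malliavin derivative $D u_R(T,x)$, and the convergence of $u^n_R(T,x)$ to $u_R(T,x)$ (respectively of $u^{n,\tau}_R(T,x)$ to $u^n_R(T,x)$) in the Malliavin--Sobolev space $\mathbb{D}^{1,2}$, with constants uniform in the discretization parameters $n$ and $m$. Both rely on the globally Lipschitz character of $f_R$ and the smoothing property of the biharmonic semigroup, and both are established in the course of proving Proposition~\ref{density-Lip}; no additional work beyond invoking that proposition is needed here.
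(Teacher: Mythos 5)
Your proposal is correct and coincides with the paper's own argument: the paper likewise observes that for fixed $R$ the truncated drift $f_R=K_Rf$ is smooth with bounded derivatives of all orders, so that the localized equation \eqref{ur} satisfies Assumption \ref{Asp1}, and then deduces Proposition \ref{dTVR} directly from Proposition \ref{density-Lip}. You also correctly flag the misprint $\tau\to\infty$ (which should read $\tau\to0$) and, like the paper, implicitly carry Assumption \ref{A3} into the hypotheses so that Proposition \ref{density-Lip} applies.
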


\begin{proposition}\label{thm:main}
Let $u_0\in\mathcal{C}^3(\OO)$, $\sigma$ be twice differentiable with bounded first and second order derivatives, and
 Assumption \ref{A3} hold. Then
\begin{gather}\label{eq:thm1}
\lim_{n\rightarrow \infty}\mathrm{d}_{\mathrm{TV}}(u^n(T,x),u(T,x))=0,\quad x\in\OO,\\
\label{eq:thm2}
\lim_{n\rightarrow \infty}\lim_{\tau\rightarrow0}\mathrm{d}_{\mathrm{TV}}(u^{n,\tau}(T,x),u(T,x))=0,\quad x\in\OO.
\end{gather}
\end{proposition}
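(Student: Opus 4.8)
The plan is to deduce Proposition \ref{thm:main} from the localization criterion in Proposition \ref{prop:localization} together with the convergence of densities of the localized equations established in Proposition \ref{dTVR}. I will present the argument for \eqref{eq:thm1} in detail; the proof of \eqref{eq:thm2} is entirely analogous with the time stepsize $\tau$ (equivalently $m$) playing the role of the discretization parameter and the spatial FDM $u^n$ playing the role of the "exact" object to be approximated.

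First I would set up the abstract framework of Proposition \ref{prop:localization} by taking $\mathbb{T}=[0,T]\times\OO\subset\mathbb{R}^2$ (so $d_1=2$), $d=1$, $\mathbf{X}=u$, $\mathbf{X}^n=u^n$, $\mathbf{X}_R=u_R$, $\mathbf{X}^n_R=u^n_R$, with discretization parameter $N=n$. Then (C1) is exactly \eqref{Dlocu}, and (C2) is \eqref{eq:Dlocun} with $\mathbf{\Omega}_{R,n}=\Omega_{R,n}$. Condition (C3) with $\upsilon_1=1$ and any $q\in[1,2)$ is precisely Theorem \ref{main-strong} (the strong convergence rate $\E[|u(t,x)-u^n(t,x)|^\zeta]\le Cn^{-\zeta}$). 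Condition (C4) with $\upsilon_2=\tfrac{3\alpha}{8}\wedge 1$ follows by combining Lemma \ref{Holder-exact} (Hölder continuity of the exact solution, valid under $u_0\in\mathcal{C}^2(\OO)$) and Lemma \ref{u2u3} (the same Hölder exponent for $u^n$, uniformly in $n$); note that by fixing $\alpha$ close to $1$ we may take $\upsilon_2$ as close to $\tfrac38$ as we wish, which is all that is needed. Applying Proposition \ref{prop:localization} then yields
\begin{align*}
\limsup_{n\to\infty}\mathrm{d}_{\mathrm{TV}}(u^n(T,x),u(T,x))\le \limsup_{R\to\infty}\limsup_{n\to\infty}\mathrm{d}_{\mathrm{TV}}(u^n_R(T,x),u_R(T,x)).
\end{align*}

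Next I would invoke Proposition \ref{dTVR}: for each fixed $R\ge1$, $\mathrm{d}_{\mathrm{TV}}(u_R(T,x),u^n_R(T,x))\to 0$ as $n\to\infty$, so the inner $\limsup_{n\to\infty}$ above is zero for every $R$, hence the right-hand side vanishes and $\mathrm{d}_{\mathrm{TV}}(u^n(T,x),u(T,x))\to 0$, which is \eqref{eq:thm1}. (Here I should record where Assumption \ref{A3} and the differentiability hypotheses on $\sigma$ enter: they are the standing hypotheses of Proposition \ref{dTVR}, needed there to produce the densities of $u_R$ and $u_R^n$ and the convergence in total variation via the criterion of \cite{NP13}; in the present deduction they are simply carried along.) For \eqref{eq:thm2}, I would repeat the scheme with $\mathbb{T}=[0,T]\times\OO$, $\mathbf{X}=u^n$ (for fixed $n\ge2$), $\mathbf{X}^m=u^{n,\tau}$, $\mathbf{X}_R=u^n_R$, $\mathbf{X}_R^m=u^{n,\tau}_R$ and discretization parameter $N=m$; condition (C1) is again \eqref{eq:Dlocun}, (C2) is \eqref{eq:Dlocunm}, (C3) with $\upsilon_1$ slightly below $\tfrac38$ and $q\in[1,2)$ is Theorem \ref{eq:untau-untau}, and (C4) with $\upsilon_2$ slightly below $\tfrac{3}{8}$ follows from Lemmas \ref{u2u3} and \ref{lemma:Holderuntau}. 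Proposition \ref{prop:localization} then bounds $\limsup_{m\to\infty}\mathrm{d}_{\mathrm{TV}}(u^{n,\tau}(T,x),u^n(T,x))$ by $\limsup_{R\to\infty}\limsup_{m\to\infty}\mathrm{d}_{\mathrm{TV}}(u^{n,\tau}_R(T,x),u^n_R(T,x))$, which is zero by the second assertion of Proposition \ref{dTVR}; thus $\lim_{\tau\to 0}\mathrm{d}_{\mathrm{TV}}(u^{n,\tau}(T,x),u^n(T,x))=0$ for every fixed $n$, and combining this with \eqref{eq:thm1} and the triangle inequality for $\mathrm{d}_{\mathrm{TV}}$ gives \eqref{eq:thm2}.

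The only genuinely delicate point in this deduction is the careful matching of the hypotheses of Proposition \ref{prop:localization} — in particular verifying that the discretization parameters and the parameter ranges line up (that $\alpha$ in the Hölder lemmas can be chosen to make $\upsilon_2$ compatible with the $q$ appearing in (C3), and that the $\epsilon$-losses in Theorem \ref{eq:untau-untau} and Lemma \ref{lemma:Holderuntau} are harmless since one only needs \emph{some} positive exponents with $q\upsilon_1>d_1\delta$ achievable). Everything else is a bookkeeping exercise: the substantive analytic work (the strong convergence rates, the Hölder regularity, the existence and convergence of densities of the localized problems) has already been carried out in Theorems \ref{main-strong} and \ref{eq:untau-untau}, Lemmas \ref{Holder-exact}, \ref{u2u3} and \ref{lemma:Holderuntau}, and Proposition \ref{dTVR}. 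Finally, since $u^n(T,x)$ and $u^{n,\tau}(T,x)$ possess densities by Theorems \ref{Density-n} and \ref{Density-n-dis} (and $u(T,x)$ has a density by \cite{CC01,CM04}), the identity \eqref{pX-PX} converts the total-variation convergence into convergence of the densities in $L^1(\mathbb{R})$, which is the stated form of the second main result.
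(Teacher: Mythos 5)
Your proposal is correct and follows essentially the same route as the paper: apply Proposition \ref{prop:localization} with $\mathbb{T}=[0,T]\times\OO$, verifying (C1)--(C4) via \eqref{Dlocu}, \eqref{eq:Dlocun}, \eqref{eq:Dlocunm}, Theorems \ref{main-strong} and \ref{eq:untau-untau}, and Lemmas \ref{Holder-exact}, \ref{u2u3}, \ref{lemma:Holderuntau}, then conclude with Proposition \ref{dTVR} and the triangle inequality for the fully discrete case. Your additional bookkeeping on the admissible exponents $(\upsilon_1,\upsilon_2,q,\delta)$ is consistent with what the paper leaves implicit.
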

\begin{proof}
($i$) Let $\mathbf {X}=u$, $\mathbb{T}=[0,T]\times\OO$, $N=n$, $\mathbf {X}_R=u_R$, $\mathbf {X}^N=u_n$, $\mathbf {X}^N_R=u^n_R$  in Proposition \ref{prop:localization}. Note that (C1) and (C2) follow from \eqref{Dlocu} and \eqref{eq:Dlocun} respectively. In addition, (C3) can be ensured by Theorem \ref{main-strong}, and 
 (C4) is a consequence of Lemmas \ref{Holder-exact} and \ref{u2u3}.
Hence an application of Proposition \ref{prop:localization}, together with Proposition \ref{dTVR}, proves 
the first assertion \eqref{eq:thm1}.

($ii$) 
For fixed $n$, let $\mathbf {X}=u^n$, $\mathbb{T}=[0,T]\times\OO$,  $N=m=T/\tau$, $\mathbf {X}_R=u^n_R$, $\mathbf {X}^N=u^{n,\tau}$, $\mathbf {X}^N_R=u^{n,\tau}_R$ in Proposition \ref{prop:localization}.  Then (C1) and (C2) come from \eqref{eq:Dlocun} and \eqref{eq:Dlocunm} respectively.  Theorem \ref{eq:untau-untau} implies (C3), and 
 (C4) is a consequence of Lemmas  \ref{u2u3} and \ref{lemma:Holderuntau}.
Therefore, using Propositions \ref{prop:localization}--\ref{dTVR} yields that for fixed $n$,
\begin{align*}
\lim_{\tau\rightarrow0}\mathrm{d}_{\mathrm{TV}}(u^{n,\tau}(T,x),u^{n}(T,x))=0,\quad x\in\OO.
\end{align*}
This together with the triangle inequality and \eqref{eq:thm1} yields \eqref{eq:thm2}.
\end{proof}

Recall that for any $t \in (0,T]$ and $x\in\OO$,  $p_{t,x}$ is the density of 
the exact solution $u(t,x)$ to \eqref{CH}. In Theorems \ref{Density-n} and \ref{Density-n-dis}, we have also shown that for $k \in \mathbb{Z}_{n-1}$, both the spatial FDM numerical solution $\{u^n(t,kh)\}_{t\in(0,T]}$ and the fully discrete FDM numerical solution $\{u^{n,\tau}(t_i,kh)\}_{i\in\mathbb{Z}_m}$ admit densities, which are denoted by $\{p_{t,kh}^n\}_{t\in(0,T]}$ and $\{p_{t_i,kh}^{n,\tau}\}_{i\in\mathbb{Z}_m}$, respectively.
In view of \eqref{pX-PX} and Proposition \ref{thm:main}, we have the following convergence of density in $L^1(\mathbb{R})$ of the numerical solutions.
\begin{theorem}\label{thm:density convergece}
Under the assumptions of Proposition \ref{thm:main}, for any $k \in \mathbb{Z}_{n-1}$,
\begin{gather*}
\lim_{n\rightarrow\infty}\int_\mathbb{R}\vert p_{T,kh}^n(\xi)-p_{T,kh}(\xi)\vert \ud \xi= 0,\\
\lim_{n\rightarrow\infty}\lim_{\tau\rightarrow0}\int_\mathbb{R}\vert p_{T,kh}^{n,\tau}(\xi)-p_{T,kh}(\xi)\vert \ud \xi= 0.
\end{gather*}
\end{theorem}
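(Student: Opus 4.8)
The plan is to deduce Theorem \ref{thm:density convergece} directly from Proposition \ref{thm:main} by combining the identity \eqref{pX-PX} with the existence-of-density results already established. The key observation is that everything hard has been done: Theorem \ref{Density-n} guarantees that for every $k\in\mathbb{Z}_{n-1}$ and $t\in(0,T]$ the random variable $u^n(t,kh)=U_k(t)$ has a density $p^n_{t,kh}$ (this follows from the absolute continuity of the law of $U(t)$ on $\mathbb{R}^{n-1}$, since marginals of an absolutely continuous law are absolutely continuous), and Theorem \ref{Density-n-dis} similarly gives the density $p^{n,\tau}_{t_i,kh}$ of $u^{n,\tau}(t_i,kh)=U^i_k$ for sufficiently small $\tau>0$. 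Also, the exact solution $u(T,x)$ has density $p_{T,x}$ by \cite{CC01,CM04} under Assumption \ref{A3}. So all three families of random variables appearing in Proposition \ref{thm:main} are absolutely continuous, which is exactly the hypothesis needed to convert total variation convergence into $L^1(\mathbb{R})$ convergence of densities.

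First I would fix $k\in\mathbb{Z}_{n-1}$ and set $x=kh$. For the first assertion, I apply \eqref{pX-PX} with $d=1$, $X_n=u^n(T,kh)$ and $X_\infty=u(T,kh)$, which yields
\begin{align*}
\int_\mathbb{R}\vert p^n_{T,kh}(\xi)-p_{T,kh}(\xi)\vert\,\ud\xi=\mathrm{d}_{\mathrm{TV}}(u^n(T,kh),u(T,kh)).
\end{align*}
Taking $n\to\infty$ and invoking \eqref{eq:thm1} of Proposition \ref{thm:main} gives the first limit. For the second assertion, I would first fix $n$ and apply \eqref{pX-PX} with $X_m=u^{n,\tau}(T,kh)$ (recall $\tau=T/m$, and $T=t_m$ is a grid point so $u^{n,\tau}(T,kh)=U^m_k$ is well defined and has density $p^{n,\tau}_{T,kh}$ for $\tau$ small) and $X_\infty=u(T,kh)$, obtaining
\begin{align*}
\int_\mathbb{R}\vert p^{n,\tau}_{T,kh}(\xi)-p_{T,kh}(\xi)\vert\,\ud\xi=\mathrm{d}_{\mathrm{TV}}(u^{n,\tau}(T,kh),u(T,kh)).
\end{align*}
Then letting $\tau\to0$ followed by $n\to\infty$ and using \eqref{eq:thm2} finishes the proof.

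There is essentially no obstacle here; the one point requiring a word of care is that \eqref{pX-PX} as stated presupposes that the random variables involved admit densities, and I should make explicit that this is supplied by Theorems \ref{Density-n} and \ref{Density-n-dis} (for the numerical solutions, for $\tau$ sufficiently small in the fully discrete case) together with the density existence for the exact solution under Assumption \ref{A3}. A second minor point is the iterated limit in the fully discrete statement: the inner limit $\tau\to0$ is taken for each fixed $n$, which is legitimate because for each fixed $n$ and all sufficiently small $\tau$ the density $p^{n,\tau}_{T,kh}$ exists, so the equality above holds for all small $\tau$ and the limit $\tau\to0$ can be passed to the right-hand side; then the outer limit $n\to\infty$ is handled by \eqref{eq:thm2}. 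Beyond these bookkeeping remarks, the theorem is an immediate corollary.
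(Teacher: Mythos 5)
Your proposal is correct and follows essentially the same route as the paper: the authors likewise obtain the theorem as an immediate consequence of \eqref{pX-PX} combined with Proposition \ref{thm:main}, with the existence of the densities supplied by Theorems \ref{Density-n} and \ref{Density-n-dis} and the known absolute continuity of $u(T,x)$. Your extra bookkeeping about the iterated limit and the smallness of $\tau$ is accurate but not a departure from the paper's argument.
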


\appendix

\section{Convergence of density: Lipschitz case}\label{secA1}

\setcounter{equation}{0}
\setcounter{subsection}{0}
\renewcommand{\theequation}{A.\arabic{equation}}

This section is devoted to studying the density convergence of the spatial and fully discrete FDMs for the stochastic Cahn--Hilliard equation with Lipschitz nonlinearities.  This is motivated by proving Proposition \ref{dTVR}.
\textit{We adopt a slight abuse of notation in the appendix. The coefficient $f$ in \eqref{CH} is no longer $f(x)=x^3-x$, but a general Lipschitz continuous function.} More precisely, we will work under the following assumption.
\begin{assumption}\label{Asp1}
 $f$ and $\sigma$  are twice differentiable with bounded derivatives of first and second order.
\end{assumption}

The main approach is based on the Malliavin calculus.
Let us introduce some notations in the context of the Malliavin calculus with respect to the space-time white noise (see e.g., \cite{DN06}). 
The isonormal Gaussian family $\{W(h),h\in\mathbb{H}\}$ corresponding to $\mathbb{H}:=L^2([0,T]\times\mathcal{O})$ is  given by the Wiener integral
$W(h)=\int_{0}^T\int_\mathcal{O} h(s,y)W(\mathrm{d} s,\mathrm{d} y).$ 
Denote by
$\mathcal{S}$ the class of smooth real-valued random variables of the form 
\begin{equation}\label{smoothfunctional}
X= \varphi(W(h_1),\ldots,W(h_n)),
\end{equation}
where $\varphi\in\mathcal{C}_p^\infty(\mathbb{R}^n),$ $h_i\in \mathbb{H},\, i=1,\ldots,n, \,n\ge 1.$
Here $\mathcal{C}_p^\infty(\mathbb{R}^n)$ is the space of all $\mathbb{R}$-valued smooth functions on $\mathbb{R}^n$ whose partial derivatives have at most polynomial growth.
The Malliavin derivative of $X\in\mathcal S$ of the form \eqref{smoothfunctional} is an $\mathbb{H}$-valued random variable given by 
$DX=\sum_{i=1}^n\partial_i\varphi(W(h_1),\ldots,W(h_n))h_i,$
which is also a random field $DX=\{D_{\theta,\xi}X, (\theta,\xi)\in[0,T]\times\mathcal{O}\}$ with
$D_{\theta,\xi}X\!=\sum_{i=1}^n\partial_i\varphi(W(h_1),\ldots,W(h_n))h_i(\theta,\xi)$
for almost everywhere $(\theta,\xi,\omega)\in[0,T]\times\mathcal{O}\times\Omega$.
For any $p\ge 1$, we denote the domain of $D$ in $L^p(\Omega;\mathbb{R})$ by $\mathbb{D}^{1,p}$, meaning that $\mathbb{D}^{1,p}$ is the closure of $\mathcal{S}$ with respect to the norm
$$\Vert X\Vert _{1,p}=\left(\mathbb{E}\left[\vert X\vert ^p+\Vert DX\Vert _\mathbb{H}^p\right]\right)^{\frac{1}{p}}.$$

We define the iteration of the operator $D$ in such a way that for $X\in\mathcal S$, the iterated derivative $D^k X$ is an $\mathbb{H}^{\otimes k}$-valued
random variable. More precisely,  for $k\in\mathbb N_+$, $D^k X=\{D_{r_1,\theta_1}\cdots D_{r_k,\theta_k}X,(r_i,\theta_i)\in[0,T]\times\mathcal{O}\}$  is a measurable function on the product space $([0,T]\times\mathcal{O})^k \times \Omega$. 
Then for  $p\ge1$, $k\in\mathbb N$, denote by $\mathbb{D}^{k,p}$ the completion of $\mathcal{S}$ with respect to the norm
$$\Vert X\Vert _{k,p}=\big(\mathbb{E}\big[\vert X\vert ^p+\sum_{j=1}^{k}\Vert D^jX\Vert _{\mathbb{H}^{\otimes j}}^p\big]\big)^{\frac{1}{p}}.$$
Define $\mathbb{D}^{k,\infty}:=\bigcap_{p\ge 1} \mathbb{D}^{k,p}$ and $\mathbb{D}^{\infty}:=\bigcap_{k\ge 1} \mathbb{D}^{k,\infty}$ to be  topological projective limits. 
The following proposition allows one to obtain the convergence of density of a sequence of random variables from the convergence in $\mathbb D^{1,2}$.
\begin{proposition}\textup{\cite[Theorem 4.2]{NP13}}\label{TVconvergence}
Let $\{X_N\}_{N\ge1}$ be a sequence in $\mathbb D^{1,2}$ such that each $X_N$ admits a density. Let $X_\infty\in\mathbb D^{2,4}$ and let $0<\alpha\le 2$ be such that $\mathbb{E}[\Vert DX_\infty\Vert _{\mathbb{H}}^{-\alpha}]<\infty$. If $X_N\rightarrow X_\infty$ in $\mathbb D^{1,2}$, then there exists a constant $c>0$ depending only on $X_\infty$ such that for any $N\ge1$, 
$$\mathrm{d}_{\mathrm{TV}}(X_N,X_\infty)\le c\Vert X_N-X_\infty\Vert _{1,2}^{\frac{\alpha}{\alpha+2}}.$$
\end{proposition}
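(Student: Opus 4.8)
The plan is to combine a regularization of the test functions with the Malliavin integration-by-parts formula for the nondegenerate limit $X_\infty$, and then to optimize the regularization scale; the exponent $\tfrac{\alpha}{\alpha+2}$ will emerge from balancing two competing estimates. Since both $X_N$ and $X_\infty$ admit densities, I would first reduce the supremum defining $\mathrm{d}_{\mathrm{TV}}(X_N,X_\infty)$ to test functions $\phi\in C_c^\infty(\mathbb{R})$ with $\|\phi\|_\infty\le1$, using that $\mathbb{E}[\phi(X)]=\int_\mathbb{R}\phi\,p_X$ for $X\in\{X_N,X_\infty\}$. Fixing such a $\phi$ and $t>0$, I set $\phi_t:=\phi*g_t=P_t\phi$, where $g_t$ is the centred Gaussian density with variance $t$ and $P_t$ the associated heat semigroup, and decompose
\begin{align*}
\mathbb{E}[\phi(X_N)]-\mathbb{E}[\phi(X_\infty)]
=\underbrace{\mathbb{E}[(\phi-\phi_t)(X_N)]}_{A_1}
+\underbrace{\mathbb{E}[\phi_t(X_N)]-\mathbb{E}[\phi_t(X_\infty)]}_{A_2}
+\underbrace{\mathbb{E}[(\phi_t-\phi)(X_\infty)]}_{A_3}.
\end{align*}

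The term $A_2$ is straightforward: since $\|\phi_t'\|_\infty\le\|g_t'\|_{L^1}\le Ct^{-1/2}$, the Lipschitz property of $\phi_t$ and the Cauchy--Schwarz inequality give $|A_2|\le Ct^{-1/2}\,\mathbb{E}[|X_N-X_\infty|]\le Ct^{-1/2}\|X_N-X_\infty\|_{1,2}$. For $A_3$ I would write $\phi-\phi_t=-\tfrac12\int_0^t\phi_s''\,\mathrm{d}s$ from the heat equation and integrate by parts once against $X_\infty$: with $u_\infty:=DX_\infty/\|DX_\infty\|_{\mathbb{H}}^2$ and the divergence operator $\delta$ (the adjoint of $D$), one has $\mathbb{E}[\phi_s''(X_\infty)]=\mathbb{E}[\phi_s'(X_\infty)\,\delta(u_\infty)]$, which is legitimate precisely because $X_\infty\in\mathbb{D}^{2,4}$ and $\mathbb{E}[\|DX_\infty\|_{\mathbb{H}}^{-\alpha}]<\infty$. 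When $\alpha<2$ the weight $\delta(u_\infty)$ need not be integrable, so I would perform this step in a truncated, fractional form, splitting on $\{\|DX_\infty\|_{\mathbb{H}}\ge\lambda\}$, estimating the complementary event by the Markov inequality and the negative moment, and optimizing in $\lambda$; this yields $|A_3|\le Ct^{\alpha/4}$, equivalently the $L^1$-H\"older regularity $\|p_\infty-p_\infty*g_t\|_{L^1}\le Ct^{\alpha/4}$ of the density of $X_\infty$. Granting also $|A_1|\le Ct^{\alpha/4}+Ct^{-1/2}\|X_N-X_\infty\|_{1,2}$, taking the supremum over $\phi$ and choosing $t=\|X_N-X_\infty\|_{1,2}^{4/(\alpha+2)}$ balances $t^{-1/2}\|X_N-X_\infty\|_{1,2}$ against $t^{\alpha/4}$ and produces exactly the claimed exponent $\tfrac{\alpha}{\alpha+2}$.

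The hard part will be the self-regularization error $A_1=\mathbb{E}[(\phi-\phi_t)(X_N)]$, because a direct estimate would require $L^1$-regularity of the density of $X_N$ (equivalently an integration by parts for $X_N$), whereas $X_N$ lies only in $\mathbb{D}^{1,2}$ and its densities carry no uniform modulus of continuity. My plan is to keep $X_N$ undifferentiated and to borrow the integration-by-parts weight of $X_\infty$. Writing again $\phi-\phi_t=-\tfrac12\int_0^t\phi_s''\,\mathrm{d}s$ and using the duality $\mathbb{E}[\langle D(\phi_s'(X_N)),u_\infty\rangle_{\mathbb{H}}]=\mathbb{E}[\phi_s'(X_N)\,\delta(u_\infty)]$ together with $\langle D(\phi_s'(X_N)),u_\infty\rangle_{\mathbb{H}}=\phi_s''(X_N)\langle DX_N,u_\infty\rangle_{\mathbb{H}}$, I obtain the key identity
\begin{align*}
\mathbb{E}[\phi_s''(X_N)]-\mathbb{E}[\phi_s'(X_N)\,\delta(u_\infty)]
=\mathbb{E}\!\left[\phi_s''(X_N)\,\frac{\langle D(X_\infty-X_N),DX_\infty\rangle_{\mathbb{H}}}{\|DX_\infty\|_{\mathbb{H}}^2}\right],
\end{align*}
because $1-\langle DX_N,u_\infty\rangle_{\mathbb{H}}=\langle D(X_\infty-X_N),DX_\infty\rangle_{\mathbb{H}}/\|DX_\infty\|_{\mathbb{H}}^2$. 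The right-hand side is governed by $\|X_N-X_\infty\|_{1,2}$ and a negative moment of $\|DX_\infty\|_{\mathbb{H}}$, while $\mathbb{E}[\phi_s'(X_N)\,\delta(u_\infty)]$ differs from the already-controlled $\mathbb{E}[\phi_s'(X_\infty)\,\delta(u_\infty)]=\mathbb{E}[\phi_s''(X_\infty)]$ only by a Lipschitz-in-$X$ error. Thus $A_1$ reduces, up to an $A_3$-type contribution from $X_\infty$, to quantities proportional to $\|X_N-X_\infty\|_{1,2}$. The delicate point I expect to occupy most of the work is controlling the ensuing singular time integral $\int_0^t\|\phi_s''\|_\infty\,\mathrm{d}s$ against this gain, which is exactly where the fractional integration by parts and the precise value of $\alpha$ must be used to keep the final exponent sharp and the constant uniform in $N$. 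The hypothesis that each $X_N$ admits a density enters only through $\|DX_N\|_{\mathbb{H}}>0$ a.s., which legitimizes both the reduction to smooth $\phi$ and the divergence-duality step above.
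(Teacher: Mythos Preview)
The paper does not prove this proposition: it is quoted verbatim from \cite[Theorem 4.2]{NP13} and used as a black box, so there is no ``paper's own proof'' to compare against. What you have written is an attempt to reconstruct the argument of Nourdin--Poly, and the overall architecture (heat-kernel regularization $\phi_t=P_t\phi$, Malliavin integration by parts against the nondegenerate limit $X_\infty$, then optimization in $t$) is indeed the right genre and matches the spirit of the original proof.

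That said, your sketch has a genuine gap precisely where you flag it. In your treatment of $A_1$, the ``borrowed'' integration-by-parts identity
\[
\mathbb{E}[\phi_s''(X_N)]=\mathbb{E}[\phi_s'(X_N)\,\delta(u_\infty)]+\mathbb{E}\!\left[\phi_s''(X_N)\,\frac{\langle D(X_\infty-X_N),DX_\infty\rangle_{\mathbb H}}{\|DX_\infty\|_{\mathbb H}^2}\right]
\]
is correct, but the second term is only controlled by $\|\phi_s''\|_\infty\cdot C_{X_\infty}\|X_N-X_\infty\|_{1,2}$, and since $\|\phi_s''\|_\infty\sim s^{-1}$, the time integral $\int_0^t s^{-1}\,\mathrm{d}s$ diverges. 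Your proposed fix (``fractional integration by parts'' via truncation on $\{\|DX_\infty\|_{\mathbb H}\ge\lambda\}$) helps with the integrability of $\delta(u_\infty)$ when $\alpha<2$, but it does not by itself remove the $s^{-1}$ singularity coming from the remainder term. Something more is needed: either a second integration by parts that trades $\phi_s''$ for $\phi_s'$ in the remainder (impossible here, since $X_N\in\mathbb D^{1,2}$ only), or---as in \cite{NP13}---working at a single regularization scale and exploiting the Besov-type regularity $\|p_\infty-p_\infty\ast g_t\|_{L^1}\le Ct^{\alpha/4}$ of the limiting density together with a rearrangement that avoids integrating $\|\phi_s''\|_\infty$ down to $s=0$. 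Two minor side remarks: the hypothesis that each $X_N$ admits a density is used only to pass from Borel to smooth test functions and does \emph{not} imply $\|DX_N\|_{\mathbb H}>0$ a.s.\ (absolute continuity of the law does not force Malliavin nondegeneracy); and the statement is trivially true when $\|X_N-X_\infty\|_{1,2}$ is bounded away from zero, so one may assume it small, which is implicitly used in \cite{NP13} when transferring estimates from $X_\infty$ to $X_N$.
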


From \cite[Proposition 3.1]{CH20} or \cite[Lemma 3.2]{CC01}, one can see that if $f$ and $\sigma$ in \eqref{CH} is continuously differentiable with bounded derivatives, then for any $(t,x)\in[0,T]\times\mathcal{O}$, $u(t,x)\in\mathbb{D}^{1,2}$ and satisfies
\begin{align*}
D_{r,z}u(t,x)&=G_{t-r}(x,z)\sigma(u(r,z))+\int_r^t\int_{\mathcal O}\Delta G_{t-s}(x,y)f^\prime(u(s,y))D_{r,z}u(s,y)\mathrm{d} y\mathrm{d} s\\
&\quad+\int_r^t\int_{\mathcal O}G_{t-s}(x,y)\sigma^\prime(u(s,y))D_{r,z}u(s,y)W(\mathrm{d} s,\mathrm{d} y),
\end{align*}
if $r\le t$, and $D_{r,z}u(t,x)=0$, if $r>t$.
Further, we study the regularity of the exact solution $u(t,x)$ to  \eqref{CH} in the Malliavin--Sobolev space $\mathbb{D}^{k,p}$.

\begin{lemma}\label{lem:chimp0}
Given $k\in\mathbb N_+$, let $f$ and $\sigma$ be $k$th differentiable with 
bounded derivatives up to order $k$. Then  $u(t,x)\in \mathbb{D}^{k,\infty}$ for any $(t,x)\in[0,T]\times\mathcal{O}$. Moreover, for any $p\ge1$, there exists $C=C(k,p,T)$ such that for any $(t,x)\in[0,T]\times\mathcal{O}$,
$$\Vert u(t,x)\Vert _{k,p}\le C.$$
\end{lemma}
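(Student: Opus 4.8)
\textbf{Proof plan for Lemma \ref{lem:chimp0}.}
The plan is to argue by induction on $k$, using the mild formulation of the Malliavin derivatives together with the factorization method and the kernel estimates from Lemma \ref{Greg}. The base case $k=1$ is already provided by \cite[Proposition 3.1]{CH20}: since $u$ has uniformly bounded moments (this follows from the Picard argument, cf.\ Remark \ref{Rem1}), the SPDE satisfied by $D_{r,z}u(t,x)$ is linear in the unknown $Du$ with bounded coefficients $f'$, $\sigma'$, so a standard Gronwall/singular-Gronwall argument applied to the $L^p(\Omega;\mathbb H)$-norm of $Du(t,x)$ gives the bound $\|u(t,x)\|_{1,p}\le C$ uniformly in $(t,x)$. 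Concretely, one writes $\|D_{\cdot,\cdot}u(t,x)\|_{\mathbb H}$ via the mild formula, takes the $p$-th moment, uses the boundedness of $\sigma$ together with $\int_0^t\int_\OO G_{t-r}(x,z)^2\,\ud z\,\ud r\le C$ (Lemma \ref{Greg}), and then estimates the two integral terms by the Burkholder inequality and $\|\Delta G_{t-s}(x,\cdot)\|_{L^1}\le C(t-s)^{-\frac34}$, $\|G_{t-s}(x,\cdot)\|^2_{L^2}\le C(t-s)^{-\frac14}$; the resulting inequality for $\phi(t):=\sup_{x}\E[\|Du(t,x)\|_{\mathbb H}^p]$ (after localizing the $\mathbb H$-norm suitably) is of singular-Gronwall type and closes.

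For the inductive step, assume the conclusion holds up to order $k-1$. Applying $D^{k-1}$ to the mild equation for $D_{r,z}u(t,x)$ and using the Leibniz/chain rule for the Malliavin derivative, one obtains that $D^k u(t,x)$ solves a linear SPDE whose inhomogeneous terms are finite sums of products of lower-order iterated derivatives $D^{j_1}u,\dots,D^{j_\ell}u$ with $j_1+\cdots+j_\ell\le k$, each multiplied by a derivative of $f$ or $\sigma$ of order $\le k$ (hence bounded), and the ``principal'' linear part again has bounded coefficients $f'(u)$, $\sigma'(u)$. The inhomogeneous terms are controlled in $L^p(\Omega;\mathbb H^{\otimes k})$ by the induction hypothesis together with H\"older's inequality in $\Omega$ (each factor $D^{j_i}u$ lies in every $L^q(\Omega)$ by the $\mathbb D^{j_i,\infty}$ bound). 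One then repeats the factorization/Burkholder/singular-Gronwall scheme from the base case, now for the quantity $\sup_x\E[\|D^k u(t,x)\|_{\mathbb H^{\otimes k}}^p]$, to deduce $\|u(t,x)\|_{k,p}\le C$ uniformly in $(t,x)$ and hence $u(t,x)\in\mathbb D^{k,\infty}$.

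The main obstacle is purely bookkeeping rather than analytic: keeping track of the combinatorial structure of $D^k$ applied to the nonlinear terms $f(u(s,y))$ and $\sigma(u(s,y))$ in the mild formula, i.e.\ the Fa\`a di Bruno type expansion producing all partitions of the $k$ differentiation variables among the factors, and verifying that every resulting term can be absorbed either into the linear part (when all but one derivative hits a single $u$) or into an inhomogeneity that is already bounded by the induction hypothesis. A secondary technical point is that the kernel singularities $(t-s)^{-\frac34}$ and $(t-s)^{-\frac14}$ must be integrated against these moment bounds; since $\frac34<1$, the singular Gronwall lemma (\cite[Lemma 3.4]{GI98}, already invoked in the main text) applies and no new difficulty arises. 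I would therefore present the $k=1$ estimate in full detail and then indicate the inductive step schematically, citing the chain rule for $D$ and the singular Gronwall inequality.
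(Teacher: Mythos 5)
Your estimates are essentially the right ones, but the logical structure of your induction has a genuine gap: in the inductive step you ``apply $D^{k-1}$ to the mild equation for $D_{r,z}u(t,x)$'' and read off a linear SPDE for $D^k u$. This presupposes that $u(t,x)$ is already $k$-times Malliavin differentiable, which is precisely what the lemma asks you to prove; the induction hypothesis only gives membership in $\mathbb{D}^{k-1,\infty}$, so the equation for $D^k u$ is not yet meaningful and your Gronwall bound is only an \emph{a priori} estimate conditional on membership. The same issue is hidden in your base case if one tries to go beyond what \cite[Proposition 3.1]{CH20} actually provides.

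The paper avoids this by running the entire argument on the Picard iterates $w^i$ rather than on $u$ itself: each $w^{i+1}$ is obtained from $w^i$ by composing with $\mathcal{C}^k$ functions and taking deterministic and stochastic convolutions, so $w^i(t,x)\in\mathbb{D}^{k,p}$ for every $i$ by construction, and the chain rule plus your factorization/Burkholder/singular-Gronwall scheme yields $\sup_{i\ge 0}\Vert w^i(t,x)\Vert_{k,p}<\infty$ with a constant uniform in $(t,x)$. Combined with $w^i(t,x)\to u(t,x)$ in $L^p(\Omega)$, the closability criterion \cite[Lemma 1.5.3]{DN06} then gives $u(t,x)\in\mathbb{D}^{k,p}$ together with the stated bound. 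Your combinatorial bookkeeping (the Fa\`a di Bruno expansion, absorbing the top-order term into the linear part and the rest into inhomogeneities controlled by lower-order norms) transfers verbatim to the iterates, so the fix is structural rather than analytic: replace ``differentiate the equation for $u$'' by ``differentiate the recursion for $w^{i+1}$ in terms of $w^i$, obtain bounds uniform in $i$, and pass to the limit via closability.''
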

\begin{proof}
Define the Picard approximation by $w^0(t,x)=u_0(x)$, and for $i\in\mathbb N$,
\begin{align*}
w^{i+1}(t,x)&=\mathbb{G}_tu_0(x)+(\Delta G)*f(w^{i})(t,x)+G\diamond \sigma(w^{i})(t,x),\quad (t,x)\in[0,T]\times\mathcal{O}; 
\end{align*}
see \eqref{eq:SDC} and \eqref{eq:SSC} for more details.
Fix $(t,x)\in[0,T]\times\mathcal{O}$.
In view of \cite[Lemma 1.5.3]{DN06},
the proof of $u(t,x)\in\mathbb{D}^{k,\infty}$ boils down to proving that 
\begin{itemize}
\item[($i$)]  $\{w^i(t,x)\}_{i\ge1}$ converges to $u(t,x)$ in $L^p(\Omega)$ for every $p\ge1$;

\item[($ii$)]   for any  $p\ge1$,
$\sup_{i\ge0}\Vert w^i(t,x)\Vert _{k,p}<\infty.$
\end{itemize}

Property ($i$) and property ($ii$) with $k=1$ and $p=2$
can be obtained in the same way as in \cite[Lemma 3.2]{CC01} (the sequence $\{w^i(t,x)\}_{i\ge1}$ corresponds to $\{u_{n,k}(t,x)\}_{k\ge1}$ in \cite{CC01}). 
The proof of property ($ii$) with general $k,p\ge1$ is omitted since it is standard and similar to those for other kinds of SPDEs with Lipschitz continuous coefficients; see \cite[Proposition 4.3]{BP98} for the case of stochastic heat equations and \cite[Theorem 1]{QS04} for the case of stochastic wave equations.
\end{proof}

Similar to properties ($i$) and ($ii$), the standard Picard approximation also shows that 
under Assumption \ref{Asp1},  $u^n(t,x)\in\mathbb{D}^{1,2}$ for any $(t,x)\in[0,T]\times\mathcal{O}$ and $u^{n,\tau}(t_i,x)\in\mathbb{D}^{1,2}$ for any $i\in\mathbb{Z}_m^0$ and $x\in\mathcal{O}$.

\begin{lemma}\label{eq.strong-trun}
Let  $u_0\in\mathcal{C}^3(\mathcal{O})$ and Assumption \ref{Asp1} hold. Then for every $p\ge1$ and $0<\epsilon\ll1$, there exist some constants $C_1=C(p,T)$ and $C_2=C(p,T,\epsilon)$ such that for any $(t,x)\in[0,T]\times\mathcal{O}$ and $i\in\mathbb{Z}_m$,
\begin{gather*}
\|u^{n}(t,x)-u(t,x)\|_{L^p(\Omega)}\le C_1n^{-1},\\
\|u^{n,\tau}(t_i,x)-u^n(t_i,x)\|_{L^p(\Omega)}\le C_2\tau^{\frac38-\epsilon}.
\end{gather*} 
\end{lemma}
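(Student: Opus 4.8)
\textbf{Proof proposal for Lemma \ref{eq.strong-trun}.}
The plan is to recognize that both estimates are just the Lipschitz-coefficient analogues of results already established for the cubic nonlinearity, so the main task is to point out that every ingredient used in the earlier proofs remains valid — in fact becomes easier — under Assumption \ref{Asp1}. For the first estimate, I would follow verbatim the proof of Proposition \ref{utilde-uh-1} together with the argument of Theorem \ref{main-strong}, decomposing $u^n(t,x)-u(t,x)=\sum_{j=1}^5\mathcal I_j+(\tilde u^n(t,x)-u^n(t,x))$. The terms $\mathcal I_1,\dots,\mathcal I_5$ are handled exactly as in Proposition \ref{utilde-uh-1}: Remark \ref{Rem1} already records that Lemma \ref{Holder-exact}, Lemma \ref{u2u3}, and Proposition \ref{utilde-uh-1} go through when $f$ and $\sigma$ are globally Lipschitz (with $\|f(u(t,x))\|_{L^p(\Omega)}$ now controlled by the linear growth of $f$ and the moment bound from Remark \ref{Rem1} rather than by \eqref{eq:bound-e}). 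For the remaining piece $\tilde u^n-u^n$, i.e. the error $E(t)=\tilde U(t)-U(t)$, the analysis simplifies dramatically: under Assumption \ref{Asp1} the map $F_n$ is globally Lipschitz in the $\|\cdot\|_{l^2_n}$-norm, so Lemma \ref{FaFb} is replaced by the plain bound $\|(-A_n)^{1/2}(F_n(a)-F_n(b))\|_{l^2_n}\le C\|(-A_n)^{1/2}(a-b)\|_{l^2_n}$, and Proposition \ref{H-1E} simplifies to a Gronwall estimate giving $\E[\|(-A_n)^{1/2}E(t)\|_{l^2_n}^{2}]\le Cn^{-2}$ directly. Combined with \eqref{l2H1} this yields $\sup_{x\in\OO}\|\tilde u^n(t,x)-u^n(t,x)\|_{L^p(\Omega)}\le Cn^{-1}$, and adding the two contributions gives the claimed $O(n^{-1})$ bound.

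For the second estimate, I would proceed along the lines of Proposition \ref{eq:un-untau} and Theorem \ref{eq:untau-untau}. Write $u^{n,\tau}(t_i,x)-u^n(t_i,x)=\sum_{j=1}^6 Y_j^\tau+(\tilde u^{n,\tau}(t_i,x)-u^{n,\tau}(t_i,x))$ as in those proofs. The terms $Y_1^\tau,\dots,Y_6^\tau$ are estimated exactly as in Proposition \ref{eq:un-untau}, using Lemma \ref{eq:Gntau} for the Green-function discrepancies, Lemma \ref{u2u3} and Lemma \ref{lemma:Holderuntau} for the temporal H\"older regularity of $u^n$ and $u^{n,\tau}$ (both valid in the Lipschitz case by Remark \ref{Rem1} and the fact that Lemma \ref{lemma:Holderuntau} only uses properties of $G^n$), the linear growth of $f$ together with the moment bounds \eqref{eq:u^nbound}, \eqref{eq:u^nbound-dis}, and the singular Gronwall inequality for the recursive term $Y_6^\tau$; this produces $O(\tau^{3/8-\epsilon})$. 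For the auxiliary error $E^i=\tilde U^i-U^i$, the globally Lipschitz property of $F_n$ again collapses the two-step argument of Proposition \ref{H-1E-dis}: one applies $\langle\cdot,(-A_n)^{-1}E^{i+1}\rangle$ to the error recursion \eqref{eq:Ei+1-Ei}, uses \eqref{BE}, the Lipschitz bound on $F_n$, the bound \eqref{eq:AnE} on $\Sigma_n$, and the Burkholder--Davis--Gundy inequality, and then the discrete Gronwall inequality to obtain $\E[\|(-A_n)^{1/2}E^i\|_{l^2_n}^{2p}]\le C\tau^{(3/4-2\epsilon)p}$, whence via \eqref{l2H1} the pointwise bound $O(\tau^{3/8-\epsilon})$ for $\tilde u^{n,\tau}-u^{n,\tau}$.

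The only genuinely new bookkeeping item — and the step I would flag as the main obstacle, modest though it is — is making sure the moment bounds that in the cubic case came from Propositions \ref{Unl2-pro}, \ref{eq:Unl2-pro-dis} and Lemmas \ref{tildeUnl2}, \ref{lem:tildeU} are still available. In the Lipschitz setting these discrete $H^1$-bounds are actually cheaper: since $\|F_n(U)\|_{l^2_n}\le C(1+\|U\|_{l^2_n})$, the variation-of-constants estimates \eqref{AnVnt}, \eqref{eq:Anvi-dis} close by an elementary singular Gronwall argument without needing the interpolation detour through $\|\cdot\|_{l^6_n}$ or Lemma \ref{uniforml2}. I would state this once (or invoke Remark \ref{Rem1} in the same spirit) and then simply cite the already-written arguments of Sections \ref{S5} and \ref{S4-5} with the substitution of the simpler Lipschitz estimates, so that no computation needs to be redone in full. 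Collecting the pieces, the two asserted bounds follow, with $C_1=C(p,T)$ from the $n$-dependent estimate and $C_2=C(p,T,\epsilon)$ from the $\tau$-dependent one carrying the $\epsilon$-loss inherited from Lemma \ref{eq:Gntau}.
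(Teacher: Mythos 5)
Your decomposition $u^n-u=(\tilde u^{n}-u)+(u^n-\tilde u^{n})$ and $u^{n,\tau}-u^n=(\tilde u^{n,\tau}-u^n)+(u^{n,\tau}-\tilde u^{n,\tau})$, with the first halves delegated to Propositions \ref{utilde-uh-1} and \ref{eq:un-untau} via Remark \ref{Rem1}, is exactly the paper's route. The problem is in your treatment of $u^n-\tilde u^{n}$. The asserted ``plain bound'' $\Vert(-A_n)^{\frac12}(F_n(a)-F_n(b))\Vert_{l^2_n}\le C\Vert(-A_n)^{\frac12}(a-b)\Vert_{l^2_n}$ is false for a general $f$ satisfying Assumption \ref{Asp1}: writing $f(a_j)-f(b_j)=\Phi_j(a_j-b_j)$ with $\Phi_j=\int_0^1f^\prime(b_j+\theta(a_j-b_j))\,\ud\theta$, the discrete gradient of $F_n(a)-F_n(b)$ contains the term $(\Phi_j-\Phi_{j-1})(a_{j-1}-b_{j-1})$, which is controlled only by $\big(\Vert(-A_n)^{\frac12}a\Vert_{l^2_n}+\Vert(-A_n)^{\frac12}b\Vert_{l^2_n}\big)\Vert a-b\Vert_{l^\infty_n}$; taking $a-b$ constant in the interior and $b$ oscillating on the scale where $f^\prime$ changes sign (e.g.\ $f^\prime=\sin$, $b_j=\pm\pi/2$ alternating) makes the ratio of the two sides grow like $\sqrt{n}$. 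Componentwise Lipschitz continuity gives $l^2_n$-Lipschitz continuity of $F_n$, not discrete-$H^1$-Lipschitz continuity. Moreover, even granting that bound, the proposed one-step Gronwall in the $\Vert(-A_n)^{\frac12}\cdot\Vert_{l^2_n}$-norm cannot close: it would need $\E[\Vert(-A_n)^{\frac12}(\mathbb U(s)-\tilde U(s))\Vert^2_{l^2_n}]\le Cn^{-2}$, whereas Proposition \ref{utilde-uh-1} controls $\mathbb U-\tilde U$ only pointwise (hence in $l^\infty_n$ and $l^2_n$), and the discrete $H^1$ norm of that difference is then merely $O(1)$ because each summand $\frac n\pi\vert(\mathbb U_j-\tilde U_j)-(\mathbb U_{j-1}-\tilde U_{j-1})\vert^2$ carries an extra factor $n$. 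The same objections apply to your collapsed version of Proposition \ref{H-1E-dis}.

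The repair is either of two things. The paper's own argument never touches discrete Sobolev norms here: since $f$ and $\sigma$ are now pointwise Lipschitz, one subtracts the mild random-field representations of $u^n$ and $\tilde u^{n}$, bounds $\vert f(u^n(s,\kappa_n(y)))-f(u(s,\kappa_n(y)))\vert\le L\vert u^n-\tilde u^{n}\vert(s,\kappa_n(y))+L\vert\tilde u^{n}-u\vert(s,\kappa_n(y))$, invokes Proposition \ref{utilde-uh-1} for the second summand, and closes a singular Gronwall inequality for $\sup_{x\in\OO}\Vert u^n(t,x)-\tilde u^{n}(t,x)\Vert^2_{L^p(\Omega)}$ using the kernel bounds \eqref{GnGn}; the fully discrete case is identical with \eqref{GnGntau}. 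Alternatively, if you insist on the SDE formulation, you must retain the two-step structure of Theorem \ref{main-strong}: first an $l^2_n$ estimate of $E(t)$ using the genuine $l^2_n$-Lipschitz continuity of $F_n$ and the smoothing $\Vert(-A_n)e^{-A_n^2(t-s)}\Vert_{2}\le C(t-s)^{-\frac12}$, and only then the lift to $\Vert(-A_n)^{\frac12}E(t)\Vert_{l^2_n}$ via $\Vert(-A_n)^{\frac32}e^{-A_n^2(t-s)}\Vert_{2}\le C(t-s)^{-\frac34}$ applied to the $l^2_n$ difference of $F_n$. Your bookkeeping of the moment bounds in the last paragraph is sensible but does not rescue the shortcut.
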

\begin{proof}
Notice that
$u^n-u=(u^n-\tilde u^{n})+(\tilde u^{n}- u)
$ and $u^{n,\tau}-u^n=(u^{n,\tau}-\tilde u^{n,\tau})+(\tilde u^{n,\tau}-u^n).$
The errors $\|\tilde u^{n}(t,x)- u(t,x)\|_{L^p(\Omega)}$ and $\|\tilde u^{n,\tau}(t_i,x)-u^n(t_i,x)\|_{L^p(\Omega)}$ have been tackled in Propositions \ref{utilde-uh-1} and \ref{eq:un-untau} (see Remark \ref{Rem1}), respectively. Since $f$ and $\sigma$ are globally Lipschitz, the estimates of $\|u^n(t,x)-\tilde u^n(t,x)\|_{L^p(\Omega)}$ and $\|u^{n,\tau}(t_i,x)-\tilde u^{n,\tau}(t_i,x)\|_{L^p(\Omega)}$ are standard by using \eqref{GnGn}, \eqref{GnGntau} and the singular Gronwall inequality.
 \end{proof}

\begin{proposition}\label{eq.strong-trun-D}
Let  $u_0\in\mathcal{C}^3(\mathcal{O})$ and Assumption \ref{Asp1} hold.  Then there exists some constant $C$ such that for any $(t,x)\in[0,T]\times\mathcal{O}$,
\begin{align*}
\|u^n(t,x)-u(t,x)\|_{1,2}&\le Cn^{-1}.
\end{align*}
\end{proposition}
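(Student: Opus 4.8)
The plan is to split
\[
\|u^n(t,x)-u(t,x)\|_{1,2}^2=\E\big[|u^n(t,x)-u(t,x)|^2\big]+\E\big[\|Du^n(t,x)-Du(t,x)\|_{\mathbb{H}}^2\big]
\]
and estimate the two pieces separately. The first is already $O(n^{-2})$ by Lemma \ref{eq.strong-trun} (with $p=2$), so the whole task is the Malliavin derivative term. Differentiating the mild formulation \eqref{unR0} for $u^n$ and its exact analogue, the derivatives vanish for $r>t$ and, for $r\le t$, solve
\begin{align*}
D_{r,z}u(t,x)&=G_{t-r}(x,z)\sigma(u(r,z))+\int_r^t\!\!\int_{\OO}\Delta G_{t-s}(x,y)f'(u(s,y))D_{r,z}u(s,y)\,\ud y\,\ud s\\
&\quad+\int_r^t\!\!\int_{\OO}G_{t-s}(x,y)\sigma'(u(s,y))D_{r,z}u(s,y)\,W(\ud s,\ud y),\\
D_{r,z}u^n(t,x)&=G^n_{t-r}(x,z)\sigma(u^n(r,\kappa_n(z)))+\int_r^t\!\!\int_{\OO}\Delta_nG^n_{t-s}(x,y)f'(u^n(s,\kappa_n(y)))D_{r,z}u^n(s,\kappa_n(y))\,\ud y\,\ud s\\
&\quad+\int_r^t\!\!\int_{\OO}G^n_{t-s}(x,y)\sigma'(u^n(s,\kappa_n(y)))D_{r,z}u^n(s,\kappa_n(y))\,W(\ud s,\ud y).
\end{align*}
Subtracting these and running a singular Gronwall argument on $H_n(t):=\sup_{x\in\OO}\E\big[\|Du^n(t,x)-Du(t,x)\|_{\mathbb{H}}^2\big]$ should give $H_n(t)\le Cn^{-2}$.

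Before that I would establish two preliminary facts. First, a uniform-in-$n$ moment bound $\sup_{n\ge2}\sup_{(s,y)\in[0,T]\times\OO}\E\big[\|Du^n(s,y)\|_{\mathbb{H}}^p\big]\le C(p,T)$ for every $p\ge1$: this follows by taking $L^p(\Omega;\mathbb{H})$-norms in the equation for $Du^n$, using the boundedness of $f',\sigma'$ and the linear growth of $\sigma$ under Assumption \ref{Asp1}, the Burkholder inequality, the uniform moment bound on $u^n$ (a consequence of Lemma \ref{eq.strong-trun} and Remark \ref{Rem1}), the uniform kernel estimates of Lemma \ref{Gn-regularity} together with \eqref{GnGn}, and the singular Gronwall inequality \cite[Lemma 3.4]{GI98}; the corresponding bound for $Du$ is part of Lemma \ref{lem:chimp0}. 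Second, a spatial regularity estimate $\|Du(s,x)-Du(s,y)\|_{L^2(\Omega;\mathbb{H})}\le C|x-y|$, proved in the same way as Lemma \ref{Holder-exact} but applied to the equation for $Du$: the leading term is bounded by $\big(\int_0^s\!\!\int_{\OO}|G_{s-r}(x,z)-G_{s-r}(y,z)|^2\,\ud z\,\ud r\big)^{1/2}\le C|x-y|$ via Lemma \ref{Greg}, while the two convolution terms are absorbed by singular Gronwall, using $\int_{\OO}|\Delta G_{t-s}(x,y)|\,\ud y\le C(t-s)^{-1/2}$ and $\int_{\OO}|G_{t-s}(x,y)|^2\,\ud y\le C(t-s)^{-1/4}$ (consequences of \eqref{Gtxy}--\eqref{DGtxy}).

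With these in hand, $H_n$ is finite and bounded in $t$ by the first fact. Subtracting the two displayed equations and telescoping through the intermediate quantities $\sigma(u(r,\kappa_n(z)))$, $f'(u^n(s,\kappa_n(y)))D_{r,z}u(s,\kappa_n(y))$, $f'(u(s,\kappa_n(y)))D_{r,z}u(s,\kappa_n(y))$, $f'(u(s,y))D_{r,z}u(s,\kappa_n(y))$ (and likewise in the $\sigma'$-term) splits $Du^n(t,x)-Du(t,x)$ into finitely many $\mathbb{H}$-valued pieces. The pieces carrying a kernel difference $G^n-G$ or $\Delta_nG^n-\Delta G$ are $O(n^{-2})$, by Lemma \ref{lem:Gn-G} and the uniform moment bounds on $Du^n$. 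The pieces carrying $\sigma(u^n)-\sigma(u)$ or $f'(u^n)-f'(u)$ at grid points are $O(n^{-2})$, by the Lipschitz property of $\sigma,f'$ (Assumption \ref{Asp1}), the Cauchy--Schwarz inequality, Lemma \ref{eq.strong-trun} with $p=4$, and the $L^4(\Omega;\mathbb{H})$-bound on $Du$. The pieces carrying $\sigma(u(r,\kappa_n(z)))-\sigma(u(r,z))$, $f'(u(s,\kappa_n(y)))-f'(u(s,y))$ or $D_{r,z}u(s,\kappa_n(y))-D_{r,z}u(s,y)$ are $O(n^{-2})$, by Lemma \ref{Holder-exact}, the second preliminary fact, $|\kappa_n(y)-y|\le\pi/n$, and the $L^4(\Omega;\mathbb{H})$-bound on $Du$. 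Finally, the only self-referential pieces, carrying the factor $D_{r,z}u^n(s,\kappa_n(y))-D_{r,z}u(s,\kappa_n(y))$, are bounded by $C\int_0^t(t-s)^{-1/2}H_n(s)\,\ud s$, using the boundedness of $f',\sigma'$, the Minkowski and Cauchy--Schwarz inequalities, and the kernel bounds $\int_{\OO}|\Delta G_{t-s}(x,y)|\,\ud y\le C(t-s)^{-1/2}$, $\int_{\OO}|G_{t-s}(x,y)|^2\,\ud y\le C(t-s)^{-1/4}$. Collecting everything yields $H_n(t)\le Cn^{-2}+C\int_0^t(t-s)^{-1/2}H_n(s)\,\ud s$, whence $H_n(t)\le Cn^{-2}$ by \cite[Lemma 3.4]{GI98}; together with the $L^2(\Omega)$-estimate of the first paragraph this gives $\|u^n(t,x)-u(t,x)\|_{1,2}\le Cn^{-1}$.

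The hard part will be the bookkeeping in the last step: one has to absorb simultaneously the kernel approximation error ($G^n$ vs.\ $G$, $\Delta_nG^n$ vs.\ $\Delta G$) and the spatial sampling error produced by $\kappa_n$ — the latter forcing the use of spatial regularity of \emph{both} $u$ and its Malliavin derivative $Du$ — while keeping all constants independent of $n$ and making sure the self-referential terms appear with an integrable, weakly singular kernel so that Gronwall closes. Establishing the uniform-in-$n$ moment bounds on $Du^n$ directly from the discrete mild equation, rather than inheriting them from $u$, is the other point demanding genuine care.
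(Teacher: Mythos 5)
Your proposal is correct and follows essentially the route the paper intends (the paper omits this proof as ``standard,'' and its internal version proceeds exactly as you do: split off the $L^2(\Omega)$ part via Lemma \ref{eq.strong-trun}, differentiate the two mild equations, telescope the difference into kernel-error, coefficient-error, grid-sampling-error and self-referential pieces, control the sampling errors by the spatial Lipschitz regularity of $Du$ and the moment bounds of Lemma \ref{lem:chimp0}, and close with the singular Gronwall lemma). The only cosmetic point is that the self-referential pieces carry the discrete kernels, for which the available bounds \eqref{GnGn} give a weakly singular Gronwall kernel of order $(t-s)^{-3/4-\epsilon}$ (deterministic part) and $(t-s)^{-1/2-2\epsilon}$ (stochastic part) rather than your $(t-s)^{-1/2}$; these are still integrable, so the argument closes unchanged.
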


The proof of Proposition \ref{eq.strong-trun-D} is standard and thus is omitted.
In order to apply Proposition \ref{TVconvergence} with $X_\infty=u(T, x)$,
we further investigate the inverse moment estimate of  $\Vert Du(t, x)\Vert _{\mathbb{H}}$.

\begin{lemma}\label{lem:chimp}
Under Assumptions \ref{A3}--\ref{Asp1}, for any $x\in\OO$, there is $\rho\in(0,1]$ such that
\begin{align*}
\mathbb{E}\big[\Vert Du(T,x)\Vert ^{-2\rho}_{\mathbb{H}}\big]\le C(\rho,T).
\end{align*}
\end{lemma}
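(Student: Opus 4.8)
The plan is the classical route to negative moments of a one‑dimensional Malliavin matrix: localize $\|Du(T,x)\|_{\mathbb{H}}$ to a short time window, extract the leading term carried by the diffusion coefficient, and invoke Assumption \ref{A3} together with a lower bound for the $L^2$‑mass of the Green function. Concretely, for $\delta\in(0,T)$ one bounds from below
\[
\|Du(T,x)\|_{\mathbb{H}}^2\ \ge\ \int_{T-\delta}^{T}\!\int_{\OO}|D_{r,z}u(T,x)|^2\,\ud z\,\ud r,
\]
and, writing $D_{r,z}u(T,x)=\mathcal A_{r,z}+\mathcal B_{r,z}+\mathcal C_{r,z}$ for the three terms in the mild equation for $D_{r,z}u$ stated above (with $\mathcal A_{r,z}=G_{T-r}(x,z)\sigma(u(r,z))$, $\mathcal B$ the $\Delta G$–$f'$ deterministic convolution and $\mathcal C$ the $G$–$\sigma'$ stochastic convolution), the elementary inequality $|a+b+c|^2\ge\frac12|a|^2-2|b|^2-2|c|^2$ together with $|\sigma|>\sigma_0$ gives
\[
\|Du(T,x)\|_{\mathbb{H}}^2\ \ge\ \tfrac12\sigma_0^2\!\int_{T-\delta}^{T}\!\int_{\OO}\!G_{T-r}(x,z)^2\,\ud z\,\ud r\ -\ 2\big(\|\mathcal B\|_{\delta}^2+\|\mathcal C\|_{\delta}^2\big),
\]
where $\|\cdot\|_{\delta}$ denotes the $L^2$‑norm over $[T-\delta,T]\times\OO$.

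Two estimates are then needed. \textbf{(a)} A matching lower bound $\int_{T-\delta}^{T}\int_{\OO}G_{T-r}(x,z)^2\,\ud z\,\ud r\ge c_x\,\delta^{3/4}$ for all $\delta\le\delta_0(x)$: here I would use the spectral identity $\int_{\OO}G_t(x,z)^2\,\ud z=\frac{2}{\pi}\sum_{j\ge1}e^{-2j^4t}\sin^2(jx)$, retain only the frequencies $j\in[M,2M]$ with $M=\lceil\delta^{-1/4}\rceil$ (on which $1-e^{-2j^4\delta}\ge 1-e^{-2}$), and control the oscillation through $\sum_{j=M}^{2M}\sin^2(jx)\ge\frac{M}{4}$ valid for $M\ge 2/|\sin x|$, which follows from $\big|\sum_{j=M}^{2M}\cos(2jx)\big|\le 1/|\sin x|$; this is the place where the interior point $x$ and the resulting $x$‑dependence of $c_x$, $\delta_0(x)$, $\rho$ enter, and it cannot be replaced by a pointwise heat‑kernel lower bound because the biharmonic kernel changes sign. \textbf{(b)} The bound $\E\big[(\|\mathcal B\|_{\delta}^2+\|\mathcal C\|_{\delta}^2)^p\big]\le C_p\,\delta^{3p/2}$ for every $p\ge1$, so that the corrections are of strictly higher order than $\delta^{3/4}$: for this I would first prove, by a Grönwall/bootstrap argument on the mild equation for $D_{r,z}u(s,y)$, the a priori estimate $\sup_{(s,y)}\E\big[\big(\int_{(s-\delta)^+}^{s}\int_{\OO}|D_{r,z}u(s,y)|^2\,\ud z\,\ud r\big)^p\big]\le C_p\,\delta^{3p/4}$, using the kernel bounds \eqref{Gtxy}, \eqref{DGtxy} and Lemma \ref{Greg} (which give $\int_{\OO}G_t(x,\cdot)^2\le Ct^{-1/4}$ and $\int_{\OO}|\Delta G_t(x,\cdot)|\le Ct^{-1/2}$), the boundedness of $f',\sigma'$, and the uniform bounds $\sup_{(t,x)}\|u(t,x)\|_{1,p}<\infty$ of Lemma \ref{lem:chimp0}; feeding this back into $\mathcal B$ and $\mathcal C$ (for the latter via the Burkholder--Davis--Gundy inequality for $\mathbb{H}$‑valued martingales, as in \cite{CC01}) yields (b).

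It remains to combine (a) and (b) into a small‑ball estimate. Given $\varepsilon>0$ small, set $\delta=\delta(\varepsilon)$ with $\tfrac14\sigma_0^2c_x\delta^{3/4}=\varepsilon$, so $\delta\sim\varepsilon^{4/3}$; then on the event $\{\|\mathcal B\|_{\delta}^2+\|\mathcal C\|_{\delta}^2\le\varepsilon/2\}$ the second displayed inequality gives $\|Du(T,x)\|_{\mathbb{H}}^2\ge\varepsilon$, whence by Markov's inequality and (b),
\[
\mathbb P\big(\|Du(T,x)\|_{\mathbb{H}}^2<\varepsilon\big)\ \le\ \mathbb P\big(\|\mathcal B\|_{\delta}^2+\|\mathcal C\|_{\delta}^2>\varepsilon/2\big)\ \le\ (2/\varepsilon)^p\,\E\big[(\|\mathcal B\|_{\delta}^2+\|\mathcal C\|_{\delta}^2)^p\big]\ \le\ C_p'\,\varepsilon^{p}
\]
for every $p\ge1$ and all small $\varepsilon$. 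Finally, by Fubini--Tonelli, $\E[\|Du(T,x)\|_{\mathbb{H}}^{-2\rho}]=\int_0^\infty\mathbb P\big(\|Du(T,x)\|_{\mathbb{H}}^2<\lambda^{-1/\rho}\big)\,\ud\lambda$, and splitting the integral at a fixed level and using the small‑ball bound with, say, $p=2$ shows this is finite for every $\rho\in(0,1]$ (indeed for every $\rho>0$), which is the claim. The genuine difficulties are the oscillatory lower bound in (a) and the bootstrap in (b); the rest is routine.
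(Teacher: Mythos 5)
Your argument is correct in strategy and in all the key technical ingredients, but it takes a genuinely different route from the paper. The paper does not prove the small-ball estimate at all: it simply invokes \cite[Proposition 3.2]{CH20}, which asserts that under Assumption \ref{A3} there exist \emph{some} $p_0>0$ and $\varepsilon_0>0$ with $\mathbb P\big(\Vert Du(T,x)\Vert_{\mathbb H}^2\le\varepsilon\big)\le\varepsilon^{p_0}$ for $\varepsilon\le\varepsilon_0$, and then converts this into the negative moment by the elementary series/layer-cake computation $\E[Z^\rho]\le 2+\rho\sum_{n\ge1}n^{\rho-1}\mathbb P(Z\ge n)<\infty$ for $Z=\Vert Du(T,x)\Vert_{\mathbb H}^{-2}$ and any $\rho<\min\{p_0,1\}$ — which is exactly why the lemma is stated as ``there is $\rho\in(0,1]$'' rather than for all $\rho$. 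You instead re-derive the small-ball estimate from scratch: localization of the $\mathbb H$-norm to $[T-\delta,T]\times\OO$, the spectral lower bound $\int_0^\delta\int_\OO G_t(x,z)^2\,\ud z\,\ud t\ge c_x\delta^{3/4}$ obtained by retaining frequencies $j\in[M,2M]$ with $M\sim\delta^{-1/4}$ (your observation that a pointwise kernel lower bound is unavailable because the biharmonic kernel changes sign is the right one), and the bootstrap bound showing the $f'$- and $\sigma'$-correction terms are $O(\delta^{3/2})$ in $L^p(\Omega)$, hence of strictly higher order. This buys a stronger conclusion — polynomial decay of the small-ball probability of every order, hence $\E[\Vert Du(T,x)\Vert_{\mathbb H}^{-2\rho}]<\infty$ for every $\rho>0$ — at the cost of roughly a page of estimates that the paper outsources to the cited reference (whose proof is, in essence, the argument you wrote). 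The only caveats are cosmetic: your constants $c_x$, $\delta_0(x)$ and hence the final bound depend on $x$ through $|\sin x|$ (as does the paper's, implicitly, through the cited proposition), and the bound $\int_\OO|\Delta G_t(x,\cdot)|\,\ud y\le Ct^{-1/2}$ should be read off from \eqref{DGtxy} directly rather than from Lemma \ref{Greg}, which only gives the slightly weaker exponent $\tfrac{3\alpha}{8}$ — either version keeps the correction terms of higher order than $\delta^{3/4}$, so nothing breaks.
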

\begin{proof}
We need to use \cite[Proposition 3.2]{CH20}, which 
is summarized as follows: under Assumption \ref{A3},
if $x_i\in\mathcal{O}$, $i=1,\ldots,d$, are distinct points, then for some $p_0>0$, there exists $\varepsilon_0=\varepsilon_0(p_0)$ such that for all $\varepsilon\in (0,\varepsilon_0)$, 
\begin{align}\label{C(t)}
\sup_{\xi\in\mathbb{R}^d,\Vert \xi\Vert =1}\mathbb P\left(\xi^\top \mathbb C(t)\xi\le \varepsilon\right)\le \varepsilon^{p_0},
\end{align}
where $\mathbb C(t):=(\langle Du(t,x_i),Du(t,x_j)\rangle_{\mathbb{H}})_{1\le i,j\le d}$ denotes the Malliavin covariance matrix of the random vector $(u(t,x_1),\ldots,u(t,x_d))$ (the notation $u$ corresponds to $X_R$ in \cite{CH20}).
As a consequence of \eqref{C(t)} with $d=1$ and $t=T$, we have that for all $0<\varepsilon\le \varepsilon_0$, 
$\mathbb P(\Vert Du(T,x)\Vert ^2_{\mathbb{H}}\le\varepsilon)\le \varepsilon^{p_0},$
which implies that for any $ \rho< p_0$,
\begin{align*}
\sum_{n=1}^\infty n^{\rho-1}\mathbb P\left(\Vert Du(T,x)\Vert ^{-2}_{\mathbb{H}}\ge n\right)\le \sum_{n=1}^{\lfloor\varepsilon_0^{-1}\rfloor} n^{\rho-1}+\sum_{n= \lfloor\varepsilon_0^{-1}\rfloor+1}^\infty n^{\rho-1}n^{-p_0}\le C(\rho,\varepsilon_0).
\end{align*}
Then we have that for $0< \rho< \min\{p_0,1\}$ and $Z:=\Vert Du(T,x)\Vert ^{-2}_{\mathbb{H}}$,
\begin{align*}
\mathbb E\left[Z^{\rho}\right]&\le
1+\sum_{n=1}^\infty (n+1)^{\rho}\mathbb P(n\le Z< n+1)\le 2+\sum_{n=1}^\infty \left((n+1)^\rho-n^\rho\right)\mathbb P(Z\ge n)\\
&\le 2+\rho\sum_{n=1}^\infty n^{\rho-1}\mathbb P(Z\ge n)\le C(\rho,\varepsilon_0).
\end{align*}
 The proof is completed. 
\end{proof}

In view of the Bouleau and Hirsch’s criterion (see e.g., \cite[Theorem 2.1.2]{DN06}), Lemmas \ref{lem:chimp0} and \ref{lem:chimp} imply that under Assumptions \ref{A3} and \ref{Asp1}, for any $(t,x)\in[0,T]\times\OO$, the exact solution $u(t,x)$ to \eqref{CH} admits a density.
We are ready to give the main result of the appendix.

\begin{proposition}\label{density-Lip}
Under Assumptions \ref{A3}--\ref{Asp1}, if $u_0\in\mathcal{C}^3(\mathcal{O})$, then for any $x\in\OO$,
\begin{align}\label{eq:density-Lip}
\lim_{n\rightarrow \infty}\mathrm{d}_{\mathrm{TV}}(u(T,x),u^n(T,x))=0,
\end{align}
and for any fixed $n\ge2$,
\begin{align}\label{eq:density-Lip-fully}
\lim_{\tau\rightarrow 0}\mathrm{d}_{\mathrm{TV}}(u^n(T,x),u^{n,\tau}(T,x))=0.
\end{align}
\end{proposition}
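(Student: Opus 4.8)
The plan is to apply Proposition \ref{TVconvergence} in two separate instances, one for each assertion. For \eqref{eq:density-Lip}, I take $X_N=u^n(T,x)$ and $X_\infty=u(T,x)$, with the filtration parameter $N=n$. First I verify the hypotheses of Proposition \ref{TVconvergence}: each $u^n(T,x)$ belongs to $\mathbb D^{1,2}$ and admits a density (both facts are recorded just before Lemma \ref{eq.strong-trun}, using the Picard approximation and Theorem \ref{Density-n} together with Assumptions \ref{A3}--\ref{Asp1}); $u(T,x)\in\mathbb D^{2,4}$ by Lemma \ref{lem:chimp0} with $k=2$, $p=4$; and there exists $\rho\in(0,1]$ with $\mathbb E[\|Du(T,x)\|_{\mathbb H}^{-2\rho}]<\infty$ by Lemma \ref{lem:chimp}, so one may take $\alpha=2\rho\in(0,2]$ in Proposition \ref{TVconvergence}. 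Next, Proposition \ref{eq.strong-trun-D} gives $\|u^n(T,x)-u(T,x)\|_{1,2}\le Cn^{-1}$, so $u^n(T,x)\to u(T,x)$ in $\mathbb D^{1,2}$. Proposition \ref{TVconvergence} then yields
\[
\mathrm d_{\mathrm{TV}}(u^n(T,x),u(T,x))\le c\,\|u^n(T,x)-u(T,x)\|_{1,2}^{\frac{\alpha}{\alpha+2}}\le c\,(Cn^{-1})^{\frac{2\rho}{2\rho+2}}\to 0
\]
as $n\to\infty$, which is exactly \eqref{eq:density-Lip}.

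For \eqref{eq:density-Lip-fully}, I fix $n\ge2$ and apply Proposition \ref{TVconvergence} with $X_N=u^{n,\tau}(T,x)$ (where $\tau=T/m$, so the discretization parameter is $N=m$) and $X_\infty=u^n(T,x)$. The key point is that the roles are now played entirely at the spatially discrete level: $u^{n,\tau}(T,x)\in\mathbb D^{1,2}$ and has a density by the Picard argument together with Theorem \ref{Density-n-dis}; and $u^n(T,x)$ must be shown to lie in $\mathbb D^{2,4}$ and to satisfy a negative moment bound $\mathbb E[\|Du^n(T,x)\|_{\mathbb H}^{-\alpha}]<\infty$ for some $\alpha\in(0,2]$. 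The convergence $u^{n,\tau}(T,x)\to u^n(T,x)$ in $\mathbb D^{1,2}$ as $\tau\to0$ is the temporal analogue of Proposition \ref{eq.strong-trun-D}; combining the $L^p$-rate from Lemma \ref{eq.strong-trun} (second estimate, order $\tau^{3/8-\epsilon}$) with the corresponding bound on $\|Du^{n,\tau}(T,x)-Du^n(T,x)\|_{L^2(\Omega;\mathbb H)}$ obtained by subtracting the Malliavin-derivative equations and applying the singular Gronwall lemma gives $\|u^{n,\tau}(T,x)-u^n(T,x)\|_{1,2}\le C_{n}\tau^{3/8-\epsilon}\to0$. Then Proposition \ref{TVconvergence} again supplies $\mathrm d_{\mathrm{TV}}(u^n(T,x),u^{n,\tau}(T,x))\le c\,\|u^{n,\tau}(T,x)-u^n(T,x)\|_{1,2}^{\alpha/(\alpha+2)}\to 0$.

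I expect the main obstacle to be establishing the nondegeneracy $\mathbb E[\|Du^n(T,x)\|_{\mathbb H}^{-\alpha}]<\infty$ (and the analogous $\mathbb D^{2,4}$-regularity) for the \emph{spatially discrete} solution $u^n$, since Lemma \ref{lem:chimp} and the cited \cite[Proposition 3.2]{CH20} are stated for the continuous solution $u$. The natural route is to reproduce the argument of \cite{CH20}: from the Malliavin-derivative identity for $u^n$ one lower-bounds $\|D u^n(T,x)\|_{\mathbb H}^2$ by restricting the $\mathbb H$-norm integral to a small time window $[T-\delta,T]$, where the dominant term is $\int_{T-\delta}^T\int_\OO |G^n_{T-r}(x,z)|^2\sigma(u^n(r,\kappa_n(z)))^2\,\mathrm dz\,\mathrm dr\ge \sigma_0^2\int_{T-\delta}^T\int_\OO |G^n_{T-r}(x,z)|^2\,\mathrm dz\,\mathrm dr$ by Assumption \ref{A3}, while the remaining convolution terms are controlled as $O(\delta^{\theta})$ in $L^p$ using the smoothing estimates for $G^n$ and $\Delta_nG^n$; a Chebyshev-type argument then yields a small-ball estimate $\mathbb P(\|Du^n(T,x)\|_{\mathbb H}^2\le\varepsilon)\le C\varepsilon^{p_0}$ uniform in $n$, from which the negative moment follows as in Lemma \ref{lem:chimp}. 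Since this is standard once the discrete Green-function bounds \eqref{GnGn} are in hand, the statement follows by assembling these pieces; I would simply cite the appropriate discrete analogues and invoke Proposition \ref{TVconvergence}.
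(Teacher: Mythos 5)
Your proposal follows the paper's proof exactly: both assertions are obtained from Proposition \ref{TVconvergence}, the first with $(X_N,X_\infty)=(u^n(T,x),u(T,x))$ and $\alpha=2\rho$ via Lemma \ref{lem:chimp0}, Proposition \ref{eq.strong-trun-D} and Lemma \ref{lem:chimp}, the second with $(X_N,X_\infty)=(u^{n,\tau}(T,x),u^n(T,x))$. The paper disposes of the second assertion with ``in a similar manner \ldots we omit the details,'' so your explicit identification of the ingredients it leaves implicit --- the $\mathbb{D}^{2,4}$-regularity and negative-moment estimate for $Du^n(T,x)$ at the spatially discrete level, and the $\mathbb{D}^{1,2}$-convergence of $u^{n,\tau}(T,x)$ to $u^n(T,x)$ --- together with your sketch of how to obtain them is a faithful (indeed more complete) rendering of the same argument.
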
\begin{proof}
 ($i$) Lemma \ref{lem:chimp0}, Proposition \ref{eq.strong-trun-D}, and
 Lemma \ref{lem:chimp} indicate that the conditions of 
Proposition \ref{TVconvergence}  are fulfilled with $\alpha=2\rho$, $X_n=u^n(T,x)$ and $X_\infty=u(T,x)$. Thus \eqref{eq:density-Lip} can be obtained from Proposition \ref{TVconvergence}.

($ii$) In a similar manner, \eqref{eq:density-Lip-fully} can be proved applying Proposition \ref{TVconvergence} with $X_\infty=u^n(T,x)$ and $X_m=u^{n,\tau}(T,x)$, and we omit the details.
\end{proof}

In Sections \ref{S5}--\ref{S6},
 we give the strong convergence orders and density convergence of
 the spatial and fully discrete FDMs applied to \eqref{CH} for \textit{Case 1: $f(x)=x^3-x$}.
In the appendix, 
 we also present 
 that the above results also hold for \textit{Case 2: $f$ is twice differentiable with bounded derivatives of first and second order}.
 Although the results are the same in both cases, the main techniques are essentially different.
We take the spatial FDM as instance to point out some differences between these two cases below.
 \begin{itemize}
 \item[$(1)$] In both  \textit{Case 1} and \textit{Case 2},
  the strong convergence analysis of the spatial FDM relies on the introduction of the auxiliary process $\tilde{u}$  and the error estimate between $u$ and $\tilde{u}$ in Proposition \ref{utilde-uh-1} (see Theorem \ref{main-strong} and Lemma \ref{eq.strong-trun} for \textit{Case 1} and \textit{Case 2}, respectively). 
  However, we emphasis that the introduction of the auxiliary process $\tilde{u}$ is mainly used to deal with \textit{Case 1}, and is not necessary for \textit{Case 2}. Alternatively, Lemma \ref{eq.strong-trun} can be proved based on the following decomposition
{\small  
  \begin{align*}
u^n(t,x)-u(t,x)&=\int_0^t\int_{\mathcal O}[\Delta_nG^n_{t-s}(x,y)-\Delta G_{t-s}(x,y)]f(u^n(s,\kappa_n(y)))\mathrm{d} y\mathrm{d} s\\
&\quad+\int_0^t\int_{\mathcal O}\Delta G_{t-s}(x,y)[f(u^n(s,\kappa_n(y)))-f(u^n(s,y))]\mathrm{d} y\mathrm{d} s\\
&\quad+\int_0^t\int_{\mathcal O}\Delta G_{t-s}(x,y)[f(u^n(s,y))-f(u(s,y))]\mathrm{d} y\mathrm{d} s\\
&\quad+\int_0^t\int_{\mathcal O}[G^n_{t-s}(x,y)-G_{t-s}(x,y)]\sigma(u^n(s,\kappa_n(y)))W(\mathrm{d} s,\mathrm{d} y)\\
&\quad+\int_0^t\int_{\mathcal O}G_{t-s}(x,y)[\sigma(u^n(s,\kappa_n(y)))-\sigma(u^n(s,y))]W(\mathrm{d} s,\mathrm{d} y)\\
&\quad+\int_0^t\int_{\mathcal O} G_{t-s}(x,y)[\sigma(u^n(s,y))-\sigma(u(s,y))]W(\mathrm{d} s,\mathrm{d} y).
\end{align*}}However, the above way of decomposition does not work for the proof of Theorem \ref{main-strong}, due to the absence of the Lipschitz property of $f$ in \textit{Case 1}.

 \item[$(2)$] In \textit{Case 2}, the key to deriving the density convergence of the numerical solution is the application of Proposition \ref{TVconvergence}, whose prerequisite involves the Malliavin differentiability of the exact solution. However, in \textit{Case 1}, we are only aware that the exact solution $u(t,x)$ is
 locally Malliavin differentiable, and it is still unclear to us whether $u(t,x)$ belongs to $\mathbb D^{1,2}$ or not. This brings difficulty in applying
  Proposition \ref{TVconvergence} to prove the density convergence for \textit{Case 1}. Instead,  we propose a novel localization argument,  which enables us to convert the proof of the  density convergence in \textit{Case 1} into the strong convergence analysis of the numerical method in \textit{Case 1} and the density convergence of the numerical solution in \textit{Case 2}.
 \end{itemize}
%Since the proof is similar to that of Proposition \ref{Unl2-pro}, we only give a sketch proof. 

%\section*{References}
\bibliographystyle{plain}
\bibliography{mybibfile}

\end{document}